\title{Higher dimensional Birkhoff attractors}
\author{Marie-Claude Arnaud}\thanks{MCA: Universit\'e Paris Cit\'e and Sorbonne Universit\'e, CNRS, IMJ-PRG, F-75005 Paris, France. Supported by ANR CoSyDy, ANR21-CE40-0014. Email:  marie-claude.arnaud@imj-prg.fr}
\author{Vincent Humili\`ere} \thanks{VH: Sorbonne Universit\'e and Universit\'e Paris Cit\'e, CNRS, IMJ-PRG, F-75005 Paris, France. Member of IUF and supported by COSY (ANR-21-CE40-0002) and CoSyDy (ANR-21-CE40-0014). Email: vincent.humiliere@imj-prg.fr}
\author{Claude Viterbo}
\thanks{CV: Universit\'e Paris-Saclay, CNRS, Laboratoire de math\'ematiques d'Orsay, 91405, Orsay, France. Supported by ANR COSY (ANR-21-CE40-0002). Email: claude.viterbo@universite-paris-saclay.fr}
\thanks{MZ: Sorbonne Universit\'e and Universit\'e Paris Cit\'e, CNRS, IMJ-PRG, F-75005 Paris, France. Supported by ANR CoSyDy, ANR-CE40-0014. Email: maxime.zavidovique@imj-prg.fr}
\begin{document}

\maketitle

\begin{center}
  With an appendix by \textsc{Maxime Zavidovique}
\end{center}

\begin{abstract}
  We extend to higher dimensions the notion of Birkhoff attractor of a dissipative map. We prove that this notion coincides with the classical Birkhoff attractor defined by Birkhoff in \cite{Birkhoff-attractor}. 
  We prove that for the dissipative system associated to the discounted   Hamilton-Jacobi equation
the graph of a solution is contained in the Birkhoff attractor. 
We also study what happens when we perturb a Hamiltonian system to make it dissipative and let the perturbation go to zero. 
The paper contains two main results on $\gamma$-supports and elements of the $\gamma$-completion of the space of exact Lagrangians. Firstly the $\gamma$-support of a Lagrangian in a cotangent bundle carries the cohomology of the base 
and secondly given an exact Lagrangian, $L$, any  Floer theoretic equivalent Lagrangian $L'$ is the $\gamma$-limit of Hamiltonian images of $L$. The appendix provides instructive counter-examples.
\end{abstract}

\tableofcontents

\section{Introduction and main results}

This work is dedicated to the study of conformal symplectic dynamics, a now classical
extension of symplectic dynamics where the symplectic form is only preserved, up to a multiplicative factor. 
Such dynamics include for instance 
mechanical systems with a friction force proportional
to velocity. 

 Dissipative twist maps of the two-dimensional annulus form another class of examples of conformally symplectic dynamical systems which was extensively studied by Birkhoff and others \cite{Birkhoff-attractor,Charpentier-1,Charpentier-2,LeCalvez-Birkhoff1, LeCalvez-Birkhoff2}. In this setting, there is a global attractor but, more interestingly, the  core of the dynamics is carried by a subset of the global attractor, the so-called \emph{Birkhoff attractor}. This is a minimal invariant compact subset which separates the annulus. Note that despite its name it is not in general an attractor, since some points may not converge to it. There are very few results in the higher dimensional case. Invariant submanifolds were studied in \cite{Arnaud-Fejoz}, but it may happen that there is no such invariant submanifold. One difficulty is that there may be no global attractor and if it exists, it may not separate. It is thus a priori unclear how to define an invariant subset that would extend the construction of Birkhoff.

 The main goal of this article is to provide such an extension of the Birkhoff attractor in any dimension, even when the dynamics has no attractor. The construction relies on tools from symplectic topology, and more precisely on the completion of the space on exact Lagrangian submanifolds with respect to a certain metric. The Birkhoff attractor arises as the ``support'' (called $\gamma$-support) in a sense to be made precise later of a generalized Lagrangian in this completion that is a fixed point of the dynamics.

 We then study this generalized attractor from various viewpoints.
 From the analytic viewpoint, we compare this attractor with the graph of the so-called viscosity solution of the discounted Hamilton-Jacobi equation in the setting of weak KAM theory. Driven by the beautiful result from \cite{DFIZ} on the selection of weak KAM solutions, there was a hope that  the generalized Lagrangian fixed point could converge as conformal exponent goes to zero, thus selecting a generalized invariant Lagrangian of the symplectic (non conformal) dynamics. We prove with some example that this is unfortunately false.
 
Let us observe that the works \cite{Birkhoff-attractor,Charpentier-1,Charpentier-2} also provide a link between the restricted dynamics on the Birkhoff attractor (more precisely its rotation set) and the topological complexity of the attractor. In higher dimension, there seems to be no hope to obtain such a result. Indeed, it is known that for any flow $(\psi_t)$ on a manifold $M$ there exists a conformal flow on the cotangent bundle $T^*M$ for which the zero section is the Birkhoff attractor and whose restriction to the zero section is $(\psi_t)$; see \cite[Sect. 7.1]{Arnaud-Fejoz}. 

A second part of the paper deals with symplectic topological aspects of the $\gamma$-supports and so in particular the Birkhoff attractor. A first result is that a compact $\gamma$-support  and in particular a Birkhoff attractor carries the cohomology of the base, which can be seen as the higher dimensional analogue of ``separating''.
A second result is to prove a weak version of a central conjecture from symplectic topology, the so-called nearby Lagrangian conjecture. This conjecture claims that the group of Hamiltonian diffeomorphisms acts transitively on the set of exact Lagrangians of $T^*N$. We here prove that this holds for the completion for the spectral metric of this group. As a consequence, any statement concerning the spectral norm that holds for Lagrangians that are Hamiltonian isotopic to the zero section holds for all exact Lagrangians.

\medskip
We now state more precisely our results.

We let $(M,\omega)$ be an exact symplectic manifold of dimension $2n$ and $\lambda$ a Liouville form, i.e. a $1$-form such that $d\lambda=-\omega$.
We also assume that $(M,\omega)$ is Liouville:  
there exists a sequence of compact subsets with smooth boundary $K_1\subset K_2\subset \dots $ with $M=\bigcup_{i=1}^\infty K_i$ and such that for each $i$, the Liouville vector field $X$ (defined by $\iota_X\omega=\lambda$) is transverse to $\partial K_i$ and points inward.
  The standard example is given by a Liouville domain, that is   
 a compact exact symplectic manifold $(W, -d\lambda)$ with boundary $\partial W$, such that the Liouville vector field defined by $i_X\omega=\lambda$ is transverse to the boundary. We can then extend $W$ to $W\cup \partial W\times [1,+\infty[$ with symplectic form on $\partial W\times [1,+\infty[$ given by 
$-d(t\lambda)$. Then $X$ extends to the vector field $- \frac{\partial}{\partial t}$ on $\partial W\times [1,+\infty[$ and its flow is complete\footnote{We shall often need this completeness, not only in forward time, but also in backward time. It follows from the above description that this is always the case for this extension of a Liouville domain.}. This case includes in particular the cotangent bundle $(T^*N, -d\lambda)$ of any closed smooth manifold $N$ with the standard Liouville form $\lambda=p\,d\!q$.

A \emph{conformally symplectic} diffeomorphism in $(M,\omega)$ is a diffeomorphism $\phi$ such that $\phi^*\omega=a\, \omega$ for some $a>0$. Note that in dimension $2$, any map $\phi$ satisfies $\phi^*\omega=a(z)\omega$ for some  function $a$ but we say that $\phi$ is conformally symplectic only if $a$ is constant. In higher dimension $\phi^*\omega=a(z)\omega$ implies $a(z)$ is constant, so the terminology ``conformally symplectic'' is unambiguous. Also note that if $a\neq 1$, then $M$ must be a non-compact manifold of infinite volume, and refer to 
\cite{Arnaud-Fejoz} for recent results on this topic.  The map $\phi$ is said \emph{conformally exact symplectic} if $\phi^*{\lambda}-a \lambda$ is exact
for some Liouville form $\lambda$ (i.e. satisfying $\omega=-d\lambda$). 
According to Appendix B of \cite{Arnaud-Fejoz}, if $\phi$ is homotopic to identity there exists a primitive $\lambda$ of $\omega$ such that $\phi$ is conformally exact symplectic for $\lambda$. Moreover, under mild assumptions at infinity, namely that the $\omega$-dual vector field of $\phi^* \lambda-a \lambda$ is complete (see \cite[Appendix B, prop. 10]{Arnaud-Fejoz}), $\phi$ is conjugate to a conformally exact symplectic map for the original $\lambda$.

The space of closed exact Lagrangian submanifolds in $M$ is denoted by  $\fL(M,\omega)$. 
Exact conformal maps act on $\fL(M,\omega)$ and  $\fL(M,\omega)$   carries the so-called spectral norm $\gamma$ (see Section \ref{sec:preliminaries}). Its completion with respect to the spectral norm is denoted  by $\hatL(M,\omega)$. The elements of these completions have a $\gamma$-support, which is a closed subset of $M$ (see Section \ref{sec:preliminaries} and \cite{Viterbo-gammas}).

A special case of our main theorem  (See Theorem \ref{th:main-for-branes} below) is the following result.

\begin{thm}\label{th:main}
Given a conformally exact symplectic map $\phi$  with $a\neq 1$ on $T^*N$, there is a closed invariant subset $B(\phi)$  canonically associated to $\phi$ such that $B(\phi)=\gammasupp(L)$, for some $L \in \hatL(T^*N)$ which is fixed by $\phi$. 
\end{thm}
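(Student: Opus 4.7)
The plan is to realize $\phi$ as a strict $\gamma$-contraction of the complete metric space $\hatL(T^*N)$ and invoke the Banach fixed point theorem.

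First I would normalize by assuming $0<a<1$, replacing $\phi$ by $\phi^{-1}$ if necessary (whose conformal factor is $1/a$). Writing $\phi^*\lambda=a\lambda+dg$ for some smooth $g$, one checks that $\phi$ sends closed exact Lagrangians to closed exact Lagrangians: a primitive $f_L$ of $\lambda|_L$ pushes forward to the primitive $a\,f_L\circ\phi^{-1}+g\circ\phi^{-1}$ of $\lambda|_{\phi(L)}$. Hence $\phi$ acts on $\L(T^*N)$.

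The crucial step will be the scaling identity
\[
\gamma(\phi(L_1),\phi(L_2))=a\,\gamma(L_1,L_2),\qquad L_1,L_2\in\L(T^*N).
\]
This should follow because the symplectic action functional underlying the definition of $\gamma$ is multiplied by $a$ under pullback by $\phi$ (as $\phi^*\omega=a\omega$), so its critical values scale by $a$; the additive ambiguity coming from $g$ and from the choices of primitives is washed out when one takes the spectral difference that defines $\gamma$. Given this, $\phi$ is an $a$-contraction of $(\L(T^*N),\gamma)$, hence extends uniquely to an $a$-contraction of the completion $\hatL(T^*N)$.

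Applying the Banach fixed point theorem then produces a \emph{unique} $L\in\hatL(T^*N)$ with $\phi(L)=L$, and this $L$ is canonically associated to $\phi$. I would then define $B(\phi):=\gammasupp(L)$, which is closed by definition, and whose $\phi$-invariance is a consequence of the equivariance of the $\gamma$-support under conformally exact symplectic maps:
\[
\phi(B(\phi))=\phi(\gammasupp(L))=\gammasupp(\phi(L))=\gammasupp(L)=B(\phi).
\]
The main obstacle is the contraction identity: to make the scaling argument rigorous one must verify carefully that the spectral norm $\gamma$ (and its extension to $\hatL(T^*N)$) transforms as claimed under conformally exact symplectic maps in the non-compact setting of $T^*N$. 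The equivariance of $\gammasupp$ under $\phi$ is the only other genuinely non-formal ingredient; once these two facts are available, the conclusion is a clean application of Banach's theorem.
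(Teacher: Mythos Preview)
Your proposal is correct and follows essentially the same route as the paper: normalize to $a<1$, use the scaling identity $\gamma(\phi(L),\phi(L'))=a\,\gamma(L,L')$ (the paper's Proposition~\ref{prop:gamma}) to get a contraction on the completion, apply the Banach/Picard fixed point theorem, and conclude invariance of $B(\phi)$ via the equivariance $\gammasupp(\phi(L))=\phi(\gammasupp(L))$ (Proposition~\ref{prop:gamma-support}(2)). The only cosmetic difference is that the paper runs the contraction argument on the space of branes $(\hatLL,c)$ rather than directly on $(\hatL,\gamma)$, obtaining the slightly stronger Theorem~\ref{th:main-for-branes}; but since Proposition~\ref{prop:gamma} gives the $a$-contraction for both metrics, your direct approach on $\hatL$ is equally valid for the statement at hand.
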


\begin{rem} 
 Sets of the form $\gammasupp(L)$ have a number of properties. In particular they are $\gamma$-coisotropic (see \cite{Viterbo-gammas}), which implies that their Hausdorff dimension is at least $n$. We shall prove more properties of the $\gamma$-support in Theorem \ref{th:cohomology}.
 \end{rem} 

The fixed point and its $\gamma$-support will be respectively denoted by $L_\infty$ and $B$, or $L_\infty(\phi)$ and $B(\phi)$ if needed.
The subset $B$ will be called \emph{generalized Birkhoff attractor} of $\phi$. This terminology is justified by the following result.

\begin{thm}\label{th:annulus-case}
  Let $\phi$ be a conformally symplectic map homotopic to identity of the annulus $\mathbb{A}=[-1,1]\times \bS^1$ with conformal ratio $a<1$ and such that $\phi(\mathbb{A})\subset (-1,1)\times\bS^1$. Then $B(\phi)$ coincides with the classical Birkhoff attractor (see \cite{Birkhoff-attractor}).
\end{thm}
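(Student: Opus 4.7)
The plan is to prove both inclusions $B(\phi)\subseteq B_{\mathrm{cl}}$ and $B_{\mathrm{cl}}\subseteq B(\phi)$. Recall the classical construction: let $\Lambda=\bigcap_{n\geq 0}\phi^n(\mathbb{A})$, which is compactly contained in $\mathbb{A}^\circ$ because $\phi(\mathbb{A})\Subset\mathbb{A}$, and let $V^+$, resp.\ $V^-$, be the connected component of $\mathbb{A}\setminus\Lambda$ containing the upper, resp.\ lower boundary circle. Then $B_{\mathrm{cl}}:=\overline{V^+}\cap\overline{V^-}=\partial V^+\cap\partial V^-$, the two descriptions agreeing because $V^+$ and $V^-$ are disjoint open subsets.

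For $B_{\mathrm{cl}}\subseteq B(\phi)$: by Theorem~\ref{th:cohomology}, the inclusion induces an injection $H^1(\mathbb{A})\hookrightarrow H^1(B(\phi))$, so $B(\phi)$ separates the two ends of $\mathbb{A}$. Moreover, the upper semi-continuity of $\gammasupp$ along $\gamma$-convergent sequences, applied to $\phi^n(L_0)\to L_\infty$ for any initial exact Lagrangian $L_0\subset\mathbb{A}$, gives $B(\phi)\subseteq\Lambda$, whence $V^\pm\cap B(\phi)=\emptyset$. Let $W^\pm$ be the connected components of $\mathbb{A}\setminus B(\phi)$ adjacent to the upper, resp.\ lower boundary; then $V^\pm\subseteq W^\pm$, and separation gives $W^+\neq W^-$. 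Because the common boundary of two distinct open components of a complement is contained in the complement,
\[
B_{\mathrm{cl}}=\overline{V^+}\cap\overline{V^-}\subseteq\overline{W^+}\cap\overline{W^-}\subseteq B(\phi).
\]

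For $B(\phi)\subseteq B_{\mathrm{cl}}$: the strategy is to realize $L_\infty$ as the $\gamma$-limit of iterates $\phi^n(\widetilde L_0^+)$ starting from a generalized Lagrangian $\widetilde L_0^+\in\hatL(\mathbb{A})$ localized above $\Lambda$. Because $\phi$ acts as a strict $\gamma$-contraction of factor $a<1$ on $\hatL(\mathbb{A})$, any starting element converges to the same fixed point $L_\infty$. We take $\widetilde L_0^+$ to be the $\gamma$-limit of a sequence of exact Lagrangians with supports in the collar $(1-\varepsilon,1]\times\bS^1\subset V^+$ (such $\varepsilon>0$ exists because $\Lambda$ is compact in $\mathbb{A}^\circ$). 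The component $V^+$ is positively invariant under $\phi$---this follows from $\phi$ being homotopic to the identity and $\phi(\mathbb{A})\Subset\mathbb{A}$---so $\phi^n(\widetilde L_0^+)$ has $\gamma$-support in $\overline{V^+}$ for every $n\geq 0$. Upper semi-continuity of $\gammasupp$ under $\phi^n(\widetilde L_0^+)\to L_\infty$ then gives $B(\phi)\subseteq\overline{V^+}$. The symmetric argument with $\widetilde L_0^-$ localized in $V^-$ yields $B(\phi)\subseteq\overline{V^-}$, hence $B(\phi)\subseteq\overline{V^+}\cap\overline{V^-}=B_{\mathrm{cl}}$.

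The two main technical points are (a) constructing the localized generalized Lagrangian $\widetilde L_0^+\in\hatL(\mathbb{A})$: since horizontal circles $\{c\}\times\bS^1$ with $c\neq 0$ fail to be exact in $T^*\bS^1$, one must use a $\gamma$-Cauchy sequence of exact Lagrangians whose supports concentrate in the collar of the upper boundary; and (b) establishing the positive invariance $\phi(V^\pm)\subseteq V^\pm$, which follows from the fact that $\phi$ is homotopic to the identity---hence does not exchange the two ends of $\mathbb{A}$---together with the strict containment $\phi(\mathbb{A})\Subset\mathbb{A}$, forcing the image annulus to be properly nested so that $\phi(\{1\}\times\bS^1)\subseteq\overline{V^+}$, from which positive invariance follows by pushing paths through $\phi$.
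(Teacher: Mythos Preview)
Your argument for the inclusion $B_{\mathrm{cl}}\subseteq B(\phi)$ is essentially the same as the paper's (the paper cites \cite[Prop.~6.10]{Viterbo-gammas} for the separating property rather than Theorem~\ref{th:cohomology}, but either works), and is fine.

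The inclusion $B(\phi)\subseteq B_{\mathrm{cl}}$, however, has a genuine gap: the object $\widetilde L_0^+\in\hatL(T^*\bS^1)$ you want simply does not exist. An exact closed curve in $T^*\bS^1$ has vanishing Liouville class, i.e.\ zero signed area with the zero section, and therefore must intersect the zero section; so there is no exact Lagrangian contained in the collar $(1-\varepsilon,1]\times\bS^1$. Worse, this obstruction persists in the completion: as recalled after Proposition~\ref{prop:gamma-support}, the $\gamma$-support of \emph{any} element of $\hatL(T^*\bS^1)$ meets every closed exact Lagrangian, in particular the zero section $\bS^1\times\{0\}$. Hence no $\widetilde L_0^+\in\hatL$ can have $\gamma$-support confined to $V^+$, and your iteration-plus-semicontinuity scheme cannot get started.

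The paper's proof confronts exactly this issue. It starts instead from a \emph{non-exact} horizontal circle $\Lambda=\bS^1\times\{c\}$ with $c$ close to $1$, which does lie in $U_1^+$. The iterates $\phi^k(\Lambda)$ have Liouville class $a^k c$, so one corrects by the vertical translation $\tau_{-a^k c}$ to obtain exact curves $\Lambda_k=\tau_{-a^k c}\phi^k(\Lambda)$. The point is then to show that $\Lambda_k\to L_\infty$ in $\gamma$; since $\Lambda_k$ is not obtained by iterating a single contraction, this requires a small fixed-point lemma for a sequence of $a$-contractions $f_k=\tau_{-a^k c}\,\phi\,\tau_{a^{k-1}c}$ converging to $\phi$. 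Once that is established, semi-continuity gives $\gammasupp(L_\infty)\subset\liminf_k\Lambda_k\subset\overline{U_1^+}$ (the translations $\tau_{-a^k c}\to\mathrm{id}$, so the Hausdorff limit is unaffected), and symmetrically for $\overline{U_1^-}$.
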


We recall the definition of the classical Birkhoff attractor in Section \ref{sec:connection-Birkhoff}. We point out that even though the Birkhoff attractor is classically defined for twist maps, its definition extends to the general case. We assume for the rest of this introduction that $M=T^*N$ is the cotangent bundle of a smooth closed manifold $N$.

The topology of $B(\phi)$ is rather well understood in the case $N=\bS^1$ : $B(\phi)$ separates the annulus, is connected, but can be an indecomposable continuum\footnote{That is a connected compact metric space that cannot be decomposed as the union of two proper connected compact subsets.}(see \cite{Charpentier-1, Charpentier-2}). What can we say in general? The following result partially answers this question.

\begin{thm}\label{th:cohomology} 
  Let $L \in \hatL(T^*N)$ be such that $\gammasupp(L)$ is compact. 
  Then, the natural map $H^\ell(N)\to \bar H^\ell(\gammasupp(L)):=\varinjlim_{U\supset \gammasupp(L)} H^\ell(U)$ is injective for any $\ell \geq 0$.
\end{thm}

The above of course applies to the Birkhoff attractor of any conformally exact symplectic map provided it is compact. This can be seen as a higher dimensional equivalent of ``separating'' (which actually means separating the two ends of the annulus). Indeed, one can check that a connected set in the annulus is ``separating'' if the cohomology of $\bS^1$ injects in its cohomology. We will prove Theorem \ref{th:cohomology} in Section \ref{sec:topological-properties}. 

The following question remains open: Is $B$ always connected ? 
This was recently answered positively in the case of a cotangent bundle in \cite{AGHIV}. It is also proved that $\gammasupp(L)$ does not have to be connected when non-compact. 

\medskip
When $\phi=\phi_{H,\alpha}^1$ is the time-1 map of a damped Hamiltonian system  with friction proportional to  velocity, that is a system driven by the vector field $X_{H, \alpha}$ such that $i_{X_{H, \alpha}}=-dH+\alpha\lambda$ and given by the equations
$$
\begin{cases}
\dot q(t)=& -\frac{\partial H}{\partial p}(t,q(t),p(t))\\
\dot p(t)=& \frac{\partial H}{\partial q}(t,q(t),p(t))-\alpha p
\end{cases}
$$
(which is conformally symplectic of ratio $a=e^{-\alpha}$) a classical approach to finding closed of invariant subsets $\phi_{-H,\alpha}^t$ consists in studying the discounted Hamilton-Jacobi equation
\begin{equation}
\label{eq:HJ-discounted}
\alpha u(x) + H(x,du(x))=0.
 \end{equation}
For given $\alpha >0$, and for $H$ coercive such an equation has a viscosity solution, and this solution is unique (see \cite{Lions1982, Barles} in the case of ${\mathbb R}^n$, but the general case is proved similarly).  
We denote it by $u_{H,\alpha}$.

We prove in Section \ref{sec:HJ}:

\begin{thm}\label{th:discounted}
  For any Tonelli Hamiltonian $H:T^*N\to\R$, consider the viscosity solution $u_{H,\alpha}$, and $x$ a point of differentiability of $u_{H, \alpha}$. Then $(x,du_{H,\alpha}(x))$ belongs to the generalized Birkhoff attractor of $\phi_{-H,\alpha}^1$.
\end{thm}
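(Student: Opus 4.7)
The plan is to match two natural contraction structures: on the symplectic side, $\phi=\phi_{H,\alpha}^1$ acts as a $\gamma$-contraction on $\hatL(T^*N)$ with factor $a=e^{-\alpha}$ (which is precisely how $L_\infty$ arises in Theorem \ref{th:main} via Banach fixed point); on the analytic side, the discounted Lax--Oleinik semigroup $T_\alpha$ is a $C^0$-contraction on $C(N)$ with the same factor $e^{-\alpha}$, whose unique fixed point is $u_{H,\alpha}$. The goal is to identify these two fixed points through a canonical uniformly continuous embedding $u\mapsto L_u$ from $C(N)$ to $\hatL(T^*N)$ intertwining $T_\alpha$ with $\phi_*$, and then show that whenever $u$ is Lipschitz and differentiable at $x_0$, one has $(x_0,du(x_0))\in\gammasupp(L_u)$.

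I would first define $u\mapsto L_u$. For smooth $f,g\colon N\to\R$, the graphs $\Gamma_{df}$ and $\Gamma_{dg}$ are obtained from the zero section $0_N$ by the time-$1$ flows of the fiberwise-constant Hamiltonians $\pi^* f$ and $\pi^* g$, and the elementary bound $\gamma(\Gamma_{df},\Gamma_{dg})\le \mathrm{osc}(f-g)\le 2\|f-g\|_\infty$ follows from the comparison of $\gamma$ with the Hofer norm. Therefore, for any smooth $u_n$ converging uniformly to $u\in C(N)$, the sequence $\Gamma_{du_n}$ is $\gamma$-Cauchy, and its $\gamma$-limit $L_u\in\hatL(T^*N)$ is independent of the approximating sequence. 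This yields a uniformly continuous $C(N)\to\hatL(T^*N)$ that coincides with $u\mapsto \Gamma_{du}$ when $u$ is $C^1$.

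Next I would prove the intertwining $\phi_* L_u = L_{T_\alpha u}$ for every $u\in C(N)$. When $u$ is smooth and no caustics form under the damped Hamiltonian flow over $[0,1]$ starting from $\Gamma_{du}$, this is the classical method of characteristics for the evolution equation $\partial_t v+\alpha v+H(x,dv)=0$, whose time-$1$ map is $T_\alpha$ and whose geometric solution is $\phi^t(\Gamma_{du})$. The general case is then obtained by density, combining the uniform continuity of $\phi_*$ in $\gamma$ with that of $T_\alpha$ in $C^0$. Iterating from $u_0=0$, so that $L_{u_0}=0_N$, and letting $n\to\infty$, we get $\phi^n(0_N)\to L_\infty$ in $\gamma$ while $T_\alpha^n(0)\to u_{H,\alpha}$ in $C^0$, whence $L_\infty = L_{u_{H,\alpha}}$.

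Finally, I would prove the local step: if $u$ is Lipschitz on $N$ and differentiable at $x_0$, then $(x_0,du(x_0))\in\gammasupp(L_u)$. For this, construct smooth $u_n\to u$ uniformly with $du_n(x_0)\to du(x_0)$, which is possible by mollifying locally near $x_0$ and correcting to preserve the tangent plane there. Each point $(x_0,du_n(x_0))$ lies in $\gammasupp(\Gamma_{du_n})=\Gamma_{du_n}$, and $\Gamma_{du_n}\to L_u$ in $\gamma$; the upper semi-continuity of $L\mapsto \gammasupp(L)$ with respect to $\gamma$, established in \cite{Viterbo-gammas}, then places $(x_0,du(x_0))$ in $\gammasupp(L_u)$. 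Applying this to $u=u_{H,\alpha}$ and invoking $L_\infty=L_{u_{H,\alpha}}$ concludes the proof. The main obstacle is the intertwining $\phi_* L_u=L_{T_\alpha u}$ beyond the smooth caustic-free regime: it must be propagated to arbitrary continuous $u$ using only the $C^0$-continuity of both contractions, which is crucial because $u_{H,\alpha}$ is typically only Lipschitz.
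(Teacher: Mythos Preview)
There are two genuine gaps, and the first one is fatal to the strategy. The identity $L_\infty = L_{u_{H,\alpha}}$ is false in general, and the intertwining $\phi_* L_u = L_{T_\alpha u}$ that would produce it fails as soon as a caustic forms. Your map $u\mapsto L_u$ lands in the $\gamma$-closure of \emph{graphs}, and this set is not $\phi_*$-invariant: already for smooth $u$ with $\phi(\Gamma_{du})$ non-graphical one has $\phi_*L_u=\phi(\Gamma_{du})$, whose $\gamma$-support is all of $\phi(\Gamma_{du})$, whereas by Proposition~\ref{prop:gamma-support}(4) $\gammasupp(L_{T_\alpha u})\subset\liminf_m\Gamma_{dv_m}$ for mollifications $v_m$ of $T_\alpha u$ and thus cannot contain the folded part of $\phi(\Gamma_{du})$. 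The damped pendulum of Section~\ref{sec:alpha-to-1} makes this visible at the level of the fixed points: $B_{H,\alpha}=\gammasupp(L_\infty)$ is the full infinite spiral, while for mollifications $v_m$ of $u_{H,\alpha}$ the set $\liminf_m\Gamma_{dv_m}$ consists only of the two outer branches together with a vertical segment above $x=\tfrac12$, missing all inner windings; hence $\gammasupp(L_{u_{H,\alpha}})\subsetneq B_{H,\alpha}$ and $L_{u_{H,\alpha}}\neq L_\infty$. The second gap is that your final step invokes the semi-continuity of the $\gamma$-support in the wrong direction. Proposition~\ref{prop:gamma-support}(4) asserts $\gammasupp(\gamma\text{-}\lim L_k)\subset\liminf_k\gammasupp(L_k)$; it does \emph{not} say that a limit of points $p_k\in\gammasupp(L_k)$ lies in $\gammasupp(\lim L_k)$. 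The sequence $u_n(x)=\tfrac1n\sin(n^2x)$ on $\bS^1$ already shows this reverse inclusion is false.

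The paper's argument is quite different and uses the Tonelli hypothesis essentially (Appendix~\ref{appendix} shows the statement fails without it). It establishes only that $u_{H,\alpha}$ is the \emph{graph selector} of $\tilde L_\infty$ (Corollary~\ref{cor:KAMF-graph-selector}), a strictly weaker link than your $L_\infty=L_{u_{H,\alpha}}$, and then works directly from the definition of the $\gamma$-support. Given a differentiability point $q$, one builds a compactly supported exact symplectic map $F$ near $(q,du_{H,\alpha}(q))$ that preserves the fiber $T_q^*N$, moves $du_{H,\alpha}(q)$, and has $F^*\lambda-\lambda=d\Sigma$ with $\Sigma|_{T_q^*N}=0$. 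A conjugacy formula (Proposition~\ref{prop:conjugacy-Hamiltonian-exact-symplectic}) gives $F^{-1}(L_\infty(H,\alpha))=L_\infty(K,\alpha)$ for a nearby Tonelli $K$; the fiberwise variational identity $u_{H,\alpha}(q)=\min_{p}U_{H,\alpha}(q,p)$ (Proposition~\ref{prop:u-and-U}) together with the super-differential property of every minimiser then forces $u_{K,\alpha}$ to have a super-differential at $q$ different from $du_{H,\alpha}(q)$. Hence $u_{K,\alpha}-u_{H,\alpha}$ is non-constant, the graph selectors of $\tilde L_\infty(K,\alpha)$ and $\tilde L_\infty(H,\alpha)$ differ, so $F^{-1}(L_\infty)\neq L_\infty$, which is exactly what Definition~\ref{defgamma} requires.
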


\begin{rems} 
\begin{enumerate}
\item The literature on discounted Hamilton-Jacobi equations and weak KAM theory uses a different convention. As we do here, symplectic geometers tend to use the convention $\iota_{X_H}\omega=-dH$, while dynamicists use $\iota_{X_H}\omega=dH$. The latter gives the standard sign in Hamilton equations $\dot q=\partial_pH, \dot p=-\partial_qH$, while the former is often preferred because it yields spectral invariants that are non-decreasing with respect to the Hamiltonian function. 
With the dynamicist's convention, Theorem \ref{th:discounted} becomes: \emph{For any Tonelli Hamiltonian $H:T^*N\to\R$, consider the viscosity solution $u_{H,\alpha}$, and $x$ a point of differentiability of $u_{H, \alpha}$. Then $(x,du_{H,\alpha}(x))$ belongs to the generalized Birkhoff attractor of $\phi_{H,\alpha}^1$.} 
    \item We remind the reader that a Tonelli Hamiltonian is a Hamiltonian such that
    \begin{itemize}
             \item $H$ is $C^2$ and for all $(q, p)\in T^*N$, $\frac{\partial^2 H}{\partial p^2}(q,p)$ is positive definite,
       \item $H$ is superlinear i.e. for all $q\in N$, $\lim_{\| p\|\to\infty} \frac{H(q, p)}{\| p\|}=+\infty$, where $\|\cdot\|$ is any Riemannian norm on $N$.
    \end{itemize}
\item 
In Appendix \ref{appendix}, Maxime Zavidovique constructs an example of a non-Tonelli Ha\-miltonian for which the conclusion of the above theorem does not hold, as well as an example of a Tonelli Hamiltonian for which the Birkhoff attractor of  $\phi_{-H,\alpha}^1$ is strictly larger than the closure of $\bigcup_{t\in\R}\phi_{-H,\alpha}^t(\mathrm{graph}(du_{H, \alpha}))$.
\end{enumerate}
\end{rems} 
An intermediate step in the proof of Theorem \ref{th:discounted} consists in showing that the viscosity solution coincides with the graph selector of the fixed point $L_\infty(\phi)$ in $\hatL(T^*N)$. See Section \ref{sec:HJ}.

\medskip
We also study the limit $\alpha \to 0$  or equivalently $a\to 1$. By compactness, the Birkhoff attractors of $\phi_{-H,\alpha}^1$ admit a limit point as $\alpha\to 0$, which provides a closed invariant subset of the Hamiltonian map $\phi_{-H}^1$. Moreover,
Davini, Fathi, Ituriaga and Zavidovique (\cite{DFIZ}) have shown that for $H$ Tonelli, as $\alpha$ converges to $0$, the solution $u_\alpha$ of (\ref{eq:HJ-discounted}) converges uniformly to some $u_0$.
It is then natural to ask whether the same holds for the fixed point $L_\infty(\phi_{-H,\alpha}^1)$, with respect to the $\gamma$-topology. It turns out that this is not true in the case of a pendulum  even though it is a uniformly bounded sequence with respect to the distance $\gamma$.

\begin{thm}\label{th:pendulum} If $H:T^*\bS^1\to\R$ is the Hamiltonian for a standard pendulum given by  $H(\theta,p)=\frac12p^2-\cos\theta$, then $L_\infty(\phi_{-H,\alpha}^1)$ does not admit any limit point as $\alpha$ goes to $0$ in $\hatL(T^*\bS^1)$.
\end{thm}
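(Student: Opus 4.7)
The natural strategy is to argue by contradiction: assume that along some subsequence $\alpha_n \to 0^+$ the fixed points $L_n := L_\infty(\phi_{H,\alpha_n}^1)$ $\gamma$-converge to some $L^\star \in \hatL(T^*\bS^1)$, and derive a contradiction from pendulum-specific geometry.

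\textbf{Step 1: identifying the limit.} Two things should be forced by continuity. First, since $\phi_{H,\alpha_n}^t \to \phi_H^t$ smoothly on compact sets for every $t$, and since the action of (conformally) symplectic maps on $\hatL$ is $\gamma$-continuous, the identities $(\phi_{H,\alpha_n}^t)_*L_n = L_n$ pass to the limit and $L^\star$ is invariant under the entire pendulum flow $\phi_H^t$. Second, the graph selector is $1$-Lipschitz with respect to $\gamma$ and, by the intermediate step of Theorem \ref{th:discounted}, the selector of $L_n$ is $u_{H,\alpha_n}$. By the Davini--Fathi--Iturriaga--Zavidovique theorem \cite{DFIZ}, $u_{H,\alpha_n}$ converges uniformly to the distinguished weak KAM solution $u_0$, so the selector of $L^\star$ is $u_0$.

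\textbf{Step 2: constraining the $\gamma$-support.} Passing Theorem \ref{th:discounted} to the limit gives $\{(q, du_0(q)) : u_0 \text{ differentiable at } q\} \subseteq \gammasupp(L^\star)$. For the pendulum $u_0$ is explicit: its almost-everywhere graph lies on the separatrix, with a single corner over the hyperbolic fixed point. Combined with $\phi_H^t$-invariance, this forces $\gammasupp(L^\star)$ to contain the closure of the full separatrix---the figure-eight formed by both homoclinic orbits and the hyperbolic point.

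\textbf{Step 3 and main obstacle.} It remains to rule out the existence of any $L^\star \in \hatL(T^*\bS^1)$ satisfying these constraints, equivalently to show the sequence $\{L_n\}$ is not $\gamma$-Cauchy. I would seek a pendulum-specific spectral obstruction: for small $\alpha$ the damped dynamics produce a homoclinic tangle whose basin boundary near the hyperbolic point winds around the elliptic sink on the order of $|\log \alpha|$ times, and this wind count should be detectable by a spectral invariant $c(G, L_n)$ for a suitable test Hamiltonian $G$. The hard part is precisely this: the $\gamma$-topology is much coarser than Hausdorff convergence of supports, so a priori much of the tangle structure is invisible in $\hatL$; producing a genuinely spectral quantity that both oscillates with $\alpha$ and survives passage to the completion---most likely through explicit generating functions near the hyperbolic fixed point---is where pendulum-specific computations will be essential.
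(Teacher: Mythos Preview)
Your outline has a genuine gap at Step~2 and leaves the essential work undone at Step~3. In Step~2 you invoke semi-continuity of the $\gamma$-support to conclude that the separatrix lies inside $\gammasupp(L^\star)$, but the inclusion goes the other way: Proposition~\ref{prop:gamma-support}(4) only gives $\gammasupp(L^\star)\subset\limsup_n \gammasupp(L_{\alpha_n})$, so from $(q,du_{H,\alpha_n}(q))\in B_{H,\alpha_n}$ you cannot deduce $(q,du_0(q))\in\gammasupp(L^\star)$. Knowing that the graph selector of $L^\star$ is $u_0$ does not by itself place $\mathrm{graph}(du_0)$ inside $\gammasupp(L^\star)$ for an element of the completion. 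Even if one grants all of Steps~1--2, the constraints you obtain ($\phi_H^t$-invariance, selector $u_0$, support contained in the closed eye region) do not obviously preclude the existence of such an $L^\star$; there is no off-the-shelf classification of elements of $\hatL(T^*\bS^1)$ with prescribed support to appeal to. So the real content of the theorem is hidden in your Step~3, which you leave as a heuristic.

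The paper bypasses all of this and proves directly that $(L_\alpha)$ is not $\gamma$-Cauchy, via the quantitative estimate $\gamma(L_\alpha,L_\beta)\gtrsim 1-\beta/\alpha$ for $\beta\ll\alpha$. The mechanism is concrete: for each $\alpha$ the Birkhoff attractor is an ``infinite spiral'' (two heteroclinics winding into the sink), and after straightening $L_\beta$ to the zero section by an area-preserving map, $L_\alpha$ becomes a spiral about a point. A combinatorial lemma (Lemma~\ref{Lemma-6.4}) bounds $\gamma$ of a spiral against the zero section from below by the smallest of four specific lobe areas determined by the outermost intersections; an energy-dissipation computation $\dot E=-\alpha p^2$ then evaluates these areas and shows they stay bounded away from $0$. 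Your intuition that the winding is the obstruction is correct, but the paper converts it into an area (not a winding count), and the ``spiral lemma'' is precisely the missing spectral ingredient you were looking for.
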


For the proof, we refer to Section \ref{sec:alpha-to-1}.

\medskip
In a second more symplectic topological part of the paper,  we prove on one hand Theorem \ref{th:cohomology} (see Section \ref{sec:topological-properties}), as well as a weak version of the Nearby Lagrangian conjecture (see Conjecture \ref{conj:NLC}). We denote by $\DHam(M,\omega)$  the set of smooth Hamiltonian diffeomorphisms.
Let $\widehat{\DHam}(M,\omega)$ 
be the completion of ${\DHam}(M,\omega)$ 
with respect to the spectral norm.

\begin{thm}\label{th:weak-NLC}
  Any smooth closed exact Lagrangian submanifold in $T^*N$ is the image of the zero section by an element of $\widehat\DHam(T^*N)$.  
\end{thm}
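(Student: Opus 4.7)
The plan is to combine the paper's second main symplectic result stated in the abstract --- that any Floer-theoretically equivalent Lagrangian is the $\gamma$-limit of Hamiltonian images --- with the nearby Lagrangian literature, and then promote the resulting Lagrangian-level approximation to a Cauchy sequence in $(\DHam(T^*N),\gamma)$.

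First I would invoke the known results on the nearby Lagrangian conjecture (Abouzaid--Kragh, Guillermou, and others) to argue that any closed exact Lagrangian $L\subset T^*N$ is Floer-theoretically equivalent to the zero section $0_N$; concretely, $L$ is quasi-isomorphic to $0_N$ in the wrapped Fukaya category (up to a grading/\emph{Spin} adjustment). Applying the paper's $\gamma$-approximation result with the roles $L\rightsquigarrow 0_N$ and $L'\rightsquigarrow L$ produces a sequence $\phi_n\in\DHam(T^*N)$ with $\phi_n(0_N)\to L$ in the $\gamma$-topology of $\hatL(T^*N)$. The remaining issue is that such $\phi_n$ need not themselves form a $\gamma$-Cauchy sequence in $\DHam(T^*N)$.

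To fix this, after extracting a subsequence so that $\gamma(\phi_n(0_N),\phi_{n+1}(0_N))<2^{-n}$, I would appeal to a quantitative lifting lemma: two Hamiltonian-isotopic closed exact Lagrangians $L_1,L_2\subset T^*N$ with $\gamma(L_1,L_2)$ small are related by a Hamiltonian diffeomorphism $\theta\in\DHam(T^*N)$ satisfying $\theta(L_1)=L_2$ and $\gamma(\theta)\le C\,\gamma(L_1,L_2)$. Granted this, choose $\theta_n$ taking $\phi_n(0_N)$ to $\phi_{n+1}(0_N)$ with $\gamma(\theta_n)\lesssim 2^{-n}$, and define $\psi_1=\phi_1$, $\psi_{n+1}=\theta_n\psi_n$. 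Then $\psi_n(0_N)=\phi_n(0_N)\to L$ and $\gamma(\psi_n^{-1}\psi_{n+1})=\gamma(\theta_n)\to 0$, so $(\psi_n)$ is Cauchy. Its limit $\Phi\in\widehat{\DHam}(T^*N)$ satisfies $\Phi(0_N)=L$ by continuity of the action of $\widehat{\DHam}(T^*N)$ on $\hatL(T^*N)$.

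The main obstacle is the quantitative lifting lemma. I would try to establish it using generating functions quadratic at infinity: two $\gamma$-close exact Lagrangians in $T^*N$ admit generating functions close in an appropriate spectral sense, and a convex interpolation between them yields a Hamiltonian isotopy whose $\gamma$-norm is controlled by the spectral distance between the endpoints. An alternative that would bypass this lemma entirely is to refine the proof of the paper's Floer-equivalence theorem so that the approximating diffeomorphisms are Cauchy by construction --- for instance by producing them as a continuous path in $\widehat{\DHam}(T^*N)$ rather than as a bare approximating sequence.
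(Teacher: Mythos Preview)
Your proposal has a circularity problem and then a genuine gap.

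\textbf{Circularity.} The ``second main symplectic result'' you quote from the abstract \emph{is} Theorem~\ref{th:weak-NLC} (more precisely its Liouville-manifold generalization, Theorem~\ref{Theorem-A4}). The paper does not prove a separate, weaker Lagrangian-level approximation statement (``there exist $\phi_n$ with $\phi_n(0_N)\to L$ in $\hatL$'') and then upgrade it; it proves the $\widehat{\DHam}$ statement directly. So your first step assumes what you are asked to prove.

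\textbf{The lifting lemma.} Even if one produced the sequence $\phi_n$ by other means, your quantitative lifting lemma---two Hamiltonian-isotopic Lagrangians with small $\gamma$-distance are related by a Hamiltonian diffeomorphism with comparably small $\gamma$-norm---is not established anywhere in the paper and is not obviously true. Generating-function interpolation controls the Hofer norm in terms of $C^0$ data, not the $\gamma$-norm of the isotopy in terms of the Lagrangian $\gamma$-distance; bridging these is nontrivial and is exactly the difficulty you are trying to finesse. Your alternative (``refine the proof so the approximating diffeomorphisms are Cauchy by construction'') is in fact what the paper does, but via a mechanism you have not identified.

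\textbf{What the paper actually does.} The argument avoids the lifting problem entirely by a Banach fixed-point trick on $\widehat{\DHam}$ rather than on $\hatL$. For each exact Lagrangian $L_j$ one modifies the Liouville form near $L_j$ so that the associated Liouville flow $\rho_j^t$ fixes $L_j$ (Proposition~\ref{prop:exact-are-fixed-points}). A direct computation shows $\rho_0^t\rho_1^{-t}\in\DHam_c$, so $\rho_0$ and $\rho_1$ differ by a compactly supported Hamiltonian diffeomorphism. Now conjugation $u\mapsto \rho_0 u\rho_0^{-1}$ is an $a$-contraction on $\widehat{\DHam}$ (Proposition~\ref{prop:weak-conjugacy}); the map $u\mapsto \rho_0 u\rho_0^{-1}(\rho_0\rho_1^{-1})$ therefore has a unique fixed point $u\in\widehat{\DHam}$, which satisfies $u\rho_1 u^{-1}=\rho_0$. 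Uniqueness of the fixed point of $\rho_j$ in $\hatL$ then forces $u(L_1)=L_0$. The Cauchy sequence in $\DHam$ is produced automatically by the Picard iteration, with no need to compare $\gamma$ on Lagrangians with $\gamma$ on diffeomorphisms.
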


Heuristically this theorem tells us that to prove a statement involving the $\gamma$-distance for exact Lagrangians in $T^*N$, it is enough to prove it for Lagrangians Hamiltonianly isotopic to the zero section. In Section \ref{sec:weak-NLC} we prove that this holds in any Liouville manifold; (see Theorem \ref{Theorem-A4}).

\subsection*{Acknowledgments} 
We thank Maxime Zavidovique for many useful discussions and for his appendix. We are grateful to Tomohiro Asano for pointing out a missing assumption in Theorem \ref{th:cohomology} in a former version of the paper. We also thank the members of the ANR project CoSyDy for listening to preliminary versions of the results presented here and for related discussions. Finally, we are also very grateful to the anonymous referee for a careful reading and many useful comments to improve the paper.

\section{Preliminaries on \texorpdfstring{$\gamma$}{gamma} and its completion}\label{sec:preliminaries}

\subsection{Basic definitions and notation}

Any compactly supported smooth Hamiltonian $H:\mathbb{S}^1\times M\to \R$ generates a Hamiltonian isotopy $\phi_H=(\phi_H^t)_{t\in\R}$ obtained by integrating the time-dependent vector field $X_{H_t}$ which is defined by $\iota_{X_{H_t}}\omega=-dH_t$, where we use the notation $H_t(x)=H(t,x)$. The group of compactly supported Hamiltonian diffeomorphisms, i.e. the set of diffeomorphisms generated this way, will be denoted by $\DHam_c(M,\omega)$. 

A \emph{conformally symplectic} (CS) diffeomorphism is a diffeomorphism $\phi$ for which there is a constant $a\in\R$, called the \emph{conformal ratio} and such that
\[\phi^*\omega=a\,\omega.\]
We are specifically interested in the case where $a\neq 1$ and we will most of the time assume $a<1$. A conformally symplectic diffeomorphism is called \emph{exact} (CES) if $f^*\lambda-a\,\lambda$ is an exact $1$-form. It is called \emph{Hamiltonian} if it is the time-1 map of a time-dependent vector field $X_t$ such that $\iota_{X_t}\omega=\alpha_t\lambda-dH_t$, for some time dependent $\alpha_t\in\R$. For instance, the Liouville vector field $X$ is Hamiltonian (with $H_t=0$ and $\alpha_t=1$). We denote by $\phi_{H, \alpha}^t$ the isotopy generated by this vector field.

\begin{rem}\label{rem:CSCES}
  One can easily check that Hamiltonian CS diffeomorphisms are CES. As already mentioned in the introduction, it is proved in the appendix of Arnaud-Fejoz \cite{Arnaud-Fejoz} that under the assumptions specified there, any CS diffeomorphism which is homotopic to identity is CES for some Liouville form, and furthermore that given a Liouville form $\lambda$ any CS diffeomorphism which is homotopic to identity is smoothly conjugate to a CES diffeomorphism with respect to $\lambda$. 
In the sequel we shall only deal with CES maps, unless otherwise stated, and let the reader apply the result of \cite{Arnaud-Fejoz} to extend the results to the CS case.
\end{rem}

 A Lagrangian submanifold $L$ is called \emph{exact} if the restriction $\lambda|_L$ is exact. In this case, there exists a primitive function $f_L:L\to\R$ of $\lambda$ on $L$, i.e. $\lambda|_L=df_L$. If $L$ is connected, the primitive $f_L$ is unique up to addition of a constant. 

 A \emph{Lagrangian brane} of $L$ is a triple $(L,f_L,\tilde{G}_L)$ where $L$ is an exact Lagrangian submanifold, $f_L$ is a primitive for $\lambda_{\mid L}$ and $\tilde{G}_L$ is a grading of $L$, in the sense of \cite{Seidel-graded}. More precisely, $\tilde{G}_L:L\to\tilde{\Lambda}(TM)$ is a lift of the natural map $G_L:L\to\Lambda(TM)$ to the fiberwise universal cover of the Lagrangian Grassmannian of the tangent bundle $TM$. However, we will mostly forget about the grading and denote Lagrangian branes by $\tilde L=(L,f_L)$. 
 We will say that $\tilde L$ is a brane associated to $L$ or simply a \emph{lift} of $L$. 
 
 Branes may be shifted as follows: for $\tilde L=(L,f_L)$ and $c\in\R$, we have a \emph{shift} map  $T_c$  given by $T_c(L, f_L)= (L, f_L+c)$. Moreover the natural action of $\DHam_c(M,\omega)$ on exact Lagrangian lifts to an action on branes given by $\phi_H^1(L, f_L)=(\phi_H^1(L), H\sharp f_L)$, where
 \begin{equation}
 H\sharp f_L(\phi_H^1(x))=f_L(x)+\int_{\left\{\phi_H^t(x)\right\}_{t\in[0,1]}}\lambda+H\,d\!t.\label{eq:Ham-action-on-primitive}
 \end{equation}
 
Finally, let us see how CES diffeomorphisms act on branes. 
Let $\phi$ be a CES diffeomorphism of conformal ratio $a$. Choose a function $h$ such that $\phi^*\lambda-a\lambda=dh$ and denote by $\tilde \phi$ the pair $(\phi, h)$. Then any brane $\tilde L=(L,f_L)$ has an associated brane
   \begin{equation}\label{eq:CES-on-brane}
\tilde \phi(\tilde L)=\left(\phi(L)\,,\, (a\,f_L+h_{\mid L})\circ\phi^{-1}\right).
\end{equation}
We see that this depends on the choice of $h$. For instance, if $\phi=\phi_{H,\alpha}^t$ is the time-$t$ flow of a damped Hamiltonian, then a possible choice for $h$ is
 \begin{equation}
    \label{eq:conf-Ham-h}
    h(x)=\int_{-t}^0e^{\alpha s}(\iota_{ X_{H,\alpha}}\lambda + H)\circ \phi_{H,\alpha}^{t+s}(x)ds,
\end{equation}
which gives a lift $\Phi_{H,\alpha}^t$ whose action on a brane $\tilde L=(L,f_L)$ is given by $\Phi_{H,\alpha}^t(\tilde L)=(\phi_{h,\alpha}^t(L),F_L)$ where 
\begin{equation}\label{eq:conf-Ham-on-brane}
  F_L(x)=e^{-\alpha t}f_L(\phi^{-t}_{H, \alpha}(x))+\int_{-t}^0e^{\alpha s}\big (\lambda_{\phi^s_{H, \alpha}(x)}(X_{H, \alpha}(\phi_{H, \alpha}^s(x)))+H(\phi^s_{H, \alpha}(x))\big)ds.
\end{equation}

 \subsection{Lagrangian spectral invariants and the Lagrangian \texorpdfstring{$\gamma$}{gamma}-distance}\label{sec:Lagrangian-spectral-inv}

Since our manifold is convex at infinity, the Floer cohomology of any pair of closed connected 
exact Lagran\-gian submanifolds $L,L'$ is well defined (see \cite{Floer-Lag-intersection, McDuff-contact-boundaries, Viterbo-FunctorsI}) and will be denoted by $HF^*(L,L')$. The Lagrangians $L, L'$ are said to be \emph{Floer theoretic equivalent} and we write $L\sim L'$, if $HF^*(L,L')\simeq 
  H^*(L)\simeq H^*(L')$ where the isomorphisms are induced by multiplication, i.e. there exist $\alpha \in HF^*(L',L)$ and $\beta \in HF^*(L,L')$ such that  $\alpha \cup\cdot : HF^*(L,L') \longrightarrow HF^*(L',L')=H^*(L')$ and $\beta \cup \cdot : HF^*(L',L) \longrightarrow HF^*(L,L)=H^*(L)$ are isomorphisms. In particular $\alpha\cup \beta =1\in HF^*(L',L')=H^*(L')$ and $\beta \cup \alpha=1 \in HF^*(L,L)$ (see \cite[Def. 3.5]{Abouzaid-Kragh}). 

\begin{defn}   We let $I(M,\omega)$ denote the set of equivalence classes of connected exact Lagrangians for the Floer theoretic equivalence relation.
\end{defn}

The condition $L\sim L'$ ensures that $\gamma(L,L')$ is well-defined (see for example \cite{shelukhin-sc-viterbo}). If $\fL\in I(M,\omega)$ is an equivalence class, we denote by $\LL$ the corresponding set of branes. Note that any $\fL\in I(M,\omega)$ is stable under Hamiltonian isotopy but could in general be bigger than the Hamiltonian isotopy class. In the case of a cotangent bundle $(T^*N, \omega)$ it is known  \cite{Fukaya-Seidel-Smith, Kragh3} that there exists a unique equivalence class, which we denote by $\fL(T^*N)$. 

A conjecture usually attributed to Arnold (and first published in \cite{Lalonde-Sikorav}) states
 \begin{Conjecture}[Nearby Lagrangian conjecture]\label{conj:NLC}
 Let $L$ be an exact Lagrangian in $T^{*}N$, where $L,N$ are closed manifolds. Then there exists a Hamiltonian isotopy such that $L=\phi (0_{N})$. 
 \end{Conjecture}
  In dimension $4$, some partial results towards the conjecture are known (\cite{Eliashberg-Polterovich-local-2-knots, Hind, Kim-unknotedness, Dimitroglou-nearby-lagrangian}), but nothing is known in higher dimensions. 
  This conjecture implies that the projection of $L$ on $N$ is a homotopy equivalence and that we have an isomorphism $HF^{*}(L,0_{N})\simeq H^{*}(N)$. These have been proved independently (see \cite{Fukaya-Seidel-Smith, Kragh-Abouzaid}). As stated in the introduction (Theorem \ref{th:weak-NLC}), we establish a weak version of this conjecture in Section \ref{sec:weak-NLC}.
  
 Recall that if $L$ and $L'$ are transverse, the Floer cohomology is defined from a cochain complex whose underlying module is freely generated (over a ring $\mathbb{A}$) by the intersection points of $L$ and $L'$:
 \[CF(L,L'):=\bigoplus_{x\in L\cap L'}\mathbb{A}x.\]
 
 Let now $\tilde L=(L,f_L)$, $\tilde L'=(L',f_{L'})$ be branes associated to Lagrangians $L, L'$. Then, the Floer complex may be filtered by the action of intersection points. Namely, denoting the \emph{action} by
 \[\mathcal A_{\tilde L,\tilde L'}(x):=f_{L}(x)-f_{L'}(x),\]
 we have for any real number $a$ a subcomplex
 \[CF_{\geq a}(\tilde L,\tilde L'):=\bigoplus_{x\in L\cap L', \mathcal  A_{\tilde L,\tilde L'}(x)\geq a}\mathbb{A}x.\]
 The homology of the quotient \[CF_{<a}(\tilde L,\tilde L')=CF(\tilde L,\tilde L')\,\left/\,CF_{\geq a}(\tilde L,\tilde L')\right.,\] will be denoted $HF_{a}(\tilde L,\tilde L')$ and called \emph{filtered Floer cohomology} of $(\tilde L,\tilde L')$. The inclusion of complexes induces a map $i_a:HF(L,L')\to HF_a(\tilde L,\tilde L')$. 
\begin{rem}
If $L$ and $L'$ are not transverse, $HF(L,L')$ is defined as $HF(L,L'')$ for any sufficiently small Hamiltonian deformation $L''$ of $L'$ which is transverse to $L$. Then $HF(L,L'')$ has a limit as $L''$ converges  (in the $C^\infty$ topology) and remains transverse to $L'$, and this is denoted by $HF(L,L')$. It does not depend on the deformation (see e.g. \cite{Seidel}).
\end{rem}

 \begin{rem}(Generating functions)\label{rem:generating-functions} If $L\in\fL(T^*N, \omega)$ is Hamiltonian isotopic to the zero section, then it admits a generating function quadratic at infinity \cite{Laudenbach-Sikorav}, i.e. a function $S:N\times\R^d\to\R$ such that $S(x,\xi)$ coincides with a non-degenerate quadratic form $Q:\R^d\to\R$ outside a compact set and
   \[L=\left\{(x,p)\in T^*N: \exists \xi\in\R^d,\partial_xS(x,\xi)=p,\,\partial_\xi S(x,\xi)=0\right\}.\]
The choice of $S$ determines a choice of brane $\tilde L$ for which we have  canonical isomorphisms \cite{Viterbo-FCFH2, Milinkovic-equivalence, Milinkovic-Oh} 
\[HF_a(\tilde L,0_N)\simeq H^*(S^a,S^{-\infty})\]
for any $a\in\R$ and where $S^{a}=\{(x,\xi):S(x,\xi)<a\}$ and  $S^{-\infty}$ denotes $S^{<A}$ for $A<<0$.
 \end{rem}
   
For any transverse pair $(L, L')$ in the same class $\fL\in I(M,\omega)$ we may now define the spectral invariants of any corresponding branes $\tilde L$ and $\tilde L'$. For any non-zero cohomology class $\alpha\in H^*(L)$, we set
\[\ell(\alpha;\tilde L,\tilde L')=\inf\left\{a\in\R : i_a(\alpha)\neq 0\right\}.\]
Spectral invariants were first introduced and studied by Viterbo \cite{Viterbo-STAGGF} using generating functions, and then extended to more general situations using Floer theory by Oh \cite{Oh-action-functional-I}, Schwarz \cite{Schwarz}, Leclercq \cite{Leclercq}. In \cite{Leclercq}, even though he assumes $L'=\phi(L)$ it is clear that the construction extends to the general case when $L, L'$ are Floer theoretic equivalent. 
It turns out that the map $\ell(\alpha, \cdot, \cdot)$ extends to all pairs of Floer theoretic equivalent branes $(\tilde L,\tilde L')\in\LL\times \LL$ (not necessarily associated to transverse Lagrangians). Moreover, these invariants have the following properties:

 \begin{prop} Let $\tilde L,\tilde L'\in\LL$ and $\alpha\in H^*(L)\setminus\{0\}$. Then,
   \label{prop:lag-spec-inv-properties}
   \begin{enumerate}
   \item \textsc{(Spectrality)} There exists $x\in L\cap L'$, such that $\ell(\alpha;\tilde L,\tilde L')=\mathcal A_{\tilde L,\tilde L'}(x)$.
   \item \textsc{(Monotonicity and Hofer continuity\footnote{We recall that the Hofer norm of $\phi$ is the infimum of  $\int_0^1 \left[\max_{x}H_t(x)-\min_{x}H_t(x)\right]dt$ over all Hamiltonians $H$ generating $\phi$ at time one. Property (2) in Proposition \ref{prop:lag-spec-inv-properties} implies that the spectral norm $\gamma$ is continuous with respect to Hofer norm.} )} For any Hamiltonian $H\in C^\infty_c(\mathbb{S}^1\times M)$ the following inequalities hold:
     \[ \int_0^1\min_{x\in M} H_t(x) dt\leq \ell(\alpha;\phi_H^1(\tilde L),\tilde L')-\ell(\alpha;\tilde L,\tilde L')\leq \int_0^1\max_{x\in M} H_t(x) dt\]
   \item \textsc{(Shift)} For any constant $c\in\R$, we have:
     \[\ell(\alpha;\tilde L+c,\tilde L')=\ell(\alpha;\tilde L,\tilde L'-c)=\ell(\alpha;\tilde L,\tilde L')+c.\]
   \item \textsc{(Conformal invariance)} For any CES diffeomorphism $\phi$ of conformal ratio $a$, we have 
     \[\ell(\alpha;\tilde \phi(\tilde L),\tilde \phi(\tilde L'))=a\, \ell(\phi^*\alpha; \tilde L, \tilde L').\]
     In particular, this does not depend on the choice of lift $\tilde \phi$ of $\phi$.
   \item \textsc{(Triangle inequality)} For any third Lagrangian brane $\tilde L''\in\LL$, and any class $\beta$ such that $\alpha\cup\beta\neq 0$, we have
     \[\ell(\alpha\cup\beta;\tilde L,\tilde L'')\geq \ell(\alpha;\tilde L,\tilde L')+\ell(\beta;\tilde L',\tilde L'').\]
   \end{enumerate}
 \end{prop}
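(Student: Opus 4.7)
My plan is to handle the five properties by first reducing to the transverse case, where the filtered Floer complex is genuinely generated by the intersection points with action filtration $\mathcal{A}_{\tilde L,\tilde L'}$, and then extending to non-transverse pairs using the Hofer-continuity from property (2), which shows $\ell(\alpha;\cdot,\cdot)$ is $1$-Lipschitz with respect to the Hofer norm. Properties (1)--(3) are essentially formal once the filtered structure is in place; (4) and (5) require respectively the naturality under CES maps and the pair-of-pants product.

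I would start with \textsc{(Spectrality)}: in the transverse case, the filtered complex $CF_{<a}(\tilde L,\tilde L')$ only jumps when $a$ crosses a value of the finite action spectrum $\{\mathcal{A}_{\tilde L,\tilde L'}(x):x\in L\cap L'\}$, so $i_a(\alpha)$ is locally constant off this spectrum and the defining infimum is attained at an intersection point. \textsc{(Shift)} is immediate from the identity $\mathcal{A}_{\tilde L+c,\tilde L'}=\mathcal{A}_{\tilde L,\tilde L'}+c$. For \textsc{(Monotonicity/Hofer continuity)} I would use the standard energy estimate for Floer continuation maps: the continuation $CF(\tilde L,\tilde L')\to CF(\phi_H^1(\tilde L),\tilde L')$ shifts the filtration of a cycle by at most $\int_0^1\max H_t\,dt$, and the reverse continuation by at most $-\int_0^1\min H_t\,dt$; applying this to a cycle representing $\alpha$ gives both inequalities and simultaneously justifies the extension of $\ell$ to non-transverse pairs.

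For \textsc{(Conformal invariance)} I would exploit formula (\ref{eq:CES-on-brane}) together with the fact that $\phi^*\omega=a\omega$ rescales the symplectic area of any Floer strip by $a$. A direct computation using (\ref{eq:CES-on-brane}) shows
\[
\mathcal{A}_{\tilde\phi(\tilde L),\tilde\phi(\tilde L')}(\phi(x))=a\,\mathcal{A}_{\tilde L,\tilde L'}(x),
\]
with the $h_{|L}$ and $h_{|L'}$ contributions cancelling; this both yields the scaling by $a$ and shows independence of the chosen lift $\tilde\phi$, since changing $h$ by a constant shifts both primitives identically. The pullback $\phi^*$ on Floer cohomology identifies the filtrations up to the factor $a$, so evaluating on $\phi^*\alpha$ on the right matches $\alpha$ on the left. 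Finally, \textsc{(Triangle inequality)} follows from the filtered Lagrangian pair-of-pants product, which sends $HF_a(\tilde L,\tilde L')\otimes HF_b(\tilde L',\tilde L'')$ into $HF_{a+b}(\tilde L,\tilde L'')$ and recovers the cup product under the identifications, giving the inequality at once.

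The main obstacle will be conformal invariance: unlike Hamiltonian diffeomorphisms, a CES map is not the endpoint of a path of symplectomorphisms, so the invariance cannot be reduced to a continuation argument and must be verified directly from the rescaling of symplectic areas, keeping careful track of the grading and the $h$-terms in (\ref{eq:CES-on-brane}). A secondary subtlety, which underlies the whole statement, is that $\ell$ must be defined for Floer-theoretically equivalent pairs and not only for Hamiltonian-isotopic ones as in \cite{Leclercq}; one should check, using the mutually inverse classes $\alpha,\beta$ in the definition of $\sim$, that the construction of $\ell(\alpha;\tilde L,\tilde L')$ and all five properties pass to this broader setting unambiguously.
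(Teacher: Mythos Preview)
The paper does not actually prove this proposition: it is stated as a collection of well-known properties, with references to \cite{Viterbo-STAGGF,Oh-action-functional-I,Schwarz,Leclercq} for the construction and the remark that, although \cite{Leclercq} assumes $L'=\phi(L)$, ``it is clear that the construction extends'' to Floer-theoretically equivalent pairs. Your outline is therefore not competing with any argument in the paper; it is essentially a sketch of the standard proofs one would find in those references, and as such it is correct in spirit. The reductions you describe---spectrality from the finiteness of the spectrum, shift from the obvious action identity, Hofer continuity from continuation energy estimates, the triangle inequality from the filtered pair-of-pants product, and conformal invariance from the action rescaling $\mathcal{A}_{\tilde\phi(\tilde L),\tilde\phi(\tilde L')}(\phi(x))=a\,\mathcal{A}_{\tilde L,\tilde L'}(x)$---are exactly the right ingredients.

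One point worth sharpening: for (4) you correctly note that a CES map is not an endpoint of a symplectic isotopy and so continuation does not apply, but your proposed fix (``verified directly from the rescaling of symplectic areas'') needs a bit more. You must also check that the Floer differential is intertwined by $\phi$, i.e.\ that pushing forward an almost complex structure $J$ to $\phi_*J$ sends Floer strips for $(L,L')$ bijectively to Floer strips for $(\phi(L),\phi(L'))$ with the same index; the action rescaling alone only controls filtration levels, not the chain map. This is routine (precompose $u$ with $\phi^{-1}$), but should be said. Likewise, your closing remark about extending from Hamiltonian-isotopic to Floer-equivalent pairs is the genuinely nontrivial point the paper sweeps under the rug, and deserves more than a sentence if you intend to give a self-contained proof.
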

 
 We may now define the spectral distance as in \cite{Viterbo-STAGGF} and \cite{Viterbo-gammas}\footnote{More precisely, \cite{Viterbo-STAGGF} defines $\gamma$ via generating functions and \cite{Viterbo-gammas} defines $c$ via sheaves but these are equivalent to the Floer theoretic version we give  according to   \cite{Milinkovic-Oh} and \cite{Viterbo-Sheaves} respectively.}. 
 Given two equivalent Lagrangians $L, L'$, and two choices of respective branes $\tilde L$, $\tilde L'$, we set
 \begin{equation*}
   c(\tilde L,\tilde L')=\max\{0, \ell(\mu; \tilde L, \tilde L')\}- \min \{0,\ell(1; \tilde L,\tilde L')\}
 \end{equation*}
 where $\mu$ and $1$ respectively denote top-degree and $0$-degree generators of $HF^*(L)$. We also define
 \begin{equation*}
   \gamma(L,L')=\ell(\mu; \tilde L, \tilde L')-\ell(1; \tilde L,\tilde L').
 \end{equation*}
 Note that by the shift property in Proposition \ref{prop:lag-spec-inv-properties}, the real number $\gamma(L,L')$ does not depend on the choice of branes $\tilde L, \tilde L'$. Moreover, we have $\gamma(L, L')=\inf c(\tilde L, \tilde L')$ where the infimum runs over all branes $\tilde L$, $\tilde L'$ of $L$, $L'$ (\cite[Prop 5.2]{Viterbo-gammas})

The proposition below follows immediately from the fourth item of Proposition \ref{prop:lag-spec-inv-properties} and plays an important role in our story.
 
 \begin{prop}\label{prop:gamma} Let $\fL\in I(M, d\lambda)$. Then, $c$ is a distance function on $\LL$ and $\gamma$ is a distance function on $\fL$. Moreover, for any CES diffeomorphism $\phi$ of ratio $a$, any $L, L'\in\fL$ and any lifts $\tilde \phi, \tilde L, \tilde L'$ of these objects, we have
   \begin{equation*}
     c(\tilde \phi (\tilde L), \tilde \phi(\tilde L'))=a\, c(\tilde L,\tilde L')\quad \text{and}\quad \gamma(\phi(L), \phi(L'))=a\,\gamma(L, L').
   \end{equation*}
 \end{prop}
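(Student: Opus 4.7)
The proposition contains two kinds of statements: the fact that $c$ and $\gamma$ are genuine distance functions, and the conformal scaling behavior. As the authors indicate, the latter --- which is the new content here --- follows immediately from property (4) of Proposition \ref{prop:lag-spec-inv-properties}. Since $\phi\colon L\to\phi(L)$ is a diffeomorphism, the ring isomorphism $\phi^*\colon H^*(\phi(L))\to H^*(L)$ preserves the unit $1$ and the top-degree class $\mu$. Substituting $\alpha=\mu$ and $\alpha=1$ in (4) and taking the difference gives $\gamma(\phi(L),\phi(L'))=a\,\gamma(L,L')$. Because $a>0$ commutes with $\max(0,\cdot)$ and $\min(0,\cdot)$, the same substitutions, applied separately to the positive and negative parts entering the definition of $c$, yield $c(\tilde\phi(\tilde L),\tilde\phi(\tilde L'))=a\,c(\tilde L,\tilde L')$.

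The distance axioms are classical; I sketch the logic. Symmetry of $\gamma$ comes from Poincar\'e duality for Lagrangian Floer cohomology, which reads $\ell(\alpha;\tilde L,\tilde L')=-\ell(\mathrm{PD}(\alpha);\tilde L',\tilde L)$ and exchanges $1$ with $\mu$; substituting into the definition of $\gamma$ gives $\gamma(L,L')=\gamma(L',L)$. The triangle inequality follows from the sub-additivity (5) of Proposition \ref{prop:lag-spec-inv-properties} applied to the factorizations $\mu=\mu\cup 1=1\cup\mu$, inserting an intermediate brane $\tilde L'$ and combining with the duality identity. Non-negativity then follows from these two properties together with the normalization $\gamma(L,L)=0$, which itself is obtained by comparing $\ell(\mu;\tilde L,\tilde L)$ and $\ell(1;\tilde L,\tilde L)$ using an arbitrarily small Morse perturbation on $L$. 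The analogous properties for $c$ on $\LL$ are handled by the same manipulations combined with the shift property (3), which controls the behavior of the primitive.

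The only substantive point, and the main obstacle, is non-degeneracy: $\gamma(L,L')=0\Rightarrow L=L'$. I would argue by contradiction. If $L\neq L'$, pick a Darboux ball $B$ around a point of the symmetric difference small enough that $L\cap B$ can be displaced from $L'$ by a compactly supported Hamiltonian $H$ whose Hofer norm is controlled by the symplectic size of $B$. The Hofer continuity (2) combined with $\gamma(L,L)=0$ then forces $\gamma(L,L')$ to be bounded below by the displacement energy of $L\cap B$ from $L'$, which is positive; this argument is classical and goes back to \cite{Viterbo-STAGGF}. Non-degeneracy of $c$ on $\LL$ then reduces to that of $\gamma$ via (3): if $c(\tilde L,\tilde L')=0$ then $L=L'$, and the shift property forces the two primitives to coincide, so $\tilde L=\tilde L'$.
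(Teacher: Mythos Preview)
Your proposal is correct and follows exactly the route the paper indicates: the conformal scaling identities are obtained from item (4) of Proposition \ref{prop:lag-spec-inv-properties}, and the distance axioms for $c$ and $\gamma$ are treated as classical. In fact the paper gives no argument at all beyond the sentence ``follows immediately from the fourth item of Proposition \ref{prop:lag-spec-inv-properties}'', so your sketch of the distance axioms (duality, triangle inequality, non-degeneracy via displacement energy) is more than the paper provides.
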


 The distance $\gamma$ is called the \emph{spectral distance}. The metric spaces $(\fL,\gamma)$ and $(\LL,c)$ are not complete (and not even Polish, see \cite[Prop A.1]{Viterbo-gammas}).
 We denote their respective completions by $\hatL$ and $\hatLL$. Their study was initiated in \cite{Humiliere-completion} (in their Hamiltonian version in $\R^{2n}$) and pushed further in \cite{Viterbo-gammas}.

 \begin{rem}\label{rem:completions} The following operations extend to completions in a very natural way. All these operations obviously still satisfy properties (2)-(5) from Proposition \ref{prop:lag-spec-inv-properties}.
   \begin{enumerate}
   \item The natural projection $u:\LL\to\fL$, $(L, f_L)\mapsto L$  is $1$-Lipschitz hence extends to a map $u:\hatLL\to\hatL$. This extension is surjective  \cite[Prop 5.5]{Viterbo-gammas}. 
   \item The group $\DHam_c(M,\omega)$ acts by isometry on $\fL$ and $\LL$. Therefore, this extends to actions of $\DHam_c(M,\omega)$ on $\hatL$ and $\hatLL$. We will use the same notation for these actions as before taking completion; namely we will write $\phi_H^1(L)$ and $\phi_H^1(\tilde L)$.
\item  By Proposition \ref{prop:gamma}, any CES diffeomorphism of ratio $a$ acts as an $a$-Lipschitz map on $\fL$ hence extends to a self-map of $\hatL$.  
   \item The shift map $T_c$ also acts as an isometry on $\LL$, hence extends to a map $T_c:\hatLL\to\hatLL$.
   \item For any class $\alpha$, the spectral invariant $\ell(\alpha;\cdot, \cdot)$ is Lipschitz on $\LL\times\LL$ hence extends to $\hatLL\times\hatLL$.  
\end{enumerate}
\end{rem}
  \begin{example} 
    Let $f$ be a smooth function on a closed connected manifold $N$. If $L=\Gamma_f$ is the graph of $df$, $\mu_N, 1_N$ are the generators of $H^n(N), H^0(N)$, then \[c(\mu_N,\Gamma_{f})=\max_{x\in N}f(x),\quad c(1_N,\Gamma_{f})=\min_{x\in N}f(x),\] and the other $c(\alpha,\Gamma_{f})$ are critical values of $f$ obtained by minmax on the cohomology class $\alpha$, i.e. setting $f^c=\{x\in N \mid f(x)\leq c\}$, 
    $$c(\alpha,\Gamma_f)=\inf\{c : \alpha \neq 0\;\text{in}\; H^*(f^c)\}$$
  \end{example}

 \subsection{The \texorpdfstring{$\gamma$}{gamma}-support}\label{sec:gamma-support}

We fix a given class $\fL\in I(M,\omega)$ throughout this section. The elements of the completion $\hatL$ are very abstract objects. Indeed, by definition, they are certain equivalence classes of Cauchy sequences of Lagrangian submanifolds with respect to the spectral distance. The $\gamma$-support addresses this issue by associating to any element in $\hatL$ a closed subset of $M$.

\begin{defn}[$\gamma$-support, \cite{Viterbo-gammas}]\label{defgamma} Let $L\in \hatL$. The \emph{$\gamma$-support} of $L$ is the set of all $x\in M$ such that for any neighborhood $U$ of $x$, there exists $\phi\in\DHam_c(M,\omega)$ supported in $U$ which satisfies \[\phi(L)\neq L.\] 
\end{defn}

In other words, a point $x$ does not belong to $\gammasupp(L)$ if any Hamiltonian diffeomorphism supported in a sufficiently narrow neighborhood of $x$ leaves $L$ invariant. Note that this is analogous to the definition of support for 
distributions, where compactly supported Hamiltonian diffeomorphisms play the role of test functions.

We will also need the following 
\begin{defn}[\cite{Usher-rigidity,Viterbo-gammas}]
  A locally closed subset $V$ in a symplectic manifold  is \emph{$\gamma$-coisotropic} if for any $x\in V$ there exists a ball $B$ centered at $x$ such that for any smaller ball $B'\subsetneq B$ centered at $x$, there is $\delta>0$ such that for any $\phi\in\DHam_c(M,\omega)$ supported in $B$, 
    \[\phi(V)\cap B'=\emptyset\quad\implies\quad  \gamma(\phi)>\delta.\]
\end{defn}
From \cite{Viterbo-gammas}, we may assert that if $V$ is $\gamma$-coisotopic, then it has Hausdorff dimension at least $n$, and that   a {\bf closed} submanifold   is $\gamma$-coisotropic if and only if  it is coisotropic in the usual sense.  
The $\gamma$-support is well behaved in many respects. 
We refer to \cite{Viterbo-gammas} for the following proposition which lists some important properties of $\gamma$-supports.

\begin{prop}\label{prop:gamma-support}
We have
  \begin{enumerate}
\item \textsc{(Regular Lagrangians)} For any smooth Lagrangian  $L\in\fL$, we have $\gammasupp(L)=L$.
  \item \textsc{(Invariance)} For any CES diffeomorphism $\psi$ (e.g. if $\psi\in\DHam_c(M,\omega)$), and for any $L\in\hatL$, we have
    \begin{equation*}
      \gammasupp(\psi(L))=\psi(\gammasupp(L)).
    \end{equation*}
  \item \label{Prop-2.13-3}\textsc{ ($\gamma$-coisotropic)} For any $L\in\fL$,  $\gammasupp(L)$ is $\gamma$-coisotropic 
    
  \item \textsc{(Semi-continuity)} For any open subset $U$ and any sequence $L_k\in\hatL$, $k\geq0$, with  $\gammasupp L_k\subset U$ which $\gamma$-converges to some $L\in\hatL$ for all $k\geq0$, then $\gammasupp(L)\subset U$. In other words, we have
    \[\gammasupp(L)\subset\bigcap_{k_0\geq0}\overline{\bigcup_{k\geq k_0}\gammasupp(L_k)}.\]
  \end{enumerate}
\end{prop}

\begin{rem}
\begin{enumerate}
    \item  The $\gamma$-support does not determine $L\in \hatL$, unless the $\gamma$-support is an $n$-dimensional manifold (\cite{AGIV}).
    \item It is not known whether for a given $L\in\hatL$ and a neighborhood $U$ of $\gammasupp(L)$, we can always find a sequence $(L_k)_{k\geq 1}$ converging to $L$ and contained in $U$. 
\end{enumerate}
\end{rem}

\begin{rem}\label{rem:sympeo} Homeomorphisms which are $C^0$-limits of Hamiltonian diffeomorphisms are sometimes called \emph{Hamiltonian homeomorphism}. Since the $\gamma$-norm on $\DHam_c(M,\omega)$ is $C^0$ continuous\footnote{This follows from \cite{Viterbo-STAGGF} on $\mathbb R^{2n}$ and \cite{BHS-C0} for the general case.}, the natural action of Hamiltonian diffeomorphisms on $\hatL$ extends to an action of Hamiltonian homeomorphisms on $\hatL$. It then follows from Proposition \ref{prop:gamma-support} items (2),(4), that 
 \begin{equation*}
      \gammasupp(\psi(L))=\psi(\gammasupp(L)).
    \end{equation*}
  holds for any Hamiltonian homeomorphism $\psi$.
\end{rem}

The $\gamma$-support has many more properties. For instance, it is known that in a cotangent bundle $T^*N$, it has non-trivial intersection with all fibers as well as with every closed exact Lagrangian submanifolds.
We refer the reader to \cite{Viterbo-gammas} for more details on $\gamma$-supports.

We will also need the following lemma.

\begin{lem}[\cite{Viterbo-gammas}, Lemma 6.12]\label{lem:shift-of brane} Let $\tilde L\in \hatLL$ correspond to an element $L\in\hatL$, and let $H$ be a Hamiltonian for which there exists $C\in\R$ such that $H(t,x)=C$ for any $t\in\mathbb{S}^1$, $x\in \gammasupp(L)$. Then $\phi_H^1$ acts on $\tilde L$ as a shift: 
  \[\phi_H^1(\tilde L)=T_C\tilde L.\]
\end{lem}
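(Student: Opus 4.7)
The plan is to reduce to the case $C=0$, treat smooth Lagrangians by a direct tangency computation, and extend to the completion by $C^0$-approximating the Hamiltonian and invoking the $\gamma$-support machinery.

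First I would set $K=H-C$. Since $K$ and $H$ differ by the constant $C$, we have $X_{K_t}=X_{H_t}$, hence $\phi_K^t=\phi_H^t$; comparing the integrands in formula \eqref{eq:Ham-action-on-primitive} then gives $H\sharp f_L=K\sharp f_L+C$, so $\phi_H^1(\tilde L)=T_C\,\phi_K^1(\tilde L)$. It is therefore enough to prove $\phi_K^1(\tilde L)=\tilde L$ under the assumption that $K$ vanishes on $\gammasupp(L)$.

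Second, I would treat the case where $L\in\L$ is an honest smooth Lagrangian, so that Proposition \ref{prop:gamma-support}(1) yields $\gammasupp(L)=L$. From $K_t|_L=0$ for every $t$ we get $dK_t(v)=0$ for $v\in T_xL$, hence $\omega(X_{K_t},v)=-dK_t(v)=0$; since $L$ is Lagrangian, this forces $X_{K_t}(x)\in(T_xL)^\omega=T_xL$, so the flow $\phi_K^t$ preserves $L$. Along a trajectory in $L$ one has $K\equiv 0$ and $\lambda|_L=df_L$, so
\[
\int_0^1\bigl(\lambda(X_{K_t})+K\bigr)\circ\phi_K^t(x)\,dt=\int_0^1\frac{d}{dt}f_L(\phi_K^t(x))\,dt=f_L(\phi_K^1(x))-f_L(x),
\]
which, inserted in \eqref{eq:Ham-action-on-primitive}, yields $K\sharp f_L=f_L$ and thus $\phi_K^1(\tilde L)=\tilde L$.

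For general $\tilde L\in\hatLL$, I would approximate $K$ in $C^0$ norm by Hamiltonians $K_\varepsilon$ vanishing on an open neighborhood $U_\varepsilon$ of $\gammasupp(L)$: multiply $K$ by a smooth cutoff equal to $0$ near $\gammasupp(L)$ and $1$ away from it, using that $K$ is continuous with $K|_{\gammasupp(L)}=0$ to guarantee $\|K_\varepsilon-K\|_{C^0}\to 0$. Hofer continuity (Proposition \ref{prop:lag-spec-inv-properties}(2)) yields $\phi_{K_\varepsilon}^1\to\phi_K^1$ in the $\gamma$-distance, and since Hamiltonian diffeomorphisms act by $\gamma$-isometries on $\hatLL$, $\phi_{K_\varepsilon}^1(\tilde L)\to\phi_K^1(\tilde L)$ in $\hatLL$. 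It therefore suffices to show $\phi_{K_\varepsilon}^1(\tilde L)=\tilde L$ for each $\varepsilon$. Since $\supp(K_\varepsilon)\cap\gammasupp(L)=\emptyset$, a fragmentation of $\phi_{K_\varepsilon}^1$ into factors supported in small balls of $M\setminus\gammasupp(L)$, combined with Definition \ref{defgamma} upgraded to the brane level, gives the conclusion.

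The main technical obstacle is precisely this upgrade: Definition \ref{defgamma} only ensures that a small-ball supported diffeomorphism disjoint from $\gammasupp(L)$ fixes $L$ in $\hatL$, whereas we need invariance of $\tilde L$ in $\hatLL$, i.e. vanishing of the primitive shift. To kill this shift one has to approximate $\tilde L$ by smooth branes whose underlying Lagrangians can be arranged disjoint from the support of the given fragment, so that the smooth computation of the second step produces shift zero, and then pass to the limit. This is where the full $\gamma$-support technology from \cite{Viterbo-gammas} enters and is, in my view, the only delicate point of the proof.
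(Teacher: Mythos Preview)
The paper does not give its own proof of this lemma; it is quoted from \cite{Viterbo-gammas} (Lemma 6.12), and the only comment offered is the sentence immediately following it, noting that for a genuine smooth brane the statement follows directly from formula \eqref{eq:Ham-action-on-primitive}. Your reduction to $C=0$ and your second step are precisely that remark, written out in full, and are correct.

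For the passage to the completion your outline is a natural line of attack, and you have correctly located the only real obstacle: the definition of $\gamma$-support together with fragmentation gives $\phi_{K_\varepsilon}^1(L)=L$ in $\hatL$, not $\phi_{K_\varepsilon}^1(\tilde L)=\tilde L$ in $\hatLL$, so one must separately kill the possible shift. Your proposed fix---approximate $\tilde L$ by smooth branes whose underlying Lagrangians avoid the support of a given fragment---is not obviously available: a $c$-Cauchy sequence $\tilde L_k\to\tilde L$ gives no a priori control on where $L_k$ sits relative to $\supp(K_\varepsilon)$, and the semi-continuity in Proposition \ref{prop:gamma-support}(4) goes in the wrong direction for this purpose. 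So the argument, as you yourself say, has a genuine gap at exactly the point you flag; closing it requires the brane-level input established in \cite{Viterbo-gammas}, which is why the present paper simply cites that reference rather than reproving it.
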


Note that in the case $\tilde L\in\LL$ is a genuine smooth brane, this would follow immediately from (\ref{eq:Ham-action-on-primitive}).

\section{The generalized Birkhoff attractor}\label{Section-Birkhoff-attractor}

\subsection{Proof of Theorem \ref{th:main} and first properties}

In this section we assume that $\phi$ is a conformally exact symplectic diffeomorphism with $a\neq 1$, on some Liouville manifold $(M,\omega=-d\lambda)$.

Our first task will be to prove a generalization of Theorem \ref{th:main} to such manifolds. In fact we will prove a refined version which applies to Lagrangian branes and obviously implies Theorem \ref{th:main}. We will then end the section with some extra properties and remarks.

Recall that $\phi$ not only acts on $\hatL(M,\omega)$, but also on the completion of Lagrangian branes $\hatLL(M,\omega)$ with respect to the metric $c$, by Remark \ref{rem:completions}.

\begin{thm}\label{th:main-for-branes} 
Assume that $(M,\omega)$ admits a non-empty class of closed exact Lagrangian submanifolds $\fL\in I(M,\omega)$ which is preserved by $\phi$ and denote by $\LL$ the corresponding class of branes. 
Then for any lift $\tilde\phi$ of $\phi$, there is a unique element $\tilde L_\infty$ in the $c$-completion $\hatLL$
such that $\tilde\phi\left(\tilde L_\infty\right)=\tilde L_\infty$. As a consequence, denoting by $L_\infty$ the element of $\hatL$ corresponding to $\tilde L_\infty$,
the subset \[B(\phi)=\gammasupp(L_\infty)\] is an invariant, closed and $\gamma$-coisotropic subset of $\phi$.
\end{thm}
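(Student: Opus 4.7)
My plan is to set up a strict contraction on the complete metric space $(\hatLL, c)$ and invoke the Banach fixed point theorem. First, I would note that $\hatLL$ is non-empty: since $\L$ is non-empty by hypothesis, pick any $L \in \L$; the exactness of $\lambda|_L$ provides a primitive $f_L$, so that $(L, f_L) \in \LL \subset \hatLL$. By construction, $(\hatLL, c)$ is complete.

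Next, by the conformal invariance of $c$ in Proposition \ref{prop:gamma}, the action of $\tilde\phi$ on $\LL$ satisfies $c(\tilde\phi(\tilde L), \tilde\phi(\tilde L')) = a \cdot c(\tilde L, \tilde L')$; since $\phi$ preserves $\L$, formula \eqref{eq:CES-on-brane} shows this action is a self-map of $\LL$, which by Remark \ref{rem:completions} extends to an $a$-Lipschitz self-map of $\hatLL$. In the case $a < 1$, this is a strict contraction and the Banach fixed point theorem yields a unique fixed point $\tilde L_\infty \in \hatLL$. In the case $a > 1$, I would apply the same argument to $\tilde\phi^{-1}$, which has conformal ratio $1/a < 1$; its unique fixed point is automatically the unique fixed point of $\tilde\phi$.

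Finally, letting $L_\infty := u(\tilde L_\infty) \in \hatL$ where $u : \hatLL \to \hatL$ is the natural projection from Remark \ref{rem:completions}(1), I obtain $\phi(L_\infty) = L_\infty$, and I set $B(\phi) := \gammasupp(L_\infty)$. This set is closed by the very definition of the $\gamma$-support, is $\phi$-invariant by the equivariance statement in Proposition \ref{prop:gamma-support}(2), and is $\gamma$-coisotropic by Proposition \ref{prop:gamma-support}(3). Since the projection $u$ forgets the action functional and therefore identifies any two lifts of $\tilde\phi$ differing by a shift $T_c$, the set $B(\phi)$ is independent of the chosen lift.

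The proof is essentially a one-line application of the Banach fixed point theorem and I do not anticipate any serious obstacle; the real content has been packaged upstream, namely the construction of the completion $\hatLL$, the strict contraction property of CES diffeomorphisms on $c$ (Proposition \ref{prop:gamma}), and the good behavior of the $\gamma$-support under such maps (Proposition \ref{prop:gamma-support}).
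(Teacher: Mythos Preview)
Your proof is correct and follows essentially the same approach as the paper: reduce to the case $a<1$ (by passing to $\phi^{-1}$ if needed), apply the Banach/Picard fixed point theorem to the $a$-contraction $\tilde\phi$ on the complete metric space $(\hatLL,c)$, and then read off invariance and $\gamma$-coisotropy of $B(\phi)$ from Proposition~\ref{prop:gamma-support}. Your write-up is in fact slightly more detailed than the paper's (you explicitly check non-emptiness, closedness, and independence of the lift), but the argument is the same.
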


This result obviously implies Theorem \ref{th:main}.

\begin{proof}[Proof of Theorem \ref{th:main-for-branes}] 
Changing $\phi$ to $\phi^{-1}$ we may assume $a<1$. Then $\tilde\phi$ acts on $\LL(M, \omega)$ as a contraction, by Proposition \ref{prop:gamma}. 
By Picard's fixed point theorem, $\tilde\phi$ has a unique fixed point $\tilde L_\infty \in
\hatLL(M, \omega)$. By Proposition \ref{prop:gamma-support}, item (2), we must then have
$\phi(\gammasupp(L_\infty))= \gammasupp(L_\infty)$. Finally $B(\phi)$ is $\gamma$-coisotropic by Proposition \ref{prop:gamma-support}, (\ref{Prop-2.13-3}). 
\end{proof}

We also have
\begin{prop} \label{prop:lagrangian-in-B}
Assume that $B(\phi)$ contains a closed exact Lagrangian
submanifold $\Lambda$. Then the union of the images  $\phi^k(\Lambda)$
(for $k\geq k_0$) is dense in $B(\phi )$.
\end{prop}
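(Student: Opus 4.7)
The plan is to exploit the fact that $\tilde\phi$ acts as a strict contraction on the completion of branes, so that the forward iterates $\tilde\phi^k(\tilde\Lambda)$ converge to the unique fixed point $\tilde L_\infty$; combined with semi-continuity of the $\gamma$-support this forces $B(\phi)$ to lie in the closure of the iterates, while the reverse inclusion is automatic from invariance of $B(\phi)$.

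First I would pick a brane lift $\tilde\Lambda\in\widehat{\mathcal L\mathcal L}$ of $\Lambda$ in the same Floer-theoretic equivalence class as $\tilde L_\infty$ (this requires $\Lambda$ to belong to the class $\mathcal L$ on which $L_\infty$ is defined, which is the natural reading of the proposition). As in the proof of Theorem \ref{th:main}, we may assume $a<1$, so Proposition \ref{prop:gamma} gives
\[
c\bigl(\tilde\phi^k(\tilde\Lambda),\,\tilde L_\infty\bigr)
=c\bigl(\tilde\phi^k(\tilde\Lambda),\,\tilde\phi^k(\tilde L_\infty)\bigr)
=a^k\,c(\tilde\Lambda,\tilde L_\infty)\xrightarrow[k\to\infty]{}0,
\]
and since the forgetful projection $\widehat{\mathcal L\mathcal L}\to\widehat{\mathcal L}$ is $1$-Lipschitz (Remark \ref{rem:completions}(1)) this yields $\gamma(\phi^k(\Lambda),L_\infty)\to 0$.

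Next I would apply the semi-continuity of the $\gamma$-support (Proposition \ref{prop:gamma-support}(4)) to the sequence $\phi^k(\Lambda)$, using that each $\phi^k(\Lambda)$ is a genuine smooth Lagrangian so that $\gammasupp(\phi^k(\Lambda))=\phi^k(\Lambda)$ by item (1) of the same proposition. This produces the inclusion
\[
B(\phi)=\gammasupp(L_\infty)\subset\bigcap_{k_0\geq 0}\overline{\bigcup_{k\geq k_0}\phi^k(\Lambda)}.
\]
For the opposite inclusion, the assumption $\Lambda\subset B(\phi)$ together with $\phi(B(\phi))=B(\phi)$ (Theorem \ref{th:main-for-branes}) gives $\phi^k(\Lambda)\subset B(\phi)$ for every $k$, and since $B(\phi)$ is closed, $\overline{\bigcup_{k\geq k_0}\phi^k(\Lambda)}\subset B(\phi)$. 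Combining both inclusions proves equality for every $k_0$, which is exactly the claimed density.

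The only delicate point is the first step: one must make sure that $\tilde\Lambda$ and $\tilde L_\infty$ really lie in the same component of $\widehat{\mathcal L\mathcal L}$ so that $c(\tilde\Lambda,\tilde L_\infty)$ is finite and the contraction argument applies. In the cotangent bundle setting of Theorem \ref{th:main} this is automatic since there is a unique Floer class (\cite{Fukaya-Seidel-Smith,Kragh3}); in the general setting of Theorem \ref{th:main-for-branes} it is built into the hypothesis that the class $\mathcal L$ is preserved by $\phi$, so iterating $\tilde\phi$ does not leave $\widehat{\mathcal L\mathcal L}$ and the $c$-distance to $\tilde L_\infty$ stays finite throughout.
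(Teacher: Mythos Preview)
Your argument is correct and follows essentially the same route as the paper: use the contraction property to get $\phi^k(\Lambda)\to L_\infty$, then semi-continuity of the $\gamma$-support (Proposition~\ref{prop:gamma-support}(4)) together with $\gammasupp(\phi^k(\Lambda))=\phi^k(\Lambda)$ for the inclusion $B(\phi)\subset\overline{\bigcup_{k\geq k_0}\phi^k(\Lambda)}$, and invariance of $B(\phi)$ for the reverse inclusion. The only difference is cosmetic: you pass through branes and the metric $c$, whereas the paper works directly with $\gamma$ on $\widehat{\mathcal L}$ (Proposition~\ref{prop:gamma} already gives $\gamma(\phi(L),\phi(L'))=a\,\gamma(L,L')$, so the contraction argument runs without lifting to $\widehat{\mathcal L\mathcal L}$).
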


  \begin{rem}
    It is interesting to compare the above result with Question 3 studied in Appendix \ref{appendix}.
  \end{rem}

  \begin{proof} In this situation, the  fixed point $L_\infty(\phi )$ must be the $\gamma$-limit of $\phi^k(\Lambda)$. But then, by the fourth item of Proposition \ref{prop:gamma-support}, $$B(\phi)=\gammasupp(L_\infty)\subset
     \bigcap_{k_0}\overline{\bigcup_{k\geq k_0} \gammasupp(\phi^k(\Lambda))}= \bigcap_{k_0}\overline{\bigcup_{k\geq k_0}\phi^k(\Lambda)}.$$
Since on the other hand $\bigcup_{k\geq k_0}\phi^k(\Lambda)\subset B(\phi)$ for any $k_0$,     this implies that the union of  images  $\phi^k(\Lambda)$ (for $k\geq k_0$) is dense in $B(\phi )$.
  \end{proof}

  We can say more if the following conjecture holds

  \begin{Conjecture}[Viterbo \cite{SHT}]\label{conj-bounded-Lagrangian}
    For any $r>0$, the metric $\gamma$ is bounded on the space of exact Lagrangians which are included in a disc bundle $\{(q,p)\in\T^*N:\|p\|\leq r\}$.
  \end{Conjecture}

  This conjecture is known to hold on a large class of manifolds, for instance $\mathbb S^n$, $\mathbb T^n$, compact Lie groups, compact homogenous spaces and others \cite{Shelukhin-Zoll,shelukhin-sc-viterbo,Guillermou-Vichery, Viterbo-inverse-reduction}.
  
  \begin{cor} Assume Conjecture \ref{conj-bounded-Lagrangian} holds for $N$. If $B(\phi)$ contains a closed exact Lagrangian $\Lambda$, then $B(\phi)=\Lambda$.   
  \end{cor}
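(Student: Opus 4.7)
The plan is to establish that $\Lambda$, viewed as an element of $\hatL(T^*N)$, already equals the Picard fixed point $L_\infty$ produced in the proof of Theorem~\ref{th:main}. Once this identification is made, Proposition~\ref{prop:gamma-support}(1) gives
$$B(\phi)=\gammasupp(L_\infty)=\gammasupp(\Lambda)=\Lambda,$$
which is the desired conclusion.

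The core idea is to apply Conjecture~\ref{conj-bounded-Lagrangian} to the \emph{backward} iterates of $\Lambda$. Since $B(\phi)$ is $\phi$-invariant and contains $\Lambda$, the family $\{\phi^{-k}(\Lambda)\}_{k\ge 0}$ consists of closed exact Lagrangians all lying in $B(\phi)$ (note that the CES property of $\phi$ ensures the $\phi^{-k}(\Lambda)$ are genuine exact Lagrangian submanifolds in the Floer class $\L(T^*N)$). Granting that $B(\phi)$ is contained in a disc bundle $\{\|p\|\le r\}$, the conjecture yields a uniform constant $C$ with $\gamma(\phi^{-k}(\Lambda),\Lambda)\le C$ for every $k\ge 0$. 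Combining this bound with the conformality formula $\gamma(\phi(L),\phi(L'))=a\,\gamma(L,L')$ from Proposition~\ref{prop:gamma} gives
$$\gamma(\Lambda,\phi^k(\Lambda))=a^k\,\gamma(\phi^{-k}(\Lambda),\Lambda)\le a^k C,$$
which tends to $0$ since $a<1$. Therefore $\phi^k(\Lambda)\to \Lambda$ in the $\gamma$-metric.

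On the other hand, the Picard fixed point argument in the proof of Theorem~\ref{th:main} shows that the same sequence $\phi^k(\Lambda)$ converges in $\gamma$ to $L_\infty$. Uniqueness of $\gamma$-limits in $\hatL$ then forces $\Lambda=L_\infty$, which completes the proof.

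The main obstacle is verifying that $B(\phi)$ is bounded in a disc bundle, so that Conjecture~\ref{conj-bounded-Lagrangian} applies to the entire backward orbit $\{\phi^{-k}(\Lambda)\}_{k\geq0}$. In an attracting situation analogous to Theorem~\ref{th:annulus-case}, where $\phi$ sends a fixed disc bundle strictly into itself, this is automatic. In the general case, one should extract the confinement from the dissipative nature of $\phi$, for example by combining Proposition~\ref{prop:lagrangian-in-B} with the semi-continuity in Proposition~\ref{prop:gamma-support}(4) applied to a forward orbit that is controlled by the contraction of $\phi$ and the compactness of $\Lambda$.
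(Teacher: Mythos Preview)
Your argument is correct and follows essentially the same route as the paper: both proofs hinge on the backward iterates $\phi^{-k}(\Lambda)\subset B(\phi)$, the uniform $\gamma$-bound supplied by Conjecture~\ref{conj-bounded-Lagrangian}, and the conformality relation $\gamma(\phi(L),\phi(L'))=a\,\gamma(L,L')$. The only cosmetic difference is that you argue constructively (showing $\phi^k(\Lambda)\to\Lambda$ and hence $\Lambda=L_\infty$), whereas the paper argues by contradiction, observing that $\gamma(\phi^{-k}(\Lambda),L_\infty)=a^{-k}\gamma(\Lambda,L_\infty)$ would diverge unless $\gamma(\Lambda,L_\infty)=0$. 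Your version has the minor advantage of applying the conjecture only to pairs of genuine smooth Lagrangians $(\phi^{-k}(\Lambda),\Lambda)$, while the paper's formulation tacitly passes through $L_\infty\in\hatL$; and you are right to flag that boundedness of $B(\phi)$ is an implicit hypothesis in both arguments---the paper simply asserts ``$\phi^{-k}(\Lambda)$ remains bounded'' without further comment.
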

\begin{proof} 
Indeed, $\gamma(\phi^{-k}(\Lambda), L_\infty(\phi))=\gamma(\phi^{-k}(\Lambda), \phi^{-k}( L_\infty(\phi)) =a^{-k} \gamma(\Lambda, L_\infty(\phi)) $ goes to $+\infty$. But since $\Lambda \subset B(\phi)$, $\phi^{-k}(\Lambda)$ remains bounded, this contradicts the conjecture. 
\end{proof} 

  \begin{rem}
   The set
$B(\phi )$ is the minimal  invariant set of the form
$\gammasupp(\Lambda)$, $\Lambda\in \hatL(M,\omega)$. Indeed, if $\gammasupp(\Lambda) \subset B$
then $\gammasupp(\phi(\Lambda))\subset B$, since the sequence $\phi^k(\Lambda)$
$\gamma$-converges to $L_\infty$ and each element has support in $B$, we deduce that the sequence 
$ \phi^k (\gammasupp(\Lambda))$ must be dense in $B(\phi )$. 
\begin{Question}
Are there examples where we can find an invariant set strictly containing $B(\phi)$ and of the form $\gammasupp(\Lambda)$ ? 
\end{Question}
  \end{rem}

\subsection{Connection with the Birkhoff attractor for the annulus}\label{sec:connection-Birkhoff}
We refer to \cite{LeCalvez-Birkhoff1} and \cite{LeCalvez-Birkhoff2} for details on the classical Birkhoff attractor, whose construction we now recall.

Let us consider  the annulus $\mathbb A= {\mathbb S}^{1}\times [-1,1]$ supposed to be contained in the sphere $\mathbb S^{2}$ as the thickening of the equator. Let $\phi$ be  dissipative map of $\mathbb A$, i.e. there exists $\alpha<1$ such that for all measurable sets $U$, we have $\mu (\phi(U))\leq \alpha \mu(U)$. We assume that $\phi(\mathbb A)\subset {\mathbb S}^{1}\times (-1,1)$. 
Then the set $C_{0}=\bigcap_{n\geq 1}\phi^{n}(\mathbb A)$ is an invariant set, and has measure zero. As a decreasing sequence of compact connected sets, it is compact connected. Furthermore, $C_0$ is the largest compact invariant subset of $\mathbb A$.
If we set $U_{n}\cup V_{n}= \mathbb A \setminus \phi^{n}(\mathbb A)$, where $U_{n}$ is the connected component containing $\mathbb S^{1} \times \{1\}$ and $V_{n}$ the connected component containing $\mathbb S^{1} \times \{-1\}$, we have $U_{0}^+=\bigcup_{n}U_{n}, U^{-}_{0}= \bigcup_{n}V_{n}$ satisfy $U_{0}^{+}\cup U^{-}_{0}=\mathbb A\setminus C_{0}$. 

But we can find a smaller invariant set by ``cutting out the hair'' from $C_{0}$. 
In other words $C_{0}$ is a compact connected subset separating $\mathbb S^{2}$ in two simply connected sets, $U^{+}_{0},U^{-}_{0}$ such that $\mathbb S^{2} \setminus C_{0}=U^{+}_{0}\cup U^{-}_{0}$. Then if $\Fr(U^{+}_{0})=$ denotes the frontier of $U^{+}_{0}$, $C_{1}=\Fr(U^{+}_{0})\cap \Fr(U^{-}_{0})$, then $C_{1}$ is contained in $C_{0}$ and is an invariant set. It is obtained by removing the points of $C_{0}$ which are not adherent to both $U^{+}_{0}$ and $U^{-}_{0}$ (see Figure \ref{fig-8n} and  \ref{fig-9n}).  We shall denote by $U_{1}^{+}, U_{1}^{-}$ the connected components of $\mathbb A \setminus C_{1}$. 
We then have $C_{1}=\overline U_{1}^{+} \cap \overline U_{1}^{-}=Fr(U_{1}^{+})=Fr (U_{1}^{-})$. The subset $C_1$ is called the \emph{Birkhoff attractor} of $\phi$.

Since the subset $B(\phi)$ from Theorem \ref{th:main-for-branes} is a compact invariant subset, we have $B(\phi)\subset C_0$. However, $B(\phi)$ cannot be equal to $C_{0}$, because $C_{0}$ can be non $\gamma$-coisotropic at certain points  e.g. at the end of the hair ( see Figure \ref{fig-8n}) for the same reason $[0,1] \subset {\mathbb R}^{2}$ is not $\gamma$-coisotropic at $0$ or $1$.

\setlength{\fboxrule}{0.5pt}
 \begin{figure}[H]
 \begin{overpic}[width=9cm]{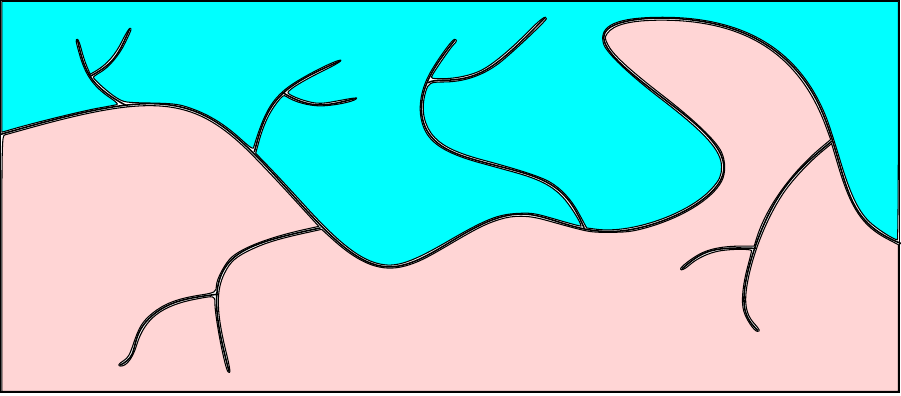}
  \put(40,35){$U^+_{0}$}
  \put(40,5){$U^{-}_{0}$}
   \put (83,6){$\bullet$}
      \put (50,39){$\bullet$}
   \put (75,13){$\bullet$}
 \end{overpic}
\caption{ The invariant set $C_{0}$ : it is not $\gamma$-coisotropic for example at the points marked ``$\bullet$''. The blue set is $U_{0}^+$, the pink set is $U^{-}_{0}$. }
\label{fig-8n}
\end{figure}

 \begin{figure}[ht]
 \begin{overpic}[width=9.5cm]{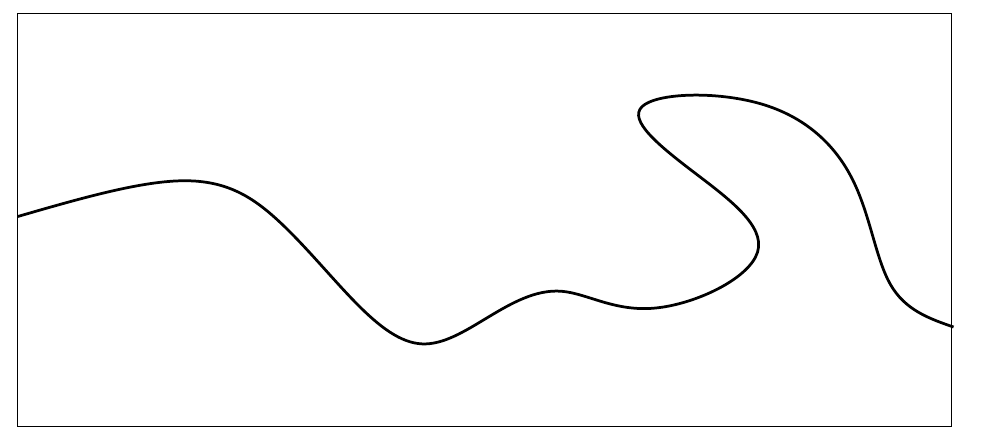}
 \end{overpic}
\caption{ The invariant set $C_{1}$   }
\label{fig-9n}
\end{figure}

We are now ready to prove Theorem \ref{th:annulus-case}, namely the fact that our invariant subset $B(\phi)$ coincides with the Birkhoff attractor $C_{1}(\phi )$.

\begin{proof}[Proof of Theorem \ref{th:annulus-case}] As a first step, we prove the inclusion
  \begin{equation}
 B(\phi) \subset C_{1}(\phi ).\label{eq:inclusion-birkhoff}
\end{equation}

  To prove this inclusion, we need to consider for $a\in[-1,1]$, the set $\mathfrak{L}_a(\mathbb A)$ of simple curves homologous to $\mathbb S^1 \times \{0\}$,  with Liouville class $a \in H^{1}(\mathbb S^{1}, {\mathbb R}) \simeq {\mathbb R} $. We have $\mathfrak L_{a}( \mathbb A) =\tau_a\mathfrak L( \mathbb A)$, where $\tau_a$ denotes the translation $(x,p)\mapsto (x,p+a)$. Note that $\phi$ sends $\mathfrak L_{a}( \mathbb A)$ to $\mathfrak L_{\alpha a}( \mathbb A)$.

  We will show that for any $\Lambda$ in $\mathfrak L_{a}( \mathbb A)$, the sequence of exact curves $\Lambda_k=\tau_{-\alpha^ka}\phi^k(\Lambda)$ converges with respect to $\gamma$ to the fixed point $L_\infty(\phi)$. By taking $\Lambda=\mathbb S^1\times \{a\}$ for $a$ close enough to $1$, we have $\Lambda\subset U_1$, and by Proposition 6.17 of \cite{Viterbo-gammas}
  $$\gammasupp (L_{\infty}(\phi )) \subset \liminf_k\Lambda_k= \liminf_k\phi^k(\Lambda)\subset \liminf_k{U_k}=\overline{U_1^+}.$$
  Similarly  $\gammasupp (L_{\infty}(\phi ))\subset\overline{U_1^-}$ and (\ref{eq:inclusion-birkhoff}) follows since $B(\phi)=\gammasupp (L_{\infty}(\phi ))$.

  Let us now prove our claim, i.e. that $\Lambda_k$ $\gamma$-converges to $L_\infty$. We will first prove that $\Lambda_k$ is a Cauchy sequence. For all $k\geq 1$, we set $f_k=\tau_{-\alpha^ka} \phi \tau_{\alpha^{k-1}a}$, so that $\Lambda_{k}=f_k(\Lambda_{k-1})$. We have that $f_{k}$ converges to $\phi$ for $\gamma$, and the $f_{k}$ (and $\phi)$ are $\alpha$-contractions on $\widehat{\mathcal L} ( \mathbb A)$. 
  
  We now have the following fixed point theorem for which we have not found any reference.
  
  \begin{prop} 
  Let $(T_{k})_{k\geq 1}$ be a sequence of maps from a complete metric space $(X,d)$ to itself. Assume that 
   \begin{enumerate} 
  \item  there exists $\alpha\in[0,1)$ such that, for all $k$ the map $T_{k}$ is $\alpha$-Lipschitz. 
  \item $(T_{k})_{k\geq 1}$ converges uniformly to a map $T_{\infty}$.
  \end{enumerate} 
  Then for any $x\in X$, the sequence $T_{k}\circ T_{k-1}\circ .... \circ T_{1}(x)$ converges to $x_{\infty}$, the unique fixed point of $T_{\infty}$.
  \end{prop}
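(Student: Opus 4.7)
The plan is to combine Banach's contraction principle applied to $T_\infty$ with a direct recursive estimate on the orbit $y_k := T_k \circ T_{k-1} \circ \cdots \circ T_1(x)$. First, I would observe that pointwise (hence uniform) limits of $\alpha$-Lipschitz maps are $\alpha$-Lipschitz, so $T_\infty$ itself is an $\alpha$-contraction on the complete metric space $(X,d)$. By the classical Banach fixed point theorem, $T_\infty$ admits a unique fixed point $x_\infty$, and the goal reduces to proving $y_k \to x_\infty$.

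Next, setting $a_k := d(y_k, x_\infty)$ and $\epsilon_k := d(T_k(x_\infty), T_\infty(x_\infty))$, I would insert $T_k(x_\infty)$ in the middle and use $T_\infty(x_\infty) = x_\infty$ together with the $\alpha$-Lipschitz property of $T_k$ to get the key recursive inequality
\begin{equation*}
a_k \;\leq\; d(T_k(y_{k-1}), T_k(x_\infty)) + d(T_k(x_\infty), T_\infty(x_\infty)) \;\leq\; \alpha\, a_{k-1} + \epsilon_k.
\end{equation*}
The uniform convergence $T_k \to T_\infty$ ensures $\epsilon_k \to 0$, which is the only place the uniform (as opposed to merely pointwise) convergence is used in spirit, though in fact only the value at $x_\infty$ matters.

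Finally, iterating the recursion yields
\begin{equation*}
a_k \;\leq\; \alpha^k a_0 + \sum_{j=1}^{k} \alpha^{k-j} \epsilon_j,
\end{equation*}
and it remains to show the right-hand side tends to $0$. The first term is immediate since $\alpha < 1$. The second is a discrete convolution of a geometric weight with a null sequence: given $\eta > 0$, choose $N$ large enough that $\epsilon_j < \eta(1-\alpha)/2$ for $j \geq N$, then split the sum at $N$, noting that $\sum_{j < N} \alpha^{k-j}\epsilon_j \to 0$ as $k \to \infty$ for this fixed $N$, while the tail $\sum_{j \geq N} \alpha^{k-j}\epsilon_j$ is bounded by $\eta/2$ using the geometric series. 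This standard Cesàro-type bound is the only step requiring any work, and I do not anticipate any genuine obstacle; uniqueness of the limit follows from the uniqueness clause of Banach's theorem for $T_\infty$.
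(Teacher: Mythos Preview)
Your proof is correct. The recursive inequality $a_k \leq \alpha a_{k-1} + \epsilon_k$ followed by the discrete-convolution estimate is exactly the structure the paper uses as well, so in that respect the two arguments are the same.

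Where you differ is the choice of comparison point. The paper sets $u_k = \sup_x d(T_k\circ\cdots\circ T_1(x),\, T_\infty^k(x))$, i.e.\ it compares the perturbed orbit to the \emph{iterates} $T_\infty^k(x)$, and only at the end invokes that $T_\infty^k(x)\to x_\infty$. This forces the error term to be the uniform distance $\varepsilon_k = \sup_x d(T_k(x),T_\infty(x))$, which is why the paper genuinely uses the uniform-convergence hypothesis (and in a remark afterwards relaxes it to uniform convergence on bounded sets). You instead compare directly to the fixed point $x_\infty$, so your error term is just $\epsilon_k = d(T_k(x_\infty), x_\infty)$. As you correctly note, this means your argument only requires pointwise convergence of $T_k$ to $T_\infty$ at the single point $x_\infty$, which is strictly weaker than what the paper assumes. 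The trade-off is that the paper's bound is automatically uniform in the starting point $x$, whereas yours depends on $x$ through $a_0 = d(x,x_\infty)$; but since the dependence is only through the prefactor $\alpha^k a_0$, you still get uniform convergence on bounded sets of initial data.
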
 
  
  \begin{proof} 
  We know that $T_{\infty}^{k}(x)$ converges to $x_{\infty}$ by the standard proof of Banach's fixed point theorem. Now we set 
  $$u_{k}= \sup_{x\in X} d(T_{k}\circ T_{k-1}\circ .... \circ T_{1}(x), T_\infty^{k}(x)).$$ We have
  \begin{align*}  d(T_{k}&\circ T_{k-1}\circ .... \circ T_{1}(x),T_\infty^{k}(x)) \\
    &\leq  d( T_{k}\circ T_{k-1}\circ .... \circ T_{1}(x), T_{k}(T_\infty^{k-1}(x))+d ( T_{k}(T_\infty^{k-1}(x)), T_\infty(T_\infty^{k-1}(x))) \\ &\leq 
\alpha\, d( T_{k-1}\circ .... \circ T_{1}(x),T_\infty^{k-1}(x)) + d(T_{k}, T_\infty)
  \end{align*} 
  which means, setting $ \varepsilon_{k}=d(T_{k},T_{\infty})=\sup_{x\in X}d(T_{k}(x),T_{\infty}(x))$
  $$u_{k}\leq   \alpha u_{k-1}+\varepsilon_{k}.$$
  Moreover $u_1$ is finite, since  we may always assume $d(T_1,T_\infty)<+\infty$. 
  Using the identity $$u_{n}- \alpha^{n-1}u_{1}= \sum_{j=0}^{n-2} \alpha^{j}(u_{n-j}-\alpha u_{n-(j+1)})$$ this implies that $$u_{n}\leq  \sum_{j=0}^{n-2} \alpha^{j} \varepsilon_{n-j}+  \alpha^{n-1}u_{1}.$$
 
  If $A$ is a bound for the sequence $( \varepsilon_k)_{k\geq 2}$ and for $k\geq r$ we have $ \varepsilon _{k}\leq \varepsilon $ then 
  \begin{gather*}u_{n}\leq  \sum_{j=0}^{n-2} \alpha^{j} \varepsilon_{n-j} +\alpha^{n-1}u_{1}\leq \varepsilon \sum_{j=0}^{n-r}\alpha^{j}+ M\sum_{j=n-r+1}^{n-2}\alpha^{j}\leq \frac{ \varepsilon \alpha }{1- \alpha}+  \alpha^{n-r+1} \frac{A}{1-\alpha}.
  \end{gather*} 
  Clearly this is bounded by $ \frac{2 \varepsilon }{1- \alpha}$ for $n$ large enough. 
  We thus proved that $u_{k}=d(T_{k}\circ T_{k-1}\circ .... \circ T_{1}(x),T_\infty^{k}(x))$ converges to $0$, hence $T_{k}\circ T_{k-1}\circ .... \circ T_{1}(x)$ converges to $x_{\infty}$.  
  \end{proof} 
  \begin{rem} 
  In the assumptions, it is of course sufficient to assume that the convergence from $T_{k}$ to $T_{\infty}$ is uniform on bounded sets. Indeed, we only need to bound the distance $d(T_{k}(T_{\infty}^{k-1}(x)), T_{\infty}(T_{\infty}^{k-1}(x)))$ and we know that the sequence $ T_{\infty}^{k}(x)$ is bounded. 
  \end{rem}

Applying the above Proposition to $f_{k}$ and $\phi$, we conclude that $\gamma-\lim \Lambda_{k}=\Lambda_\infty$ with $\Lambda_\infty=L_\infty$. In other words, the sequence $\Lambda_k$ converges to $L_\infty$. This proves our claim and concludes the proof of the inclusion (\ref{eq:inclusion-birkhoff}).

We now turn to the proof of the equality. By Proposition 6.10 in \cite{Viterbo-gammas}, the subset $B(\phi )=\gammasupp(L_{\infty}(\phi ))$ intersects all curves isotopic to the vertical. Therefore, it is an annular set, i.e. it separates the annulus.

So  $\mathbb A \setminus B(\phi ) = W^{+}\cup W^{-}$, the two unbounded connected components of the boundary, as there can be no bounded connected component, otherwise the union of such  components would be invariant and being open have non-zero measure. Since $B(\phi )\subset C_{1}(\phi )$, we must have 
$\mathbb A \setminus C_{1}(\phi ) \subset \mathbb A \setminus B(\phi ) $ hence $U_{1}^{+} \subset W^{+}$ and $U_{1}^{-} \subset W^{-}$. Let us prove that we have equality in both inclusions. Indeed, let $x\in  W^{+} \setminus U_{1}^{+}$. Then there is a positive $ \varepsilon $ such that $B(x, \varepsilon )\subset W^{+}$, hence $d(x,W^{-})  \geq \varepsilon $. But then $d(x,U_{1}^{-}) \geq \varepsilon $ since $U_{1}^{-} \subset W^{-}$. But if $x\notin U_{1}^{+}$ we must have 
$x\in \overline{U_{1}^{-}}$ and $d(x,U_{1}^{-})=0$ a contradiction. 
So we must have  $U_{1}^{+} = W^{+}$ and $U_{1}^{-} = W^{-}$ and we may conclude $B(\phi )=C_{1}(\phi )$.
\end{proof}

The following example of $\gamma$-support then  follows from the work of  Birkhoff (\cite{Birkhoff-attractor} and Marie Charpentier (\cite{Charpentier-1}). Remember that a continuum is a connected compact metric space. It is indecomposable if it cannot be written as the union of two
non-trivial (i.e. different from the whole space and the empty set) continua. Note that a closed curve is NOT indecomposable.

\begin{cor} \label{Cor-C7}
There exists a conformally symplectic map such that $\gammasupp(L_{\infty})$ is an indecomposable continuum.\end{cor}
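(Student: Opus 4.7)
The plan is to reduce the statement to the classical annulus setting and then invoke the existing literature. By Theorem \ref{th:annulus-case}, for any conformally symplectic map $\phi$ of the annulus $\mathbb{A} = [-1,1] \times \mathbb{S}^1$ that is homotopic to the identity, with conformal ratio $a < 1$ and satisfying $\phi(\mathbb{A}) \subset (-1,1) \times \mathbb{S}^1$, the $\gamma$-support $\gammasupp(L_\infty(\phi))$ coincides with the classical Birkhoff attractor $C_1(\phi)$. It is therefore enough to exhibit one such $\phi$ whose classical Birkhoff attractor is an indecomposable continuum.

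First I would write down a concrete dissipative twist map of the form $\phi(x,p) = (x + ap + \tau(x),\, ap + \tau(x))$ with $0<a<1$ and $\tau : \mathbb{S}^1 \to \mathbb{R}$ a smooth function. A direct computation gives $\phi^*(dp \wedge dx) = a\, dp \wedge dx$, so $\phi$ is conformally symplectic of ratio $a$, and a linear interpolation to the identity shows it is homotopic to the identity. By choosing the amplitude of $\tau$ sufficiently small, or equivalently by rescaling the fiber coordinate, the condition $\phi(\mathbb{A}) \subset (-1,1) \times \mathbb{S}^1$ is easily achieved, so $\phi$ lies in the class to which Theorem \ref{th:annulus-case} applies.

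Next I would invoke the classical work of Birkhoff \cite{Birkhoff-attractor} and Charpentier \cite{Charpentier-1}: for suitable choices of $\tau$ and $a$, maps of this form (or a closely related normalization) have a classical Birkhoff attractor $C_1(\phi)$ which is an indecomposable continuum. Combined with the identification $\gammasupp(L_\infty(\phi)) = C_1(\phi)$ from Theorem \ref{th:annulus-case}, this produces the desired example and proves the corollary.

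The main obstacle is essentially bookkeeping: one must check that the examples of Birkhoff and Charpentier --- stated originally in slightly different normalizations, and often under the weaker assumption of volume contraction rather than an exact constant conformal factor --- can be placed in the conformally symplectic class required by Theorem \ref{th:annulus-case}. For the affine-in-$p$ family above this is automatic, the conformal factor being the constant $a<1$; for other candidate examples it amounts to a rescaling of coordinates together with an explicit verification that the pullback of $dp\wedge dx$ is a constant multiple of itself.
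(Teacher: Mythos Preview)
Your approach is essentially the same as the paper's: reduce via Theorem~\ref{th:annulus-case} to the classical annulus setting, and then cite Birkhoff and Charpentier for an example whose Birkhoff attractor is an indecomposable continuum, the only point to verify being that the relevant example is genuinely conformally symplectic rather than merely dissipative.

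The paper's execution is slightly more direct than yours. Rather than writing down an auxiliary affine-in-$p$ family and then arguing that Birkhoff's and Charpentier's results can be transferred to it, the paper simply observes that Birkhoff's original construction in \cite{Birkhoff-attractor} is \emph{itself} conformally symplectic of ratio $1-\varepsilon$, so no transfer or normalization is needed. It also makes the logical chain through Charpentier explicit: Birkhoff shows his attractor carries two distinct rotation numbers, and Charpentier \cite{Charpentier-1} proves that this forces $C_1$ to be an indecomposable continuum. Your version leaves both of these points implicit (the ``bookkeeping'' you flag), which is the only place a referee might ask you to say more; substantively there is no gap.
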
 
\begin{proof} Note that Birkhoff's construction in section 7 of \cite{Birkhoff-attractor} is not only dissipative (i.e. strictly reduces the areas by a factor bounded by $\alpha <1$), it is a conformally symplectic map of ratio $1- \varepsilon $ for $ \varepsilon >0$. Moreover Birkhoff proves that in this example, the Birkhoff attractor has two distinct rotation numbers. According to M. Charpentier (\cite{Charpentier-1}) this implies that $C_{1}$ is an indecomposable continuum. 
But by Theorem \ref{th:annulus-case}, this implies that $\gammasupp(L_{\infty})$ is an indecomposable continuum. 
\end{proof} 
\begin{rem} 
Even though this is quite far from the subject of this article, according to \cite{AGHIV}, the $\gamma$-support of $L_{\infty}$ coincides with the reduced singular support of its quantifying sheaf (defined in \cite{Guillermou-Viterbo}) $\F_{L_{\infty}}$ belonging to the derived category of limits of constructible sheaves. Thus there exists a limit of constructible sheaves such that its singular support in $T^{*}({\mathbb S}^{1}\times {\mathbb R} )\setminus 0_{{\mathbb S}^{1}\times {\mathbb R} }$ is an indecomposable continuum. 
\end{rem}


\section{The discounted Hamiltonian-Jacobi equation and the Birkhoff attractor}\label{sec:HJ}

The goal of this section is to prove Theorem \ref{th:discounted}. The proof will use discounted weak KAM theory and graph selectors. We introduce the relevant material from weak KAM theory in Section \ref{sec:Discounted-weak-KAM-theory} and for graph selectors in Section \ref{sec:graph-selectors}. The proof of Theorem \ref{th:discounted} is then done in Section \ref{sec:proof-discounted}.

In this section we work on a cotangent bundle $M=T^*N$, where $N$ is a closed manifold, endowed with the standard Liouville form $\lambda$. We fix a Riemannian metric on $N$ and denote by $\| p\|$ the (dual) norm of an element $p\in T^*_qN$.
We assume that $H:T^*N\to \R$  is an autonomous Tonelli Hamiltonian,  i.e. that its second order fiberwise derivative 
is positive definite and that $H(q,p)$ goes to $+\infty$ as $\|p\|\to\infty$. 

It is proven in \cite{ArnaudSuZavidovique} that if $C=\max\{H(q, 0); q\in M\}$ is the maximum of $H$ on the zero-section, then $H$ is a strict Lyapunov function on $U_H=\{ H>C\}$ for $(\phi_{-H, \alpha}^t)$. Hence the conformally Hamiltonian flow $(\phi^t_{-H, \alpha})$ is defined for all positive times. However,  it is not always complete for negative times (see also \cite{MaroSor2017}).
This implies that the compact subset
\begin{equation}\label{EKH}\mathcal K_H=T^*N\backslash U_H\end{equation}  is forward invariant by $\phi_{-H,\alpha}^t$, and contains the $\omega$-limit set of every point and for every compact subset $K$, there is a positive time such that $\phi^t_{-H, \alpha}(K)\subset \mathcal K_H$.  This last point implies that the Birkhoff attractor $B_{H, \alpha}$ of $\phi_{-H,\alpha}^t$ is compact (and contained in $\mathcal K_H$). Then, the largest invariant compact set is  $K_{H,\alpha}=\bigcap_{t>0}\phi^t_{-H, \alpha}(\mathcal K_H)$, which also contains $B_{H, \alpha}$.
 
  We recall Proposition 18 of \cite{CIPP}.
 \begin{prop} Given a Tonelli Hamiltonian $H:T^*N\to\R$  and $k\in\R$, there is a Tonelli Hamiltonian  $H_0$, convex and quadratic at infinity such that $H_0(x,p) = H(x, p)$ for every $(x, p)$ such that $H(x, p) \leq k$.
 \end{prop}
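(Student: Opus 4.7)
Since $H$ is Tonelli (fiberwise smooth, strictly convex, superlinear in $p$), the sublevel set $K := \{H \le k\}$ is fiberwise compact, and by superlinearity $K \subset \{\|p\|_g < R\}$ for some $R > 0$, where $\|\cdot\|_g$ is the dual norm of a chosen Riemannian metric on the closed manifold $N$. The plan is to smoothly interpolate $H$ on $\{\|p\|_g \le R\}$ with the positive-definite quadratic form $Q(q,p) := \tfrac{C}{2}\|p\|_g^2$ on $\{\|p\|_g \ge R'\}$ for some $R' > R$, while preserving fiberwise strict convexity and superlinearity.

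\textbf{Construction.} Pick a smooth non-decreasing cutoff $\chi : [0,\infty) \to [0,1]$ with $\chi \equiv 0$ on $[0,R^2]$ and $\chi \equiv 1$ on $[(R')^2,\infty)$, convex in a right-neighborhood of $R^2$, and with first two derivatives vanishing at both endpoints of the transition interval. For a constant $C > 0$ to be chosen large, set
\[
H_0(q,p) := (1 - \chi(\|p\|_g^2))\,H(q,p) + \chi(\|p\|_g^2)\,Q(q,p).
\]
By construction, $H_0$ is smooth, coincides with $H$ on $\{\|p\|_g \le R\} \supset K$, and coincides with $Q$ for $\|p\|_g \ge R'$. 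In particular, $H_0$ is superlinear in $p$ and quadratic at infinity, and coincides with $H$ on $\{H \le k\}$.

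\textbf{Main obstacle: fiberwise strict convexity.} The nontrivial step is to verify that $H_0$ is fiberwise strictly convex throughout the transition annulus $\{R \le \|p\|_g \le R'\}$. A direct computation of the fiberwise Hessian decomposes it as a positive semidefinite piece
\[
(1-\chi)\,\partial_p^2 H \;+\; \chi\,\partial_p^2 Q
\]
plus indefinite cross terms $(Q-H)\,\partial_p^2(\chi(\|p\|_g^2))$ and $2\,\mathrm{Sym}\bigl(\partial_p(\chi(\|p\|_g^2)) \otimes \partial_p(Q-H)\bigr)$, both of order $O(C)$. Since $N$ is closed, $\|H\|_{C^2}$ is uniformly bounded on this annulus. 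The vanishing of $\chi'$ and $\chi''$ at the endpoints, together with the convexity of $\chi$ in a right-neighborhood of $R^2$, keeps the cross terms nonnegative near the boundary where $\chi\,\partial_p^2 Q$ is too small to help; meanwhile $(1-\chi)\,\partial_p^2 H$ remains strictly positive there. In the interior of the annulus, where $\chi$ is bounded away from $0$, taking $C$ sufficiently large makes $\chi\,\partial_p^2 Q$ dominate the cross terms. If any residual slack remains, a small auxiliary convex radial bump $\eta(\|p\|_g^2)$ with support in the transition annulus can be added to $H_0$ to enforce strict convexity without affecting the behavior on $K$ or at infinity.

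\textbf{Conclusion.} With $C$, $\chi$ (and possibly $\eta$) chosen as above, $H_0$ is a smooth, fiberwise strictly convex, superlinear Hamiltonian equal to $H$ on $\{H \le k\}$ and equal to the quadratic form $Q$ outside a ball in $p$. This is the desired Tonelli Hamiltonian, quadratic at infinity. The technical heart is the quantitative control of the indefinite cross terms in the transition zone, which the compactness of $N$ and the freedom to enlarge $C$ and refine $\chi$ make possible.
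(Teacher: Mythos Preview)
The paper does not prove this proposition; it simply cites it as Proposition~18 of \cite{CIPP}, so there is no in-paper argument to compare against.

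Your interpolation scheme is the natural first attempt, but the convexity step contains a real gap. You correctly observe that the cross terms $(Q-H)\,\partial_p^2\bigl(\chi(\|p\|_g^2)\bigr)$ and $2\,\mathrm{Sym}\bigl(\partial_p\chi\otimes\partial_p(Q-H)\bigr)$ are of order $C$. The trouble is that the ``good'' term $\chi\,\partial_p^2 Q=\chi\,C\,g^{-1}$ is \emph{also} of order $C$, so enlarging $C$ leaves the ratio unchanged and cannot force domination. Concretely, collecting all $C$-dependent contributions to the fiberwise Hessian (in an orthonormal frame, with $r=\|p\|_g^2$) gives
\[
C\bigl[(\chi+r\chi')\,\delta_{ij}+(4\chi'+2r\chi'')\,p_ip_j\bigr],
\]
whose eigenvalue in the radial direction is $C(\chi+5r\chi'+2r^2\chi'')$. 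Since any smooth cutoff going from $0$ to $1$ must be concave somewhere, this quantity is negative of order $C$ on part of the annulus, and for $C$ large it overwhelms the fixed positive term $(1-\chi)\,\partial_p^2 H$. Your fallback---adding a compactly supported ``convex radial bump'' $\eta(\|p\|_g^2)$---cannot repair this either: for $p\mapsto\eta(\|p\|_g^2)$ to have positive-semidefinite Hessian one needs $\eta'\ge 0$ throughout, which forces a nonzero compactly supported $\eta$ to be monotone, a contradiction.

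What does work is to abandon the $p$-dependent linear interpolation. One clean route (in the spirit of \cite{CIPP}) is to first replace $H$ outside $\{\|p\|_g\le R\}$ by a convex function of at most linear growth built from the boundary data of $H$ on the sphere $\{\|p\|_g=R\}$ (values and first derivatives there are uniformly bounded since $N$ is closed), and then combine this with a quadratic $Q-M$ via a smooth convex function $\mu(a,b)$ that is nondecreasing in each argument and agrees with $\max(a,b)$ outside a strip $|a-b|\le 1$. Because $\mu$ is convex and monotone in each slot and is applied to two fiberwise-convex functions, the result is fiberwise convex; the growth rates then force $H_0=H$ on $\{H\le k\}$ and $H_0=Q-M$ at infinity.
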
 
 If $H$ is Tonelli, we  choose $k\in \R$ such that $H$ is a Lyapunov function on $\{ H\geq k\}$ and  pick $H_0$ as in the above proposition. Then $(\phi^t_{-H, \alpha})$ and $(\phi^t_{-H_0, \alpha})$ have the same Birkhoff attractor, that is contained in $\{ H\leq k\}$.

 Reminders on   discounted weak KAM theory are given in Section \ref{sec:Discounted-weak-KAM-theory} below. The weak KAM solution is also the viscosity solution of the discounted Hamilton-Jacobi equation          
$$\eqref{eq:HJ-discounted}\quad
\alpha u(x) + H(x,du(x))=0.
$$                                   
In \cite[p.38]{DFIZ},  it is explained that for $k$ large enough, the viscosity solutions of     $\alpha u(x) + H(x,du(x))=0$ and $\alpha u(x) + H_0(x,du(x))=0   $ are the same.    Hence, replacing $H$ by $H_0$, we can assume in the remainder of the article that the discounted flow of $-H$ is complete.                                                         
Moreover, if there exists a constant $C$ such that $|\partial_pH\cdot p|\leq C|H|$, then $X_{-H, \alpha}$  is complete. In both cases, there is a maximal invariant compact subset that will be denoted by $K_{H, \alpha}$. It contains the Birkhoff attractor $B_{H, \alpha}$ of $\phi_{-H,\alpha}^t$. Recall that $B_{H,\alpha}$ is by definition the $\gamma$-support of the unique fixed point $L_\infty(H, \alpha)$ of $\phi_{-H,\alpha}^t$ in $\hatL$. The flow $\phi_{-H,\alpha}^t$ determines an action $\Phi_{-H,\alpha}^t$ on the space of branes $\LL$ given by formula (\ref{eq:conf-Ham-on-brane}), which we now recall for the reader's convenience:  $\Phi^t_{-H, \alpha}(L, f_L)=(\phi^t_{-H, \alpha}(L), F_L)$
where \[F_L(z)=e^{-\alpha t}f_L(\phi^{-t}_{-H, \alpha}(z))+\int_{-t}^0e^{\alpha s}\big (\lambda_{\phi^s_{-H, \alpha}(z)}(X_{-H, \alpha}(\phi_{-H, \alpha}^s(z)))-H(\phi^s_{-H, \alpha}(z))\big)ds.\]
The unique fixed point of $\Phi_{-H,\alpha}^t$ in $\hatLL$, which is provided by Theorem \ref{th:main-for-branes}, will be denoted by $\tilde L_\infty(H, \alpha)$.  

We define the function $U_{H, \alpha}:K_{H, \alpha}\to \R$ by
$$U_{H, \alpha}(x)=\int_{-\infty}^0 e^{\alpha t}\Big( \lambda_{\phi_{-H, \alpha}^t(x)}\big(X_{-H, \alpha}(\phi_{-H, \alpha}^t(x))\big)-H\big(\phi_{-H, \alpha}^t(x)\big)\Big)dt.$$

  \subsection{Discounted weak KAM theory}\label{sec:Discounted-weak-KAM-theory}
  
  To $H$ Tonelli, we associate a Lagrangian function $L:TN\to\R$ and
  the discounted Lax-Oleinik semi-group $(T_{H, \alpha}^t)_{t\geq 0}$ is defined on the set of continuous functions $C^0(N, \R)$ by
 \begin{equation} \label{eq:DLO-semi-group} T_{H, \alpha}^tu(q)=\inf_{\gamma:[-t, 0]\to N, 0\mapsto q} \Big( e^{-\alpha t}u(\gamma(-t))+\int_{-t}^0 e^{\alpha s} L(\gamma (s), \dot \gamma (s)) ds\Big) ,\end{equation}
  where 
  the infimum is taken over all absolutely continuous curves $\gamma$ ending at $q$. Since $\|T^t_{H, \alpha}u_1-T^t_{H, \alpha}u_2\|_\infty\leq e^{-\alpha t}\| u_1-u_2\|_\infty$, the semi-group has a unique fixed point called \emph{discounted weak KAM solution} and denoted by $u_{H, \alpha}$. Moreover, for every $u\in C^0(N, \R)$, the functions $T_{H, \alpha}^tu$ converge uniformly to $u_{H, \alpha}$ as $t$ goes to $+\infty$.
  
  Then we have 
  \begin{equation} \label{eq:DLO-fixedpoint }  \forall t\geq 0, u_{H, \alpha}(q)=\inf_{\gamma:[-t, 0]\to N, 0\mapsto q} \Big( e^{-\alpha t}u_{H, \alpha}(\gamma(-t))+\int_{-t}^0 e^{\alpha s}\big(L(\gamma (s), \dot \gamma (s))\big)ds\Big) ,\end{equation}
and
 \begin{equation} \label{eq:expression-DLO-fixedpoint }  u_{H, \alpha}(q)=\inf_{\gamma:]-\infty, 0]\to N, 0\mapsto q}  \int_{-\infty}^0 e^{\alpha t}L(\gamma (t), \dot \gamma (t))dt\end{equation}
  where the infimum is taken over all absolutely continuous curves $\gamma: ]-\infty, 0]\to M$ such that $\gamma(0)=q$. 
  Moreover, there exists  a curve $\gamma_\alpha: ]-\infty, 0]\to M$ such that we have equality in \eqref{eq:DLO-fixedpoint } and \eqref{eq:expression-DLO-fixedpoint }; see \cite[Appendix B]{DFIZ}. It then follows from a classical computation that this curve is a solution of the discounted Euler-Lagrange equations
  $$\frac{d}{dt}\Big( e^{\alpha t}\partial_vL\Big)=e^{\alpha t}\partial_qL.$$

  Recall that a covector $p\in T^*_{q_0}N$ is a \emph{super-differential} of a function $u$ if there exist a local coordinate chart around $u$ and  $K>0$ such that for any $q$ we have: $u(q)-u(q_0)\leq p(q-q_0)+K\|q-q_0\|^2$.

  \begin{prop}\label{prop:super-differential} If the infimum in \eqref{eq:expression-DLO-fixedpoint } is attained at a curve $\gamma:]-\infty, 0]\to M$,    
    then the covector $\partial_vL(\gamma(0), \dot\gamma(0))$ is 
 a super-differential of $u_{H, \alpha}$ at $q=\gamma(0)$.
  \end{prop}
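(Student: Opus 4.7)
The strategy is the classical one from weak KAM theory: use the dynamic programming principle to compare $u_{H,\alpha}(q)$ with $u_{H,\alpha}(q_0)$ by perturbing the optimal curve $\gamma$ into a competitor ending at $q$ instead of $q_0:=\gamma(0)$, then Taylor-expand $L$ and use the discounted Euler--Lagrange equation to identify the linear part.

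Fix a local coordinate chart around $q_0$ and a small $\tau>0$ (to be tuned at the end, but essentially fixed). Writing $v=q-q_0$, define the competitor
\[
\tilde\gamma(s)=\gamma(s)+\tfrac{s+\tau}{\tau}\,v,\qquad s\in[-\tau,0],
\]
so that $\tilde\gamma(-\tau)=\gamma(-\tau)$, $\tilde\gamma(0)=q$ and $\dot{\tilde\gamma}(s)=\dot\gamma(s)+v/\tau$. Applying the Lax--Oleinik fixed point identity \eqref{eq:DLO-fixedpoint } at time $\tau$ to $q$ (inequality) and $q_0$ (equality, by optimality of $\gamma$), I obtain
\[
u_{H,\alpha}(q)-u_{H,\alpha}(q_0)\leq \int_{-\tau}^0 e^{\alpha s}\bigl[L(\tilde\gamma(s),\dot{\tilde\gamma}(s))-L(\gamma(s),\dot\gamma(s))\bigr]\,ds.
\]

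Next I Taylor-expand $L$ at $(\gamma(s),\dot\gamma(s))$. The linear part is
\[
I_{\mathrm{lin}}(v)=v\cdot\int_{-\tau}^0 e^{\alpha s}\Bigl[\tfrac{s+\tau}{\tau}\partial_qL(\gamma,\dot\gamma)+\tfrac{1}{\tau}\partial_vL(\gamma,\dot\gamma)\Bigr]\,ds.
\]
Here the key computation is an integration by parts based on the discounted Euler--Lagrange equation $\tfrac{d}{ds}\bigl(e^{\alpha s}\partial_vL\bigr)=e^{\alpha s}\partial_qL$ satisfied by $\gamma$. Setting $P(s)=e^{\alpha s}\partial_vL(\gamma(s),\dot\gamma(s))$, one gets
\[
\int_{-\tau}^0 \tfrac{s+\tau}{\tau}P'(s)\,ds=P(0)-\tfrac{1}{\tau}\int_{-\tau}^0 P(s)\,ds,
\]
so the two halves of $I_{\mathrm{lin}}(v)$ collapse and leave exactly $v\cdot P(0)=v\cdot\partial_vL(\gamma(0),\dot\gamma(0))$. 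This is the algebraic heart of the argument.

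Finally I control the remainder. Since $\|\tilde\gamma(s)-\gamma(s)\|\leq\|v\|$ and $\|\dot{\tilde\gamma}(s)-\dot\gamma(s)\|=\|v\|/\tau$, a uniform bound on $D^2L$ on a neighborhood of the compact curve $s\mapsto(\gamma(s),\dot\gamma(s))$ gives a quadratic remainder bounded by $C(\tau\|v\|^2+\|v\|^2/\tau)$, which for $\tau$ fixed small enough is of the form $K\|v\|^2$. Combining yields
\[
u_{H,\alpha}(q)-u_{H,\alpha}(q_0)\leq \partial_vL(\gamma(0),\dot\gamma(0))\cdot(q-q_0)+K\|q-q_0\|^2,
\]
valid for $q$ in the chart with $\|q-q_0\|$ sufficiently small, which is the claimed super-differential property. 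The only mildly delicate point is the integration-by-parts step identifying the linear part, and ensuring that $\tau$ can be chosen independently of $v$ so that the remainder is genuinely $O(\|v\|^2)$ rather than only $o(\|v\|)$; both are handled as above.
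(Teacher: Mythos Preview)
Your argument is correct and follows the same classical route as the paper: perturb the minimizing curve $\gamma$ linearly on a short interval $[-\tau,0]$ to produce a competitor ending at $q$, Taylor-expand $L$, and use the discounted Euler--Lagrange equation in an integration by parts to identify the linear part as $\partial_vL(\gamma(0),\dot\gamma(0))\cdot(q-q_0)$. In fact your treatment of the remainder is tidier: you keep $\tau$ fixed so that the quadratic term is genuinely $K\|v\|^2$ (with $K\sim C(\tau+1/\tau)$), whereas the paper writes the analogous estimate with parameter $A=1/\tau$ and then says ``letting $A\to\infty$'', a step that is unnecessary (the boundary term already gives the desired linear part for any $A$) and would spoil the claimed $A$-independence of $C'$ if taken literally.
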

 
  \begin{proof} We use ideas similar to the proof of Proposition 4.11.1 in \cite{Fathi-book} (see also \cite{Bernard2008}). We choose $\gamma:]-\infty, 0]\to N$ such that the minimum in  \eqref{eq:expression-DLO-fixedpoint } is attained at $\gamma$. We work in a coordinate chart around $q$. Let $Q$ be close to $q$. 
    Fixing a large real number $A>0$, we denote by $\gamma_1:]-\infty, 0]\to N$ the arc such that $\gamma_{1|]-\infty, -1/A]}=\gamma_{|]-\infty, -1/A]}$ and in our chosen chart
    $$\forall\sigma\in [-1/A, 0],\quad \gamma_1(\sigma)=\gamma(\sigma)+(1+A\sigma)(Q-q).$$
 Note that $\gamma_1(0)=Q$. 
 We deduce from \eqref{eq:expression-DLO-fixedpoint } that $ u_{H, \alpha}(Q)\leq  \int_{-\infty}^0 e^{\alpha t}L(\gamma_1 (t), \dot \gamma_1 (t))dt ,$
 hence
\begin{align*}
  u_{H, \alpha}(Q) - &u_{H, \alpha}(q) \leq \int_{-1/A}^0e^{\alpha \sigma}\big( L(\gamma_1(\sigma), \dot\gamma_1(\sigma))-L(\gamma(\sigma), \dot\gamma(\sigma))\big)d\sigma.\\
     &\leq   \int_{-1/A}^0e^{\alpha \sigma}\Big(  \partial_qL(\gamma(\sigma), \dot\gamma(\sigma))(\gamma_1(\sigma)-\gamma(\sigma))+\partial_vL(\gamma(\sigma), \dot\gamma(\sigma))(\dot\gamma_1(\sigma)-\dot\gamma(\sigma))\\
  &\quad\quad\quad\quad+C\| \gamma_1(\sigma)-\gamma(\sigma)\|^2+C\| \dot\gamma_1(\sigma)-\dot\gamma(\sigma)\|^2\Big)d\sigma   
\end{align*}
for some constant $C>0$.

  Using integration by parts, and the fact that $\gamma$ is a solution of the Euler-Lagrange equations,  we deduce
    \begin{align*}  u_{H, \alpha}(Q)- u_{H, \alpha}(q)
\leq \Big[ e^{\alpha\sigma}\partial_vL(\gamma(\sigma), \dot\gamma(\sigma))(\gamma_1(\sigma)-\gamma(\sigma))\Big]_{-1/A}^0+C'\| Q-q\|^2
    \end{align*}
 for some $C'>0$ independent of $A$. Letting $A$ go to infinity, this yields    $$u_{H, \alpha}(Q)\leq  u_{H, \alpha}(q)+\partial_vL(\gamma(0), \dot\gamma(0))(Q-q)+C'\| Q-q\|^2.$$
 This shows that $\partial_vL(\gamma(0), \dot\gamma(0))$ is a super-differential of $u_{H, \alpha}$.
  \end{proof}
 
  \begin{prop}\label{prop:u-and-U}
  For every $q\in N$, we have
  $$u_{H, \alpha}(q)=\min_{p\in T_q^*N\cap K_{H, \alpha}}U_{H, \alpha}(q,p).$$
  Moreover, at every $p\in T_q^*N\cap K_{H, \alpha}$ where the minimum is attained, $p$ is a super-differential of $u_{H, \alpha}$ at $q$.
  \end{prop}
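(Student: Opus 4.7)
The plan rests on a Legendre-duality identity that identifies $U_{H,\alpha}$ with the Lagrangian action functional. Specifically, along every orbit $(Q(t),P(t))=\phi^t_{H,\alpha}(q,p)$ of the conformal Hamilton flow one has $\dot Q(t)=-\partial_p H(Q(t),P(t))$ (the horizontal component of $X_{H,\alpha}$), and the Legendre duality between $L$ and the Tonelli Hamiltonian $-H$ gives $L(Q,\dot Q)=P\cdot\dot Q+H(Q,P)=\lambda_{(Q,P)}(X_{H,\alpha})+H(Q,P)$. Consequently, for any $(q,p)\in T_q^*N\cap K_{H,\alpha}$, setting $\gamma_{(q,p)}(t)=\pi(\phi^t_{H,\alpha}(q,p))$ for the projected orbit (defined on $(-\infty,0]$ by invariance of $K_{H,\alpha}$), one obtains
\[U_{H,\alpha}(q,p)=\int_{-\infty}^{0} e^{\alpha t}\, L(\gamma_{(q,p)}(t),\dot\gamma_{(q,p)}(t))\,dt,\]
and since $\gamma_{(q,p)}$ is admissible in the variational formula \eqref{eq:expression-DLO-fixedpoint}, this is bounded below by $u_{H,\alpha}(q)$.

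For the reverse inequality I would take a curve $\gamma_\alpha:(-\infty,0]\to N$ attaining the infimum in \eqref{eq:expression-DLO-fixedpoint}, provided by \cite[Appendix B]{DFIZ}, and form its Legendre lift $p_\alpha(t)=\partial_v L(\gamma_\alpha(t),\dot\gamma_\alpha(t))$, setting $p_0:=p_\alpha(0)$. Because $\gamma_\alpha$ satisfies the discounted Euler--Lagrange equations, Legendre duality turns $(\gamma_\alpha,p_\alpha)$ into a backward orbit of the conformal Hamilton flow, so that $\phi^t_{H,\alpha}(q,p_0)=(\gamma_\alpha(t),p_\alpha(t))$ for $t\leq 0$; reading the Legendre identity backwards along this orbit gives $U_{H,\alpha}(q,p_0)=u_{H,\alpha}(q)$. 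Proposition \ref{prop:super-differential} applied to $\gamma_\alpha$ then immediately yields the super-differential claim at $p_0$.

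The step I expect to require the most care is verifying that $(q,p_0)\in K_{H,\alpha}$, i.e., that the full two-sided orbit of $(q,p_0)$ is precompact. Forward boundedness is easy: by the strict Lyapunov property of $-H$ on $U_H=\{H<C\}$ recalled at the start of the section, the forward orbit of $(q,p_0)$ enters $\mathcal K_H$ in finite time and is then trapped in the maximal invariant compact set. Backward boundedness reduces to a Tonelli a priori bound on the velocities $\dot\gamma_\alpha(t)$ on $(-\infty,0]$, which then translates via the Legendre transform into a bound on $p_\alpha(t)$. This is where the full Tonelli structure of $-H$ is used beyond the existence of $\gamma_\alpha$, and where it is convenient to have reduced to the quadratic-at-infinity Hamiltonian $H_0$ introduced earlier in the section so that the discounted flow and the discounted Lax--Oleinik semigroup enjoy the standard a priori estimates.
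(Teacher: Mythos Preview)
Your proposal is correct and follows essentially the same route as the paper: the Legendre identity $\lambda_{(q_t,p_t)}(X_{H,\alpha})+H(q_t,p_t)=L(q_t,\dot q_t)$ gives $U_{H,\alpha}\geq u_{H,\alpha}$ via \eqref{eq:expression-DLO-fixedpoint }, and the Legendre lift of a minimizing curve realises equality and feeds into Proposition~\ref{prop:super-differential}. The paper's proof is terser---it does not pause on the membership $(q,p_0)\in K_{H,\alpha}$ that you rightly flag---but the argument you sketch (backward boundedness from Tonelli a~priori estimates on $\dot\gamma_\alpha$, forward boundedness from the Lyapunov property) is exactly what is implicit there. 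One small point: the statement asserts the super-differential property at \emph{every} minimizer $p$, not only at the $p_0$ you construct; this follows immediately since any such $p$ makes its projected orbit a minimizer in \eqref{eq:expression-DLO-fixedpoint } by the Legendre identity, so Proposition~\ref{prop:super-differential} applies again.
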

  \begin{proof} For $x\in K_{H, \alpha}$, we set $(q_t, p_t)=\phi_{-H, \alpha}^t(x)$. Then 
  $$\lambda_{(q_t, p_t)}(X_{-H, \alpha}(q_t, p_t))-H(q_t, p_t)=p_t\dot q_t-H(q_t, p_t)=L(q_t, \dot q_t).$$
  We deduce from \eqref{eq:expression-DLO-fixedpoint } that
  $$U_{H, \alpha}(x)=\int_{-\infty}^0 e^{\alpha t}L(q_t, \dot q_t)dt\geq u_{H, \alpha}(q).$$
  We know that there is a solution $\gamma:]-\infty, 0]\to N$ of the Euler-Lagrange equations   for which we have equality in \eqref{eq:expression-DLO-fixedpoint } for $q=q_0$. This implies that $x=(\gamma(0), \partial_vL(\gamma(0), \dot \gamma(0))$ satisfies
  $$U_{H, \alpha}(x)=\int_{-\infty}^0e^{\alpha t}L(\gamma(t), \dot\gamma (t))dt=u_{H, \alpha}(q_0).$$
  \end{proof}
  
   \subsection{Graph selectors}\label{sec:graph-selectors}The first version of graph selector is due to Sikorav and Chaperon (\cite{Sik,Chaperon-HJ}), and was studied in 
   \cite{Viterbo-Ottolenghi,Viterbo-Ott} in the case of a Lagrangian Hamiltonianly isotopic to the zero section (this is always the case for the Lagrangian occurring in the evolution Hamilton-Jacobi equation) and using Floer homology by Oh (\cite{Oh-action-functional-I}). The general case (without assuming $L$ Hamiltonianly isotopic to $0_N)$ was  first written by  Amorim-Oh-Santos (\cite{Amorim-Oh-Santos}). In a different vein, the selectors are defined by Guillermou from the sheaf-theoretic viewpoint (\cite{Vichery-these,Guillermou-Asterisque}).
    
   We recall that every $\tilde L=(L, f_L)\in\LL(T^*N,\omega)$ has a unique graph selector that is a Lipschitz continuous function $u_{\tilde L}:M\to \R$. If $L$ is $C^k$, there exists an open subset $U_{\tilde L}$ of $N$ with full Lebesgue measure on which $u_{\tilde L}$ is $C^{k+1}$ and such that for any $q\in U_{\tilde L}$ we have:
   $$du_{\tilde L}(q)\in L\quad\text{and}\quad u_{\tilde L}(q)=f_L(du_{\tilde L}(q)).$$
   Moreover, if $\tilde L_i=(L_i, f_{L_i})\in\LL(T^*N,\omega)$ for $i=1, 2$, then 
   $$\| u_{\tilde L_1}-u_{\tilde L_2}\|_\infty\leq c(\tilde L_1, \tilde L_2).$$
   Hence the graph selector can be continuously extended to the completion $\hatLL(T^*N,\omega)$.
   
      The value of $u_{\tilde{L}}(x)$ is in fact a spectral invariant for the pair $(L,T_x^*N)$ and the above inequality follows from the triangle inequality in Proposition \ref{prop:lag-spec-inv-properties} applied to the Lagrangians $L_1$, $L_2$, $T_x^*N$ (strictly speaking Proposition \ref{prop:lag-spec-inv-properties} does not apply directly since $T_x^*N$ is not compact, but its proof extends verbatim to this setting, see \cite[Theorem 16]{Hum-Lec-Sey3}). 
      When $L$ is Hamiltonian isotopic to $0_N$, the above inequality also follows from the reduction inequality from \cite{Viterbo-STAGGF} and is mentioned for example in \cite{Cardin-Viterbo}, p.263. The general case can also be proved for example from \cite{Vichery-these}, Prop. 8.13 by taking  
   $\mathcal F_2=k_x, {\mathcal F}_1=\mathcal F_{L_1}, {\mathcal F}_3={\mathcal F}_{L_3}$ yielding $u_{L_1}(x)-u_{L_2}(x)\geq c(1;L_1,L_2)$.

   \begin{prop}
   For every $u\in C^2(N, \R)$, $T_{H, \alpha}^tu$ is the graph selector of $\Phi_{-H, \alpha}^t(\text{graph}(du), u)$.
   \end{prop}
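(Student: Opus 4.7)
The plan is to identify both $T^{t}_{H,\alpha}u(q)$ and $u_{\tilde L_t}(q)$, where $\tilde L_t := \Phi^{t}_{H,\alpha}(\mathrm{graph}(du),u)$ and $L_t := \phi^{t}_{H,\alpha}(\mathrm{graph}(du))$, with the fiberwise minimum $\min\{F(q,p) : (q,p)\in L_t\}$ of the primitive $F$ of $\tilde L_t$. First, I would unpack formula (\ref{eq:conf-Ham-on-brane}) using the identity $\lambda_z(X_{H,\alpha}(z)) + H(z) = L(\pi(z), -\partial_p H(z))$, which is just the Legendre duality between $-H$ and $L$, together with $\dot q_s = -\partial_p H(q_s,p_s)$ along the Hamiltonian flow. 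For $z\in L_t$ and $(q_s,p_s) := \phi^{s}_{H,\alpha}(z)$ this gives
$$F(z) = e^{-\alpha t}u(q_{-t}) + \int_{-t}^{0} e^{\alpha s} L(q_s,\dot q_s)\, ds,$$
which is exactly the integrand appearing in the Lax-Oleinik formula (\ref{eq:DLO-semi-group}) evaluated on the projection $q_\cdot$ of the trajectory $z(\cdot)$.

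Next I would show $T^{t}_{H,\alpha}u(q) = \min_{(q,p)\in L_t} F(q,p)$. By the Tonelli hypothesis the infimum in (\ref{eq:DLO-semi-group}) is attained at a $C^{1}$ curve $\gamma^*$ with $\gamma^*(0) = q$, and the first variation with respect to the free endpoint at time $-t$ forces the transversality condition $\partial_v L(\gamma^*(-t),\dot\gamma^*(-t)) = du(\gamma^*(-t))$. Hence the Hamiltonian lift of $\gamma^*$ starts on $\mathrm{graph}(du)$, its time-$0$ endpoint $(q,p^*)$ lies in $L_t$, and $T^{t}_{H,\alpha}u(q) = F(q,p^*)$ by the formula above. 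Conversely, any $(q,p)\in L_t$ is the endpoint of a Hamiltonian trajectory starting at $(q_{-t}, du(q_{-t}))\in \mathrm{graph}(du)$, whose projection is a competitor in (\ref{eq:DLO-semi-group}) with action $F(q,p)$; combining the two bounds yields the identity.

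The final step is to prove $u_{\tilde L_t}(q) = \min_{(q,p)\in L_t} F(q,p)$. The natural tool is the Sikorav-Chaperon broken-geodesic construction: discretizing the discounted Lagrangian action on a fine enough subdivision of $[-t,0]$ produces a generating function quadratic at infinity $S(q;\xi)$ for the brane $\tilde L_t$, whose critical values over a fixed $q$ are precisely the values of $F$ on the fiber $L_t \cap T^*_qN$. The graph selector for the unit class $1\in H^{0}$ is then the fiberwise minimum $\min_{\xi} S(q;\xi)$, in the spirit of the computation $c(1_N,\Gamma_f) = \min_N f$ recalled at the end of Section \ref{sec:Lagrangian-spectral-inv}.

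The main obstacle lies in this last step: one must verify that the broken-geodesic GFQI is a genuine lift of $\tilde L_t$ with the correct primitive $F$, and that the min-max for the class $1$ indeed reduces to a fiberwise minimum. For Tonelli $H$ this is ensured once the mesh is below the convexity radius, since each short-time action is then convex in the intermediate endpoints, and passing to the limit in the mesh identifies the minimum with the continuous Lax-Oleinik value. A variant avoiding GFQIs would exploit the $c$-Lipschitz continuity of the graph selector in $\tilde L$ together with the $C^{0}$-continuity of the Lax-Oleinik semi-group in $u$ to reduce to short times when $L_t$ is still a graph over $N$, where both sides trivially coincide with the unique value of $F$ on the fiber.
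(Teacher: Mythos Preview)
Your approach is genuinely different from the paper's. The paper does not identify either side with the fiberwise minimum of $F$; instead it conjugates the discounted flow by the Liouville contraction $\phi^{t}_{0,\alpha}$, observes that $\phi^{-t}_{0,\alpha}\circ\phi^{t}_{H,\alpha}$ is the \emph{undiscounted} Hamiltonian flow of the time-dependent Tonelli Hamiltonian $H_t=e^{-\alpha t}H\circ\phi^{t}_{0,\alpha}$, checks the corresponding identity $T^{t}_{H,\alpha}u=T^{t}(e^{-\alpha t}u)$ on the Lax--Oleinik side and $\Phi^{t}_{H,\alpha}(\mathrm{graph}(du),u)=\Phi^{t}_{\mathcal H}(\mathrm{graph}(e^{-\alpha t}du),e^{-\alpha t}u)$ on the brane side, and then simply invokes the known undiscounted result of Roos. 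In other words, the paper outsources exactly the step you flag as the ``main obstacle''.

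Your Steps 1 and 2 are fine, but Step 3 is where the actual content lies, and neither of your two justifications closes it. For the broken-geodesic GFQI, convexity of each short-time action in its own intermediate endpoint does \emph{not} make the total function $S(q;\xi)$ convex in $\xi$ (if it did, $L_t$ would always be a graph); hence there is no direct reason the min-max for the class $1$ collapses to the global minimum over the fiber. What is really being asserted here is the Tonelli identity ``variational solution $=$ viscosity solution $=$ Lax--Oleinik value'', which is a theorem (Joukovskaia, Bernard, Roos), not a consequence of mesh refinement. Your second variant fares worse: agreement for short times plus continuity gives nothing for large $t$ unless you know a semigroup/Markov property for the graph-selector side, but the graph selector is precisely the object that \emph{fails} the Markov property outside the Tonelli setting, so invoking it here is circular. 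If you want a self-contained direct proof along your lines, you must either reproduce one of the Tonelli ``viscosity $=$ variational'' arguments, or do what the paper does and reduce to a case where that theorem is already available.
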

   \begin{proof}
It is proven in \cite{Roos2019} that when $u$ is $C^2$ and $H_t$ is Tonelli and time dependent, then $T^t_{H, 0}u=u_{\Phi^t_{-H, 0}(\text{graph}(du), u)}$. In our case, we consider the isotopy
   $(\phi^{-t}_{0, \alpha}\circ \phi_{-H, \alpha}^{t})$. It is generated by the vector field $$Y=(\phi_{0,\alpha}^{-t})_* X_{-H,\alpha}-X_{0,\alpha},$$
   which is associated to the Hamiltonian $-\Hh$, where
   $$\Hh_t=(e^{-\alpha t}H\circ \phi_{0, \alpha}^{t}).$$
   As $\phi_{0, \alpha}^{t}$ preserves the fibers and its restriction to every fiber is linear, $\Hh_t$ is Tonelli.  Moreover, since $\phi_{0,\alpha}^t(q,p)=(q,e^{-\alpha t}p)$, the Lagrangian function associated to $H_t$ is $e^{-\alpha t}L$. We deduce that the Lax-Oleinik semi-group $(T^t)$ associated to $\Hh$ is related to the discounted Lax-Oleinik semi-group $(T^t_{H, \alpha})$ by
   $$T_{H, \alpha}^tu=T^t(e^{-\alpha t}u).$$
   Because $\phi_{-H, \alpha}^t =\phi^t_{-\Hh}\circ \phi_{0, \alpha}^t$, we have also $\Phi_{-H, \alpha}^t =\Phi^t_{-\Hh}\circ \Phi_{0, \alpha}^t$ and then for every $u\in C^2(N, \R)$ 
   $$\Phi_{-H, \alpha}^t(\text{graph}(du), u)=\Phi_{-\Hh}^t(\text{graph}(e^{-\alpha t}du), e^{-\alpha t}u).$$
   Finally, as $\Hh$ is Tonelli, we know that $T^t(e^{-\alpha t}u)$ is the graph selector of the Lagrangian $\Phi_{-\Hh}^t(\text{graph}(e^{-\alpha t}du), e^{-\alpha t}u)$ and this gives the wanted result.
   \end{proof}
   \begin{cor}\label{cor:KAMF-graph-selector}
   $u_{H, \alpha}$ is the graph selector of $\tilde L_\infty(H, \alpha)$.
 \end{cor}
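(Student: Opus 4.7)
The plan is to apply the preceding proposition with a fixed smooth initial function (for instance $u=0$, so $\tilde L_0 := (\mathrm{graph}(d0), 0)= (0_N, 0)$) and pass to the limit $t\to\infty$ on both sides of the identity
\[
T_{H,\alpha}^t u \;=\; u_{\,\Phi_{H,\alpha}^t(\mathrm{graph}(du),\, u)}.
\]

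First I would handle the analytic side. Discounted weak KAM theory, as recalled in Section \ref{sec:Discounted-weak-KAM-theory}, asserts that $T_{H,\alpha}^t$ is an $e^{-\alpha t}$-contraction on $(C^0(N,\R),\|\cdot\|_\infty)$ with unique fixed point $u_{H,\alpha}$, so $T_{H,\alpha}^t u$ converges uniformly to $u_{H,\alpha}$ as $t\to +\infty$ for any continuous $u$.

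Next I would handle the symplectic side. By Proposition \ref{prop:gamma} and item (3) of Remark \ref{rem:completions}, the lift $\Phi_{H,\alpha}^t$ of the CES diffeomorphism $\phi_{H,\alpha}^t$ acts on $\hatLL(T^*N)$ as an $e^{-\alpha t}$-Lipschitz map for the distance $c$. Since Theorem \ref{th:main-for-branes} provides a unique fixed brane $\tilde L_\infty(H,\alpha)\in\hatLL$, we obtain
\[
c\bigl(\Phi_{H,\alpha}^t(\tilde L_0),\, \tilde L_\infty(H,\alpha)\bigr)
= c\bigl(\Phi_{H,\alpha}^t(\tilde L_0),\, \Phi_{H,\alpha}^t(\tilde L_\infty(H,\alpha))\bigr)
\leq e^{-\alpha t}\, c(\tilde L_0, \tilde L_\infty(H,\alpha))
\;\longrightarrow\; 0
\]
as $t\to +\infty$, the initial distance being finite since $\hatLL$ is a metric completion.

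Finally I would combine the two limits using the continuity of the graph selector. As recalled in Section \ref{sec:graph-selectors}, the map $\tilde L\mapsto u_{\tilde L}$ is $1$-Lipschitz from $(\LL, c)$ into $(C^0(N),\|\cdot\|_\infty)$, hence extends continuously to $\hatLL$. Applying this continuous extension to the identity above, the graph selector of $\tilde L_\infty(H,\alpha)$ equals
\[
\lim_{t\to +\infty} u_{\,\Phi_{H,\alpha}^t(\tilde L_0)} \;=\; \lim_{t\to +\infty} T_{H,\alpha}^t u \;=\; u_{H,\alpha},
\]
which is the desired statement. There is no genuine obstacle here: the essential work has been done in the preceding proposition (identifying $T_{H,\alpha}^t u$ with the graph selector of the evolved brane), in Theorem \ref{th:main-for-branes} (existence and uniqueness of $\tilde L_\infty(H,\alpha)$), and in the Lipschitz continuity of the graph selector with respect to $c$; the only subtlety worth double-checking is that $\tilde L_0 = (0_N, 0)$ does belong to the class $\LL$ preserved by $\phi_{H,\alpha}$, which is the case on $T^*N$ since there is only one Floer-theoretic equivalence class by \cite{Fukaya-Seidel-Smith, Kragh3}.
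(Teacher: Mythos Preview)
Your proof is correct and follows essentially the same approach as the paper's own proof: pick $u=0$, use the preceding proposition to identify $T_{H,\alpha}^t u$ with the graph selector of $\Phi_{H,\alpha}^t(\tilde L_0)$, and pass to the limit using the $c$-contraction on branes together with the Lipschitz continuity of the graph selector. You have simply made explicit a few details (the contraction inequality, the finiteness of the initial distance, the single equivalence class in $T^*N$) that the paper leaves implicit.
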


Note that, as shown in Appendix \ref{appendix}, this corollary does not hold without the Tonelli assumption on the Hamiltonian.

  \begin{proof}
  We pick any $u\in C^2(N,\R)$, for instance $u=0$. We know that $\Phi_{-H,\alpha}^t(\text{graph}(du), u)$ $c$-converges to $\tilde L_\infty(H, \alpha)$ as $t$ goes to $+\infty$. We deduce that the graph selector $T^t_{H,\alpha}u$ of $\Phi_{-H,\alpha}^t(\text{graph}(du), u)$  converges uniformly to the graph selector of $\tilde L_\infty(H, \alpha)$ as $t$ goes to $+\infty$. But we also know that $T^t_{H,\alpha}u$ converges to $u_{H, \alpha}$.    This concludes the proof.
  \end{proof}
  
    \subsection{Proof of Theorem \ref{th:discounted} }\label{sec:proof-discounted}

The following lemma records formulas that will be useful later in the proof.

    \begin{lem}\label{lem:conjugacy-Hamiltonian-exact-symplectic}
      Let $\psi$ be an exact symplectic diffeomorphism of $T^*N$ and $S$ be a function satisfying $\psi^*\lambda-\lambda=dS$. We  consider $F=H\circ \psi -\alpha S$. Then the following identities hold:
      \begin{align*}
      &X_{-F,\alpha }=\psi^*X_{-H,\alpha}, \quad \phi^t_{-F,\alpha}=\psi^{-1}\circ \phi_{-H, \alpha}^t\circ \psi,\quad K_{F,\alpha}=\psi^{-1}(K_{H,\alpha}),\\  & L_\infty(H, \alpha)=\psi(L_\infty(F, \alpha)),\quad U_{F, \alpha}=U_{H,\alpha}\circ \psi-S.
      \end{align*}
    \end{lem}
    
    \begin{proof} The vector field associated to the flow $(\psi^{-1}\circ \phi^t_{-H, \alpha}\circ \psi)$ is  $Y=\psi^*X_{-H,\alpha}$, thus the second identity follows from the first one. Moreover, since $\psi$ is symplectic, we have 
\begin{equation*}\iota_Y\omega=\psi^*(\iota_{X_{-H,\alpha}}\omega)=\psi^*dH+\alpha\psi^*\lambda=d(H\circ\psi)+\alpha\lambda +\alpha dS=dF+\alpha\lambda.
\end{equation*}
Hence $Y=X_{-F, \alpha}$. 
We deduce that $K_{F,\alpha}=\psi^{-1}(K_{H,\alpha})$ and $X_{-F,\alpha}=\psi^*X_{-H,\alpha }$.  We deduce from Proposition \ref{prop:gamma-support} that $L_\infty(H, \alpha)=\psi(L_\infty(F, \alpha))$.

Finally, for $x\in K_{F,\alpha}$, we have
\begin{align*}
U&{}_{F, \alpha}(x)=\int_{-\infty}^0e^{\alpha t}\big( \lambda_{\phi^t_{-F,\alpha}(x)}X_{-F,\alpha} (\phi_{-F,\alpha}^t(x))-F(\phi^t_{-F,\alpha}(x))\big)dt\\
&=\int_{-\infty}^0e^{\alpha t}\Big( \lambda_{\psi^{-1}\phi^t_{-H,\alpha} \psi(x)}\big((D\psi(x))^{-1}X_{-H,\alpha}(\phi^t_{-H, \alpha} \psi(x))\big)-F(\phi^t_{-H,\alpha}(x))\Big)dt\\
&=\int_{-\infty}^0e^{\alpha t}\Big(\big(  \lambda_{\phi^t_{-H,\alpha}(\psi(x))}\big(X_{-H,\alpha} (\phi^t_{-H,\alpha}\psi(x))\big)-d(S\circ \psi^{-1})(\phi^t_{-H, \alpha} \psi(x))X_{-H,\alpha} (\phi^t_{-H,\alpha}\psi(x))\big)\\
&\quad\quad -\big(H(\phi^t_{-H, \alpha}\psi(x))-\alpha S\circ\psi^{-1}(\phi^t_{-H,\alpha} \psi(x))\big)\Big)dt\\
&=U_{H, \alpha}\psi(x))-\int_{-\infty}^0\frac{d}{dt}\big( e^{\alpha t}S\circ\psi^{-1}(\phi^t_{-H,\alpha}\psi(x))\big)dt\\
&=U_{H,\alpha}(\psi(x))-S(x).
\end{align*}
\end{proof}

\begin{proof}[Proof of Theorem \ref{th:discounted}]. Let $q\in N$ be a point where $u_{H, \alpha}$ has a derivative. Let  $B$ be a small ball centered at $(q, du_{H, \alpha}(q))$. Our goal is to build a Hamiltonian diffeomorphism $\psi$ with support in $B$ so that $L_\infty(H, \alpha)\neq\psi(L_\infty(H, \alpha))$. By definition of the $\gamma$-support, this will imply that $(q, du_{H, \alpha}(q))\in B_{H, \alpha}$ and conclude our proof of Theorem \ref{th:discounted}.

Let $(f_t)$ be a $C^2$-small isotopy of diffeomorphisms of the cotangent fiber $T^*_qN$ so that $f_0=\text{Id}_{T^*_qN}$, the support of the isotopy is in a small ball centered at $du_{H, \alpha}(q)$  and we have $f_1(du_{H, \alpha}(q))\neq du_{H,\alpha}(q)$.
    
    We extend $(f_t)$ to a symplectic isotopy $(g_t)$ in a small Darboux chart by the formula 
    $$g_t(Q, P)=(q+{}^tDf_t(P)^{-1}(Q-q), f_t(P)).$$
    Since $(f_t)$ is $C^2$-close to identity, $g_t$ is $C^1$-close to identity, thus its admits a generating function $\tau_t(Q, P_t)$ such that
 \begin{equation} \label{eq:generating-function }  g_t(Q, P)=(Q_t, P_t)\Longleftrightarrow
   \begin{cases}
Q_t=\partial_{P_t}\tau_t(Q, P_t)\\ P=\partial_Q\tau_t(Q, P_t)
\end{cases}
\end{equation}
    More precisely, we have $$\tau_t(Q, P_t)=\langle (f_t)^{-1}(P_t), Q-q\rangle+\langle P_t,q\rangle.$$
    Using a bump  function $\eta$ around $q$, we set
    $$\sigma_t(Q, P)=\eta(Q)\tau_t(Q, P)+(1-\eta (Q))\langle P,Q\rangle.$$
    The function  $\sigma_t$ is the generating function in the sense of \eqref{eq:generating-function }  of an exact  symplectic diffeomorphism $h_t$, which  is close to identity, and whose support is contained in a small neighbourhood of $(q, du_{H, \alpha}(q))$. As $\sigma_t=\tau_t$ in a neighbourhood of $T^*_qN$, we have $h_t(T^*_qN)=T^*_qN$, $h_1(q, du_{H, \alpha}(q))\neq (q, du_{H, \alpha}(q))$ and $\sigma_{t|T^*_qN}=\tau_{t|T^*_qN}$.
    
    Moreover, if we let $(Q_t, P_t)$ denote $h_t(Q, P)$, we have
    $$h_t^*\lambda-\lambda=P_tdQ_t-PdQ=d(\langle P_t,Q_t\rangle)-Q_tdP_t-PdQ=d(\langle P_t,Q_t\rangle)-d\sigma_t.$$
    Thus, we have $h_t^*\lambda-\lambda=d\Sigma_t$ where $\Sigma_t=\langle P_t,Q_t\rangle-\sigma_t$.
    Note that on the fiber $T_q^*N$, i.e. for $Q=q$, we have 
    $$\Sigma_t
    =\langle P_t,Q_t\rangle-\tau_t=\langle Q_t-q, P_t\rangle-\langle (f_t)^{-1}(P_t), Q-q\rangle=0.$$ 
    
    As $h_1$ is close to identity and $\Sigma_1$ close to the zero function, the Hamiltonian $F=H\circ h_1-\alpha \Sigma_1$ is Tonelli, which implies by Corollary \ref{cor:KAMF-graph-selector} that $u_{F,\alpha}$ is the graph selector of $\tilde L_\infty(F, \alpha)$. By Lemma \ref{lem:conjugacy-Hamiltonian-exact-symplectic}, we also know that $L_\infty(F, \alpha)=h_1^{-1}(L_\infty(H, \alpha))$. Thus, if we prove that $u_{F, \alpha}-u_{H, \alpha}$ is not a constant function, we will deduce that $L_\infty(H, \alpha)\neq L_\infty(F, \alpha)$, hence $L_\infty(H, \alpha)\neq h_1^{-1}(L_\infty(H, \alpha))$. We will then have reached our goal explained at the beginning of the proof. 
    
    By Lemma \ref{lem:conjugacy-Hamiltonian-exact-symplectic}, $K_{F, \alpha}=h_1^{-1}(K_{H, \alpha})$, hence 
    $$K_{F, \alpha}\cap T_q^*N=h_1^{-1}(K_{H, \alpha}\cap T^*_qN).$$
    We also have $U_{F, \alpha} = U_{H, \alpha}\circ h_1-\Sigma_1$  and $\Sigma_1|_{T^*_qN}=0$, hence $U_{F, \alpha}|_{T^*_qM}=U_{H, \alpha}\circ h_{1}|_{T_q^*N}$. Using Proposition \ref{prop:u-and-U}, we deduce the following equalities:
    \[\begin{split}
u_{H, \alpha}(q)=\min_{p\in T^*_qN\cap K_{H, \alpha}}U_{H, \alpha}(q,p)=U_{H, \alpha}(q, du_{H, \alpha}(q))\\
\min_{p\in T^*_qN\cap K_{H, \alpha}}U_{F, \alpha}\circ h_1^{-1}(q,p)=U_{F, \alpha}\circ h_1^{-1}(q, du_{H, \alpha}(q))\\
\min_{p\in T^*_qN\cap K_{F, \alpha}}U_{F, \alpha}(q,p)=U_{F, \alpha}\big( h_1^{-1}(q, du_{H, \alpha}(q))\big)\\
u_{F, \alpha}(q)=U_{F, \alpha}\big( h_1^{-1}(q, du_{H, \alpha}(q))\big).
\end{split}\]
Proposition \ref{prop:u-and-U} then implies that $h_1^{-1}(q, du_{H, \alpha}(q))$ is a super-differential of $u_{F, \alpha}$ at $q$. Since by construction $h_1^{-1}(q, du_{H, \alpha}(q))\neq (q, du_{H, \alpha}(q))$, we deduce that $u_{F, \alpha}-u_{H, \alpha}$ admits a non-zero super-differential, hence is not a constant function.
\end{proof}

        \subsection{The time dependent case }\label{sec:time-dependent}
        We can adapt the above proofs to the time-dependent setting. We assume that  $H:T^*N\times {\mathbb T}\to \R$ is a Tonelli Hamiltonian such that:
        \begin{itemize}
        \item the time-dependent vector field $X_{-H_t, \alpha}$ is complete,
        \item there is a compact neighborhood of the zero section which is forward invariant. 
        \end{itemize}
 The evolutive discounted Hamilton-Jacobi equation is in this case
        \begin{equation}
        \label{eq:HJTD-discounted}
\tfrac d{dt}u_t(x)+\alpha u_t(x) + H_t(x,du_t(x))=c.
\end{equation}
The Lagrangian action functional $\mathcal A_L$ is defined for $t_1<t_2$ and $x, y\in N$ by $$\mathcal  A_L(x, t_1;  y; t_2)=\inf_{\substack{\gamma:[t_1, t_2]\to N\\t_1\mapsto x, t_2\mapsto y}} \int_{t_1}^{t_2}e^{\alpha s}L(s;\gamma(s), \dot \gamma (s))ds.$$
where $L$ is the Lagrangian function associated to $H$. 
The cost function $c:N\times N\to \R$ is defined by $c(x,y)=\mathcal A_L(x, -1; y, 0)$.  Observe that it is a continuous function. 
For every $u\in C^0(N, \R)$ and $t_1>t_2$, we define
$$T_{H, \alpha}^{t_2, t_1}u(q)=\inf_{q'\in N} \big(e^{\alpha (t_2-t_1)}u(q')+e^{-\alpha t_1}\mathcal A_L (q', t_2; q, t_1)\big).$$
We have then $T_{H, \alpha}^{t_2, t_1}\circ T_{H, \alpha}^{t_3, t_2}=T_{H, \alpha}^{t_3, t_1}$ and $T_{H, \alpha}^{t_1+1, t_2+1}=T_{H, \alpha}^{t_1, t_2}$. The discrete Lax-Oleinik operator is also defined by$\mathcal T=T_{H, \alpha}^{0, 1}$, i.e.,
$$\mathcal T u(q)= \inf_{q'\in N} \big( e^{-\alpha}u(q')+c(q', q)\big).$$
The weak KAM solution is then the unique fixed point of $\mathcal T$.  Using the continuous-time dependent setting, we can adapt the proof of the autonomous case and deduce that         
 the pseudo-graph of the weak KAM solution is contained in 
 Birkhoff attractor      of the time-one map of the Hamiltonian isotopy.

\section{The limit \texorpdfstring{$\alpha \to 0$}{a to 0} and the pendulum with friction}\label{sec:alpha-to-1}

The goal of this section is twofold. First we want to understand what happens to the Birkhoff attractor of the composition of a conformal and a
Hamiltonian diffeomorphism when the conformal factor $a=e^{-\alpha}$ converges to $1$. We shall see that there are several possible limits, yielding invariant sets. One could hope that this 
limit of invariant sets, each of which is a $\gamma$-support of some element $L_\infty(\alpha)$ in $\widehat{\LL} (M,\omega)$ could correspond to the $\gamma$-limit of the $L_\infty(\alpha)$.  In a second part we show that this is unfortunately not the case even in the simple case of the pendulum.  
\subsection{Invariant sets for Hamiltonian flows}
Let us consider a Liouville vector field, with flow $\chi^\alpha$, so that $(\chi^\alpha)^*\lambda=e^{-\alpha} \lambda$. Let $\phi$ be a Hamiltonian diffeomorphism. Then  $\chi^\alpha\circ \phi$ is also conformal of ratio $e^{-\alpha}=a$. For each $\alpha>0$, we let $B^\alpha(\phi)$ denote the Birkhoff attractor $B(\chi^\alpha\circ \phi)$. Using compactness of the Hausdorff topology on compact sets we may define

\begin{defn}  We assume that the $B^\alpha(\phi)$ remain in a compact subset as $\alpha$ goes to $0$. We denote by $B^-(\phi)$ (resp. $B^+(\phi)$) the inferior limit (resp. superior limit) of $B_{}^\alpha(\phi)$, i.e.
$$B^-(\phi)=\liminf_{\alpha\to 0}B^\alpha(\phi)$$
$$B^+(\phi)=\limsup_{\alpha\to 0}B^\alpha(\phi)$$
Moreover, 
we denote by $B^0(\phi)$
 any $\limsup$ or $\liminf$ of $B^{\alpha_k}(\phi)$ for some sequence $\alpha_k$ going to $0$. 
\end{defn} 

\begin{rem}
  Such limits exist by compactness of the Hausdorff distance on subsets of a compact set. If $B^-(\phi)=B^+(\phi)$, then 
  we just have one Hausdorff limit and $B^0(\phi)=B^-(\phi)=B^+(\phi) $ (see \cite[p.26, Exercice 4.23, 4.24]{Kechris}).
\end{rem}

 \begin{prop} Any set $B^0(\phi)$ is invariant by $\phi$. The same holds for $B^-(\phi)$.
\end{prop}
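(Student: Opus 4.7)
The plan is to exploit the defining invariance $(\chi^\alpha \circ \phi)(B^\alpha(\phi)) = B^\alpha(\phi)$ and pass to the limit $\alpha \to 0$. The key observation is that, since $\chi^\alpha$ is the time-$\alpha$ map of a smooth vector field with $\chi^0 = \mathrm{Id}$, we have $\chi^\alpha \to \mathrm{Id}$ in the $C^0_{\mathrm{loc}}$ topology as $\alpha \to 0$, hence $\chi^\alpha \circ \phi \to \phi$ uniformly on any fixed compact subset of $M$. The standing hypothesis that all $B^\alpha(\phi)$ lie in a common compact set $K \subset M$ makes this uniform convergence available precisely where it is needed.

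First I would handle $B^0(\phi)$. By definition it is obtained from a sequence $\alpha_k \to 0$, and by compactness of the Hausdorff topology on closed subsets of $K$ I may refine so that $B^{\alpha_k}(\phi)$ converges to $B^0(\phi)$ in Hausdorff distance. I would then invoke the standard continuity fact that, for Hausdorff limits $A_k \to A$ in $K$ and maps $f_k \to f$ uniformly on $K$, one has $f_k(A_k) \to f(A)$. Applied with $f_k = \chi^{\alpha_k} \circ \phi$ and $A_k = B^{\alpha_k}(\phi)$, this yields
\[
\phi(B^0(\phi)) = \lim_k (\chi^{\alpha_k} \circ \phi)(B^{\alpha_k}(\phi)) = \lim_k B^{\alpha_k}(\phi) = B^0(\phi).
\]

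For $B^-(\phi)$ I would argue pointwise. Given $x \in B^-(\phi)$, the definition of the Kuratowski liminf provides, for each small enough $\alpha$, a point $x_\alpha \in B^\alpha(\phi)$ with $x_\alpha \to x$; then $(\chi^\alpha \circ \phi)(x_\alpha) \in B^\alpha(\phi)$ converges to $\phi(x)$ by uniform continuity, which shows $\phi(x) \in B^-(\phi)$. Running the symmetric argument with $\phi^{-1}$ and $(\chi^\alpha)^{-1} = \chi^{-\alpha}$ in place of $\phi$ and $\chi^\alpha$ (note $\chi^{-\alpha} \to \mathrm{Id}$ as well) yields $\phi^{-1}(B^-(\phi)) \subset B^-(\phi)$ and hence full $\phi$-invariance. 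I do not expect a serious obstacle: the whole proof is essentially a continuity exercise, and the only point that requires any care is to note that the standing compactness hypothesis is exactly what upgrades the pointwise convergence $\chi^\alpha \to \mathrm{Id}$ to the uniform convergence on $\bigcup_\alpha B^\alpha(\phi)$ used above.
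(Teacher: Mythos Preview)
Your pointwise argument for $B^-(\phi)$ is correct and is exactly the kind of argument the paper uses (the paper in fact only writes out the $B^0$ case and leaves $B^-$ implicit). The issue is with your treatment of $B^0(\phi)$.

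The gap is in the sentence ``by compactness of the Hausdorff topology \dots\ I may refine so that $B^{\alpha_k}(\phi)$ converges to $B^0(\phi)$ in Hausdorff distance.'' By definition $B^0(\phi)=\limsup_k B^{\alpha_k}(\phi)$ is the Kuratowski upper limit along the given sequence. Compactness does give you a Hausdorff-convergent subsequence, but its limit is in general only a \emph{subset} of $B^0(\phi)$, not equal to it. (Think of a sequence that alternates between two distinct closed sets $C$ and $D$: the limsup is $C\cup D$, while every Hausdorff-convergent subsequence has limit $C$ or $D$.) So your displayed chain of equalities proves at best $\phi(C)=C$ for some Hausdorff accumulation set $C\subset B^0(\phi)$, which is a different statement.

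The fix is simply to run your own $B^-$ argument for $B^0$ as well, and this is precisely what the paper does: take $x\in B^0(\phi)$, pick a subsequence $(\beta_k)$ of $(\alpha_k)$ and points $x_k\in B^{\beta_k}(\phi)$ with $x_k\to x$; then $(\chi^{\beta_k}\circ\phi)(x_k)\in B^{\beta_k}(\phi)$ and $(\chi^{\beta_k}\circ\phi)(x_k)\to\phi(x)$ by your uniform-on-$K$ convergence $\chi^\alpha\to\mathrm{Id}$, hence $\phi(x)\in\limsup_k B^{\beta_k}(\phi)\subset B^0(\phi)$. The reverse inclusion follows symmetrically using $\phi^{-1}$ and $(\chi^\alpha\circ\phi)^{-1}=\phi^{-1}\circ\chi^{-\alpha}$, exactly as you indicate for $B^-$.
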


\begin{proof} 
Let $x\in B_{}^0(\phi)=\limsup_{k\to\infty}B^{\alpha_k}(\phi)$.
Then there exists a subsequence $(\beta_k)$ of $(\alpha_k)$, such that  $x=\lim_kx_k$ where $x_k\in B_{}^{\beta_k}(\phi)$. If $B_{}^0(\phi)=\liminf_{k\to\infty}B^{\alpha_k}(\phi)$, the same holds with $\beta_k=\alpha_k$.

By assumption $\phi(x_k)\in \chi^{-\beta_k}B^{\beta_k}(\phi)$, and  $d(B^{\beta_k}(\phi), \chi^{-\beta_k}B^{\beta_k}(\phi))$ goes to $0$ as $\beta_k$ goes to $0$, so $\phi(x_k)$ converges to $\limsup \chi^{-\beta_k}B^{\beta_k}(\phi))=B^0(\phi)$. This means that $\phi(x)$ is in $B^0(\phi)$. 
\end{proof}

\begin{rems} 
	\begin{enumerate} 
 \item If we have a common bound for all the $B^\alpha(\phi)$ (for example if $H$ is autonomous and Tonelli), we have at least one non-empty invariant set. Of course it could be the whole space, but this can often be excluded, for example if for $c$ sufficiently large regular value,  $\{H=c\}$ is transverse to the Liouville vector field.
 \item The subset $B^+(\phi)$ is always non empty. A priori $B^-(\phi)$ could be empty, but by Proposition \ref{prop:weakKAM-non-discounted} below this cannot happen in the Tonelli case.  
            \item  If $(X_k)_{k\geq 1}$ is a family of subsets in a metric space,  $x\in \liminf_k X_k$ if and only if $\lim_k d(x,X_k)=0$,
 	while $x\in \limsup_k X_k$ if and only if the closure of the sequence $(d(x,X_k))_{k\geq 1}$ contains $0$. 
 	Note that if the $X_k$ are contained in a compact set, $X_k$ converges for the Hausdorff distance to $X_\infty$ if and only if $X_\infty=\limsup_k X_k=\liminf_k X_k$ (see \cite{Kechris}, pp. 25-26).
  \end{enumerate} 
  \end{rems} 

  \begin{prop}\label{prop:weakKAM-non-discounted} If $H$ is Tonelli and autonomous, then $B^-(\phi_{-H})$ contains the graph of the weak KAM solution $u_0$ of the Hamilton-Jacobi equation $H(x,d_xu)=0$ which, by \cite{DFIZ}, is the limit as $\alpha$ goes to zero of the functions $u_{H,\alpha}$.    
  \end{prop}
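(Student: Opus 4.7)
The plan is to combine Theorem \ref{th:discounted} (which locates the graph of $du_{H,\alpha}$ inside $B^\alpha(\phi_H)$) with a semiconcavity argument allowing one to transfer differentiability points of $u_0$ to nearby differentiability points of $u_{H,\alpha}$. Concretely, I would fix a point $x_0$ at which $u_0$ is differentiable, set $p_0=du_0(x_0)$, and show that $(x_0,p_0)\in B^-(\phi_H)$. Because $B^-(\phi_H)=\liminf_{\alpha\to 0}B^\alpha(\phi_H)$ is closed and the set of differentiability points of the Lipschitz function $u_0$ is of full measure (hence dense, and characterizes its pseudo-graph), this will yield the inclusion claimed in the proposition.

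The first step is to argue that for every sequence $\alpha_n\to 0$ one can choose differentiability points $x_n$ of $u_{H,\alpha_n}$ with $x_n\to x_0$ and $du_{H,\alpha_n}(x_n)\to p_0$. Existence of such $x_n$ (for each fixed $n$) is immediate from Rademacher's theorem applied to the Lipschitz function $u_{H,\alpha_n}$. The Tonelli hypothesis on $-H$ gives a uniform semiconcavity estimate for the family $\{u_{H,\alpha}\}_{\alpha\in(0,1]}$ (this is a standard consequence of the Lax--Oleinik formula \eqref{eq:DLO-fixedpoint }, since the minimizing curves in Proposition \ref{prop:super-differential} carry over verbatim to produce super-differentials of $u_{H,\alpha}$ at every point, with a common quadratic constant). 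Together with the uniform convergence $u_{H,\alpha}\to u_0$ of \cite{DFIZ} and the differentiability of $u_0$ at $x_0$, the standard upper semicontinuity property of the superdifferential for uniformly semiconcave families yields
\[\limsup_{n\to\infty}D^+u_{H,\alpha_n}(x_n)\subset D^+u_0(x_0)=\{p_0\}\]
for any $x_n\to x_0$. Since $du_{H,\alpha_n}(x_n)\in D^+u_{H,\alpha_n}(x_n)$ and the sequence is bounded by the uniform Lipschitz constant, every subsequential limit must equal $p_0$, so the full sequence converges to $p_0$.

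The second step is now immediate: by Theorem \ref{th:discounted}, the points $(x_n,du_{H,\alpha_n}(x_n))$ lie in $B^{\alpha_n}(\phi_H)=B(\phi_{H,\alpha_n}^1)$, so $(x_0,p_0)$ is the limit of a sequence of points in $B^{\alpha_n}(\phi_H)$. As this holds for \emph{every} sequence $\alpha_n\to 0$, one gets $d\!\left((x_0,p_0),B^\alpha(\phi_H)\right)\to 0$, which is exactly the condition to belong to $B^-(\phi_H)$. Closing up over the dense set of differentiability points of $u_0$ yields the inclusion of the whole (pseudo-)graph of $u_0$ in $B^-(\phi_H)$.

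The main obstacle is the control of the derivatives in the first step: one must upgrade the a.e.\ differentiability of each $u_{H,\alpha}$ (which by itself gives no information near a prescribed $x_0$) to the \emph{convergence} of the derivatives along some sequence $(x_n,\alpha_n)$ with $x_n\to x_0$. This is the only place where one genuinely needs uniform semiconcavity of $\{u_{H,\alpha}\}$ and the Tonelli hypothesis on $-H$; everything else is a direct application of Theorem \ref{th:discounted} and the definition of $B^-(\phi_H)$.
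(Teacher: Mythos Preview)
Your proposal is correct and follows essentially the same approach as the paper: both proofs invoke Theorem \ref{th:discounted} to place $\mathrm{graph}(du_{H,\alpha})$ inside $B^\alpha(\phi_H)$, establish uniform semiconcavity of the family $\{u_{H,\alpha}\}$ via the constant in Proposition \ref{prop:super-differential}, and then use this together with the uniform convergence $u_{H,\alpha}\to u_0$ to deduce that $\mathrm{graph}(du_0)\subset\liminf_{\alpha\to 0}\mathrm{graph}(du_{H,\alpha})$. The only cosmetic difference is that the paper packages the last step as a citation to Attouch's theorem, whereas you unfold it by hand via upper semicontinuity of the superdifferential; the paper is also a bit more careful in justifying the uniform bound on the semiconcavity constant (bounding the velocities of the minimizing curves by confining $\partial_vL(\gamma(0),\dot\gamma(0))$ to a fixed compact set $\mathcal C_H$), a point you pass over somewhat quickly.
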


  \begin{proof} By Theorem \ref{th:discounted}, $\mathrm{graph}(du_{H,\alpha})$ is included in $B^\alpha(\phi_{-H})$  and in $K(H,\alpha)$, see \eqref{EKH}. If we  look at the proof of Proposition \ref{prop:super-differential},  we see that the constant $C$ of semi-concavity that appears in the proof is a little larger than the maximum of the $C^2$-norm of $L$ 
    at $(\gamma(0),\dot\gamma(0))$ where $\gamma :]-\infty, 0]\to M$ is a curve where the minimum is attained in Equation \eqref{eq:expression-DLO-fixedpoint }. Moreover,  $\partial_vL(\gamma(0), \dot\gamma(0))$ is 
      a super-differential of $u_{H, \alpha}$ at $q=\gamma(0)$ and thus is contained in the union of the convex hull of $\overline{\text{graph}(du_{H, \alpha})}\cap T_q^*M$, see Proposition 3.3.4 of \cite{CanSin04}.
      
We have proved that for every $\alpha>0$, there are inclusions $\text{graph}(du_{H, \alpha})\subset B^{\alpha}(\phi_{-H})\subset K(H,\alpha)$. We deduce that $\partial_vL(\gamma(0), \dot\gamma(0))$ is contained in the compact set $\mathcal C_{H}$ that is obtained by taking the fiberwise convex hull of $K(H,\alpha)$. Hence the semi-concavity constant of every $u_{H, \alpha}$ is less than $\sup\{ \| p\|; (q, p)\in N\}$ where $N$ is a fixed compact neighbourhood of $\mathcal C_H$.
  
Therefore, the functions $u_{H,\alpha}$ are uniformly semi-concave and uniformly converge to $u_0$, thus according to \cite{Attouch} (who proved it in the convex case, but this immediately implies the uniformly semi-concave case)
    \[\mathrm{graph}(du_0)\subset \liminf_{\alpha\to 0}\,\mathrm{graph}(du_{H,\alpha}).\] 
The proposition follows.
  \end{proof}

\begin{rem} 
  There are many possible choices for $\lambda$ and hence $\chi^t$. For example in $T^*N$ we can  replace the tautological 1-form $\lambda$ by $\lambda-\pi^*\mu$ where $\mu$ is a closed 1-form on $N$ and $\pi:T^*N\to N$ is the canonical projection. Then  let $\chi_0^\alpha(q,p)=(q,e^{-\alpha}p)$ and $\chi_\mu^\alpha(q,p)=(q,\mu(q)+e^{-\alpha}(p-\mu(q))$.  Applying this to the above, we get for any closed 1-form $\mu\in H^1(N, \mathbb R)$ invariant subsets $B_{\mu}^0(\phi), B_{\mu}^-(\phi), B_{\mu}^+(\phi)$.
  Note that these subsets depend on $\mu$ and not only on its cohomology class. For example, for $\mu=df$ we have $B^\alpha(\Id)=\gra(df)$
\end{rem}

\begin{defn}  Let $(\phi_t)$ be an isotopy generated by a 1-periodic Hamiltonian such that $\phi_0={\rm Id}$ and $\phi_1=\phi$. For every $x\in T^*N$, consider the linear maps $g_{k, x}:Z^1(N)\to \R$ defined by \[\eta\mapsto\frac{1}{k}\int_0^k\eta(\dot \phi_t(x))dt.\]
 Every limit point $g_\infty$ of $(g_{k, x})_{k\in\N}$ is also linear and vanishes on the exact forms, hence defines an element $g_\infty\in H_1(N)$  which we call a \emph{rotation vector}.  The set of all rotation vectors for $x \in B_{\mu}^*(\phi)$ is denoted by $R^*(\mu,\phi)$. 
\end{defn} 
This can be used to prove that we get distinct sets $B_{\mu}^0(\phi)$. Indeed, whenever $R^0(\mu,\phi)\neq R^0(\mu',\phi)$ we have $B_{\mu}^0(\phi)\neq B_{\mu'}^0(\phi)$ and whenever 
$R(\mu,\phi)\cap R(\mu',\phi)=\emptyset$ we have $B_{\mu}^0(\phi)\cap B_{\mu'}^0(\phi)=\emptyset$. 

\subsection{The pendulum with friction}
One could ask, since the invariant sets $B^\alpha_{\mu}(\phi)$ are the $\gamma$-supports of some $L^\alpha_{\mu,\infty}(\phi) \in \widehat{\mathcal L} (T^*N)$, is it true that $\gamma-\lim_{\alpha\to 0} L^\alpha_{\mu,\infty}(\phi)=L^0_{\mu,\infty}(\phi)$ with $\gammasupp (L^0_{\mu,\infty}(\phi))=\B^0_{\mu}(\phi)$. As we shall see, this is not the case in general. 

\begin{prop} 
Let $H(\theta,p)= \frac{1}{2}p^2-\cos(\theta)$ be the Hamiltonian for the pendulum on $T^*\bS^1$ and set $\mu=0$. Then  the sequence $L^\alpha_{0,\infty}(\phi_{-H})$ has no limit point as $ \alpha$ goes to $0$.
\end{prop}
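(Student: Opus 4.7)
The plan is to argue by contradiction. Suppose there exist a subsequence $\alpha_n\to 0$ and $L^\star\in\hatL(T^*\bS^1)$ such that $L^{\alpha_n}_{0,\infty}(\phi_H)\to L^\star$ in the $\gamma$-topology. I will first identify $L^\star$ up to its graph selector and then exhibit a dynamical obstruction.

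The first step is to invoke the analogue of Corollary \ref{cor:KAMF-graph-selector} for the discrete conformally symplectic map $\chi^\alpha\circ\phi_H$: the argument of Section \ref{sec:proof-discounted} (combined with the discrete Lax--Oleinik formalism of Section \ref{sec:time-dependent}) shows that a brane lift $\tilde L^\alpha_\infty\in\hatLL$ of $L^\alpha_{0,\infty}(\phi_H)$ has graph selector equal to the unique fixed point $u^\alpha$ of the associated discrete discounted Lax--Oleinik operator. Absorbing the divergent constant $c_n\in\R$ arising from $\alpha_n u^{\alpha_n}\to -c(H)$, the convergence result of \cite{DFIZ} yields uniform convergence $u^{\alpha_n}-c_n\to u_0$ to the selected weak KAM solution. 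Extracting a $c$-convergent subsequence of shifted branes from the assumed $\gamma$-convergence, the limit $\tilde L^\star$ must have graph selector equal to $u_0$ by the Lipschitz continuity of the graph selector on $\hatLL$.

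The contradiction comes from a probing argument exploiting the integrability of the pendulum. The fixed point $L^\alpha_{0,\infty}(\phi_H)$ is the $\gamma$-limit of $(\chi^\alpha\circ\phi_H)^k(0_N)$ as $k\to\infty$, and since the contraction factor is $e^{-\alpha}$, the effective number of Hamiltonian iterations that survive the damping is of order $1/\alpha$. Near the pendulum's separatrix $\{H=-c(H)\}$, where return times to a transverse section diverge logarithmically, $\phi_H^{1/\alpha}$ introduces into $(\chi^\alpha\circ\phi_H)^k(0_N)$ an unbounded, non-monotone twist whose angular phase varies non-trivially with $\alpha$. Choose now a Hamiltonian $K$ compactly supported in a small disk placed at a regular point of the separatrix, and examine the spectral invariant $\alpha\mapsto\ell(\mu;\phi_K^1(\tilde L^\alpha_\infty),\tilde L^\alpha_\infty)$. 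Using action-angle coordinates for $\phi_H$ to estimate the contribution of this twist to the action, one exhibits two subsequences of $(\alpha_n)$ along which this spectral invariant admits distinct limits. By the Lipschitz continuity of spectral invariants with respect to $c$, this contradicts the $c$-convergence of the shifted branes, hence the $\gamma$-convergence of $L^{\alpha_n}_{0,\infty}(\phi_H)$.

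The main obstacle is to make the oscillation of spectral invariants rigorous. This requires quantitative control on the shape of $L^\alpha_\infty$ near the separatrix, presumably through generating-function estimates for the iterates $(\chi^\alpha\circ\phi_H)^{\lfloor 1/\alpha\rfloor}$ combined with an action-angle computation on both sides of the separatrix; the integrability of the pendulum is what makes this tractable, but pinning down two genuinely distinct subsequential limits is the core difficulty of the proof.
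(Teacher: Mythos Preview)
Your outline is not a proof; the central step---exhibiting two subsequences along which $\ell(\mu;\phi_K^1(\tilde L^\alpha_\infty),\tilde L^\alpha_\infty)$ has distinct limits---is asserted but never carried out, and you acknowledge this yourself. The heuristic that ``$\phi_H^{1/\alpha}$ introduces an unbounded, non-monotone twist whose angular phase varies non-trivially with $\alpha$'' does not by itself produce oscillation of a spectral invariant: the number of windings of the spiral $L_\alpha$ increases monotonically as $\alpha\to 0$, and it is unclear what quantity you expect to oscillate, let alone how a bump Hamiltonian $K$ near the separatrix would detect it. Moreover, your first step (identifying the graph selector of $L^\star$ via \cite{DFIZ}) is never used in the contradiction, so it is a red herring.

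The paper's argument is entirely different and much more direct. It does not argue by contradiction via a hypothetical limit; instead it proves the quantitative estimate (Proposition \ref{Prop-6.9}): for any $\alpha>0$ there exists $\beta_0<\alpha$ such that $\gamma(L_\alpha,L_\beta)\geq 4(1-\beta/\alpha)$ for all $0<\beta<\beta_0$. This immediately rules out the Cauchy property. The estimate is obtained by straightening $L_\beta$ to the zero section (so the pair $(L_\alpha,L_\beta)$ becomes a pair $(L_2,L_1)$ with $L_2$ an infinite spiral around the origin), and then applying a geometric lemma (Lemma \ref{Lemma-6.4}) bounding $\gamma(L_1,L_2)$ from below by the minimum of four explicit areas $A,A',B,B'$ cut out by the spiral. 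These areas are then computed via the energy dissipation identity $\dot E_\alpha=-\alpha\,p_\alpha\dot\theta_\alpha$, which relates the area under a heteroclinic arc to the energy drop, and the limit $\beta\to 0$ reduces the relevant area to (essentially) the area under the separatrix of the frictionless pendulum. No graph selectors, no action-angle coordinates, and no probing Hamiltonians are involved.
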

 
Let us consider the equation of the pendulum with friction $\alpha$, that is for $(\theta, p)\in T^*\bS^1$
\begin{equation}\label{eq:pendulum} \ddot \theta + \alpha \dot \theta + f(\theta)=0
 \end{equation} 
We shall assume $f(\theta)=\sin(\theta)$, but the same results would hold for any $f$ such that
\begin{enumerate} 
	\item  $f(\theta)=0 \Leftrightarrow \theta \in \{0,\pi\}$
\item  $f'(0)>0$  and $f'(\pi)<0$ 
\end{enumerate}
We write the equation as
\begin{equation}\label{eq:pendulum2}\left\{ \begin{array}{ll} \dot \theta =p\\
\dot p=- \alpha  p - f(\theta) \end{array}\right . 
 \end{equation} 

Note that for $\alpha=0$ we have the standard pendulum equation, and the vector field corresponding to the equation is 
$(p, -\alpha p - f(\theta))=(p,-f(\theta)) - \alpha (0,p)$. In our conventions, the vector field $(0,-p)$ is the Liouville vector field for $p\,d\theta$, while $(p, -f(\theta))$ is the Hamiltonian vector field corresponding to $-H(\theta, p)= - \frac{1}{2}p^2+ F(\theta)$ where $F'(\theta)=f(\theta)$. In other words, these equations generates the flow $\phi_{-H,\alpha}^t$. We shall always assume $\alpha >0$.

Note that the equilibrium points are given by $p=0, f(\theta)=0$, so there are only two equilibria, one at $(0,0)$, since $f'(0)>0$ it is a stable focus, and one at $\theta=\pi$ with $f'(\pi)<0$, a saddle. 
Finally note that the time-one flow is a conformally symplectic map with ratio $a=e^{-\alpha}$.

\begin{prop} 
The origin $(0,0)$ is a stable equilibrium, while $(0,\pi)$ is unstable. There is a single pair of heteroclinic orbits, from $(\pi,0)$ to $(0,0)$, that we denote $\gamma_L$ and $\gamma_R$ defined on $ \mathbb R$, such that $\lim_{t\to +\infty}\gamma_{\alpha,R}(t)=\lim_{t\to +\infty}\gamma_{\alpha,L}(t)=(0,0)$ while $\lim_{t\to -\infty}\gamma_{\alpha,R}(t)= \lim_{t\to -\infty}\gamma_{\alpha,L}(t)=(\pi,0)$. 
The Birkhoff attractor is $B_\alpha=\gamma_{\alpha,R}( \mathbb R) \cup \gamma_{\alpha,L}( \mathbb R)$.
\end{prop}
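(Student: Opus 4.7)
The plan is to combine classical ODE analysis of the damped pendulum with the characterization of the Birkhoff attractor recalled in Section \ref{sec:connection-Birkhoff}. First I would classify the equilibria by linearization: at $(0,0)$ the Jacobian has trace $-\alpha<0$ and determinant $f'(0)>0$, so both eigenvalues have negative real part and $(0,0)$ is asymptotically stable; at $(\pi,0)$ the determinant $f'(\pi)<0$ forces real eigenvalues of opposite sign, making $(\pi,0)$ a hyperbolic saddle with one-dimensional stable and unstable manifolds $W^s(\pi,0)$ and $W^u(\pi,0)$. Introduce the energy $E(\theta,p)=\tfrac12 p^{2}-\cos\theta$; a direct computation from (\ref{eq:pendulum2}) gives $\dot E=-\alpha p^{2}\le 0$, strict outside $\{p=0\}$. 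Since $E$ is proper on $T^*\bS^1$, every forward orbit stays in a bounded region.

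By the LaSalle invariance principle the $\omega$-limit of any orbit is contained in the largest invariant subset of $\{p=0\}$; combined with $\dot p=-f(\theta)$ this forces the $\omega$-limit to be $(0,0)$ or $(\pi,0)$. Strict decrease of $E$ off the zero locus of $p$ rules out any homoclinic loop at the saddle. The unstable manifold $W^u(\pi,0)$ has exactly two branches $W^u_R$ and $W^u_L$, distinguished by the sign of $p$ near $(\pi,0)$; neither can come back to $(\pi,0)$, hence both have $\omega$-limit $(0,0)$. Parametrizing each branch as a complete orbit $\gamma_{\alpha,\ast}:\R\to T^*\bS^1$ with $\gamma_{\alpha,\ast}(t)\to(\pi,0)$ as $t\to-\infty$ yields the claimed pair of heteroclinic orbits, and it is the only such pair because $W^u(\pi,0)$ is one-dimensional.

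For the identification of the Birkhoff attractor I would use the characterization $C_0=\bigcap_{n\ge 0}\phi^n_{H,\alpha}(\mathbb A)$ for $\mathbb A$ a sufficiently large invariant annulus: a point $x$ lies in $C_0$ iff its full backward orbit is bounded. Running LaSalle in backward time (where $E$ increases at rate $\alpha p^{2}$) its $\alpha$-limit set is again contained in $\{(0,0),(\pi,0)\}$, but backward convergence to the asymptotically stable $(0,0)$ is only possible for $x=(0,0)$ itself. Hence
\[
C_0=\{(0,0),(\pi,0)\}\cup\overline{W^u(\pi,0)}=\overline{\gamma_{\alpha,R}(\R)\cup\gamma_{\alpha,L}(\R)}.
\]
There remains to check $C_1=C_0$, i.e.\ that no hair is trimmed. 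Every non-equilibrium point on $\gamma_{\alpha,\ast}$ admits small transverse neighborhoods on either side of the orbit that lie in different components of $\mathbb A\setminus C_0$ and flow forward to the two boundary components of $\mathbb A$, so every such point lies in the common frontier of the two unbounded components; the equilibria lie in that frontier as limits of both branches.

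The main obstacle is the behaviour near $(0,0)$ in the underdamped regime $\alpha^{2}<4f'(0)$, where $\gamma_{\alpha,R}$ and $\gamma_{\alpha,L}$ spiral infinitely often into the focus and could a priori partition a punctured neighborhood of $(0,0)$ into more than two pieces. I would handle this by lifting to the universal cover of a small punctured disk around $(0,0)$ and using the linearization: each spiral lifts to a monotone embedded arc, the two lifts are disjoint by uniqueness of trajectories of the ODE, and together they separate the lifted disk into exactly two unbounded components; projecting back shows that $\mathbb A\setminus C_0$ has exactly two components with common frontier $C_0$, giving $B_\alpha=C_1=C_0$ and completing the proof.
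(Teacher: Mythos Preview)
Your identification of $C_0$ via the Lyapunov function $E$ and LaSalle's principle is correct and is essentially what the paper outsources to the reference \cite{Martins-Birkhoff}. The paper then finishes in one line: the Birkhoff attractor $C_1\subset C_0$ is an invariant continuum that separates the annulus, while the only invariant subcontinua of $C_0$ are $\{(0,0)\}$, $\{(\pi,0)\}$, $\overline{\gamma_{\alpha,R}(\R)}$, $\overline{\gamma_{\alpha,L}(\R)}$ and $C_0$ itself (any closed invariant set meeting $\gamma_{\alpha,\ast}$ contains its whole closure); the first four are points or arcs and cannot separate, so $C_1=C_0$. This bypasses entirely the local analysis near the focus.

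Your direct argument for $C_1=C_0$, by contrast, has two genuine slips. First, the claim that points on either side of $\gamma_{\alpha,\ast}$ ``flow forward to the two boundary components of $\mathbb A$'' is false: every forward orbit in $\mathbb A$ converges to $(0,0)$ (or lies on $W^s(\pi,0)$). What distinguishes the two sides is the \emph{backward} flow, which exits $\mathbb A$ since $C_0$ is exactly the set with bounded backward orbit; you would still need to argue that the two local sides exit through different boundary circles, and this is precisely the global separation statement you are trying to prove, so the argument is circular as written.

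Second, in the universal cover of the punctured disk the full preimage of $\gamma_{\alpha,R}\cup\gamma_{\alpha,L}$ consists of \emph{all} deck translates of your two lifts, not just the two lifts themselves; two monotone arcs in a strip separate it into three pieces, not two. The correct statement is that the infinitely many translates cut the strip into infinitely many components falling into exactly two orbits under the deck group, and these project to two components of the punctured disk. This is fixable, but as stated the covering argument does not establish what you claim. The paper's minimality argument avoids this topology altogether.
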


\begin{proof} 
We refer to \cite{Martins-Birkhoff}, where it is proved that the largest bounded invariant set is $B_\alpha$. Since there is no smaller non-trivial continuum that is an invariant set, $B_\alpha$ must be the Birkhoff attractor.
\end{proof}

\begin{figure}[ht]
 \begin{overpic}[width=9cm]{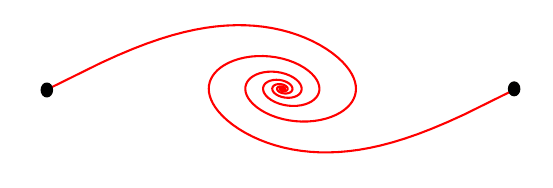}
 \end{overpic}
\caption{The Birkhoff attractor for the pendulum with friction}
\label{fig-3}
\end{figure}

\begin{figure}[ht]
 \begin{overpic}[width=9cm]{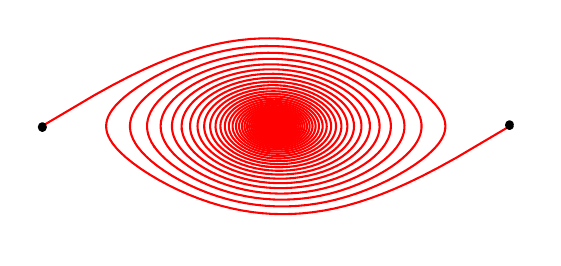}
 \end{overpic}
\caption{ The Birkhoff attractor for the pendulum with very small friction}
\label{fig-4}
\end{figure}

\begin{figure}[ht]
 \begin{overpic}[width=9cm]{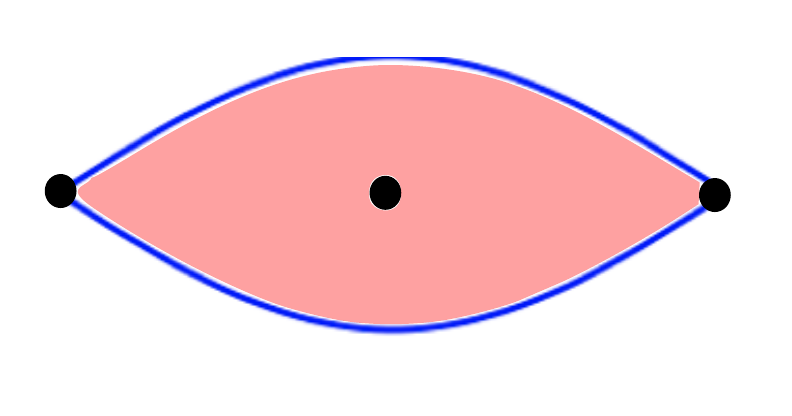}
 \end{overpic}
\caption{ The limit of the Birkhoff attractors as $\alpha$ goes to $0$ for the pendulum}
\label{fig-16}
\end{figure}
Note that in our case, the Birkhoff attractor is a $C^0$-curve which is the image of the zero section by a Hamiltonian homeomorphism. We claim that as such, it is the $\gamma$-support of a unique element of $\hatL$ denoted by $L_\alpha$. To explain this point, we call $(U)$ the property satisfied by a subset of $T^*{\mathbb S}^1$ if and only if it is the $\gamma$-support of a unique element $L\in\hatL$. It follows from \cite[Theorem 8.6]{Viterbo-gammas} that the zero section satisfies $(U)$. Moreover, by Remark \ref{rem:sympeo}, the property $(U)$ is invariant under the action of Hamiltonian homeomorphisms on $\hatL$. This shows our claim that $B_\alpha$ satisfies $(U)$.

In particular, $L_\alpha$ is the fixed point provided by Theorem \ref{th:main-for-branes}.
  We now wonder whether $L_\alpha$ converges or at least has a converging subsequence as  $\alpha$ goes to $0$. 
  The following proposition answers this question by the negative.
\begin{prop} 
Let $(\alpha_k)_{k\geq 1}$ be a sequence of positive real numbers converging to $0$. Then  the sequence  $(L_{\alpha_k})_{k\geq 1}$ does not $\gamma$-converge. 
\end{prop}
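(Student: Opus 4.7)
Sketch. My plan is to argue by contradiction: assume that a subsequence $(L_{\alpha_{k_j}})$ converges in $\gamma$ to some $L_\infty \in \widehat{\mathcal L}(T^*\bS^1)$, and derive an incompatibility.

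First I would identify the graph selector of $L_\infty$. By Corollary \ref{cor:KAMF-graph-selector}, the graph selector of each brane lift $\widetilde L_\alpha$ is the discounted viscosity solution $u_{H, \alpha}$, and by the main result of \cite{DFIZ} these converge uniformly as $\alpha\to 0$ to a specific weak KAM solution $u_0$ of the stationary Hamilton--Jacobi equation $H(x, du_0) = 0$. Since the graph selector extends continuously from $\mathcal{LL}$ to $\widehat{\mathcal{LL}}$ in the brane distance $c$, the graph selector of $\widetilde L_\infty$ must equal $u_0$ up to a constant. For the standard pendulum, $u_0$ is a piecewise smooth function with a single corner whose gradient traces portions of both separatrices on complementary open arcs of $\bS^1$.

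Next I would constrain $\gammasupp(L_\infty)$. By the semi-continuity of the $\gamma$-support (Proposition \ref{prop:gamma-support}(4)), $\gammasupp(L_\infty) \subseteq \limsup_j B_{\alpha_{k_j}}$. As $\alpha \to 0$, the two heteroclinic orbits making up $B_\alpha$ accumulate on the figure-eight $\Sigma$ formed by the upper and lower separatrices of the undamped pendulum, so $\gammasupp(L_\infty) \subseteq \Sigma \cup \{(0,0)\}$.

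A third constraint is the invariance of $L_\infty$ under $\phi_H^1$. Indeed, $L_{\alpha_{k_j}}$ is fixed by $\chi^{\alpha_{k_j}} \circ \phi_H^1$, and a direct computation on smooth graphs (using $\chi^\alpha L_f = L_{e^{-\alpha}f}$) combined with the uniform Lipschitz bounds of Remark \ref{rem:completions} shows that $\chi^\alpha \to \mathrm{Id}$ in $\widehat{\mathcal L}(T^*\bS^1)$ as $\alpha \to 0$. Passing to the limit, one obtains $\phi_H^1(L_\infty) = L_\infty$.

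The main obstacle is then to exhibit the incompatibility of the three constraints. The informal picture is that compatibility between the graph selector and the $\gamma$-support should force $\gammasupp(L_\infty)$ to contain the graph of $du_0$ wherever defined, which meets both separatrices; combined with $\phi_H^1$-invariance, and since each separatrix is a single orbit asymptoting to the saddle, this forces $\gammasupp(L_\infty) = \Sigma$. One then produces a compactly supported Hamiltonian $\psi$ adapted to the singular geometry of $\Sigma$ near the saddle $(\pi, 0)$ whose action on $\widehat{\mathcal L}(T^*\bS^1)$ rules out the existence of any $\phi_H^1$-invariant element with $\gamma$-support equal to $\Sigma$ and graph selector $u_0$. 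Turning the rough picture above into a precise quantitative lower bound on $\gamma(\psi L_{\alpha_{k_j}}, L_{\alpha_{k_j}})$ that survives passage to the limit is the most delicate step of the argument.
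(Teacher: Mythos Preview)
Your approach is fundamentally different from the paper's, and contains a genuine error as well as a major gap.

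\textbf{The error (Step 2).} Your claim that ``the two heteroclinic orbits making up $B_\alpha$ accumulate on the figure-eight $\Sigma$'' is false. Each heteroclinic orbit of the damped pendulum spirals into the focus $(0,0)$, and as $\alpha\to 0$ the successive windings become $O(\alpha)$-dense inside the eye bounded by the separatrices. Hence the Hausdorff limit $\limsup_j B_{\alpha_{k_j}}$ is the \emph{entire closed eye region} $\{H\leq 1\}$, not merely its boundary $\Sigma$. Semi-continuity of the $\gamma$-support therefore only gives $\gammasupp(L_\infty)\subseteq\{H\leq 1\}$, which is far too weak: this region is a $2$-dimensional, $\phi_H^1$-invariant, coisotropic set, so none of your subsequent constraints exclude it. Your argument that $\gammasupp(L_\infty)=\Sigma$ collapses here.

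\textbf{The gap (Step 4).} Even granting the (incorrect) conclusion $\gammasupp(L_\infty)=\Sigma$, the contradiction is not produced. You write that one ``then produces a compactly supported Hamiltonian $\psi$ adapted to the singular geometry of $\Sigma$ near the saddle'' and concede that making this precise is ``the most delicate step''. But nothing in the paper's toolkit rules out a $\phi_H^1$-invariant element of $\widehat{\mathcal L}$ with support equal to the figure-eight; this is exactly the substance of the proposition, and you have essentially restated the problem. A minor additional inaccuracy: $L_\alpha$ is fixed by $\phi_{H,\alpha}^1$, the time-one map of the damped flow, not by the composition $\chi^\alpha\circ\phi_H^1$.

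\textbf{What the paper does instead.} The paper bypasses all of this by a direct, quantitative estimate showing the sequence is not Cauchy. Proposition \ref{Prop-6.9} proves that for any $\alpha>0$ one can find $\beta$ arbitrarily small with $\gamma(L_\alpha,L_\beta)\geq 4(1-\beta/\alpha)$. The proof has two ingredients: a combinatorial lower bound (Lemma \ref{Lemma-6.4}) saying that for a pair (zero section, spiral) the $\gamma$-distance is at least the smallest of four explicit areas $A,A',B,B'$, and an energy computation for the damped pendulum showing that the relevant area between $L_\alpha$ and $L_\beta$ (after straightening $L_\beta$ to the zero section) is $\simeq 8(1-\beta/\alpha)$. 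No properties of a hypothetical limit are needed.
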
 
This will immediately follow from
\begin{prop}\label{Prop-6.9} For any $\alpha>0$ we can find $0<\beta_0<\alpha$ so that for $0<\beta < \beta_0$ we have 
$$\gamma (L_\alpha, L_\beta) \geq 4 \left(1 - \frac{\beta}{\alpha}\right) $$
 \end{prop}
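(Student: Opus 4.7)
My plan is to obtain a lower bound on $\gamma(L_\alpha, L_\beta)$ via graph selectors. By Corollary~\ref{cor:KAMF-graph-selector}, the discounted viscosity solution $u_{H,\alpha}$ is the graph selector of $\tilde L_\infty(H,\alpha)$, and the graph selector map extends to $\hatLL(T^*\bS^1)$ as a $1$-Lipschitz map with respect to the $c$-metric (Section~\ref{sec:graph-selectors}). Taking the infimum over shifts of one brane converts $c$ into $\gamma$ and the $L^\infty$-norm into half the oscillation, yielding
\[
\gamma(L_\alpha, L_\beta) \;\geq\; \tfrac{1}{2}\,\mathrm{osc}\bigl(u_{H,\alpha}-u_{H,\beta}\bigr).
\]
It then suffices to show that this oscillation is at least $8(1-\beta/\alpha)$ for $\beta$ sufficiently small compared to $\alpha$.

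To compute $u_{H,\alpha}$ concretely I would apply Proposition~\ref{prop:u-and-U}. At $q=\pi$, the constant curve $s\mapsto\pi$ saturates the pointwise bound $L(\theta,v)=\tfrac{1}{2}v^2+\cos\theta\ge -1$, giving $u_{H,\alpha}(\pi)=-1/\alpha$ exactly. For $q\ne\pi$, the identity $U_{H,\alpha}\big|_{B_\alpha}=-E/\alpha$ with $E=\tfrac{1}{2}p^2-\cos\theta$ the frictionless energy follows from integrating $\dot E=-\alpha\dot\theta^2$ by parts against $e^{\alpha s}$ along backward damped orbits. The minimum of $U_{H,\alpha}(q,\cdot)$ over $B_\alpha\cap T^*_q\bS^1$ is then attained at the first intersection of the $\alpha$-damped heteroclinic with $\{\theta=q\}$, and combining with the dissipation formula yields
\[
u_{H,\alpha}(q)-u_{H,\alpha}(\pi) \;=\; \int_q^\pi |\dot\theta_\alpha(\theta')|\,d\theta',
\]
where $|\dot\theta_\alpha(\theta)|$ denotes the angular speed of the damped heteroclinic parametrized by $\theta$.

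Finally, I would compare the two speed profiles $|\dot\theta_\alpha|$ and $|\dot\theta_\beta|$ to derive the oscillation estimate. As $\beta\to 0$ the $\beta$-heteroclinic converges to the frictionless separatrix $|\dot\theta_0(\theta)|=2\cos(\theta/2)$, whose integral over $[0,\pi]$ is exactly $\int_0^\pi 2\cos(\theta'/2)\,d\theta'=4$; this is where the constant $4$ comes from. The factor $(1-\beta/\alpha)$ should emerge from the relation $X_{H,\alpha}-X_{H,\beta}=(\alpha-\beta)\,X$ between the damped vector fields (with $X$ the Liouville vector field), which effectively relates the two dynamics up to a conformal rescaling whose contraction over the orbit lifetime encodes the ratio $\beta/\alpha$. \emph{The main obstacle} is extracting precisely the constant $4(1-\beta/\alpha)$, rather than a weaker bound like $O(\alpha-\beta)$ or $O(\alpha\log(1/\alpha))$ which would vanish in the limit: a naive perturbative comparison at $q=0$ and $q=\pi$ yields only $|A_\alpha-A_\beta|$ where $A_\alpha=\int_0^\pi|\dot\theta_\alpha|\,d\theta$, which is already $o(1)$ to leading order. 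The argument will therefore require exploiting more global features of the two attractors -- most likely the wrap-around branches $\gamma_{\alpha,R}$ versus $\gamma_{\beta,R}$, which contribute separate crossings on the circle and can be compared via the conformal-rescaling relation above to produce the desired factor.
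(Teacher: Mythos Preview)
Your reduction via graph selectors is correct as far as it goes: the inequality $\gamma(L_\alpha,L_\beta)\geq\tfrac12\,\mathrm{osc}(u_{H,\alpha}-u_{H,\beta})$ holds, and your identity $U_{H,\alpha}=-E/\alpha$ on the attractor is a nice computation. The problem is that this route \emph{cannot} yield the bound in the proposition, and the obstruction is structural, not a matter of cleverer bookkeeping. The viscosity solution $u_{H,\alpha}$ depends only on the outer envelope of the spiral (the first crossing of each vertical), and as you observed, $q\mapsto u_{H,\alpha}(q)-u_{H,\alpha}(\pi)$ converges uniformly as $\alpha\to 0$ to the primitive of the separatrix profile $2\cos(\theta/2)$. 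Hence $\mathrm{osc}(u_{H,\alpha}-u_{H,\beta})\to 0$ as $\alpha,\beta\to 0$, so the selector bound not only misses the constant $4(1-\beta/\alpha)$ but fails to give \emph{any} uniform lower bound and therefore cannot prove non-convergence of $(L_{\alpha_k})$. Your closing remark about ``wrap-around branches'' points at the phenomenon but does not supply a mechanism: the graph selector is blind to the inner coils, and those are precisely what carry the $\gamma$-distance.

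The paper's argument is of a different nature. One first straightens $L_\beta$ to the zero section by an area-preserving map, so that the pair $(L_\alpha,L_\beta)$ becomes $(L_2,L_1)$ with $L_1=0_{\bS^1}$ and $L_2$ an infinite spiral. A separate lemma (Lemma~\ref{Lemma-6.4} and its limit version) shows that for such a pair, $\gamma(L_1,L_2)$ is bounded below by the minimum of the four ``outermost'' lune areas $A,A',B,B'$ between $L_1$ and $L_2$; the proof uses a deformation argument and the ball-displacement lower bound for $\gamma$. Back in the original picture, by symmetry these four areas coincide, and each equals the area enclosed between $\gamma_{\beta,L}((-\infty,t_\beta])$ and $\gamma_{\alpha,R}([t_\alpha,+\infty))$ where $(t_\alpha,t_\beta)$ is their first intersection. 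Using $\dot E=-\alpha p\dot\theta$ one writes this area as $\tfrac1\beta(1-E_\beta(t_\beta))-\tfrac1\alpha(1-E_\alpha(t_\alpha))$ with $E_\alpha(t_\alpha)=E_\beta(t_\beta)$; for $\beta$ small the intersection is near the saddle, $\tfrac1\beta(1-E_\beta(t_\beta))\to 8$ (the full separatrix area), and the expression becomes $\simeq 8(1-\beta/\alpha)$. The key input you are missing is the spiral lemma, which is what converts the inner winding into a $\gamma$-lower bound.
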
 

   Note that the curves $L_\alpha$ are not smooth as --- at least for $\alpha$ small enough\footnote{in fact $\alpha<2$, which implies the equilibrium is elliptic.} ---  they twist infinitely many times around the point $(0,0)$. We will need to approximate them by smooth curves which are spirals described as follows. In the phase space $\R/2\pi\Z\times\R$ of the pendulum, we consider the closed disc $D$ of radius $\pi$ centered at $(0,0)$ and endow it with polar coordinates $(r,\phi)$. For any $s>0$, we consider homeomorphisms $\rho$ supported in $D$ and of the form given by
\[\rho(r,\phi)=(r,\phi+h(r)), \quad \forall (r,\phi)\in D\]
   where  $h:[0,\pi]\to[-s,0]$ is an increasing continuous map with $h(0)=-s$ and $h(\pi)=0$. We will say that a curve $L_2$ is a \emph{smooth spiral} if it is an embedded smooth closed curve in $\R/2\pi\Z\times\R$ transverse to the zero section $L_1= \R/2\pi\Z\times\{0\}$ and if there exist a parameter $t>0$ and an orientation preserving  homeomorphism $\psi$ of $\R/2\pi\Z\times\R$ which fixes $L_1$ and satisfies $L_2=\psi(\rho^t(L_1))$ (See Figure \ref{fig-10}). The above definition also makes sense if $s=\infty$, in this case, the curve $L_2$ is only continuous and we call it an \emph{infinite spiral}. The argument given above shows that infinite spirals satisfy Property $(U)$, i.e. are the $\gamma$-support of a unique element in $\hatL$. In the argument below, we sometimes abuse notation and also denote by $L_2$ this unique element.

\begin{figure}[ht]
 \begin{overpic}[width=9cm]{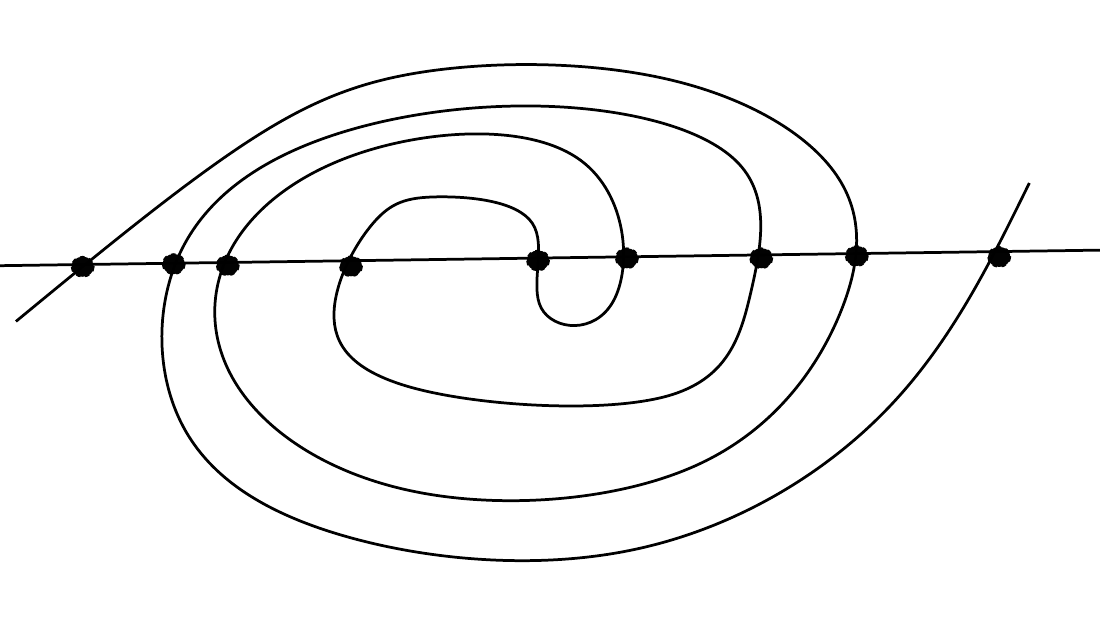}
 \put(100,32){$L_1$}
  \put(35,0){$L_2$}
   \put(92,30){$0$}
   \put(80,30){$1$}
    \put(65,30){$2$}
   \put(58,30){$3$}
   \put(50,30){$4$}
   \put(34,30){$3$}
   \put(22,29){$2$}
   \put(12,28){$1$}
   \put(8,28){$0$}  
 \end{overpic}
\caption{ $L_1,L_2$ and the Maslov index of the intersections}
\label{fig-10}
\end{figure}

In the following lemma and proof, we will use the green and pink $A, A', B, B'$ represented in green and pink on Figure \ref{fig-11}. In order to describe them, note that the point $t_0=(\pi, 0)$ belongs to $L_2\cap L_1$ and let $t_-$ and $t_+$ be the two points of $L_2\cap L_1$ which are adjacent to $t_0$ and such that $t_-$ is on the left of $t_0$ and $t_+$ on its right. Note that $L_1$ has a canonical orientation, which induces an orientation of $L_2$. We let:  
  \begin{itemize}
  \item $A$ be the area enclosed by the oriented segments of $L_1$ and $L_2$ that go from $t_0$ to $t_-$.
  \item $A'$ be the area enclosed by the oriented segments of $L_1$ and $L_2$ that go from $t_0$ to $t_+$.
  \item $B$ be the area enclosed by the oriented segments of $L_1$ and $L_2$ that go from $t_+$ to $t_0$.
  \item $B'$ be the area enclosed by the oriented segments of $L_1$ and $L_2$ that go from $t_-$ to $t_0$.
  \end{itemize}

\begin{figure}
\begin{subfigure}{\textwidth}
\centering
 \begin{overpic}[width=9cm]{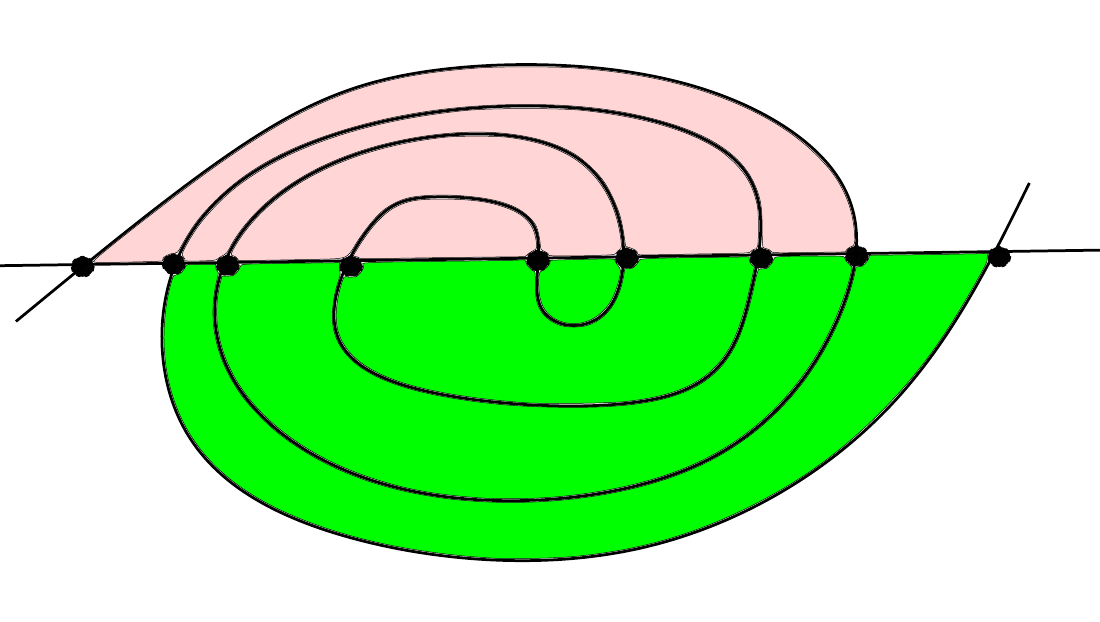}
 \put(100,35){$L_1$}
  \put(35,0){$L_2$}
   \put(35,10){$B$}
    \put(35,40){$A$}
 \end{overpic}
  \caption{The areas $A$ in pink and $B$ in green}\label{subfig-11a}
 \end{subfigure}
\begin{subfigure}{\textwidth}
\centering
\begin{overpic}[width=9cm]{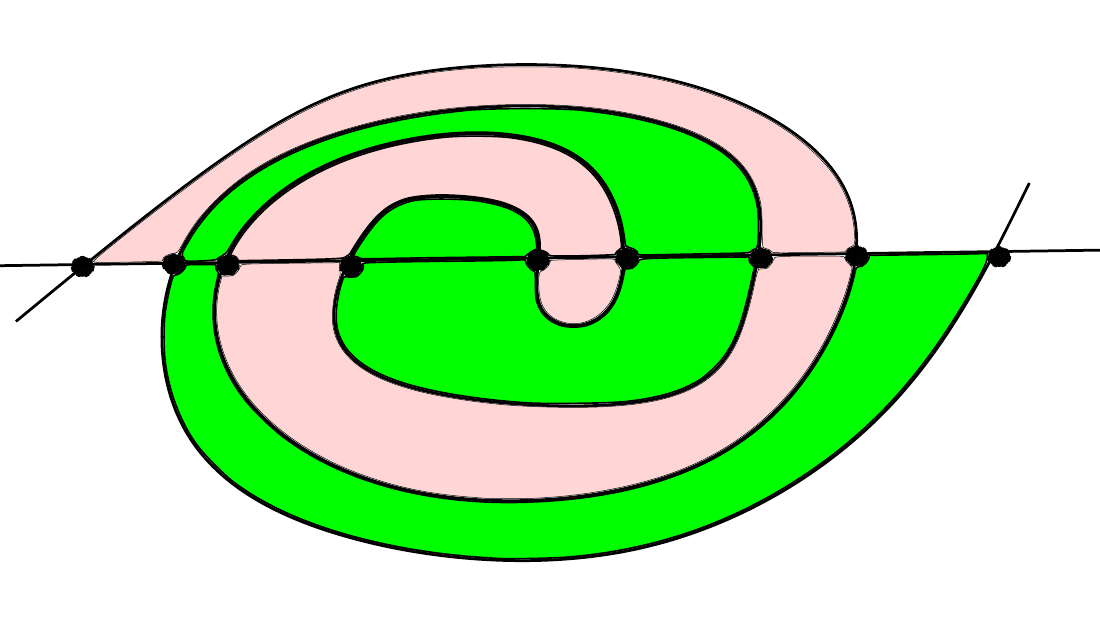}
 \put(35,25){$B'$}
    \put(35,40){$A'$}
 \end{overpic}
 \caption{ The  areas $A'$ in pink and $B'$ in green}\label{subfig-11b}
 \end{subfigure}
 
\caption{ $\gamma(L_1,L_2)$ is bounded below by the smallest of the  areas $A,A',B,B'$}
\label{fig-11}
\end{figure}

We are now ready to state 

\begin{lem} \label{Lemma-6.4}
  Assume $L_1$ is the zero section and $L_2$ is a smooth spiral as described above. Then, 
\[\gamma(L_1,L_2)\geq \min(A,A',B,B').\]
\end{lem}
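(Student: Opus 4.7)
The plan is to lower-bound $\gamma(L_1,L_2)=\ell(\mu;L_1,L_2)-\ell(1;L_1,L_2)$ via action estimates on the Floer complex $CF(L_1,L_2)$. First, fix the trivial primitive $f_{L_1}\equiv 0$ on $L_1$ and choose a primitive $f_{L_2}$ of $\lambda|_{L_2}$ normalized so that $f_{L_2}(t_0)=0$; then the action at any intersection point $x\in L_1\cap L_2$ is $\mathcal{A}(x)=-f_{L_2}(x)$. By Stokes' theorem, $\mathcal{A}(x)-\mathcal{A}(y)$ equals, up to sign, the signed symplectic area of a region bounded by arcs of $L_1$ and $L_2$ joining $x$ to $y$.

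Applying this to the four arcs determined by the pairs $(t_0,t_\pm)$: the two arcs of $L_2$ between $t_0$ and $t_-$ (resp.\ $t_+$) bound, together with the short segment of $L_1$, the regions of areas $A,B'$ (resp.\ $A',B$). Because $L_2$ is a closed exact curve, $\int_{L_2}\lambda=0$, so the integrals of $\lambda$ along the two arcs between $t_0$ and $t_-$ (resp.\ $t_+$) must coincide; combined with Stokes this forces both $|\mathcal{A}(t_-)-\mathcal{A}(t_0)|$ and $|\mathcal{A}(t_+)-\mathcal{A}(t_0)|$ to be at least $\min(A,A',B,B')$.

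To connect these estimates to $\gamma(L_1,L_2)$, I exploit that the smooth exact Lagrangian $L_2\subset T^*\mathbb{S}^1$ is Hamiltonian-isotopic to the zero section, so it admits a generating function $S:\mathbb{S}^1\times\R^d\to\R$ quadratic at infinity; the spectral invariants then become the classical minmax critical values $\ell(1;L_1,L_2)=c(1;S)$ and $\ell(\mu;L_1,L_2)=c(\mu;S)$. A Morse-theoretic analysis of the barcode of $S$, using the spiral's Maslov profile $0,1,2,3,4,3,2,1,0$ from Figure~\ref{fig-10}, shows that the critical values realizing the essential classes $1\in H^0(\mathbb{S}^1)$ and $\mu\in H^1(\mathbb{S}^1)$ correspond to intersection points whose separation in the barcode is bounded below by at least one of the action differences $|\mathcal{A}(t_\pm)-\mathcal{A}(t_0)|$, yielding $\gamma(L_1,L_2)\geq\min(A,A',B,B')$.

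The main obstacle is this final Morse-theoretic identification. I plan to handle it by tracking the barcode pairings along the spiral's nested-loop structure: each nested loop of the spiral contributes a finite bar whose length equals the enclosed symplectic area, while the two infinite bars representing the essential $1$- and $\mu$-classes are separated by these finite bars in a way dictated by the Maslov profile. The four outermost loops (adjacent to $t_0$) with areas $A,A',B,B'$ produce the shortest such bars that the infinite bars must ``cross'', giving the desired lower bound. An alternative route is to reduce the claim to the case of a simpler model Lagrangian via a Hamiltonian isotopy preserving the regions $A,A',B,B'$, using the $\gamma$-stability of the spectral invariants.
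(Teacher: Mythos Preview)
Your proposal has a genuine gap at the decisive step. Steps 1 and 2 are fine but essentially trivial: the action difference $|\mathcal A(t_-)-\mathcal A(t_0)|$ is \emph{equal} to one of the areas $A$ or $B'$ (depending on orientation), so of course it dominates $\min(A,A',B,B')$. The entire content of the lemma lies in your Step 3, and there you have only stated an intention, not an argument.

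The difficulty you are glossing over is real. The spectral values $\ell(1)$ and $\ell(\mu)$ are action values of \emph{some} intersection points of consecutive Maslov index, but there is no a priori reason they are realized at $t_0,t_-,t_+$. The spiral has many intersection points, and the finite bars coming from the inner loops typically have \emph{smaller} area than the outer ones $A,A',B,B'$; so your heuristic that ``the four outermost loops produce the shortest bars that the infinite bars must cross'' is pointing in the wrong direction. A direct barcode computation here would require you to determine, for an arbitrary smooth spiral with arbitrary area profile, exactly which pair of points carries the two infinite bars --- and you have not done this.

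The paper's proof sidesteps this identification entirely. It argues by contradiction: assuming $\gamma(L_1,L_2)<m=\min(A,A',B,B')$, the pair of intersection points realizing $\gamma$ must bound a strip that does \emph{not} touch the two outermost arcs of the spiral. One then deforms $L_2$ by inflating only those outermost arcs, increasing $A,A',B,B'$ while leaving all other strip areas fixed; by spectrality and continuity, $\gamma(L_1,L_2(t))$ stays constant. But for large $t$ one can embed a symplectic disc of arbitrarily large area whose diameter lies on $L_1$ and which misses $L_2(t)$, forcing $\gamma(L_1,L_2(t))$ to be large (via \cite[Lemma 7]{Hum-Lec-Sey2}). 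This is a stability-plus-displacement argument, not a barcode computation, and it is what you are missing. Your closing ``alternative route'' gestures toward this idea but does not supply it.
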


\begin{figure}[ht]
 \begin{overpic}[width=9cm]{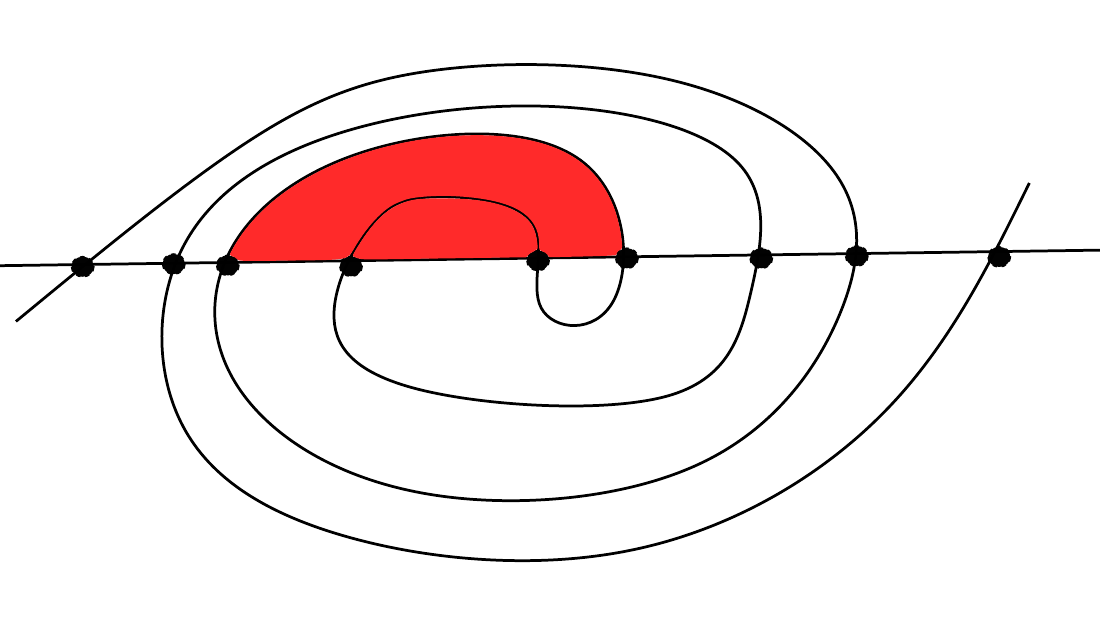}
 \put(100,32){$L_1$}
  \put(35,0){$L_2$}
   \put(92,30){$0$}
   \put(80,30){$1$}
    \put(65,30){$2$}
   \put(58,30){$3$}
   \put(50,30){$4$}
   \put(34,30){$3$}
   \put(22,29){$2$}
   \put(12,28){$1$}
   \put(8,28){$0$}
 \end{overpic}
  \begin{overpic}[width=9cm]{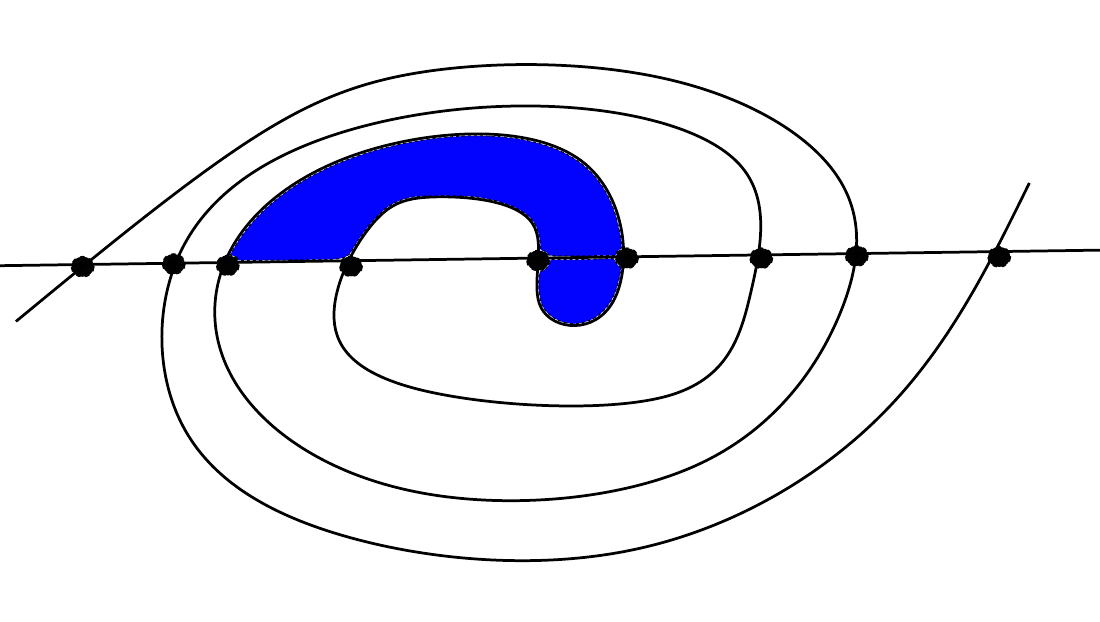}
 \put(100,32){$L_1$}
  \put(35,0){$L_2$}
   \put(92,30){$0$}
   \put(80,30){$1$}
    \put(65,30){$2$}
   \put(58,30){$3$}
   \put(50,30){$4$}
   \put(34,30){$3$}
   \put(22,29){$2$}
   \put(12,28){$1$}
   \put(8,28){$0$}
 \end{overpic}
 
\caption{ Areas realizing some of the possible values of $\gamma(L_1,L_2)$}
\label{fig-10b}
\end{figure}

\begin{figure}[ht]
\begin{overpic}[width=9cm]{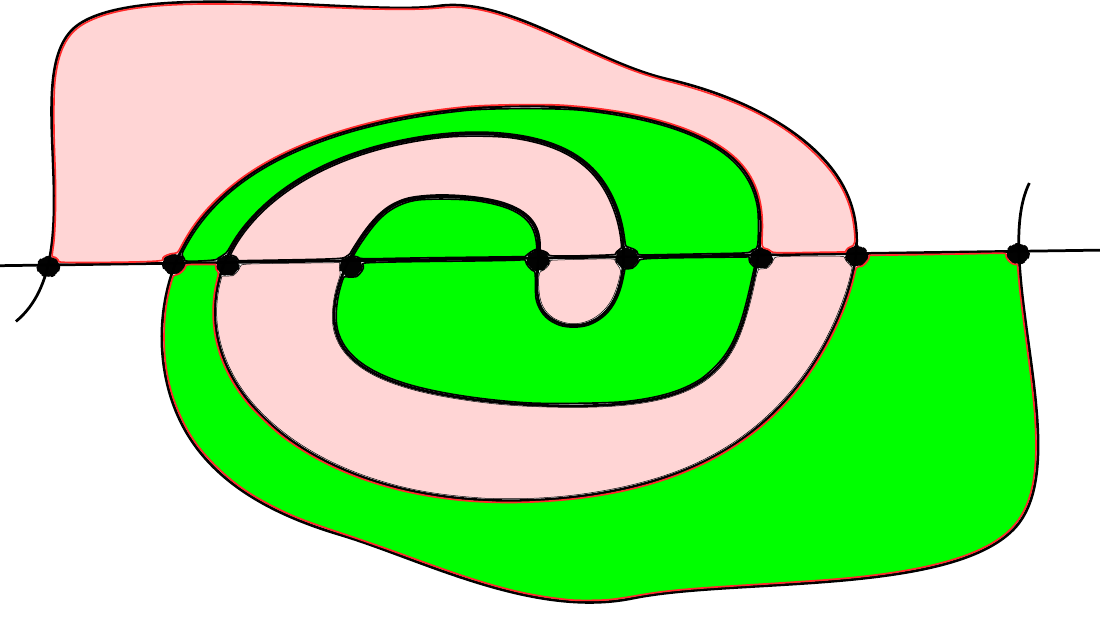}
 \put(35,25){$B'$}
    \put(35,40){$A'$}
 \end{overpic}
 \caption{ The  deformation $L_2(t)$}\label{fig-15}	
\end{figure}

\begin{figure}[ht]
 \begin{overpic}[width=9cm]{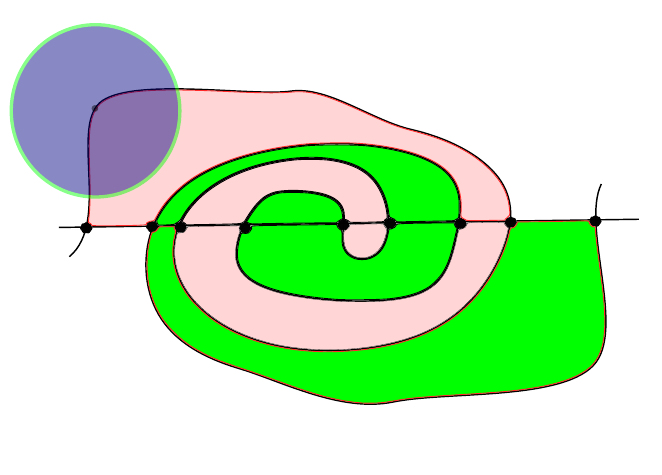}
 \end{overpic}
\caption{The disc centered on the Lagrangian}
\label{fig-8}
\end{figure}

\begin{figure}[ht]
\begin{overpic}[width=5cm]{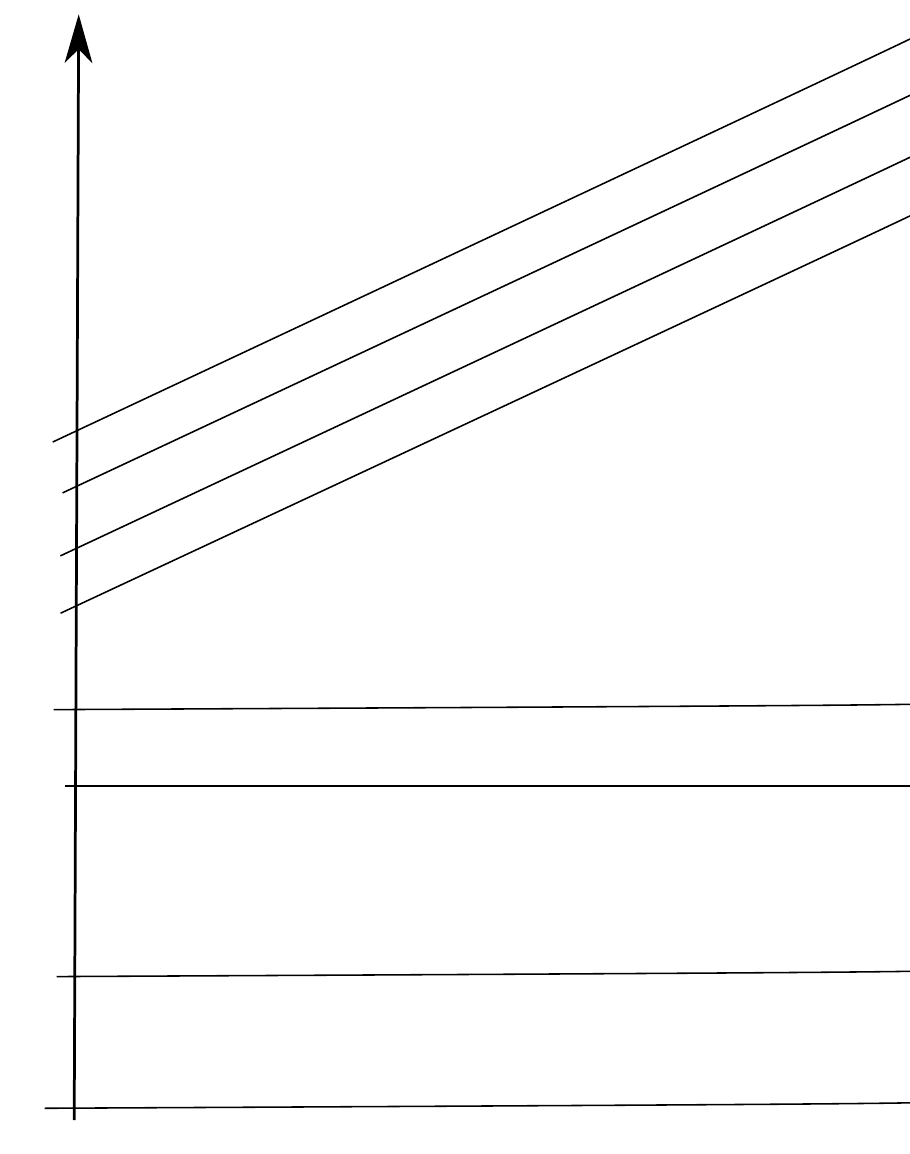}
\put(-8,3){$F(t)$}
\put(-8,12){$E(t)$}
\put(-8,30){$D(t)$}
\put(-8,38){$C(t)$}
\put(-8,55){$B'(t)$}
\put(-8,45){$B(t)$}
\put(-8,50){$A'(t)$}
\put(-8,60){$A(t)$}
 \end{overpic}
\caption{Actions of the intersection points $L_1\cap L_2(t)$.}
\label{fig-14}
\end{figure} 

\begin{proof} 
 Assume $\gamma(L_1,L_2)$ is less than the minimum $m$ of these four numbers. 
 We point out that $\gamma(L_1,L_2)$ must equal the difference of action between two points of consecutive index. Some of the possible values obtained are represented on Figure \ref{fig-10b} as the area of the red or blue discs.
 
 We see that if the area of the disc is less than $m$, then the boundary of the disc does not touch either of the outside curves bounding the green or pink domain on Figure \ref{fig-11}. Indeed the only discs connecting two consecutive points and having one of the two outside curves as a boundary are the pink or green discs in Figure \ref{fig-11}, subfigure \ref{subfig-11a} or \ref{subfig-11b}. 
 
    We may then continuously deform $L_2$ as $L_2(t)$ (see Figure \ref{fig-15}) so that we increase the area of $A(t), A'(t), B(t), B'(t)$ by inflating the lower boundary of the green part and (to preserve exactness) simultaneously increase the upper boundary of the pink part. 
    Then only $A(t),B(t),A'(t), B'(t)$ change, and so the values of the areas of the disc representing $\gamma(L_1,L_2(t))$ are unchanged by assumption, and since this varies continuously and starts below $m$ this quantity remains constant. 
     But this is impossible, since for $t$ large enough we can find the image of a symplectic disc of arbitrarily large area such that the image of a diameter goes to $L_1$ while the image of the disc avoids $L_2(s)$ and this implies $\gamma(L_1,L_2(s))$ is larger than half the area of such a disc (see \cite[Lemma 7]{Hum-Lec-Sey2}). 
         This contradicts our assumption. 
   \end{proof}

    By a simple approximation argument, we can show
\begin{lem}  \label{Lemma-6.4-for-infinite-spirals} Lemma \ref{Lemma-6.4} still holds for $L_2$ an infinite spiral.
\end{lem}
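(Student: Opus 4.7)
The approach is to approximate the infinite spiral $L_2$ by a sequence of smooth spirals $L_2^n$ that are $\gamma$-close to $L_2$, apply Lemma \ref{Lemma-6.4} to each $L_2^n$, and then pass to the limit. The infinite spiral is the limit (in $\hat{\mathcal{L}}$) of smooth spirals $L_2^n$ obtained by truncating the infinite winding in a shrinking disc around the center $(0,0)$: concretely, take $L_2^n$ of the form $\psi(\rho_n(L_1))$ where $\rho_n$ is defined by a finite parameter $s_n \to +\infty$, and arrange that $L_2^n$ agrees with $L_2$ outside a disc $D_n$ of radius $\varepsilon_n \to 0$ around the center. Since the $L_2^n$ come from a sequence of smooth spirals whose windings accumulate on the origin, one has $\gamma(L_2^n, L_2) \to 0$.

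The key geometric observation is that the four areas $A, A', B, B'$ are defined purely from the configuration in a small neighborhood of the base point $t_0 = (\pi, 0)$, together with the adjacent intersection points $t_-$ and $t_+$. Since our approximations $L_2^n$ coincide with $L_2$ outside a small disc around the origin (which is disjoint from a neighborhood of $t_0$ once $n$ is large), we have $t_\pm^n = t_\pm$ and $A_n = A$, $A_n' = A'$, $B_n = B$, $B_n' = B'$ for all $n$ large enough. Applying Lemma \ref{Lemma-6.4} to each smooth spiral $L_2^n$ then yields
\[
\gamma(L_1, L_2^n) \geq \min(A, A', B, B').
\]

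Finally, since $\gamma$ extends continuously to $\hat{\mathcal{L}} \times \hat{\mathcal{L}}$ and $L_2^n \to L_2$ in $\gamma$ by construction, we conclude by the triangle inequality
\[
|\gamma(L_1, L_2^n) - \gamma(L_1, L_2)| \leq \gamma(L_2^n, L_2) \to 0,
\]
which gives $\gamma(L_1, L_2) \geq \min(A, A', B, B')$ as desired.

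The main obstacle lies in constructing the approximating sequence $L_2^n$ so that both properties hold simultaneously: $\gamma$-convergence to $L_2$ and invariance of the local configuration at $t_0$. The first property is straightforward from the definition of an infinite spiral as a $\gamma$-limit (and from Remark \ref{rem:sympeo}, since the truncating modification can be realized by Hamiltonian homeomorphisms supported in the shrinking disc $D_n$, whose $\gamma$-norm is controlled by the enclosed area $\pi \varepsilon_n^2 \to 0$). The second property is ensured by performing the truncation only near the center of the spiral, leaving the outer part — in particular the entire neighborhood of $t_0$ used to define the four areas — completely unaffected.
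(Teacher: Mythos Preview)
Your proposal is correct and follows essentially the same approach as the paper: approximate the infinite spiral by smooth spirals that agree with it outside shrinking discs about the center, use the area bound $\gamma(L,\phi(L))\leq \pi r^2$ for modifications supported in a ball of radius $r$ to get $\gamma$-convergence, and pass Lemma~\ref{Lemma-6.4} to the limit. Your additional remark that the four areas $A,A',B,B'$ are unchanged under the truncation (since they depend only on the outer branches of the spiral near $t_0$) makes explicit a point the paper leaves implicit.
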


\begin{proof} Let $L_2$ be an infinite spiral. Recall that if $L\in\fL$ and if $\phi$ is a Hamiltonian diffeomorphism supported in a ball of radius $r$, then $\gamma(L, \phi(L))\leq \pi r^2$. Using this fact, we may construct a $\gamma$-Cauchy sequence representing $L_2$ by considering smooth spirals which coincide with $L_2$ outside balls centered at $(0,0)$ and whose radius goes to $0$. Lemma \ref{Lemma-6.4-for-infinite-spirals} then follows immediately by applying Lemma \ref{Lemma-6.4} to each element of the sequence.  
\end{proof}

We may now turn to the proof of Proposition \ref{Prop-6.9}.

\begin{proof}[Proof of Proposition \ref{Prop-6.9}] 
Consider the red  curve on Figure \ref{fig-3}. This represents the set $L_\alpha$ for some value of the parameter $\alpha$. Below, on Figure \ref{fig-12} we represented $L_\alpha$ and $L_\beta$. Obviously for  $0<\beta<\alpha$, the curve $L_\alpha$ tends to the origin faster than $L_\beta$. Now by an area preserving map, we can straighten $L_\beta$ to the zero section, and  then the pair $(L_\alpha, L_\beta)$ is equivalent to a pair of the type $(L_2,L_1)$, where $L_2$ is an infinite spiral and $L_1$ is the zero section. By Lemma \ref{Lemma-6.4-for-infinite-spirals}, $\gamma(L_\alpha, L_\beta)$ must be greater than the area between the curves, that is the area in light blue in Figure \ref{fig-13}. Notice that it is clear from Figure \ref{fig-11} that  $A+B=A'+B'$  and since our figure is now  symmetric with respect to the origin, we have $A=B, A'=B$ hence $A=A'=B=B'$. 
 
 \begin{figure}
 \begin{overpic}[width=12cm]{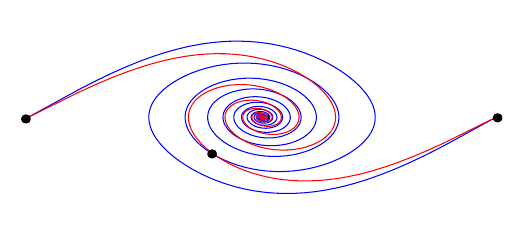}
 \put(80,20){\myRed{$\gamma_{\alpha,R}$}}
  \put(35,10){\myBlue{$\gamma_{\beta,R}$}}
  \put(20,27){\myRed{$\gamma_{\alpha,L}$}}
  \put(35,38){\myBlue{$\gamma_{\beta,L}$}}
 \end{overpic}
\caption{The first intersection points of  $L_\alpha$ (red) and $L_\beta$ (blue).}
\label{fig-12}
\end{figure}

\begin{figure}
 \begin{overpic}[width=12cm]{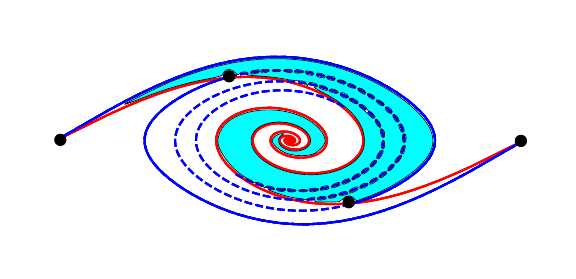}
 \put(80,22){\myRed{$L_\alpha$}}
  \put(45,40){\myBlue{$L_\beta$}}
 \end{overpic}
\caption{The  blue area is a lower bound for $\gamma(L_\alpha,L_\beta)$}
\label{fig-13}
\end{figure}

To estimate this area, we shall use energy estimates as follows.

Let $E_\alpha(t)$ be the energy of the pendulum at time $t$, i.e. $$E_\alpha(t)=E(\theta_\alpha(t),p_\alpha(t))=\frac{\dot\theta_\alpha(t)^2}{2} -\cos(\theta_\alpha(t))$$ 
Note that $E(\theta, p)= \frac{1}{2}p^2-\cos(\theta)  $ does not depend on $\alpha$.   By (\ref{eq:pendulum}), we have 
\begin{equation*}  \frac{d}{dt}\left ( \frac{\dot\theta_\alpha(t)^2}{2} -\cos(\theta_\alpha(t))\right)=-\alpha \dot\theta_\alpha(t)^2=-\alpha p_\alpha(t)\dot\theta_\alpha(t). \end{equation*} 

As a result we have that the area below $L_\alpha$ during the time interval $[t_0,t_1]$ is
$$\int_{t_0}^{t_1} p_\alpha \dot \theta_\alpha(t) dt= \frac{1}{\alpha}(E_\alpha(t_0)-E_\alpha(t_1)).
$$
    The curve $\gamma_{\alpha,L}$ on the left starts from  $E_\alpha(-\infty)=E(-\pi,0)=1$ (and the same holds for $\gamma_{\alpha,R}$). 
    Note that given $ \varepsilon >0$,  if we fix $\alpha< \varepsilon $ we can find $\beta <\alpha $ such that the first intersection point between 
    $\gamma_{\beta,L}$ and $\gamma_{\alpha,R}$ is as close as one wishes from the point $(-\pi,0)$. In other words $\gamma_{\beta,L}(t_\beta)=\gamma_{\alpha,R}(t_\alpha)$ and 
    $\gamma_{\alpha,R}\big((-\infty, t_\alpha)\big)$ is contained in a neighborhood of $(\pi,0)$.   
    
    Then we see from Figure \ref{fig-13} that $\gamma(L_\alpha, L_\beta)$ is bounded from below by the blue area surrounded by 
    $\gamma_{\beta,L}\big((-\infty, t_\beta]\big)\cup \gamma_{\alpha,R}\big([t_\alpha, +\infty)\big)$. The area surrounded by this curve will be
    $$\frac{1}{\beta}[1-E_\beta(t_\beta)]-\frac{1}{\alpha}[1-E_\alpha(t_\alpha)]$$
    But $$E_\alpha(t_\alpha)=E(\theta_\alpha(t_\alpha),p_\alpha(t_\alpha))=E(\theta_\beta(t_\beta),p_\beta(t_\beta))=E_\beta(t_\beta)$$ and 
    thus, setting $C(\alpha,\beta)=1-E_\alpha(t_\alpha)=1-E_\beta(t_\beta)$,
    the area we are trying to estimate is $C(\alpha, \beta) (\frac{1}{\beta}- \frac{1}{\alpha})$.  
    
    Notice that for fixed $\alpha$,  as $\beta $ goes to $0$, the intersection point converges to $(-\pi,0)$ and $$1-E_\beta(t_\beta)=\beta\int_{-\infty}^{t_\beta}p_\beta(t)\dot \theta_\beta(t)dt\simeq 8\beta$$
    since $\lim_{\beta\to 0}\int_{-\infty}^{t_\beta}p_\beta(t)\dot \theta_\beta(t)$ is the area under the separatrix of the frictionless pendulum, that is $\int_{-\pi}^\pi \sqrt{2(1+\cos(\theta)}d\theta =8$
    so the term $\frac{1}{\beta}[1-E_\beta(t_\beta)]$ is approximately equal to $8$, while the other term is $\simeq \frac{8\beta}{\alpha}$, so we get 
    
    $$\gamma(L_\alpha, L_\beta)\simeq 8\left(1- \frac{\beta}{\alpha}\right).$$
     
    However small is $\alpha$, we can choose $\beta$ to be smaller and such that the above quantity is greater than $\frac14(1-\frac\beta\alpha)$. 
\end{proof} 

\begin{rem} 
The curves $L_\alpha$ can be smoothed, while preserving the conclusion of Proposition \ref{Prop-6.9}. since the $L_\alpha$ are contained in a bounded subset of $T^*\bS^1$ and by Shelukhin's theorem (\cite{Shelukhin-Zoll}), the set of such Lagrangians is a $\gamma$-bounded set. However the above just proves that it is not compact !
\end{rem}

\section{Topological properties}\label{sec:topological-properties}

The goal of this section is to prove Theorem \ref{th:cohomology}. The proof will use spectral invariants for Hamiltonian diffeomorphisms and an ingredient from Lusternik-Schnirelman theory. We introduce the relevant material in \S~\ref{sec:Hamiltonian-spectral-invariants} (this will also be used in Section \ref{sec:weak-NLC}). The proof of Theorem \ref{th:cohomology} is then done in \S~\ref{sec:proof-cohomology}.

In this section we work on a cotangent bundle $M=T^*N$, endowed with the standard Liouville form $\lambda$. In particular, there is only one class of exact Lagrangians $\fL$ and branes $\LL$. Also recall that for any $L, L'\in\fL$, we have a canonical (up to shift of the grading) isomorphism $HF(L,L')\simeq H^*(N)$.

 \subsection{Hamiltonian spectral invariants}\label{sec:Hamiltonian-spectral-invariants}

 Spectral invariants and the spectral distance may also be defined for Hamiltonian diffeomorphisms similarly as in Section \ref{sec:preliminaries}  by using Hamiltonian Floer theory instead of Lagrangian Floer theory. In our setting this was first defined in \cite{Frauenfelder-Schlenk} (see also \cite{Lanzat}). The upshot is a collection of real numbers $c(\beta, \phi)$, for all non-zero cohomology classes $\beta\in H^*(N)$ and all compactly supported Hamiltonian diffeomorphisms $\phi$. These invariants are related to the Lagrangian spectral invariants by an inequality (see \cite[Prop 2.14]{M-V-Z}). Since we need a cohomological version and slightly different formulation from the original we state it as a
 \begin{lem}[\cite{M-V-Z}]
 We have for any $\beta \in H^*(N)$, for any exact Lagrangian brane $\tilde L$ and any $\phi\in\DHam_c(T^*N)$, the inequality
\begin{equation}\label{eq:Lagrangian-Hamiltonian-spectral-invariants}
  c(\beta, \phi)\leq \ell(\beta;\phi(\tilde L), \tilde L).
 \end{equation}
 \end{lem}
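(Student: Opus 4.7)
The plan is to use the natural module structure of Hamiltonian Floer cohomology on Lagrangian Floer cohomology, combined with the fact that this action is filtered. Fix a compactly supported Hamiltonian $H$ generating $\phi$. A standard pair-of-pants-type operation
\[CF^*(H) \otimes CF^*(\tilde L, \tilde L) \longrightarrow CF^*(\phi(\tilde L), \tilde L)\]
is defined by counting Floer disks with boundary on $\tilde L$ and a single interior puncture converging to a $1$-periodic orbit of $H$. A routine energy computation (the action of a cap-off boundary is $\int H \, dt$ plus a primitive term, and one tracks the hamiltonian action along with the symplectic area) shows that this map is additively filtered: the image of an element of Hamiltonian action $\geq a$ tensored with a Lagrangian element of action $\geq b$ lies in the subcomplex of Lagrangian action $\geq a+b$.

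Next I would identify this operation, after passage to cohomology, with the cup product via the PSS-type isomorphisms $HF^*(\phi)\simeq H^*(N)$ on the closed side and $HF^*(\tilde L,\tilde L)\simeq H^*(L)\simeq H^*(N)$ on the Lagrangian side. The crucial observation is that the unit $1\in HF^*(\tilde L,\tilde L)$ has spectral value $\ell(1;\tilde L,\tilde L)=0$: with the same brane on both sides, the action functional $\mathcal A_{\tilde L,\tilde L}=f_L-f_L$ vanishes identically, so any representative realizing the unit already lies in action $\geq 0$. Applied to any representative of $\beta\in HF^*(\phi)$ of action close to $c(\beta,\phi)$ paired with the unit, the filtered module map produces the class corresponding to $\beta$ in $HF^*(\phi(\tilde L),\tilde L)$ with action at least $c(\beta,\phi)+0=c(\beta,\phi)$. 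Taking the infimum of such actions yields
\[\ell(\beta;\phi(\tilde L),\tilde L)\geq c(\beta,\phi),\]
which is the desired inequality.

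The main obstacle is not the geometric construction, which is entirely standard, but the bookkeeping of conventions: one must verify that the PSS map truly intertwines the pair-of-pants operation with the cup product under the chosen cohomological/filtration conventions, that the grading lift in the brane structure introduces no shift in the identifications, and that the action filtration used to define $c(\beta,\phi)$ matches the one used in the energy estimate above (up to the usual normalization $\int_0^1 \min H\,dt$ vs.\ $\int_0^1\max H\,dt$, which vanishes in the cohomological limit). In a cotangent bundle these compatibilities can be checked concretely using the zero section as a reference and, when $L$ is Hamiltonianly isotopic to it, via generating functions as in Remark \ref{rem:generating-functions}; the general case is handled in the cited \cite{M-V-Z}, whose Proposition 2.14 we invoke.
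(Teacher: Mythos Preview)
Your proposal is correct and is essentially the same argument as the paper's. The paper uses Albers's filtered closed-open map $HF_a^*(\phi)\to HF_a^*(\phi(\tilde L),\tilde L)$ together with its compatibility with the PSS maps (the commutative square from \cite{Albers-PSS} and \cite{M-V-Z}); your module action $CF^*(H)\otimes CF^*(\tilde L,\tilde L)\to CF^*(\phi(\tilde L),\tilde L)$ evaluated at the unit is precisely that map, and your observation $\ell(1;\tilde L,\tilde L)=0$ is what makes the filtration estimate reduce to exactly the paper's conclusion.
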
 
  
\begin{proof} 
According to \cite[thm. 1.5]{Albers-PSS} and \cite[proof of Prop. 2.9]{M-V-Z} (adapting from the closed setting to the case of the cotangent bundle) we have the following diagram (open-closed map) that we translate from homology to cohomology
$$\xymatrix{HF_{a}^*(\phi)\ar[r]& HF_{a}^*(\phi(\widetilde L),\widetilde L)\\
H^*(M)\ar[u]^{j^*_a} \ar[r]^{i^*} & H^*(L)\ar[u]^{i^*_a}
}
$$
As a result if $a< c(\beta;\phi)$, the image of $\beta$ by $j_a^*$ vanishes and therefore the image of $i^*(\beta)$
 vanishes  in $HF_{a}^*(\phi(\widetilde{L}),\widetilde{L})$, hence $a < \ell (i^*(\beta);\phi(L),L)$. According to \cite{Fukaya-Seidel-Smith, Kragh2},  $i^*$ is an isomorphism in cohomology, so we write $\beta$ instead of  $i^*\beta$  by abuse of language. 
\end{proof}

 If $H$ is sufficiently $C^2$-small and autonomous, so that all the fixed points of $\phi_H^1$  correspond to constant orbits, then we have
 \begin{equation}
   \label{eq:spectral-morse}
   c(\beta, \phi_H^1)=\rho(\beta, H)
\end{equation}
where $\rho(\beta, H)$ denotes the Morse theoretic spectral invariant of $H$, defined by the following procedure.
Let $\nu:[0, +\infty)\to\R$ be a non-decreasing function  vanishing on some interval $[0,R]$ and  linear near infinity with small positive derivative on $(R,+\infty)$. 
We assume that $R$ is large enough so that the support of $H$ is included in $D_R^*N$, the cotangent disc bundle of radius $R$ (with respect to some given Riemannian metric). We set
\begin{equation}
  \label{eq:H_nu}
  H_\nu(x,p):=H(x,p)+\nu(\|p\|).
\end{equation}
 Since $H_\nu$ is proper, the Morse theoretic spectral invariants of $H_\nu$ may be defined by
 \[
 \rho(\beta;H_\nu)=\inf\left\{\,a\in\R\,:\ \beta \neq 0 \; \text{in}\; H_\nu^{<a} \right \} 
 \]
 where $H_\nu^{<a}=\{z \in T^*N \mid H_\nu(z)<a\}$. 
Observe that $\rho(\beta;H_\nu)$ is a critical value of $H$. A deformation argument then shows that the value $\rho(\beta;H_\nu)$ does not depend on the choice of the function $\nu$. Indeed, $H_\nu$ has no critical point in the complement of $D_R^*N$ and if $\nu_1,\nu_2$ are two such functions, the same holds for $H_s=(1-s)H_{\nu_1}+s H_{\nu_2}$ for $s\in[0,1]$. As a result $\rho(\beta,H_s)$ takes values in a fixed set of critical values, hence is constant by Sard's Lemma.  
We thus define $\rho(\beta, H)$ as $\rho(\beta, H_\nu)$ for any choice of $\nu$.

Since we will use them in Section \ref{sec:weak-NLC}, we introduce here the spectral distance on the group $\DHam_c(T^*N)$ and its completion. 
We will use the following notation:
 \[c_-(\phi)=c(1, \phi),\quad c_+(\phi)=-c(1, \phi^{-1}) \quad\text{and}\quad \gamma(\phi)=c_+(\phi)-c_-(\phi).\]
 The spectral distance $\gamma$ on $\DHam_c(T^*N)$ is then defined by 
 \[\gamma(\phi, \psi)=\gamma(\psi^{-1}\phi).\]
We denote by $\hatDHam(T^*N)$ the completion of $\DHam_c(T^*N)$ with respect to $\gamma$.
 
It can be checked using inequality (\ref{eq:Lagrangian-Hamiltonian-spectral-invariants}) that the Lagrangian $\gamma$-distance introduced in Section \ref{sec:preliminaries} interacts with the above \emph{Hamiltonian} $\gamma$-distance  by the following inequality:
\begin{equation*}\label{eq:ham-lag-gamma}
\gamma(\phi(L), \psi(L'))\leq \gamma(L,L')+\gamma(\phi, \psi),  
\end{equation*}
 for any $L, L'\in\fL$ and any $\phi, \psi\in\DHam_c(T^*N)$. As a consequence, the map $\DHam_c(T^*N)\times \fL\to\fL$, $(\phi,L)\mapsto\phi(L)$ extends continuously to a map $\hatDHam_c(T^*N)\times \hatL\to\hatL$. We still denote by $\phi(L)$ the result of this extended map for $\phi$ and $L$ in the completions.

\subsection{Proof of  Theorem \ref{th:cohomology}}\label{sec:proof-cohomology}
 
Our proof will use the following lemma from the classical Lusternik-Schnirelman theory.

\begin{lem}[Lusternik-Schnirelman \cite{Lusternik-Schnirelman}(see also \cite{Viterbo-Montreal})]\label{lem:Lusternik-Schnirelman} Let $f:M\to\R$ be a proper\footnote{The properness condition can be relaxed to the so-called Palais-Smale condition but we will not need it here.} function on a manifold $M$. Let $a\in H^*(M)$  and $b\in H^*(M)$ be classes with $\deg(b)>0$ and $a\cup b\neq 0$. If $\rho(a\cup b,f)=\rho(a, f)$, then $b$ induces a non-zero class in any neighborhood of the critical locus
  \[\{x\in M: df(x)=0, f(x)=\rho\}\]
  where $\rho$ denotes the common value  $\rho(a\cup b,f)=\rho(a, f)$.
\end{lem}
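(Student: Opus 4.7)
The plan is to argue by contrapositive: I would assume that $b$ restricts to zero in $H^*(U)$ for some open neighborhood $U$ of the critical locus $K_\rho := \{x \in M : df(x) = 0,\ f(x) = \rho\}$, and derive that $a \cup b = 0$ in $H^*(f^{\rho+\epsilon})$ for some $\epsilon > 0$, contradicting $\rho(a\cup b, f) = \rho$. This is the standard Lusternik--Schnirelman scheme, combining the Morse-theoretic deformation lemma with multiplicativity of the relative cup product.

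The first step is to lift both classes to relative cohomology. The hypothesis $\rho(a, f) = \rho$ forces $a$ to restrict to zero in $H^*(f^{\rho-\epsilon})$ for every $\epsilon > 0$, so the long exact sequence of the pair $(M, f^{\rho-\epsilon})$ produces a lift $\tilde a \in H^*(M, f^{\rho-\epsilon})$. Likewise, $b|_U = 0$ yields $\tilde b \in H^*(M, U)$. The relative cup product then provides
\[
\tilde a \cup \tilde b \in H^*(M, f^{\rho-\epsilon} \cup U),
\]
which maps to $a \cup b$ under restriction to $H^*(M)$. Hence $a \cup b$ vanishes in $H^*(f^{\rho-\epsilon} \cup U)$.

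The second step is to invoke the Morse-theoretic deformation lemma. For $\epsilon > 0$ small enough, I would use the negative gradient flow of $f$ (with respect to a complete Riemannian metric) to produce a deformation retraction of $f^{\rho+\epsilon}$ onto $f^{\rho-\epsilon} \cup V$ for an open neighborhood $V \subset U$ of $K_\rho$. Composing with the restriction $H^*(f^{\rho-\epsilon} \cup U) \to H^*(f^{\rho-\epsilon} \cup V)$ and using the induced isomorphism $H^*(f^{\rho+\epsilon}) \cong H^*(f^{\rho-\epsilon} \cup V)$, I would conclude $a \cup b = 0$ in $H^*(f^{\rho+\epsilon})$. This contradicts the defining property of $\rho(a \cup b, f) = \rho$, which requires this class to be non-zero in $H^*(f^{\rho+\epsilon})$ for every $\epsilon > 0$.

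The main technical point will be the deformation step: properness of $f$ ensures that $f^{-1}([\rho-\epsilon, \rho+\epsilon]) \setminus V$ is compact and that $\|\nabla f\|$ is bounded below there, so the negative gradient flow pushes this shell into $f^{\rho-\epsilon}$ in bounded time. A minor subtlety is that critical values may accumulate at $\rho$, in which case one must either restrict attention to the critical set at level $\rho$ exactly (enlarging $V$ slightly) or approximate by regular values, but this is entirely classical and I do not expect it to present a real obstacle. The degree hypothesis $\deg b > 0$ is not explicitly used by this skeleton; it only serves to rule out the trivial case $b = 1$, for which the conclusion is tautological.
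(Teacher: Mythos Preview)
The paper does not give its own proof of this lemma: it is quoted as a classical result with references to \cite{Lusternik-Schnirelman} and \cite{Viterbo-Montreal}. Your proposal is the standard Lusternik--Schnirelman argument and is correct; there is nothing to compare against in the paper itself.

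One small remark on the ``minor subtlety'' you flag. Properness actually dissolves it cleanly: the critical set of $f$ in $f^{-1}([\rho-1,\rho+1])$ is compact, so the family of compact sets $\{x : df(x)=0,\ |f(x)-\rho|\le\epsilon\}$ decreases to $K_\rho\subset U$ as $\epsilon\to 0$, and hence lies in $U$ for $\epsilon$ small. Thus for such $\epsilon$ the set $f^{-1}([\rho-\epsilon,\rho+\epsilon])\setminus V$ contains no critical points at all, and the gradient-flow deformation of $f^{\rho+\epsilon}$ into $f^{\rho-\epsilon}\cup V$ goes through without further adjustment.
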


The above lemma in particular applies in the situation where $M=T^*N$, $f=H_\nu$ $a=\pi^*\alpha$ and $b=\pi^*\beta$ with the notations of Section \ref{sec:Hamiltonian-spectral-invariants}. 
We are now ready for the proof of Theorem \ref{th:cohomology}.

\begin{proof} To prove Theorem \ref{th:cohomology}, we need to prove that for any open neighborhood $U$ of $\gammasupp(L)$, the natural map $H^\ell(N)\to H^\ell(U)$ is injective for any integer $\ell$.  We may assume without loss of generality that the closure of $U$ is compact. Since this map sends the class $1\in H^0(N)$ to $1\in H^0(U)$, we may assume that $\ell$ is positive. 
Let $U$ be a compact open neighborhood of $\gammasupp(L)$ and let $\beta\in H^\ell(N)$ be a non-zero cohomology class of positive degree $\ell$.

Let $V$ be a closed neighborhood of $\gammasupp(L)$ included in $U$ and $h$ be a Hamiltonian diffeomorphism generated by some compactly supported $C^2$-small autonomous Hamiltonian $H$ which is equal to some constant $-\varepsilon$ on $V$ and satisfies $H>-\varepsilon$ in the complement of $V$.
 
Let $\tilde L$ be a lift of $L$ (this exists by the first item in Remark
 \ref{rem:completions}).
  By (\ref{eq:Lagrangian-Hamiltonian-spectral-invariants}), we have 
  \[c(\beta, h)\leq \ell(\beta; h(\tilde L),\tilde L).\]
  Since $H$ is $C^2$ small, the above left hand side coincides with the Morse theoretic spectral invariant $\rho(\beta, H)$. On the other hand, since $H=-\varepsilon$ on a neighborhood of $\gammasupp (L)$, Lemma \ref{lem:shift-of brane} implies that $h(\tilde L)=T_{-\varepsilon}\tilde L$. Thus, by the shift property of Lagrangian spectral invariants, the right hand side satisfies:
  \[\ell(\beta; h(\tilde L),\tilde L)=\ell(\beta; \tilde L,\tilde L)-\varepsilon=-\varepsilon=\min(H).\]
As a conclusion, we obtain
\[\rho(\beta, H)\leq\min(H).\]

By construction, we have $\rho(\beta, H)=\rho(\beta, H_\nu)$ where $H_\nu$ is given by (\ref{eq:H_nu}). Since $\min(H)=\rho(1,H)$, we obtain $\rho(\beta, H_\nu)=\rho(1, H_\nu)$. We may now apply Lusternik-Schnirelman theory (Lemma \ref{lem:Lusternik-Schnirelman}) to $H_\nu$. We deduce that $\beta$ induces a non-zero class in any neighborhood of the min locus of $H$, which is nothing but $V$. In particular, $\beta$ induces a non-zero class in $U$. This concludes the proof that the map $H^\ell(N)\to H^\ell(U)$ is injective.
\end{proof}

\section{The weak nearby Lagrangian conjecture}\label{sec:weak-NLC}

We let $(M, \omega)$ with $\omega=-d\lambda$ be a Liouville manifold obtained by completing a Liouville domain. Recall that this implies in particular that the vector field $X_{\lambda}$ defined by $i_{X_{ \lambda}}\omega=\lambda$ is complete. 
Let   $\fL\in I(M,\omega)$ be a non-empty class of Lagrangian submanifolds, so that the spectral distance  is well defined for $L, L'$ in $\fL$.  

\begin{lem} 
Let $\rho$ be a CS diffeomorphism of $(M, \omega)$ with conformal factor $a$. 
Then the map 
\begin{gather*} \Theta : \widehat{\DHam}(M, \omega) \longrightarrow  \widehat{\DHam}(M, \omega) \\
u \longmapsto  \rho u \rho^{-1} \end{gather*} 
is Lipschitz of ratio $a$.
\end{lem}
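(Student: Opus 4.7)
The plan is to reduce the claim, via the (right-)invariance of $\gamma$ on $\DHam_c(M,\omega)$, to the single identity
\[
\gamma(\rho u \rho^{-1}) = a\,\gamma(u), \qquad u \in \DHam_c(M,\omega).
\]
Granting this, one computes
\[
\gamma(\Theta(u),\Theta(v)) = \gamma\bigl((\rho v\rho^{-1})^{-1}\rho u\rho^{-1}\bigr) = \gamma(\rho v^{-1} u\rho^{-1}) = a\,\gamma(v^{-1} u) = a\,\gamma(u,v).
\]
Since $\rho$ is a diffeomorphism (so $\DHam_c$ is stable under $\Theta$: if $u$ is supported in $K$, then $\rho u\rho^{-1}$ is supported in $\rho(K)$), $\Theta$ is $a$-Lipschitz on the dense subset $\DHam_c$ and extends uniquely by uniform continuity to an $a$-Lipschitz self-map of $\widehat{\DHam}(M,\omega)$.

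To prove the key identity, I would first determine the Hamiltonian generating the conjugate flow. If $u = \phi^1_H$ is generated by the compactly supported time-dependent Hamiltonian $H_t$, a direct computation using $\rho^*\omega = a\omega$ yields $\rho_* X_{H_t} = X_{aH_t\circ \rho^{-1}}$, so that $\rho u \rho^{-1}$ is the time-$1$ map of the compactly supported Hamiltonian $H'_t := aH_t\circ \rho^{-1}$.

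Next I would track how the action functional rescales. The map $z \mapsto \rho\circ z$ is a bijection between the $1$-periodic orbits of $X_{H_t}$ and those of $X_{H'_t}$; for a capping disc $w$ of $z$, the disc $\rho\circ w$ caps $\rho\circ z$, and
\[
\int_D (\rho\circ w)^*\omega = a\int_D w^*\omega, \qquad \int_0^1 H'_t(\rho z(t))\,dt = a\int_0^1 H_t(z(t))\,dt,
\]
so the action functional satisfies $\mathcal A_{H'}(\rho\circ z,\rho\circ w) = a\,\mathcal A_H(z,w)$. Choosing an $\omega$-compatible almost complex structure $J$ for the Floer data of $H$ and the (still $\omega$-compatible, since $a>0$) pull-back $\rho^* J$ on the $H'$-side, this bijection on orbits upgrades to a chain-level filtered isomorphism $CF^{*,<c}(H') \simeq CF^{*,<c/a}(H)$, commuting with the PSS identifications with $H^*(M)$. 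Since $\rho^*1 = 1 \in H^0(M)$, one deduces $c(1,\rho u\rho^{-1}) = a\,c(1,u)$, and applying the same reasoning to $(\rho u\rho^{-1})^{-1} = \rho u^{-1}\rho^{-1}$ gives $c_+(\rho u\rho^{-1}) = a\,c_+(u)$. Subtracting yields $\gamma(\rho u\rho^{-1}) = a\,\gamma(u)$.

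The main obstacle is the Floer-theoretic compatibility in the last step: one must verify that transporting admissible Floer data by $\rho$ produces admissible data for $H'$ (in particular that the relevant moduli spaces remain compact in the Liouville setting) and that the induced chain isomorphism intertwines the PSS maps. This is routine for conjugation by a symplectomorphism; the only new point here is the action rescaling factor $a$ coming from $\rho^*\omega = a\omega$, which requires slightly more care when $\rho$ does not respect the Liouville structure at infinity.
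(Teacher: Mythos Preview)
Your proof is correct and follows essentially the same route as the paper. The paper's argument is the one-line computation $\gamma(\rho u\rho^{-1},\rho v\rho^{-1})=\gamma(\rho v^{-1}u\rho^{-1})=a\,\gamma(v^{-1}u)$, invoking the conformal invariance $\gamma(\rho w\rho^{-1})=a\,\gamma(w)$ as a ``basic property of $\gamma$'' (the Hamiltonian analogue of Proposition~\ref{prop:lag-spec-inv-properties}(4)); you simply unpack this property via the explicit rescaling of the Hamiltonian, the action functional, and the filtered Floer complex, which is indeed the standard justification.
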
 
\begin{proof} 
We have $\gamma ( \rho u \rho^{-1} ,  \rho v \rho^{-1})= \gamma ( \rho uv^{-1} \rho^{-1})=a \gamma ( uv^{-1})$  by the basic properties of $\gamma$.
\end{proof} 
 \begin{prop}[Weak conjugacy of conformally symplectic maps]\label{prop:weak-conjugacy}
 Let $\rho, \sigma$ be two conformally symplectic maps such that $\rho^{-1} \sigma$ is in ${\DHam}_c(M, \omega)$ (in particular they have the same ratio and coincide in the complement of a compact set). Then there exists $u \in \widehat{\DHam}(M, d\lambda)$ such that
 \begin{equation}
\sigma = u^{-1}\rho u.\label{eq:weak-conjugacy}
\end{equation}
\end{prop}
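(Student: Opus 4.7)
The plan is to turn the sought-after identity into a fixed-point equation in $\widehat{\DHam}(M,\omega)$ and solve it via Banach's contraction mapping theorem. First I would reduce to the case $a<1$: if $a>1$, replacing $(\rho,\sigma)$ by $(\rho^{-1},\sigma^{-1})$ preserves the form of the statement (since $\sigma=u^{-1}\rho u \iff \sigma^{-1}=u^{-1}\rho^{-1} u$), leaves $(\rho^{-1}\sigma)^{-1}=\sigma^{-1}\rho\in\DHam_c(M,\omega)$, and replaces the ratio $a$ by $a^{-1}<1$. Set $h=\rho^{-1}\sigma\in\DHam_c(M,\omega)$, so that $\sigma=\rho h$, and write $\Theta(g)=\rho g\rho^{-1}$ as in the preceding lemma. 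A direct algebraic manipulation shows that the desired equality $\sigma=u^{-1}\rho u$ is equivalent, upon setting $v=u^{-1}$, to the fixed-point equation
\[v=\Theta(h)\,\Theta(v).\]

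I would then define $F:\widehat{\DHam}(M,\omega)\to\widehat{\DHam}(M,\omega)$ by $F(v)=\Theta(h)\,\Theta(v)$ and verify that $F$ is an $a$-contraction for the spectral distance $\gamma$. The left-invariance of $\gamma$, which is immediate from the definition $\gamma(\phi,\psi)=\gamma(\psi^{-1}\phi)$ and persists in the completion, gives
\[\gamma(F(v_1),F(v_2))=\gamma(\Theta(h)\Theta(v_1),\Theta(h)\Theta(v_2))=\gamma(\Theta(v_1),\Theta(v_2)).\]
Combining this with the Lipschitz estimate $\gamma(\Theta(v_1),\Theta(v_2))\le a\,\gamma(v_1,v_2)$ provided by the preceding lemma yields the $a$-contraction property.

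Since $(\widehat{\DHam}(M,\omega),\gamma)$ is complete by construction, Banach's fixed-point theorem produces a unique $v_\infty\in\widehat{\DHam}(M,\omega)$ with $v_\infty=\Theta(h)\Theta(v_\infty)$, i.e.\ $v_\infty=\rho h v_\infty\rho^{-1}$. Equivalently, $h=\rho^{-1}v_\infty\rho\,v_\infty^{-1}$, and setting $u=v_\infty^{-1}$ gives $\rho h=u^{-1}\rho u$, which is exactly \eqref{eq:weak-conjugacy}.

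The main point to be careful about is the set-up rather than the contraction itself: one must check that composition and inversion extend continuously from $\DHam_c(M,\omega)$ to the $\gamma$-completion (so that $F$ is well defined on $\widehat{\DHam}(M,\omega)$), and that $\gamma$ remains left-invariant after completion. These are consequences of the $1$-Lipschitz nature of the group operations with respect to $\gamma$ together with the symmetry $\gamma(\phi)=\gamma(\phi^{-1})$ of the spectral norm, but they are what makes the whole scheme admissible, so I would verify them carefully at the outset. Once this is in place, the contraction argument runs without obstruction.
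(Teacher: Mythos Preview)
Your argument is correct and follows essentially the same route as the paper: both reduce to $a<1$ and solve the conjugacy by applying Banach's fixed-point theorem to an affine map of the form $u\mapsto \Theta(u)\cdot\psi$ (the paper right-multiplies by $\psi=\rho\sigma^{-1}$, you left-multiply by $\Theta(h)=\Theta(\rho^{-1}\sigma)$), using the $a$-Lipschitz property of $\Theta$ and the invariance of $\gamma$. Your version is a bit more explicit about the reduction to $a<1$ and about the well-definedness issues in the completion, which the paper addresses only in the remark following the statement.
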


 As written above, equality (\ref{eq:weak-conjugacy}) abuses notation since it is not a priori defined what it means for a conformally symplectic map to be composed with an element of $\widehat{\DHam}(M,\omega)$. This equality should be understood as:
  \[(\sigma^{-1}u^{-1}\rho) u=\id\quad\text{in}\ \widehat\DHam(M,\omega). \]
  Indeed, $u \mapsto \sigma^{-1}u^{-1}\rho$ defines an isometry of $\DHam(M,\omega)$, so extends to $\widehat{\DHam}(M,\omega)$. 

\begin{proof} 
 Let $\psi \in \DHam (M, \omega)$ and consider the map $ u \mapsto \rho u \rho^{-1} \psi$. This is a contraction provided the ratio $a$ of $\rho$ is smaller than $1$ hence has a unique fixed point. Choose $\psi=\rho\sigma^{-1}$, so that this fixed point now satisfies $$\rho u \rho^{-1} \rho\sigma^{-1}=u$$ that is $u\sigma u^{-1}=\rho$, so $u$ realizes the conjugation. 
\end{proof} 

\begin{prop} \label{prop:exact-are-fixed-points}
Let $0<a<1$ and  $L_{0} \in \fL$ be an exact Lagrangian. Then there exists a Liouville form $\mu_0$ which differs from $\lambda$ by the differential of a compactly supported function, and whose associated Liouville flow  $\rho_0^t$  is complete and satisfies $\rho_0^t(L_{0})=L_{0}$ for all $t\in\R$.
\end{prop}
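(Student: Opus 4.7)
The plan is to produce $\mu_0$ by subtracting from $\lambda$ the differential of a compactly supported function that restricts to a primitive of $\lambda|_{L_0}$ on $L_0$; this arranges $\mu_0|_{L_0}=0$ and thereby forces the new Liouville vector field to be tangent to $L_0$.

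First I would use exactness of $L_0$ to pick $g:L_0\to\R$ with $\lambda|_{L_0}=dg$, and then use compactness of $L_0$ to extend $g$ to a smooth function $\tilde g$ on a tubular neighborhood $U$ of $L_0$, multiplying by a bump function $\chi$ supported in $U$ and equal to $1$ near $L_0$ to obtain a compactly supported $f:=\chi\tilde g$ with $f|_{L_0}=g$. Setting $\mu_0:=\lambda-df$, one has $d\mu_0=-\omega$, so $\mu_0$ is a Liouville form differing from $\lambda$ by the differential of the compactly supported function $-f$, and $\mu_0|_{L_0}=dg-dg=0$.

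Next I would let $X_{\mu_0}$ be the Liouville vector field defined by $\iota_{X_{\mu_0}}\omega=\mu_0$. For any $x\in L_0$ and $v\in T_xL_0$, the computation $\omega(X_{\mu_0}(x),v)=\mu_0(v)=0$ places $X_{\mu_0}(x)$ in $(T_xL_0)^{\omega}=T_xL_0$ by the Lagrangian condition, so $X_{\mu_0}$ is tangent to $L_0$ and its flow $\rho_0^t$ preserves $L_0$ wherever defined. Finally, to verify completeness, I would observe that $X_{\mu_0}-X_\lambda$ is characterized by $\iota_{X_{\mu_0}-X_\lambda}\omega=-df$ and hence is compactly supported; a standard ODE argument then shows that, since $X_\lambda$ is complete by the standing Liouville-manifold hypothesis on $M$, a maximal orbit of $X_{\mu_0}$ failing to exist for all time would eventually have to leave every compact set, in particular $\mathrm{supp}(f)$, after which it would evolve under $X_\lambda$ and hence extend for all $t\in\R$, contradicting maximality. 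There is essentially no obstacle in this argument; the whole proof reduces to exactness of $L_0$ combined with the freedom to modify a Liouville form by an exact $1$-form.
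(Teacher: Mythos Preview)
Your proof is correct and follows the same strategy as the paper's: modify $\lambda$ by the differential of a compactly supported function so that the new Liouville vector field is tangent to $L_0$, and deduce completeness from the fact that the modification is compactly supported.

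The only difference is in how tangency along $L_0$ is arranged. The paper invokes the Weinstein neighbourhood theorem: it takes $\lambda_0$ to be the canonical Liouville form on a Weinstein neighbourhood $U\cong T^*L_0$, writes $\lambda=\lambda_0+df$ on $U$, and cuts off $f$ so that $\mu_0=\lambda_0$ near $L_0$; then $X_{\mu_0}$ is the standard fibre-scaling vector field near $L_0$, which visibly fixes the zero section. You instead arrange only that $\mu_0|_{L_0}=0$ and then use the Lagrangian condition $T_xL_0=(T_xL_0)^\omega$ to conclude tangency directly. Your route is slightly more elementary in that it avoids the Weinstein theorem; the paper's route gives a bit more, namely an explicit local normal form for $\mu_0$ near $L_0$. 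Either way the conclusion and the completeness argument are the same.
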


 \begin{proof} 
 Consider $U$ a Weinstein neighbourhood of $L_0$ with Liouville form $\lambda_{0}$. By exactness of $L_0$, there is a function $f$ defined on $U$ such that $\lambda=\lambda_{0}+ df$.
 Let $\beta:U\to\R$ be a smooth function equal to $1$ near $L_{0}$ and to $0$ near $\partial U$, and consider the form $\mu_0 = \lambda - d(\beta f)$. Obviously $d\lambda=d\mu_0$ and $\mu_0 = \lambda_{0}$ near $L_{0}$. As a result the vector field $X_{\mu_0}$ defined by $\mu_0=i_{X_{\mu_0}}\omega$ coincides with $X_{\lambda}$ outside of $U$ and with $X_{\lambda_{0}}$ in a neighbourhood of $L_0$. Thus the Liouville form $\mu_0$ suits our needs. Since it differs from $\lambda$ only on a compact set, its flow is also complete.
 \end{proof} 

 We are now ready to prove Theorem \ref{th:weak-NLC}, whose statement we recall.
 \setcounter{section}{1}
 \setcounter{thm}{7}
 \begin{thm}[Weak nearby Lagrangian conjecture]\label{Theorem-A4}
 Let $L_{0}, L_{1} \in \fL$. Then there exists $\phi \in \widehat{\DHam}(M, \omega)$ such that $\phi(L_{0})=L_{1}$.
 \end{thm}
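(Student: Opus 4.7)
The strategy is to reduce the problem to the weak conjugacy result Proposition \ref{prop:weak-conjugacy}, applied to two CES diffeomorphisms respectively fixing $L_0$ and $L_1$. I would start by applying Proposition \ref{prop:exact-are-fixed-points} twice to produce Liouville forms $\mu_0$ and $\mu_1$ on $(M,\omega)$, each of the form $\mu_i = \lambda - dg_i$ with $g_i \in C^\infty_c(M)$, whose complete Liouville flows $(\rho_i^s)_{s \in \R}$ satisfy $\rho_i^s(L_i) = L_i$. Fix $t > 0$ and set $\rho = \rho_0^t$, $\sigma = \rho_1^t$: these are CES diffeomorphisms for $\lambda$ of common ratio $a = e^{-t} < 1$ (the CES property for $\lambda$ follows from $\mu_i - \lambda$ being exact with compactly supported primitive). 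Since $\rho$ is CES with $a \neq 1$, preserves the class $\mathcal L$ containing $L_0$ (CES maps preserve Floer-theoretic equivalence, and $\rho(L_0) = L_0 \in \mathcal L$), Theorem \ref{th:main-for-branes} identifies $L_0$ with the unique fixed point of $\rho$ in $\hat{\mathcal L}$; likewise $L_1$ is the unique fixed point of $\sigma$.

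The central step is to verify the hypothesis of Proposition \ref{prop:weak-conjugacy}, namely that $\rho^{-1}\sigma \in \mathrm{DHam}_c(M,\omega)$. Compact support is clear since $\mu_i - \lambda$ is compactly supported, so both $\rho$ and $\sigma$ coincide with the time-$t$ flow of $X_\lambda$ outside a compact set. For the Hamiltonian property, the key claim is that for any Liouville form $\mu = \lambda - dg$ with $g \in C^\infty_c(M)$, the map $\rho_\mu^t \circ (\rho_\lambda^t)^{-1}$ is in $\mathrm{DHam}_c$: its generating vector field in $t$, computed by the chain rule, equals $X_\mu - (\rho_\mu^t)_* X_\lambda$, whose contraction with $\omega$ unwinds, using $(\rho_\mu^t)^*\lambda - a\lambda$ exact with compactly supported primitive, to $dg$ plus an exact compactly supported form, hence is a compactly supported time-dependent Hamiltonian vector field. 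Writing
\[
\sigma\rho^{-1} = \bigl[\rho_{\mu_1}^t (\rho_\lambda^t)^{-1}\bigr] \cdot \bigl[\rho_\lambda^t (\rho_{\mu_0}^t)^{-1}\bigr]
\]
then exhibits $\sigma \rho^{-1}$ as a product of two compactly supported Hamiltonian diffeomorphisms; conjugating by $\rho$ (which is CES and therefore sends compactly supported Hamiltonian diffeomorphisms to compactly supported Hamiltonian diffeomorphisms) yields $\rho^{-1}\sigma \in \mathrm{DHam}_c(M,\omega)$.

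Finally, Proposition \ref{prop:weak-conjugacy} provides $u \in \widehat{\mathrm{DHam}}(M,\omega)$ with $\sigma = u^{-1}\rho u$, interpreted via the extended action on $\hat{\mathcal L}$ (Remark \ref{rem:completions}). Evaluating on $L_1$ and using $\sigma(L_1) = L_1$ gives $\rho(u(L_1)) = u(L_1)$, so $u(L_1) \in \hat{\mathcal L}$ is a fixed point of $\rho$; by the uniqueness in Theorem \ref{th:main-for-branes} we conclude $u(L_1) = L_0$, and $\phi := u^{-1}$ is the required element. The main obstacle in this plan is the second paragraph: the bare observation that $\rho^{-1}\sigma$ is a compactly supported exact symplectomorphism does not automatically make it Hamiltonian, so one really has to exhibit the Hamiltonian isotopy through the explicit decomposition via the ambient Liouville flow $\rho_\lambda^t$; once this is secured, the rest is a formal fixed-point argument.
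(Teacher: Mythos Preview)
Your proposal is correct and follows essentially the same route as the paper: produce modified Liouville flows fixing $L_0$ and $L_1$, check that the time-$t$ maps differ by an element of $\DHam_c$, apply Proposition \ref{prop:weak-conjugacy}, and conclude by uniqueness of the fixed point. The only cosmetic difference is in the verification that $\rho^{-1}\sigma\in\DHam_c(M,\omega)$: the paper computes directly that $\rho_0^t\rho_1^{-t}$ is generated by the Hamiltonian $e^t\,g\circ\rho_0^{-t}$ (where $\mu_1=\mu_0+dg$), whereas you factor through the ambient Liouville flow $\rho_\lambda^t$ and show each piece $\rho_{\mu_i}^t(\rho_\lambda^t)^{-1}$ is Hamiltonian; both computations are the same Cartan-type calculation and yield compactly supported Hamiltonians.
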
 
 \setcounter{section}{7}
 \setcounter{thm}{3}
 \begin{proof} 
 According to Proposition \ref{prop:exact-are-fixed-points}, there exist Liouville forms $\mu_0, \mu_1$ whose associated Liouville flows $\rho_{0}^t, \rho_{1}^t$ are complete and satisfy $\rho_{j}^t(L_{j})=L_{j}$ for $j=0,1$. Moreover, $\mu_{1}=\mu_{0}+ dg$ for some compactly supported function $g$.
 As a result,
\begin{gather*}   \frac{d}{dt} \rho_{0}^{t}\rho_{1}^{-t}(x) =X_{\mu_{0}}(\rho_{0}^{t}\rho_{1}^{-t}(x))-d\rho_{0}^{t}(X_{\mu_{1}}(\rho_{1}^{-t}))=\\
X_{\mu_{0}}(\rho_{0}^{t}\rho_{1}^{-t}(x))-d\rho_{0}^{t}(X_{\mu_{1}}(\rho_{0}^{{-t}}\rho_{0}^{t}\rho_{1}^{-t})).
\end{gather*}
Thus, $\rho_{0}^{t}\rho_{1}^{-t}$ is the flow of $$Z_t(x)=X_{\mu_{0}}(x)-d\rho_{0}^{t}(\rho_{0}^{-t}(x))X_{\mu_{1}}(\rho_{0}^{-t}(x))$$
and $i_{Z}\omega=\mu_{0}-i_{(\rho_{0}^{t})_* X_{\mu_{1}}} \omega$. Since $i_{Y}f^{*}(\alpha)=f^{*}(i_{f_{*}Y}\alpha)$  and  $(\rho_j^{t})^{*}\omega= e^{t}\omega$ for $j=0,1$ we have
\begin{align*}
  i_{Z}\omega &= \mu_{0}-e^t(\rho_{0}^{-t})^*i_{X_{\mu_{1}}}\omega\\
  & =\mu_{0}-e^t(\rho_{0}^{-t})^*\mu_1\\
              & = \mu_0- e^t(\rho_0^{-t})^*\mu_0-e^t(\rho_0^{-t})^*dg\\
  & = d \left(e^t \cdot g\circ {\rho_0^{-t}}\right)
\end{align*}
so $Z$ is a Hamiltonian vector field.

Let  $\rho_{0},\rho_{1}$ denote $\rho_0^t, \rho_1^t$ for some negative $t$. Then, Proposition \ref{prop:weak-conjugacy} applies and we deduce that $\rho_{0}$ and $\rho_{1}$ are conjugate by an element  $u\in\widehat{\DHam}(M,\omega)$, that is
$u^{-1}\rho_{1}u=\rho_{0}$. The fixed points of $\rho_{0}$ and $\rho_{1}$ in $\hatL(M, \omega)$ are unique.
 But if $L_{0}$ is fixed by $\rho_{0}$ then $u(L_{0})$ is fixed by $\rho_{1}$. We may thus conclude that $u(L_{0})=L_{1}$.  
 \end{proof}

 In the case where $(M,\omega)$ is a cotangent bundle $T^*N$, as explained in Section \ref{sec:Lagrangian-spectral-inv}, there exists only one class of closed exact Lagrangian denoted $\fL (T^*N)$. We immediately deduce

 \begin{cor} 
Let $L, L'$ be two closed exact Lagrangians in $T^*N$. Then there exists $\psi \in \widehat \DHam (T^*N)$ such that $\psi(L)=L'$. 
\end{cor}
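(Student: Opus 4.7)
The plan is to derive the corollary as an immediate specialization of Theorem~\ref{Theorem-A4} to the cotangent bundle setting. The statement of Theorem~\ref{Theorem-A4} requires that the two Lagrangians $L_0, L_1$ both lie in a single non-empty Floer theoretic equivalence class $\L \in I(M,\omega)$, so the only thing to check in order to conclude is that any two closed exact Lagrangians in $T^*N$ indeed lie in such a common class.

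First I would invoke the fact recalled in Section~\ref{sec:Lagrangian-spectral-inv} that in a cotangent bundle $(T^*N, -d\lambda)$ of a closed manifold, the set $I(T^*N)$ of Floer theoretic equivalence classes of closed exact Lagrangians consists of a single element $\L(T^*N)$; this is the content of the theorems of Fukaya--Seidel--Smith and Kragh (\cite{Fukaya-Seidel-Smith, Kragh3}) cited there, which guarantee that for any closed exact Lagrangian $L \subset T^*N$ one has an isomorphism $HF^*(L, 0_N) \simeq H^*(N)$ implementing Floer theoretic equivalence between $L$ and the zero section.

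Given $L, L' \in \L(T^*N)$, we may then apply Theorem~\ref{Theorem-A4} with $(M,\omega) = (T^*N, -d\lambda)$, $L_0 = L$ and $L_1 = L'$: this yields an element $\psi \in \widehat{\DHam}(T^*N)$ such that $\psi(L) = L'$, which is exactly the assertion of the corollary. Note that the action of $\widehat{\DHam}(T^*N)$ on $\L(T^*N)$ extending the Hamiltonian action, used in the equality $\psi(L) = L'$, is the one defined via the inequality
\[
\gamma(\phi(L), \psi(L')) \leq \gamma(L, L') + \gamma(\phi, \psi)
\]
recalled at the end of Section~\ref{sec:Hamiltonian-spectral-invariants}.

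There is no real obstacle here beyond citing the correct inputs: the existence of a unique Floer theoretic class on $T^*N$ and Theorem~\ref{Theorem-A4} itself. The deep work is already in the proof of Theorem~\ref{Theorem-A4} (the construction of suitable Liouville forms fixing $L$ and $L'$, followed by the weak conjugacy of conformally symplectic maps via Picard's fixed point theorem in $\widehat{\DHam}$) and in the classical nearby Lagrangian cohomology statements of Fukaya--Seidel--Smith and Kragh; once these are granted, the corollary is a one-line specialization.
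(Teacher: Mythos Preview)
Your proposal is correct and follows exactly the paper's own argument: the paper simply notes that in $T^*N$ there is a unique Floer theoretic equivalence class $\L(T^*N)$ (by Fukaya--Seidel--Smith and Kragh) and then states that the corollary follows immediately from Theorem~\ref{Theorem-A4}. Your write-up is just a slightly more explicit version of this one-line deduction.
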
 

In $T^*N$, we have a slightly more precise statement:
 \begin{cor}\label{Weak-Nearby-with-support}
 Let $L, L'\subset{\mathfrak L}(T^*N)$ contained in $DT^*N$. Then there exists a sequence $(\phi_k)_{k\geq 1}$ in $\DHam(T^*N)$ such that $\gamma-\lim(\phi_k(L))=L'$ and $\phi_k$ is  supported in $DT^*N$. 
 \end{cor}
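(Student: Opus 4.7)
The plan is to refine the proof of Theorem \ref{Theorem-A4} (and of Proposition \ref{prop:weak-conjugacy}) by controlling, at every stage of the Picard iteration, the support of the Hamiltonian diffeomorphisms it produces, so that each one is supported in $DT^*N$. Apply Proposition \ref{prop:exact-are-fixed-points} to $L$ and to $L'$ with Weinstein neighborhoods $U_0$ of $L$ and $U_1$ of $L'$ both contained in $DT^*N$ (available since $L, L'\subset DT^*N$). This yields Liouville forms $\mu_0, \mu_1$ that differ from $\lambda$ only on compact subsets of $\mathring{DT^*N}$, with complete associated flows $\rho_0^t, \rho_1^t$ fixing $L, L'$ respectively. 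Fix $t>0$, set $\rho_j:=\rho_j^t$ (CES of ratio $e^{-t}<1$), $\psi:=\rho_0\rho_1^{-1}$, and consider the contraction $T(u):=\rho_0\,u\,\rho_0^{-1}\,\psi$ on $\widehat{\DHam}(T^*N)$.

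The key claim is that $\psi$ and every Picard iterate $\phi_k:=T^k(\mathrm{id})$, starting from $\phi_0=\mathrm{id}$, is supported in $DT^*N$. This rests on the fact that the standard Liouville vector field $X_\lambda=-p\,\partial_p$ points inward on $\partial DT^*N$, and $X_{\mu_j}=X_\lambda$ outside the compact modification regions, which lie in $\mathring{DT^*N}$. For any $z\in T^*N\setminus DT^*N$ (i.e.\ $\|p\|\geq 1$), the backward trajectory $\rho_1^{-s}(z)=(q,e^s p)$, $s\in[0,t]$, stays outside $DT^*N$ and avoids $U_1$; then $\rho_0^s$ applied to the endpoint $(q,e^t p)$ traces back radially to $z$ through points with $\|p\|\geq 1$, never entering $U_0$. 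Hence $\psi(z)=z$, so $\psi$ is supported in $DT^*N$. The same radial argument shows that if $\phi_k$ is supported in $DT^*N$, then for $z\notin DT^*N$ one has $\psi(z)=z$ and $\rho_0^{-1}(z)=\rho_\lambda^{-t}(z)\notin DT^*N$ lies outside the support of $\phi_k$, so $T(\phi_k)(z)=z$; by induction every $\phi_k\in\DHam(T^*N)$ is supported in $DT^*N$.

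Since $T$ is an $e^{-t}$-contraction on the complete metric space $\widehat{\DHam}(T^*N)$, the sequence $(\phi_k)_{k\geq 1}$ $\gamma$-converges to the unique fixed point $u$ of $T$, and the concluding argument in the proof of Theorem \ref{Theorem-A4} identifies $u(L)$ with $L'$ in $\hatL(T^*N)$. Invoking the continuity of the action of $\widehat{\DHam}(T^*N)$ on $\hatL(T^*N)$ recalled at the end of \S\ref{sec:Hamiltonian-spectral-invariants}, we obtain $\phi_k(L)\to L'$ in $\gamma$, which is precisely the conclusion of the corollary. The only real subtlety is the support bookkeeping in the inductive step, which in turn hinges on the geometric fact that the backward Liouville flow carries $T^*N\setminus DT^*N$ into itself and that the modifications of $\lambda$ are localized strictly inside $\mathring{DT^*N}$.
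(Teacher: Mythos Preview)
Your approach is correct in spirit and essentially works, but it is a genuinely different---and more elaborate---route than the paper's.

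The paper does \emph{not} revisit the Picard iteration of Proposition~\ref{prop:weak-conjugacy} at all. Instead it argues directly with the \emph{standard} Liouville contraction $\psi^t(q,p)=(q,e^{-t}p)$: for $t\geq 0$ the isotopy $t\mapsto\psi^t(L)$ is an exact Lagrangian isotopy inside $DT^*N$, hence is realized by a Hamiltonian isotopy that can be truncated to have support in $DT^*N$. Since $\gamma(\psi^t(L),0_N)=e^{-t}\gamma(L,0_N)\to 0$, one gets sequences $\rho_k,\rho_k'\in\DHam(T^*N)$ supported in $DT^*N$ with $\rho_k(L),\rho_k'(L')\xrightarrow{\gamma}0_N$, and then $\phi_k:=(\rho_k')^{-1}\rho_k$ does the job via the triangle inequality. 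This uses $0_N$ as a common target and bypasses any iterative bookkeeping; it is shorter and avoids having to modify the Liouville form near $L$ and $L'$.

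Your argument, by contrast, keeps the modified Liouville flows $\rho_0,\rho_1$ of Proposition~\ref{prop:exact-are-fixed-points} and shows inductively that each Picard iterate $\phi_k=T^k(\mathrm{id})$ is supported in $DT^*N$. The support check is sound: the key point, that outside $DT^*N$ both $\rho_j^{\pm 1}$ agree with the radial flow and hence preserve the complement of $DT^*N$, is exactly right. Two small wrinkles to be aware of: (i) with your choice $\psi=\rho_0\rho_1^{-1}$ and $T(u)=\rho_0 u\rho_0^{-1}\psi$, the fixed point $u$ satisfies $u\rho_1 u^{-1}=\rho_0$, so $u(L')=L$ rather than $u(L)=L'$; this is harmless by the symmetry of the statement, but your final sentence should be adjusted (or take $\psi=\rho_1\rho_0^{-1}$, $T(u)=\rho_1 u\rho_1^{-1}\psi$); (ii) fitting Weinstein neighborhoods $U_0,U_1$ inside $DT^*N$ requires $L,L'\subset\mathring{DT^*N}$, which you can always arrange first by a small radial contraction (itself Hamiltonian and supported in $DT^*N$, by the paper's truncation argument). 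With these tweaks your proof goes through; the paper's proof simply achieves the same conclusion with less machinery.
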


\begin{proof}
Let us first prove that for $\psi^t(q,p)=(q,e^{-t}p)$, and $L\subset DT^*N$, we have that for $t\geq 0$, $\psi^t(L)$	is obtained by applying a Hamiltonian isotopy to $L$, and this isotopy can be assumed to be supported in $DT^*N$. Indeed, it is well known that exactness of $\psi^t(L)$ implies that the isotopy is realized by a Hamiltonian isotopy $\rho^t$. Since the image of $L$ remains in $DT^*N$ we may truncate the Hamiltonian outside $DT^*N$ (i.e. making it compact supported) without changing $\psi^t(L)=\rho^t(L)$. 
Now as $t$ goes to $+\infty$, $\psi^t(L)$ converges to $0_N$ since $\gamma(\psi^t(L),0_N)=e^{-t}\gamma(L,0_N)$. So by the preceding argument, we have two sequences $(\rho_k)_{k\geq 1}, (\rho'_k)_{k\geq 1}$ such that  $\rho_k(L)$ and $\rho'_k(L')$ $\gamma$-converge to $0_N$. Then fix $ \varepsilon >0$ and choose $k$ large enough so that $\gamma(\rho_k(L),0_N), \gamma( \rho'_k(L'),0_N)$ are both less than $ \varepsilon /2$. Now
$$\gamma((\rho'_k)^{-1}\rho_k(L),L')=\gamma(\rho_k(L),\rho'_k(L'))\leq \gamma(\rho_k(L),0_N)+ \gamma(\rho'_k(L'),0_N)=2 \varepsilon /2= \varepsilon 
$$
Since $\rho_k$ and $\rho'_k$ are supported in $DT^*N$ so is $\phi_k=(\rho'_k)^{-1}\rho_k$ and $\gamma-\lim((\rho'_k)^{-1}\rho_k(L))=L'$.
\end{proof}

Let us now give an example of application of the above. Let us remind the reader of the following conjecture
\begin{Conjecture}\label{Conj1} Let $g$ be a Riemannian metric on the closed manifold $N$. There exists a constant $C_N(g)$ such that for any Lagrangian  $L$ in ${\mathfrak L}(T^*N)$ contained in $D_1T^*N=\{(q,p)\mid \vert p \vert_g\leq 1\}$ we have $$\gamma(L)\leq C_N(g)$$
\end{Conjecture}
A variant of this conjecture is 
\begin{Conjecture}\label{Conj2} Let $g$ be Riemannian metric on the closed manifold $N$. There exists a constant $C_N(g)$ such that for any Lagrangian  $L=\phi_H(0_N)$ that is the image of the zero section by a Hamiltonian map and contained in $D_1T^*N$ satisfies $$\gamma(L)\leq C_N(g)$$
\end{Conjecture}
Obviously the first conjecture is stronger than the second one. But in fact we have
\begin{prop} 
Conjecture \ref{Conj1} and Conjecture \ref{Conj2} are equivalent. 
\end{prop}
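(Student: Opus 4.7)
The forward implication, Conjecture \ref{Conj1} $\Rightarrow$ Conjecture \ref{Conj2}, is immediate: Hamiltonian images of $0_N$ form a subclass of $\mathfrak{L}(T^*N)$, so a uniform bound on $\gamma(L)$ over the larger class restricts to the smaller one with the same constant $C_N(g)$. All the work will be in the reverse direction.

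My plan for the reverse implication is to approximate, in the $\gamma$-topology, an arbitrary $L\in\mathfrak{L}(T^*N)$ with $L\subset D_1T^*N$ by Hamiltonian images of the zero section that also lie in $D_1T^*N$. Starting from such an $L$, I would invoke Corollary \ref{Weak-Nearby-with-support} applied to the pair $(L,0_N)$ with $DT^*N = D_1T^*N$. This produces a sequence $(\phi_k)_{k\geq 1}$ in $\DHam(T^*N)$, each $\phi_k$ supported in $D_1T^*N$, satisfying $\gamma(\phi_k(L),0_N)\to 0$. Setting $L_k := \phi_k^{-1}(0_N)$, each $L_k$ is by construction a Hamiltonian image of the zero section; and since $\phi_k^{-1}$ is supported in $D_1T^*N$ and $0_N\subset D_1T^*N$, we have $L_k\subset D_1T^*N$, so $L_k$ is admissible for Conjecture \ref{Conj2}.

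The next step is to combine this approximation with Hamiltonian invariance of $\gamma$ (the case $a=1$ of Proposition \ref{prop:gamma}): since $\phi_k(L_k)=0_N$,
\[\gamma(L,L_k)\;=\;\gamma(\phi_k(L),\phi_k(L_k))\;=\;\gamma(\phi_k(L),0_N)\;\longrightarrow\;0,\]
so $L_k\to L$ with respect to $\gamma$. Conjecture \ref{Conj2} applied to each $L_k$ yields $\gamma(L_k,0_N)\leq C_N(g)$, and continuity of $L'\mapsto\gamma(L',0_N)$ on $\hatL(T^*N)$ gives $\gamma(L,0_N)\leq C_N(g)$, which is Conjecture \ref{Conj1}.

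The only real obstacle is to ensure simultaneously (i) that the approximants $L_k$ lie inside $D_1T^*N$ and (ii) that they $\gamma$-converge to $L$. Both conditions are exactly what Corollary \ref{Weak-Nearby-with-support} provides (this is where the weak nearby Lagrangian conjecture enters in a crucial way), so no further argument is needed and the equivalence follows.
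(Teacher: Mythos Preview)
Your proof is correct and follows essentially the same approach as the paper: both invoke Corollary \ref{Weak-Nearby-with-support} to approximate $L$ in $\gamma$ by Hamiltonian images of $0_N$ lying inside $D_1T^*N$, then pass to the limit. The only cosmetic difference is that the paper applies the corollary directly with $(0_N,L)$ to get $\phi_k(0_N)\to L$, whereas you apply it with $(L,0_N)$ and then invert, adding one harmless use of Hamiltonian invariance of $\gamma$.
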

\begin{proof} 
Obviously it is enough to prove that Conjecture \ref{Conj2} implies Conjecture \ref{Conj1}. So let $L\in {\mathfrak L}(T^*N)$ and assume it is contained in $DT^*N$. By Corollary \ref{Weak-Nearby-with-support}, there exists $\phi_k$ in $\DHam (T^*N)$ supported in $DT^*N$ such that $\phi_k(0_N)$  $\gamma$-converges to $L$. In other words
$\lim_k \gamma(\phi_k(0_N),L)=0$. But then $\phi_k(0_N)$ is contained in $DT^*N$ (since $\phi_k$ has support in $DT^*N$ and $0_N\subset DT^*N$ !) and Conjecture \ref{Conj2} claims that $\gamma(\phi_k(L))\leq C_N(g)$. But then $\gamma(L)=\lim_k \gamma(\phi_k(0_N))\leq C_N(g)$ which proves Conjecture \ref{Conj1}. 
\end{proof}  

Let us conclude by stating the following
\begin{metatheorem} 
	Let $(M,\omega)$ be a Liouville manifold. Let $L$ be an exact Lagrangian. Then any closed property involving only $\gamma$ which is true for all Lagrangians $L'$, Hamiltonianly isotopic to $L$,  holds for any $L'$  in the same Floer theoretic class as $L$. 
\end{metatheorem}
\begin{proof} 
Indeed let $\psi^t$ be the flow of the Liouville vector field. Then $\psi^t(L')$ is Hamiltonianly isotopic to $L'$ and $\psi^t(L)$ is Hamiltonianly isotopic to $L$. So $\psi^t(L')=\rho^t(L'), \psi^t(L)=\sigma^t(L)$. Since $\gamma (\psi^t(L'),\psi^t(L))$ goes to zero, we have $\gamma(\rho^t(L'),\sigma^t(L))= \gamma(L', (\rho^t)^{-1}\sigma^t(L))$ goes to zero as $t$ goes to infinity. So if 
$(\rho^t)^{-1}\sigma^t(L))$ belongs to some interval, the same holds for $\gamma(L')$. 	
\end{proof}

\begin{rem} 
 In the case of cotangent bundles, where we have a single class, a statement concerning the $\gamma$ metric will hold for exact Lagrangians if it holds for any Lagrangian Hamiltonianly isotopic to the zero section. 
\end{rem}

\appendix

\section{One lemma and two examples to three questions (by \textsc{Maxime Zavidovique})}\label{appendix}

\subsection{A lemma}
In order to understand the similarities and differences between viscosity and variational solutions of Hamilton-Jacobi equations, one needs to understand better viscosity solutions in general contexts. Under convex hypotheses as they admit a variational characterization, many dynamical properties help to visualize their behavior.  Without convexity, it seems to be less the case. We provide here a simple lemma that shows that in dimension one, such solutions may not behave too erratically.

Let us recall the definitions first. Let $N$ be any smooth manifold and $G : T^*N\times \R\ \to \R$ a continuous function. We consider the equation $G\big(x, D_x u ,u(x)\big )=0 $ where $u : N \to \R$ is a continuous function.

\begin{defn}\rm
If $u : N\to \R$ and $x \in N$ we define
\begin{itemize}
\item  the superdifferential of $u$ at $x$, denoted by $\partial^+u(x)$ as the set of $D_x\phi$ where $\phi : N\to \R$ is a $C^1$ function such that $u-\phi$ has a local maximum at $x$.
\item 
 the subdifferential of $u$ at $x$, denoted by $\partial^-u(x)$ as the set of $D_x\phi$ where $\phi : N\to \R$ is a $C^1$ function such that $u-\phi$ has a local minimum at $x$.

\end{itemize}
\end{defn}

We recall that if $D_x u$ exists then $\partial^+u(x)=\partial^-u(x) = \{D_xu\}$. Reciprocally if both  $\partial^+u(x)$ and $\partial^-u(x)$ are non empty, then $D_x u$ exists (see \cite{CanSin04}).

\begin{defn}\rm
We say that a continuous function $u :N\to \R$ is a viscosity solution of $G\big(x,D_x u , u(x)\big) = 0$ if for all $x\in N$,
\begin{itemize}
\item for all $p\in \partial^+u(x)$, then $G\big(x,p,u(x)\big) \leqslant 0$,
\item  for all $p\in \partial^-u(x)$,  then $G\big(x,p,u(x)\big) \geqslant 0$.
\end{itemize}
\end{defn}

We now focus on the case of a 1 dimensional manifold $N$. So in the following, $N$ is either the real line $\R$ or the unit circle $\mathbb{T}^1$. In particular, as our result is mainly local we can see $x_0$ as being in an open interval, and the order is then the usual order on $\R$.  Our result is the following:

\begin{lem}\label{main-lemma}
Let $u : N\to \R$ be a Lipschitz viscosity solution to $G\big(x,D_x u , u(x)\big) = 0$. Let $x_0\in N$ such that the set $\left\{p\in \R,\ \  G\big(x_0,p,u(x_0)\big) = 0\right\}$ has empty interior.  Then the (almost everywhere defined) function $u'$ admits a left limit $u'_-(x_0)$ at $x_0$ and a right limit $u'_+(x_0)$ at $x_0$. It follows that $u$ has a left derivative at $x_0$ that is $u'_-(x_0)$ and a right derivative at $x_0$ that is $u'_+(x_0)$. 
 \end{lem}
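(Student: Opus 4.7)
The plan is to reduce the statement about one-sided derivatives to a statement about one-sided limits of $u'$. Since $u$ is Lipschitz it is absolutely continuous and $u'$ exists almost everywhere; if we prove that $u'(t) \to q$ as $t \to x_0^+$ along the (dense) set of differentiability points, then
\[
\frac{u(x_0+h) - u(x_0)}{h} \;=\; \frac{1}{h} \int_{x_0}^{x_0+h} u'(t)\,dt \;\longrightarrow\; q
\]
as $h \to 0^+$, so $u$ has right derivative $q$ at $x_0$, and symmetrically on the left. The real content is therefore to show that $a_+ := \liminf_{x \to x_0^+} u'(x)$ and $b_+ := \limsup_{x \to x_0^+} u'(x)$ (over points of differentiability) agree, and likewise on the left.

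Suppose for contradiction that $a_+ < b_+$. Because $Z := \{p \in \R : G(x_0,p,u(x_0)) = 0\}$ has empty interior, we may pick $p^* \in (a_+,b_+) \setminus Z$. After swapping the roles of sub- and supersolution if necessary, assume $G(x_0,p^*,u(x_0)) > 0$; by continuity of $G$ and of $u$ this persists on some interval $[x_0, x_0 + \delta]$, i.e.\ $G(x,p^*,u(x)) > 0$ there. The heart of the argument is now to manufacture a local maximum of the auxiliary function $f(t) := u(t) - p^* t$ inside $(x_0, x_0+\delta)$. Since $a_+ < p^* < b_+$, differentiability points with $u'(t) < p^*$ (equivalently $f'(t) < 0$) and with $u'(t) > p^*$ (i.e.\ $f'(t) > 0$) both accumulate at $x_0^+$. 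Pick a differentiability point $x_1 \in (x_0, x_0+\delta)$ with $f'(x_1) < 0$, then a differentiability point $y_1 \in (x_0,x_1)$ with $f'(y_1) > 0$. Then $f$ is strictly greater than $f(y_1)$ just to the right of $y_1$ and strictly greater than $f(x_1)$ just to the left of $x_1$, so the maximum of $f$ on $[y_1,x_1]$ is attained at some interior point $z$. At this $z$, the constant $p^*$ belongs to $\partial^+ u(z)$, so the viscosity subsolution inequality gives $G(z,p^*,u(z)) \leq 0$, contradicting the positivity of $G(\cdot,p^*,u(\cdot))$ on $[x_0, x_0 + \delta]$. The symmetric sub-case $G(x_0,p^*,u(x_0)) < 0$ is handled by producing an interior local minimum of $f$ (now using $f'(y_1) < 0$ to the right of some $f'(x_1) > 0$) and invoking the supersolution inequality. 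Running the identical argument on the left of $x_0$ finishes the proof.

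The delicate point—and the only place where the empty-interior hypothesis on $Z$ really enters—is the ability to insert a value $p^*$ strictly between $a_+$ and $b_+$ at which $G(x_0,p^*,u(x_0)) \neq 0$; without this one cannot exclude oscillation of $u'$ between different zeros of $G(x_0,\cdot,u(x_0))$. Everything else is the standard one-variable viscosity test-function argument, together with the fact that a Lipschitz function with a one-sided limit of its a.e.\ derivative has the corresponding one-sided derivative.
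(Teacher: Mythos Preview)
Your argument is correct and rests on the same mechanism as the paper's: oscillation of $u'$ near $x_0$ forces interior extrema of $t\mapsto u(t)-p^*t$, and the viscosity inequalities at such extrema control the sign of $G$. The paper organizes this a little differently. It isolates the interior-extremum step as a separate Darboux-type lemma, and then, for \emph{every} $p''$ strictly between the two cluster values of $u'$, applies that lemma on alternating oscillation intervals $(y_{2n},y_{2n+1})$ and $(y_{2n+1},y_{2n+2})$ to produce both a $\partial^+$- and a $\partial^-$-touching along sequences tending to $x_0$; passing to the limit gives $G(x_0,p'',u(x_0))=0$ for all such $p''$, so the whole interval lies in $Z$, contradicting empty interior. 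You run the contrapositive: pick one $p^*\notin Z$ first, propagate the nonzero sign of $G(\cdot,p^*,u(\cdot))$ to a one-sided neighbourhood by continuity, and then a single interior extremum already gives the contradiction. Your route is marginally more economical (one extremum, no limiting sequence); the paper's is more modular, and makes explicit that the full open interval between the cluster values would have to sit inside $Z$.
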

Note that the hypothesis that $u$ is Lipschitz is not very restrictive as, as soon as $G$ is coercive in $p$, then any solution $u$ is automatically Lipschitz.
As immediate corollaries we deduce
\begin{cor}\label{cormax}
Let $u : N\to \R$ be a Lipschitz viscosity solution to $G\big(x,u'(x) , u(x)\big) = 0$. Let $x_0\in N$ such that the set $\left\{p\in \R,\ \  G\big(x_0,p,u(x_0)\big) = 0\right\}$ has empty interior.  Then, 
\begin{itemize}
\item $G(x_0,u'_-(x_0),u(x_0)\big) = G(x_0,u'_+(x_0),u(x_0)\big)=0$.
\item if $ u'_-(x_0)\leqslant u'_+(x_0)$ then $\partial^-u(x_0) = [ u'_-(x_0) , u'_+(x_0)]$ and for all $p\in [ u'_-(x_0) , u'_+(x_0)]$ it holds  $G(x_0,p,u(x_0)\big)\geqslant 0$,
\item if $ u'_-(x_0)\geqslant u'_+(x_0)$ then $\partial^+u(x_0) = [ u'_+(x_0) , u'_-(x_0)]$ and for all $p\in [ u'_+(x_0) , u'_-(x_0)]$ it holds $G(x_0,p,u(x_0)\big)\leqslant 0$.
\end{itemize}

\end{cor}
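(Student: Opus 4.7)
The plan is to combine Lemma \ref{main-lemma} (which supplies the one-sided derivatives $u'_\pm(x_0)$) with three standard facts: at every differentiability point the two viscosity inequalities collapse to the equality $G(x,u'(x),u(x))=0$; the sub-/superdifferential of a function on $\R$ at a point depends only on the one-sided first-order behavior there; and $\partial^\pm u(x_0)$ is a closed subset of $\R$. Since $u$ is Lipschitz, by Rademacher's theorem the set $D$ of differentiability points has full measure, and at any $x\in D$ both $\partial^+u(x)=\partial^-u(x)=\{u'(x)\}$, so the viscosity conditions force $G(x,u'(x),u(x))=0$. To prove the first bullet, I pick sequences $x_n\to x_0^-$ and $y_n\to x_0^+$ in $D$; Lemma \ref{main-lemma} gives $u'(x_n)\to u'_-(x_0)$ and $u'(y_n)\to u'_+(x_0)$, and continuity of $G$ together with continuity of $u$ yields $G(x_0,u'_\pm(x_0),u(x_0))=0$.

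For the second bullet, assume $u'_-(x_0)\le u'_+(x_0)$. I will use the equivalent first-order characterization of $\partial^- u(x_0)$: $p\in\partial^- u(x_0)$ iff $u(x)\ge u(x_0)+p(x-x_0)+o(|x-x_0|)$. Writing the two one-sided expansions $u(x)-u(x_0)=u'_\pm(x_0)(x-x_0)+o(x-x_0)$, I obtain
\[
\frac{u(x)-u(x_0)-p(x-x_0)}{|x-x_0|}=
\begin{cases} (u'_+(x_0)-p)+o(1), & x\to x_0^+,\\ (p-u'_-(x_0))+o(1), & x\to x_0^-,\end{cases}
\]
so the liminf is nonnegative on \emph{both} sides exactly when $u'_-(x_0)\le p\le u'_+(x_0)$. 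This gives $\partial^- u(x_0)=[u'_-(x_0),u'_+(x_0)]$. The viscosity subsolution inequality applied to every such $p$ then yields $G(x_0,p,u(x_0))\ge 0$ on this whole interval, proving the second bullet. (Existence of a $C^1$ test function for interior $p$ is realized by the affine map $x\mapsto u(x_0)+p(x-x_0)$; for the endpoints $p=u'_\pm(x_0)$ one either invokes closedness of $\partial^-u(x_0)$ or modifies the test by a small quadratic $-\varepsilon(x-x_0)^2$.)

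The third bullet is strictly symmetric: when $u'_-(x_0)\ge u'_+(x_0)$, the same expansions show $p\in\partial^+ u(x_0)$ iff $u'_+(x_0)\le p\le u'_-(x_0)$, and the viscosity supersolution condition forces $G(x_0,p,u(x_0))\le 0$ on $[u'_+(x_0),u'_-(x_0)]$. The one genuine obstacle — really only a bookkeeping nuisance — is tracking signs consistently between the two sides of $x_0$ (the factor $x-x_0$ changes sign, which is precisely what makes the subdifferential show up when the one-sided derivatives satisfy $u'_-\le u'_+$ and the superdifferential when $u'_-\ge u'_+$). Once this sign accounting is fixed, all three statements of the corollary fall out of the above first-order computation combined with the viscosity inequalities.
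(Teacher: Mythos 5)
Your proof is correct and follows essentially the same route as the paper: the first bullet by passing to the limit of $G(x,u'(x),u(x))=0$ along differentiability points, and the other two by identifying $\partial^{\mp}u(x_0)$ with the interval between the one-sided derivatives via a first-order expansion (the paper packages the same computation as an explicit auxiliary function $v$ touching $u$ from the relevant side, and leaves the reverse inclusion as an exercise, which your $\liminf$ characterization supplies). One small caveat: the fallback of perturbing the affine test function by $-\varepsilon(x-x_0)^2$ at the endpoints does not quite work, since an $o(|x-x_0|)$ error can dominate $\varepsilon(x-x_0)^2$, but your alternative --- closedness of $\partial^{-}u(x_0)$, i.e.\ the standard equivalence of the test-function and $\liminf$ definitions of the sub/superdifferential --- is valid.
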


\begin{proof}

If $x$ is a point where $u'(x)$ exists, then $G\big(x,u'(x),u(x)\big)=0$. By letting $x\to x_0$ from below we obtain that $ G(x_0,u'_-(x_0),u(x_0)\big)=0$ and similarly by letting $x\to x_0$ from above we obtain that $ G(x_0,u'_+(x_0),u(x_0)\big)=0$.

We then explain the next point. If $ u'_-(x_0)\geqslant u'_+(x_0)$ and $p\in [ u'_-(x_0) , u'_+(x_0)]$ let $v$ be defined in a neighborhood of $x_0$ by
$$
v(x) =
\begin{cases}
 u(x) + \big(p- u'_-(x_0)\big)(x-x_0) ,\quad {\rm if \ \ } x\leqslant x_0,\\
 u(x) + \big(p- u'_+(x_0)\big)(x-x_0) ,\quad {\rm if \ \ } x\geqslant x_0.
\end{cases}
 $$
 Then $v\leqslant u$, $v(x_0) = u(x_0)$ and $v'(x_0)$ exists with $v'(x_0) =p$. It follows easily that $\{p\} = \partial^- v(x_0) \subset \partial^- u(x_0)$. The other inclusion $\partial^-u(x_0) \subset [ u'_-(x_0) , u'_+(x_0)]$ is left as an exercise and the end is just the definition of viscosity solution.
\end{proof}

A key ingredient in the proof of lemma \ref{main-lemma} is the following elementary ``à la Darboux'' lemma\footnote{It was pointed out by A. Davini that this lemma also appears in \cite{Tran}.}:

\begin{lem}\label{dif}
Let $f : N \to \R$ be a continuous function. 
\begin{enumerate}
\item assume that there are $a<b$ and $p>p'$ such that $p\in \partial^- f(a)$ and $p'\in \partial^- f(b)$. Then for all $p>p_0>p'$ there exists $c\in (a,b)$ such that $p_0\in \partial^+ f(c)$;
\item assume that there are $a<b$ and $p<p'$ such that $p\in \partial^+ f(a)$ and $p'\in \partial^+ f(b)$. Then for all $p>p_0>p'$ there exists $c\in (a,b)$ such that $p_0\in \partial^- f(c)$.
\end{enumerate}

\end{lem}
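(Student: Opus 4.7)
The plan is to reduce both items to an elementary extremum argument by subtracting the affine function $x\mapsto p_0 x$. Note that if $\psi$ is $C^1$ with $\psi'(x_0)=q$ and $f-\psi$ has a local minimum (resp.~maximum) at $x_0$, then writing $g(x)=f(x)-p_0 x$ one has $g-(\psi-p_0\cdot\mathrm{id})$ with the same local minimum (resp.~maximum) at $x_0$, and $(\psi-p_0\cdot\mathrm{id})'(x_0)=q-p_0$. Hence $\partial^\pm g(x)=\partial^\pm f(x)-p_0$ for every $x$. After this reduction, it suffices to prove: in item (1), if $p\in\partial^- f(a)$ with $p>0$ and $p'\in\partial^- f(b)$ with $p'<0$, then some $c\in(a,b)$ satisfies $0\in\partial^+ f(c)$; and the analogous statement for item (2).

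For item (1), from $p\in\partial^- f(a)$ pick a $C^1$ test function $\phi_a$ with $\phi_a'(a)=p>0$ such that $f-\phi_a$ attains a local minimum at $a$. Then for $x>a$ sufficiently close to $a$,
\[
f(x)-f(a)\geq \phi_a(x)-\phi_a(a)=p(x-a)+o(x-a)>0.
\]
Symmetrically, from $p'\in\partial^- f(b)$ with $p'<0$ one obtains $f(x)>f(b)$ for $x<b$ sufficiently close to $b$. By continuity, $f$ attains its maximum on the compact interval $[a,b]$ at some point $c$; the two one-sided inequalities above force $c\notin\{a,b\}$, so $c\in(a,b)$. Taking $\phi\equiv 0$ as test function shows that $0=\phi'(c)\in\partial^+ f(c)$. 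Undoing the normalisation gives $p_0\in\partial^+ f(c)$, as desired.

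Item (2) is completely symmetric: under the natural reading (that $p_0$ lies strictly between $p$ and $p'$), one obtains after the same normalisation a function with $p<0\in\partial^+ f(a)$ and $p'>0\in\partial^+ f(b)$, from which the test-function inequalities yield $f(x)<f(a)$ just to the right of $a$ and $f(x)<f(b)$ just to the left of $b$. Then the \emph{minimum} of $f$ on $[a,b]$ is attained at some interior $c\in(a,b)$, and the constant test function gives $0\in\partial^- f(c)$. Alternatively, item (2) follows from item (1) applied to $-f$, using the identity $\partial^\mp(-f)=-\partial^\pm f$.

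The only mild subtlety is that the local minima and maxima furnished by the definitions of $\partial^-$ and $\partial^+$ are genuinely local, so the strict inequalities $f(x)>f(a)$ and $f(x)>f(b)$ (or their analogues in item (2)) hold only in one-sided neighbourhoods of $a$ and $b$; this is however exactly what is required to exclude $c=a$ and $c=b$ when locating the global extremum of $f$ on $[a,b]$. No use is made of any differentiability of $f$ beyond continuity, and no coercivity or convexity hypothesis on $G$ enters the proof.
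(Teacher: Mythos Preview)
Your proof is correct and follows essentially the same approach as the paper: reduce to $p_0=0$ by subtracting an affine function, use the subdifferential test functions to show $f$ strictly increases to the right of $a$ and strictly decreases to the left of $b$, and conclude that the maximum of $f$ on $[a,b]$ lies in the interior, where $0$ is a superdifferential. Your write-up is in fact a bit more careful than the paper's about the local nature of the inequalities and about the derivation of item~(2) from item~(1).
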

\begin{proof}
Let us prove the first point. Up to adding a linear function to $f$, we assume $p_0=0$, thus $p>0>p'$. There exists a $C^1$ function $\phi$ such that $\phi'(a) = p$, $\phi\leqslant f$ with equality at $a$. This implies that $f(x)> f(a)$ if $x\in (a,a+\varepsilon)$ for some small $\varepsilon>0$. A similar argument yields that $f(x)> f(b)$ if $x\in (b-\varepsilon,b)$ for some $\varepsilon>0$. It follows that $f$ restricted to $[a,b]$ has a maximum that is reached at some $c\in(a,b)$. At this point we indeed have $0\in \partial^+ f(c)$.
\end{proof}

We finally turn to the 
\begin{proof}[Proof of Lemma \ref{main-lemma}]
Let us prove the existence of a left limit of the derivative. We argue by contradiction. As $u$ is Lipschitz if the result does not hold, we can find $p<p'$ and an increasing sequence $y_n \to x_0$ of derivability points of $u$ such that $u'(y_{2n}) \to p$ and $(y_{2n+1}) \to p'$. In particular, for $n$ large enough, we have $u'(y_{2n})<u'(y_{2n+1})$ and  $u'(y_{2n+2})<u'(y_{2n+1})$.

Let now $p'' \in(p,p')$ then for $n$ large enough, $p'' \in \big(u'(y_{2n}),u'(y_{2n+1})\big)$ and by our lemma, there is $z_n \in (y_{2n},y_{2n+1})$ such that $p'' \in \partial^+ u(z_n)$. In particular, by definition of viscosity solution $G\big(z_n,p'',u(z_n)\big)\geqslant 0$. Letting $n\to +\infty$ we find that $G(x_0,p'',u(x_0)\big) \geqslant 0$.

The same argument applied between the points  $y_{2n+1}$ and $y_{2n+2}$ gives $G(x_0,p'',u(x_0)\big) \leqslant 0$. Finally, we have proven that $G(x_0,\cdot ,u(x_0)\big)$ vanishes on $(p,p')$ which is a contradiction.

The existence of left and right derivatives follow from the property, for Lipschitz functions, that if $h\neq 0$, $h^{-1}\big(u(x_0+h)- u(x_0)\big) = h^{-1}\int_0^h u'(x_0+s)ds$.
\end{proof}

\subsection{Two examples and three questions}

\subsubsection
{\bf Question 1:} \emph{For a non-Tonelli Hamiltonian $H$, is it true that the viscosity solution is the graph selector of $\tilde L_\infty(H,\alpha)$?}

\medskip
This question arises as the result is proven to be true for Tonelli Hamiltonians in Corollary \ref{cor:KAMF-graph-selector}. We answer by the negative starting from the classical damped pendulum. Let $H: \mathbb T^1\times \R \to \R$ be defined by 
$$(x,p)\mapsto \frac12 p^2 - \cos(2\pi x)-1.$$

For $\alpha>0$ let us recall that there exists a unique (continuous) viscosity solution $u_\alpha : \mathbb T^1 \to \R$ to 
$$\alpha u_\alpha + H(x, D_x u_\alpha ) =0.$$

In the example of the damped pendulum (see Figure \ref{fig:pendulum+perturb}), the Birkhoff attractor $B_{H,\alpha}$  is given by the spiral in blue. The viscosity solution is given by integrating the top branch $f^+$ and the lower branch $f^-$ of the Birkhoff attractor (in red):
$$u_\alpha (x) = \begin{cases}
\int_{0}^x f^+(s)ds ,  \quad {\rm if}\  0\leqslant x\leqslant 1/2,\\
\int_{1}^x f^-(s)ds ,  \quad {\rm if} \ 1/2\leqslant x\leqslant 1.\\
\end{cases}
$$
In this instance, $u_\alpha$ is $C^1$ in $\mathbb T^1 \setminus \{1/2\}$ and at those points its derivative is the only sub-tangent and super-tangent. At $1/2$ the set of super-tangents is $[f^-(1/2), f^+(1/2)]$ and there are no sub-tangents.

Let us now set $H_1 = H+\rho$ where $\rho : \mathbb T^1\times \R \to \R$ is a smooth, (big) bump function such that (See Figure \ref{fig:pendulum+perturb})
 \begin{itemize}
 \item the support of $\rho$ does not intersect the Birkhoff attractor of $H$,
 \item the support of $\rho$ intersects  $\{1/2\} \times [f^-(1/2), f^+(1/2)]$ in such a way that $H_1$ has a huge maximum on this segment.
 \end{itemize}
 
 Note that $B_{H,\alpha} $ is still the Birkhoff attractor of $H_1$ and $\tilde L_\infty(H,\alpha)=\tilde L_\infty(H_1,\alpha)$ its associated brane. This follows from Lemma \ref{lem:shift-of brane}
 and the choice of the support of $\rho$.

\begin{figure}
\centering
\def\svgwidth{0.8\textwidth}
{\small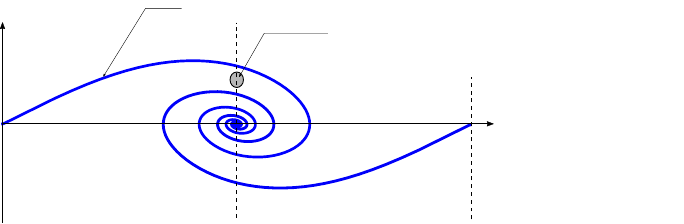}
\caption{The Birkhoff attractor of the pendulum $H$ and its perturbation $H_1=H+\rho$.}
\label{fig:pendulum+perturb}
\end{figure}

 It follows that the variational solution $\tilde u_\alpha$, for $H$ is also the variational solution for $H_1$ as it can be recovered only knowing $\widetilde L_\infty(H_1, \alpha)$. So $\tilde u_\alpha = u_\alpha$ as variational and viscosity solutions coincide for Tonelli Hamiltonians. 
 
 However $u_\alpha$ is no longer a viscosity solution for $H_1$. Indeed, it is false that $\alpha u_\alpha(1/2)+ H_1(1/2,y)\leqslant 0$ for all $y\in  [f^-(1/2), f^+(1/2)]$ thus violating the definition of viscosity solution.
 
 \subsubsection
{\bf Question 2:} \emph{For a non-Tonelli Hamiltonian $H$, is it true that, as in the Tonelli case, that if $u_\alpha$ is the viscosity solution of the $\alpha$-discounted Hamilton-Jacobi equation then $\overline{{\rm graph}(Du_\alpha)}\subset B_{H,\alpha}$? }

\medskip
For Tonelli Hamiltonians, this holds thanks to Theorem \ref{th:discounted}. Again the answer is negative and to prove it we use again the Hamiltonian $H_1$.

We now prove that 
\begin{prop}\label{A6}
If $\alpha >0$ and $u_\alpha : \mathbb T^1 \to \R$ is the viscosity solution to 
$$\alpha u_\alpha + H_1(x, D_x u_\alpha ) =0,$$
then 
$\overline{{\rm graph}(Du_\alpha)}\not\subset B_{H_1,\alpha}$ 
\end{prop}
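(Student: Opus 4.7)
The plan is to argue by contradiction: assume $\overline{\mathrm{graph}(Du_\alpha)} \subset B_{H,\alpha}$ and analyze $u_\alpha$ at the point $x_0 = 1/2$ where the bump $\phi$ is concentrated. The first step will use that $\mathrm{supp}(\phi)$ and $B_{H,\alpha}$ are disjoint compact sets, so $H_1 \equiv H$ on a neighborhood of $\overline{\mathrm{graph}(Du_\alpha)}$ and hence $\alpha u_\alpha(x) + H(x, Du_\alpha(x)) = 0$ at every differentiability point. Next I will invoke Lemma \ref{main-lemma} to conclude that $u_\alpha$ admits left and right derivatives $u'_\pm(x)$ everywhere, which under the containment hypothesis lie in $B_{H,\alpha} \cap (\{x\} \times \R)$; passing to one-sided limits of the HJ equation at $1/2$ yields $\alpha u_\alpha(1/2) + H(1/2, u'_\pm(1/2)) = 0$. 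Since $H(1/2, p) = p^2/2 - 2$, this forces $(u'_+(1/2))^2 = (u'_-(1/2))^2$, and $u_\alpha(1/2) = (2 - P^2/2)/\alpha$ where $P := |u'_+(1/2)| = |u'_-(1/2)|$.

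The second ingredient will be the comparison principle: since $\phi \geq 0$, any viscosity sub-solution of $\alpha u + H_1 = 0$ is also a viscosity sub-solution of $\alpha u + H = 0$, so $u_\alpha \leq u_\alpha^H$ globally, where $u_\alpha^H$ denotes the unique viscosity solution of $\alpha u + H = 0$. Evaluating at $x = 1/2$ gives $u_\alpha(1/2) \leq u_\alpha^H(1/2) = (2 - p_1^2/2)/\alpha$ with $p_1 := f^+(1/2) = -f^-(1/2)$. Combined with the formula above, $P \geq p_1$; but $P$ lies in the discrete set $B_{H,\alpha} \cap (\{1/2\} \times \R) = \{0\} \cup \{\pm p_k\}_{k \geq 1}$ whose maximum absolute value is $p_1$, so I can conclude $u'_\pm(1/2) \in \{\pm p_1\}$ and $u_\alpha(1/2) = u_\alpha^H(1/2)$.

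The argument will then reduce to a case analysis for $(u'_-(1/2), u'_+(1/2)) \in \{\pm p_1\}^2$. In the super-differential case $(p_1, -p_1)$, $\partial^+ u_\alpha(1/2) = [-p_1, p_1]$; since $H_1$ attains arbitrarily large values on $\{1/2\} \times [-p_1, p_1]$, there exists $p_* \in (-p_1, p_1)$ with $\phi(1/2, p_*) > (p_1^2 - p_*^2)/2$, and then $\alpha u_\alpha(1/2) + H_1(1/2, p_*) = (p_*^2 - p_1^2)/2 + \phi(1/2, p_*) > 0$ violates the super-differential inequality. In the sub-differential case $(-p_1, p_1)$, $0 \in \partial^- u_\alpha(1/2)$ and $\phi(1/2, 0) = 0$ because $(1/2, 0) \in B_{H,\alpha}$ is disjoint from $\mathrm{supp}(\phi)$; the sub-differential inequality at $p = 0$ then yields $u_\alpha(1/2) \geq 2/\alpha > (2 - p_1^2/2)/\alpha = u_\alpha(1/2)$, a contradiction.

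The hard part will be the remaining differentiable cases $u'_-(1/2) = u'_+(1/2) = P \in \{\pm p_1\}$, which I would handle by a finer use of the comparison principle. Setting $w = u_\alpha^H - u_\alpha \geq 0$, one has $w(1/2) = 0$. Since $u_\alpha^H$ has left derivative $p_1$ and right derivative $-p_1$ at $1/2$ (read off from its explicit formula), while $u_\alpha$ is differentiable there with derivative $P$, computation gives $w'_+(1/2) = -p_1 - P$ and $w'_-(1/2) = p_1 - P$. When $P = p_1$, $w'_+(1/2) = -2p_1 < 0$, so $w(x) < 0$ slightly to the right of $1/2$, contradicting $w \geq 0$; when $P = -p_1$, $w'_-(1/2) = 2p_1 > 0$ yields $w(x) < 0$ slightly to the left of $1/2$. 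All cases lead to a contradiction, establishing $\overline{\mathrm{graph}(Du_\alpha)} \not\subset B_{H,\alpha}$.
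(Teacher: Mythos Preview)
Your proof is correct and reaches the contradiction by a genuinely different route from the paper's.

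The paper's argument is dynamical: it exploits that for $x>0$ small the fiber $B_{H,\alpha}\cap(\{x\}\times\R)$ reduces to the single point $(x,f^+(x))$, so necessarily $u'_\alpha=f^+$ on an initial interval $(0,x_0)$. Then the strict monotonicity of $H$ along the two heteroclinic trajectories composing $B_{H,\alpha}$ forces the right derivative $u'_{\alpha+}(x_0)$ either to equal $f^+(x_0)$ (excluded by maximality) or to sit at the symmetric point $(1-x_0,f^-(1-x_0))$, whence $x_0=1/2$. Thus the paper lands directly in your case $(p_1,-p_1)$ and finishes with the single super-differential violation coming from the bump.

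Your approach bypasses the maximality step and the trajectory-wise energy argument, replacing them by the comparison principle (using $\phi\geq 0$, so $u_\alpha\leq u_\alpha^H$) and the discreteness of the fiber $B_{H,\alpha}\cap(\{1/2\}\times\R)$. This pins down $|u'_\pm(1/2)|=p_1$ and $u_\alpha(1/2)=u_\alpha^H(1/2)$, at the price of a four-case analysis; the two ``differentiable'' cases are then dispatched by the first-order behaviour of $w=u_\alpha^H-u_\alpha$ at its interior minimum. So the paper's proof is shorter and more structural, while yours avoids the delicate step of controlling $B_{H,\alpha}$ near the saddle and relies only on standard viscosity-solution tools. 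Both proofs invoke Lemma~\ref{main-lemma} at $x=1/2$ and hence implicitly use that $\phi$ can be chosen so that $p\mapsto \alpha u_\alpha(1/2)+H_1(1/2,p)$ has no flat pieces; this is harmless since $\phi$ is part of the constructed example.
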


\begin{proof}
Let us argue by contradiction. Recall that as $H_1$ is coercive, $u_\alpha$ is automatically Lipschitz hence derivable almost everywhere. For $x>0$ small, there is a unique point of $B_{H,\alpha}=B_{H_1, \alpha}$ above $x$, that we denoted above by $\big(x,f^+(x)\big)$. So $u'_\alpha(x) = f^+(x)$. Let $x_0$ be  the maximal point such that $u'_\alpha(x) = f^+(x)$ for $x\in (0,x_0)$. 

By Lemma \ref{main-lemma} and Corollary \ref{cormax}, $u_\alpha$ has a right derivative $u'_{\alpha+}(x_0) $ at $x_0$, by hypotheses, $\big(x_0,u'_{\alpha+}(x_0)\big) \in B_{H,\alpha} $ and $H_1\big(x_0,u'_{\alpha+}(x_0)\big) =-\alpha u_\alpha(x_0) = H_1\big(x_0,f^+(x_0)\big)$. 

Note that if $t\mapsto \big(x(t),p(t)\big)$ is a trajectory of $\phi_{-H, \alpha}$, then  $\frac{d}{dt} H\big(x(t),p(t)\big) = -\alpha p(t)^2$. It follows that $H$ is strictly decreasing on such non constant trajectories. As $H$ and $H_1$ coincide on $B_{H,\alpha}$ and that the latter is made of $2$ trajectories (and 2 fixed points), we deduce that there is only one other point $(x,y) \in B_{H,\alpha}$  such that $H_1(x,y) =  H_1\big(x_0,f^+(x_0)\big)$. By symmetry, this point is $\big(1-x_0, f^-(1-x_0)\big)$. Therefore we must have $x_0 = 1/2$ and as previously, we get a contradiction as $\alpha u_\alpha(1/2) +H_1(1/2,\cdot)$ takes positive values on the corresponding vertical segment.
\end{proof}

\subsubsection
{\bf Question 3:} \emph{Let $H : T^*N \to \R$ be a Tonelli Hamiltonian, $\alpha>0$ and $u_\alpha : N\to \R$ be the discounted weak KAM solution associated to the factor $\alpha$. The pseudograph of $u_\alpha $ is 
$$\mathcal G(u_\alpha) = \overline{ \big\{\big(x,u_\alpha'(x)\big), \ \ x\in \mathcal D\big\}}$$ where $\mathcal D$ is the set of differentiability points of $u_\alpha$. Is it true that $B_{H,\alpha} = \overline{\cup_{t\geqslant 0} \phi_{-H,\alpha}^t\mathcal G(u_\alpha) }$ ? }

\medskip
Note that this is the case for the damped pendulum.
We will construct an example on the annulus $\mathbb T^1\times \R$, for $\alpha>0$ fixed. Let $f^+ : [0,5/6] \to \R$ be smooth, as on Figure \ref{fig:question3} and $f^-(x) = -f^+(1-x)$.
Let $1/2<\varepsilon_1<\varepsilon_2< 3/4$ to be chosen later and let $v^+: [0,1]\to [0,1]$ be a smooth function such that $v^+(0) = 0$, $v^+ $ is increasing on $[0,1/4]$, $v^+$ is constant equal to $1$ on $[1/4, \varepsilon_1]$ and $v_+$ decreases to $0$ on $[\varepsilon_1,\varepsilon_2]$ to stay $0$ afterwards. Let us set $v^- (x) = -v^+(1-x)$.

\begin{figure}[h!]
\centering
\def\svgwidth{0.8\textwidth}
{\small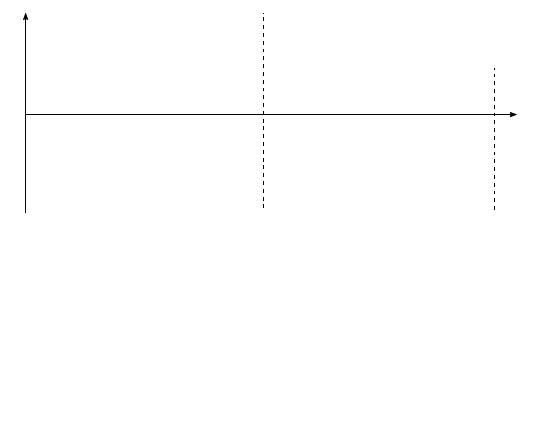}
\caption{The functions $f_\pm$ and $v_\pm$.}
\label{fig:question3}
\end{figure}

We want to construct $H$ such that the graph of $f^+$ restricted to $[0,\varepsilon_2]$
 is a trajectory of the flow $\phi_{-H,\alpha}$, and has  horizontal velocity $v^+$, and symmetrically for the graph of $f^-$ with velocity $v^-$. If $H(0,0)=0$ then the function $u_\alpha $ defined by 
 $$u_\alpha (x) = \begin{cases}
\int_{0}^x f^+(s)ds ,  \quad {\rm if}\  0\leqslant x\leqslant 1/2,\\
\int_{1}^x f^-(s)ds ,  \quad {\rm if} \ 1/2\leqslant x\leqslant 1,\\
\end{cases}
$$
will be the discounted solution and $\overline{\cup_{t\geqslant 0} \phi_{-H,\alpha}^t\mathcal G(u_\alpha) }\neq B_{H,\alpha}$ as it does not disconnect the annulus. Indeed as $(\varepsilon_2, f^+(\varepsilon_2))$ and $(1-\varepsilon_2, f^-(1-\varepsilon_2))$ are fixed points,  
$$\overline{\bigcup_{t\geqslant 0} \phi_{-H,\alpha}^t\mathcal G(u_\alpha) } \subset \{(x,f^+(x)) , \ \ x\in [0,\varepsilon_2]\} \cup \{(x,f^-(x)) , \ \ x\in [1-\varepsilon_2, 1]\}.$$ 
 
 When writing the conformal Hamiltonian flow we find the relations (for the relevant values of $x$)
 $$\begin{cases}
 \dot x(t) = \partial_p H\Big(x(t), f^\pm\big(x(t)\big)\Big) = v^\pm (x(t)) \\
\dot p(t) = -\partial_x H(x(t), f^\pm(x(t)))-\alpha f^{\pm}(x(t)) = v^\pm(x(t))(f^{\pm})'(x(t)).
 \end{cases}
$$
Integrating along a trajectory $x(t)$ that follows one of the graphs of $f^\pm$ we find that,
\begin{multline*}
H(x(T), f^\pm(x(T)))-H(x(0), f^\pm(x(0))) =\\
 \int_0^T \partial_x H\big(x(t), f^\pm(x(t))\big)  v^\pm (x(t)) +  \partial_p H\big(x(t), f^\pm(x(t))\big)(f^{\pm})'(x(t))  v^\pm (x(t)) dt \\
= -\int_0^T \alpha f^\pm(x(t))v^\pm (x(t)) dt . 
\end{multline*}
It follows that if we define $H(0,0) = H(1,0) = 0$ then 
\begin{equation}\label{lyap}
H(x,f^+(x))= \int_0^x \alpha f^+(s) ds \ \ \ {\rm and}\ \ \  H(x,f^-(x))= \int_1^x \alpha f^-(s) ds.
\end{equation}
In particular, we find that 
$$H(1/2, f^+(1/2)) =H(1/2, f^-(1/2)).$$ 
Therefore, as $\partial_p H(1/2, f^-(1/2))= v^-(1/2)=-1$ and $f^+(1/2)-f^-(1/2)=2$
$$H(1/2, f^+(1/2))-H(1/2, f^-(1/2))=0>-2= \partial_p H(1/2, f^-(1/2))(f^+(1/2)-f^-(1/2))$$
and similarly 
$$H(1/2, f^-(1/2))>H(1/2, f^+(1/2)) + \partial_p H(1/2, f^+(1/2))(f^-(1/2)-f^+(1/2)).$$
We then chose $\varepsilon_1<\varepsilon_2\in (1/2,3/4)$ in order to have 
\begin{equation}\label{cond1}
\forall x\in (1/2,\varepsilon_2], \quad H(x, f^+(x))>H(x, f^-(x)) + \partial_p H(x, f^-(x))(f^+(x)-f^-(x))
\end{equation}
which is possible by continuity and the fact that $ \partial_p H(x, f^-(x))(f^+(x)-f^-(x))= 2$ on the interval considered.

Since $\partial_p H(x, f^+(x))(f^-(x)-f^+(x)) = -2v^+(x)\leq 0$,  and using \big(see equation \eqref{lyap}\big), we conclude that if $x\in [1/2,\varepsilon_2]$, 
\begin{align}\label{cond2}
H(x,f^-(x))&> H(1/2,f^\pm(1/2))> 
H(x,f^+(x))\nonumber\\ &\geq H(x,f^+(x)) +\partial_p H(x, f^+(x))(f^-(x)-f^+(x)).
\end{align}

By symmetry of the construction with respect to the point $(1/2,1/2)$ we have inequalities similar to \eqref {cond1} and \eqref{cond2} for $x\in [1-\varepsilon_2,1/2]$ by switching $+$ and $-$. Those inequalities are necessary and sufficient to build our Tonelli Hamiltonian $H$. Here is a sketch of the construction.

Start by taking $\varepsilon'_2>\varepsilon_2$ in order that   \eqref {cond1} and \eqref{cond2} still hold for $x\in [1/2,\varepsilon'_2]$ and   $x\in [1-\varepsilon'_2,1/2]$. 

Define  $H_0$ on the strip 
\begin{multline*}
S_\varepsilon = \{(x,y), x\in [0,\varepsilon'_2], y\in [f^+(x)-\varepsilon, f^+(x)+\varepsilon]\}\\
\cup \{(x,y), x\in [1-\varepsilon'_2,1], y\in [f^-(x)-\varepsilon, f^-(x)+\varepsilon]\}
\end{multline*}
 for $\varepsilon>0$
 small enough, by
$H_0(x,y) = H(x,f^+(x)) + (y-f^+(x)) \partial_pH(x,f^+(x))-\varepsilon(y-f^+(x))^2$ if $(x,y)$ is in the first part of the strip, and  $H_0(x,y) = H(x,f^-(x)) + (y-f^-(x)) \partial_pH(x,f^-(x))- \varepsilon(y-f^-(x))^2$ otherwise. Then extend linearly $H_0$ on each $\{x\}\times [f^-(x)+\varepsilon, f^+(x)-\varepsilon]$. For $\varepsilon>0$ small enough the obtained function is convex in each fiber. 

Let $H_1 = H_0+\rho$ where $\rho $ is smooth, $C^2$ small, vanishes on the smaller strip $S_{\varepsilon/2} \subset S_\varepsilon$ and such that $y\mapsto \rho(x,y)$ is strictly convex for $y\in [f^-(x)+\varepsilon, f^+(x)-\varepsilon]$. Then the function $H_1$ is convex in each fiber (where it is defined).

For $x\in [1-\varepsilon'_2, \varepsilon'_2]$ define for $y>f^+(x)+\varepsilon$, 
$$H_1(x,y) = H_1(x,f^+(x)+\varepsilon) +  M(y-f^+(x)-\varepsilon)^2+ M(y-f^+(x)-\varepsilon)
$$
and for
$y<f^-(x)-\varepsilon$, 
$$H_1(x,y) = H_1(x,f^-(x)-\varepsilon) +  M(y-f^-(x)+\varepsilon)^2- M(y-f^-(x)+\varepsilon)
$$
where $M$ is a big enough constant to ensure strict convexity in the fibers.

Now we extend $H_1$ as follows for $x>\varepsilon'_2$:
\begin{itemize}
\item if $y< f^-(x)-\varepsilon$, 
$$H_1(x,y) = H_1(x,f^-(x)-\varepsilon) +  M(y-f^-(x)+\varepsilon)^2- M(y-f^-(x)+\varepsilon);
$$
\item  if $y> f^-(x)+\varepsilon$, 
\begin{multline*}
H_1(x,y)=(1- \psi(x))  H_1(\varepsilon'_2,f^-(\varepsilon'_2)+\varepsilon-f^-(x)+y) \\
+\psi(x) H_1(x,f^-(x)+\varepsilon) +  M(y-f^-(x)-\varepsilon)^2- M(y-f^-(x)-\varepsilon)
\end{multline*}
where $\psi : [\varepsilon'_2,1] \to [0,1]$ is a smooth, non--decreasing function that is $0$ in a neighborhood of $\varepsilon'_2$ and $1$ in a big neighborhood of $1$.
\end{itemize}
For a suitably chosen $\psi$, the Hamiltonian $H_1$ is strictly convex in the fibers.

We make a symmetric construction for $x<1-\varepsilon'_2$. Note that the obtained function is then $1$-periodic in $x$.

To finish, we just round off the corners of $H_1$, without modifying it in a neighborhood of the graphs of $f^+$ above $[0,\varepsilon_2]$ and $f^-$ on $[1-\varepsilon_2,1]$.

The Hamiltonian thus obtained has the required  trajectories as described at the beginning of the section.

\subsection*{Last acknowledgements}
M.Z. is very grateful to the authors for allowing him to include this appendix to their article. He also thanks M.-C. Arnaud for suggestions and comments on the first attempts to build those examples. Finally I express here my gratitude to V. Humili\`ere for a drastic simplification of the proof of Proposition \ref{A6} and for the figures of the Appendix.

\printbibliography

@article{AGIV,
	author = {Asano, Tomohiro and Guillermou, St{\'e}phane and Ike, Yuichi and Viterbo, Claude},
	date-added = {2025-10-01 18:04:10 +0200},
	date-modified = {2025-10-01 18:04:10 +0200},
	eprint = {2407.00395},
	eprinttype = {ArXiv},
	journal = {Journal of the Mathematical Society of Japan},
	primaryclass = {math.SG},
	shorthand = {AGIV},
	title = {Regular Lagrangians are smooth Lagrangians},
	volume = {(to appear)},
	year = {2023}}

@article{Albers-PSS,
	author = {Peter Albers},
	date-added = {2024-10-02 19:51:49 +0200},
	date-modified = {2024-10-02 19:51:49 +0200},
	doi = {10.1093/imrn/rnm134},
	journal = {International Mathematics Research Notices},
	pages = {rnm134},
	title = {A Lagrangian Piunikhin-Salamon-Schwarz morphism and two comparison homomorphisms in Floer homology},
	volume = {2008},
	year = {2008},
	bdsk-url-1 = {https://doi.org/10.1093/imrn/rnm134}}

@article {Tran,
    AUTHOR = {Armstrong, Scott N. and Tran, Hung V. and Yu, Yifeng},
     TITLE = {Stochastic homogenization of nonconvex {H}amilton-{J}acobi
              equations in one space dimension},
   JOURNAL = {J. Differential Equations},
  FJOURNAL = {Journal of Differential Equations},
    VOLUME = {261},
      YEAR = {2016},
    NUMBER = {5},
     PAGES = {2702--2737},
      ISSN = {0022-0396,1090-2732},
   MRCLASS = {35B27 (35F21 35F25)},
  MRNUMBER = {3507985},
MRREVIEWER = {Ma\l gorzata\ Peszy\'nska},
       DOI = {10.1016/j.jde.2016.05.010},
       URL = {https://doi-org.ezproxy.math.cnrs.fr/10.1016/j.jde.2016.05.010},
}

@book{CanSin04,
	author = {Cannarsa, Piermarco and Sinestrari, Carlo},
	isbn = {0-8176-4084-3},
	mrclass = {49-02 (35F20 49K20 49L20)},
	mrnumber = {2041617},
	mrreviewer = {Pierre\ Cardaliaguet},
	pages = {xiv+304},
	publisher = {Birkh\"{a}user Boston, Inc., Boston, MA},
	series = {Progress in Nonlinear Differential Equations and their Applications},
	title = {Semiconcave functions, {H}amilton-{J}acobi equations, and optimal control},
	volume = {58},
	year = {2004}}

@article{Bernard2008,
	author = {Bernard, Patrick},
	doi = {10.1090/S0894-0347-08-00591-2},
	fjournal = {Journal of the American Mathematical Society},
	issn = {0894-0347,1088-6834},
	journal = {J. Amer. Math. Soc.},
	mrclass = {37J40 (37J50)},
	mrnumber = {2393423},
	mrreviewer = {Karl\ Friedrich\ Siburg},
	number = {3},
	pages = {615--669},
	title = {The dynamics of pseudographs in convex {H}amiltonian systems},
	url = {https://doi.org/10.1090/S0894-0347-08-00591-2},
	volume = {21},
	year = {2008},
	bdsk-url-1 = {https://doi.org/10.1090/S0894-0347-08-00591-2}}

@article{Attouch,
	author = {Hedy Attouch},
	date-added = {2024-02-24 20:02:53 +0100},
	date-modified = {2024-02-24 20:02:53 +0100},
	journal = {Compte Rendus de l'Acad{\'e}mie des Sciences, S{\'e}rie A et B},
	pages = {539-542},
	title = {Convergence de fonctions convexes, des sous-diff{\'e}rentiels et semi-groupes associ{\'e}s},
	url = {https://gallica.bnf.fr/ark:/12148/bpt6k5619141s/f29.item},
	volume = {284},
	year = {1977},
	bdsk-url-1 = {https://gallica.bnf.fr/ark:/12148/bpt6k5619141s/f29.item}}

@article{CIPP,
	author = {Contreras, G. and Iturriaga, R. and Paternain, G. P. and Paternain, M.},
	date-modified = {2024-02-22 17:27:25 +0100},
	doi = {10.1007/PL00001011},
	fjournal = {Annales Henri Poincar\'{e}. A Journal of Theoretical and Mathematical Physics},
	journal = {Ann. Henri Poincar\'{e}},
	number = {4},
	pages = {655--684},
	title = {The {P}alais-{S}male condition and {M}a\~{n}\'{e}'s critical values},
	volume = {1},
	year = {2000},
	bdsk-url-1 = {https://doi.org/10.1007/PL00001011}}

@article{MaroSor2017,
	author = {Mar\`o, S. and Sorrentino, A.},
	fjournal = {Communications in Mathematical Physics},
	journal = {Comm. Math. Phys.},
	number = {2},
	pages = {775--808},
	title = {Aubry-{M}ather theory for conformally symplectic systems},
	volume = {354},
	year = {2017}}

@unpublished{ArnaudSuZavidovique,
	author = {Arnaud, Marie-Claude and Su, Xifeng and Zavidovique, Maxime},
	date-added = {2023-07-14 23:03:33 +0200},
	date-modified = {2023-07-14 23:03:33 +0200},
	note = {In preparation},
	title = {Integrability for conformally symplectic systems},
	year = {2024}}

@article{Abouzaid-Kragh,
	author = {Mohammed Abouzaid and Thomas Kragh},
	date-added = {2023-12-07 23:39:31 +0100},
	date-modified = {2023-12-07 23:39:31 +0100},
	doi = {https://doi.org/10.4310/ACTA.2018.v220.n2.a1},
	journal = {Acta Mathematica},
	number = {2},
	pages = {207-237},
	title = {Simple Homotopy Equivalence of Nearby Lagrangians},
	volume = {220},
	year = {2018},
	bdsk-url-1 = {https://doi.org/10.4310/ACTA.2018.v220.n2.a1}}

@book{Kechris,
	author = {Kechris, Alexander S.},
	date = {1995},
	date-added = {2023-11-27 22:31:40 +0100},
	date-modified = {2023-11-27 22:31:40 +0100},
	publisher = {Springer-Verlag},
	series = {Graduate Texts in Mathematics},
	title = {Classical Descriptive Set theory},
	volume = {156},
	year = {1995}}

@article{Martins-Birkhoff,
	author = {Rog\'erio Martins},
	date-added = {2023-11-22 19:07:55 +0100},
	date-modified = {2023-11-22 19:07:55 +0100},
	doi = {10.3934/dcds.2006.14.533},
	journal = {Discrete and Continuous Dynamical Systems},
	keywords = {Birkhoff attractor},
	number = {3},
	pages = {533-547},
	title = {One-dimensional attractor for a dissipative system with a cylindrical phase space},
	volume = {14},
	year = {2006},
	bdsk-url-1 = {https://doi.org/10.3934/dcds.2006.14.533}}

@article{SHT,
	archiveprefix = {arXiv},
	author = {Viterbo, Claude},
	date-added = {2023-05-24 14:27:29 +0200},
	date-modified = {2023-05-24 14:27:29 +0200},
	doi = {10.5802/jep.214},
	eprint = {0801.0206},
	journal = {Journal de l'{\'E}cole polytechnique},
	pages = {67-140},
	primaryclass = {math.SG},
	title = {Symplectic homogenization},
	volume = {10},
	year = {2023},
	bdsk-url-1 = {https://doi.org/10.5802/jep.214}}

@article{Amorim-Oh-Santos,
	author = {Amorim, Lino and Oh, Yong-Geun and Oliveira dos Santos, Joana},
	doi = {10.1017/S0305004117000561},
	fjournal = {Mathematical Proceedings of the Cambridge Philosophical Society},
	issn = {0305-0041},
	journal = {Math. Proc. Camb. Philos. Soc.},
	keywords = {53D12},
	language = {English},
	number = {3},
	pages = {411--434},
	title = {Exact {Lagrangian} submanifolds, {Lagrangian} spectral invariants and {Aubry}-{Mather} theory},
	volume = {165},
	year = {2018},
	zbl = {1404.53100},
	zbmath = {6983613},
	bdsk-url-1 = {https://doi.org/10.1017/S0305004117000561}}

@article{Floer-Lag-intersection,
	author = {Floer, Andreas},
	date-modified = {2024-02-22 16:19:22 +0100},
	fjournal = {Journal of Differential Geometry},
	issn = {0022-040X},
	journal = {J. Differential Geom.},
	mrclass = {58F05 (35J65 58E05)},
	mrnumber = {965228},
	number = {3},
	pages = {513--547},
	title = {Morse theory for {L}agrangian intersections},
	url = {https://projecteuclid.org/journals/journal-of-differential-geometry/volume-28/issue-3/Morse-theory-for-Lagrangian-intersections/10.4310/jdg/1214442477.full},
	volume = {28},
	year = {1988},
	bdsk-url-1 = {http://projecteuclid.org.accesdistant.sorbonne-universite.fr/euclid.jdg/1214442477}}

@article{McDuff-contact-boundaries,
	author = {McDuff, Dusa},
	date-modified = {2023-12-14 15:30:13 +0100},
	doi = {10.1007/BF01239530},
	journal = {Inventiones Mathematicae},
	number = {3},
	pages = {651--671},
	title = {Symplectic manifolds with contact type boundaries},
	url = {https://doi.org/10.1007/BF01239530},
	volume = {103},
	year = {1991},
	bdsk-url-1 = {https://doi-org.accesdistant.sorbonne-universite.fr/10.1007/BF01239530},
	bdsk-url-2 = {https://doi.org/10.1007/BF01239530}}

@article{Viterbo-FunctorsI,
	author = {Viterbo, C.},
	date-modified = {2024-02-22 13:09:12 +0100},
	doi = {10.1007/s000390050106},
	fjournal = {Geometric and Functional Analysis},
	issn = {1016-443X},
	journal = {Geom. Funct. Anal.},
	mrclass = {53D40 (57R17 57R58)},
	mrnumber = {1726235},
	number = {5},
	pages = {985--1033},
	title = {Functors and computations in {F}loer homology with applications. {I}},
	volume = {9},
	year = {1999},
	bdsk-url-1 = {https://doi-org.accesdistant.sorbonne-universite.fr/10.1007/s000390050106},
	bdsk-url-2 = {https://doi.org/10.1007/s000390050106}}

@article{Roos2019,
	author = {Roos, Valentine},
	doi = {10.1142/S0219199718500189},
	fjournal = {Communications in Contemporary Mathematics},
	issn = {0219-1997},
	journal = {Commun. Contemp. Math.},
	mrclass = {49L25 (35D40 35F21 35F25)},
	mrnumber = {3957150},
	mrreviewer = {Shuang Liu},
	number = {4},
	pages = {1850018, 76},
	title = {Variational and viscosity operators for the evolutionary {H}amilton-{J}acobi equation},
	url = {https://doi.org/10.1142/S0219199718500189},
	volume = {21},
	year = {2019},
	bdsk-url-1 = {https://doi.org/10.1142/S0219199718500189}}

@book{Fathi-book,
	addendum = {Also available from \url{ https://www.math.u-bordeaux.fr/~pthieull/Recherche/KamFaible/Publications/Fathi2008_01.pdf}},
	author = {Fathi, A.},
	date-modified = {2024-02-22 17:25:00 +0100},
	edition = {Preliminary version 10},
	title = {Weak KAM theorems in Lagrangian dynamics},
	url = {https://www.researchgate.net/publication/240409259_Weak_KAM_Theorem_in_Lagrangian_Dynamics},
	year = {2008},
	bdsk-url-1 = {https://www.researchgate.net/publication/240409259_Weak_KAM_Theorem_in_Lagrangian_Dynamics}}

@article{Dimitroglou-nearby-lagrangian,
	author = {Dimitroglou Rizell, Georgios and Goodman, Elizabeth and Ivrii, Alexander},
	date-added = {2023-04-20 18:37:36 +0200},
	date-modified = {2023-04-20 18:37:36 +0200},
	doi = {10.1007/s00039-016-0388-1},
	journal = {Geometric and Functional Analysis},
	pages = {1297--1358},
	title = {Lagrangian isotopy of tori in ${S^2\times S^2}$ and ${{\mathbb{C}}P^2}$},
	year = {2016},
	bdsk-url-1 = {https://doi.org/10.1007/s00039-016-0388-1}}

@article{Milinkovic-Oh,
	author = {Darko Milinkovi{\'c} and Yong-Geun Oh},
	date-added = {2023-04-15 18:09:57 +0200},
	date-modified = {2023-04-15 18:09:57 +0200},
	journal = {Journal of the Korean Math. Society},
	number = {4},
	pages = {1065-1087},
	title = {Floer homology as the stable Morse homology},
	url = {https://koreascience.kr/article/JAKO199711919485998.pdf},
	volume = {34},
	year = {1997},
	bdsk-url-1 = {https://koreascience.kr/article/JAKO199711919485998.pdf}}

@article{Hind,
	author = {Hind, R.},
	date-added = {2023-04-14 21:26:06 +0200},
	date-modified = {2023-04-14 21:26:06 +0200},
	doi = {10.1007/s00039-004-0459-6},
	journal = {Geometric and Functional Analysis.},
	number = {2},
	pages = {303--318},
	title = {Lagrangian spheres in {{\(S^2\times S^2\)}}},
	volume = {14},
	year = {2004},
	bdsk-url-1 = {https://doi.org/10.1007/s00039-004-0459-6}}

@article{Kim-unknotedness,
	author = {Kim, Joontae},
	date-added = {2023-04-14 21:22:27 +0200},
	date-modified = {2023-04-14 21:22:53 +0200},
	doi = {10.1007/s00208-020-02049-7},
	journal = {Mathematische Annalen},
	number = {3-4},
	pages = {891--905},
	title = {Unknottedness of real {Lagrangian} tori in {{\(S^2 \times S^2\)}}},
	volume = {378},
	year = {2020},
	bdsk-url-1 = {https://doi.org/10.1007/s00208-020-02049-7}}

@article{Eliashberg-Polterovich-local-2-knots,
	author = {Eliashberg, Y. and Polterovich, L.},
	date-added = {2023-04-14 21:20:57 +0200},
	date-modified = {2023-04-14 21:21:32 +0200},
	doi = {10.2307/2118583},
	journal = {Annals of Mathematics. Second Series},
	number = {1},
	pages = {61--76},
	title = {Local {Lagrangian} 2-knots are trivial},
	volume = {144},
	year = {1996},
	bdsk-url-1 = {https://doi.org/10.2307/2118583}}

@article{Milinkovic-equivalence,
	author = {Darko Milinkovi{\'c}},
	date-added = {2023-04-14 18:44:18 +0200},
	date-modified = {2023-04-14 18:46:21 +0200},
	doi = {10.2140/pjm.2000.195.371},
	journal = {Pacific J. Math.},
	number = {2},
	pages = {371--415},
	title = {On equivalence of two constructions of invariants of Lagrangian submanifolds},
	volume = {195},
	year = {2000},
	bdsk-url-1 = {https://doi.org/10.2140/pjm.2000.195.371}}

@article{Lanzat,
	author = {Lanzat, Sergei},
	doi = {10.1007/s13366-015-0254-6},
	fjournal = {Beitr\"{a}ge zur Algebra und Geometrie. Contributions to Algebra and Geometry},
	issn = {0138-4821},
	journal = {Beitr. Algebra Geom.},
	mrclass = {53D05 (53D40 53D45)},
	mrnumber = {3493995},
	mrreviewer = {Umberto Leone Hryniewicz},
	number = {2},
	pages = {361--390},
	title = {Hamiltonian {F}loer homology for compact convex symplectic manifolds},
	url = {https://doi.org/10.1007/s13366-015-0254-6},
	volume = {57},
	year = {2016},
	bdsk-url-1 = {https://doi.org/10.1007/s13366-015-0254-6}}

@article{Frauenfelder-Schlenk,
	author = {Frauenfelder, Urs and Schlenk, Felix},
	doi = {10.1007/s11856-007-0037-3},
	fjournal = {Israel Journal of Mathematics},
	issn = {0021-2172},
	journal = {Israel J. Math.},
	mrclass = {53D40 (37J05 37J45)},
	mrnumber = {2342472},
	mrreviewer = {Michael J. Usher},
	pages = {1--56},
	title = {Hamiltonian dynamics on convex symplectic manifolds},
	url = {https://doi.org/10.1007/s11856-007-0037-3},
	volume = {159},
	year = {2007},
	bdsk-url-1 = {https://doi.org/10.1007/s11856-007-0037-3}}

@article{Birkhoff-attractor,
	addendum = {See also: Collected Math. Papers of G. D. Birkhoff, vol. II, p. 418-443, American Math. Soc. 1950 (reprinted by Dover, New York (1968).},
	author = {Birkhoff., G. D.},
	date-added = {2023-03-20 21:15:03 +0100},
	date-modified = {2023-03-20 21:20:10 +0100},
	doi = {10.24033/bsmf.1182},
	journal = {Bull. Soc. Math. France},
	pages = {1-26},
	title = {Sur quelques courbes ferm{\'e}es remarquables},
	volume = {60},
	year = {1932},
	bdsk-url-1 = {https://doi.org/10.24033/bsmf.1182%20}}

@article{Oh-action-functional-I,
	author = {Oh, Yong-Geun},
	fjournal = {Journal of Differential Geometry},
	issn = {0022-040X},
	journal = {J. Differential Geom.},
	mrclass = {58E05 (57R70 58D15 58F05)},
	mrnumber = {1484890},
	mrreviewer = {Joa Weber},
	number = {3},
	pages = {499--577},
	title = {Symplectic topology as the geometry of action functional. {I}. {R}elative {F}loer theory on the cotangent bundle},
	url = {http://projecteuclid.org/euclid.jdg/1214459976},
	volume = {46},
	year = {1997},
	bdsk-url-1 = {http://projecteuclid.org/euclid.jdg/1214459976}}

@article{DFIZ,
	author = {Andrea Davini and Albert Fathi and Renato Iturriaga and Maxime Zavidovique},
	date-added = {2022-12-11 22:13:07 +0100},
	date-modified = {2022-12-11 22:19:13 +0100},
	doi = {10.1007/s00222-016-0648-6},
	journal = {Invent. math},
	shorthand = {DFIZ},
	title = {Convergence of the solutions of the discounted Hamilton--Jacobi equation.},
	year = {2016},
	bdsk-url-1 = {https://doi.org/10.1007/s00222-016-0648-6}}

@article{Kragh3,
	author = {Kragh, T.},
	date-added = {2022-11-11 14:16:16 +0100},
	date-modified = {2022-11-11 14:16:16 +0100},
	doi = {10.2140/gt.2013.17.639},
	journal = {Geometry \& Topology},
	pages = {639-731},
	title = {Parametrized ring-spectra and the nearby Lagrangian conjecture. Appendix by M. Abouzaid},
	volume = {17},
	year = {2013},
	bdsk-url-1 = {https://doi.org/10.2140/gt.2013.17.639}}

@article{AGHIV,
	author = {Asano, Tomohiro and Guillermou, St\'ephane and Humili\`ere, Vincent and Ike, Yuichi and Viterbo, Claude},
	doi = {10.5802/crmath.499},
	fjournal = {Comptes Rendus Math\'ematique. Acad\'emie des Sciences. Paris},
	issn = {1631-073X,1778-3569},
	journal = {C. R. Math. Acad. Sci. Paris},
	mrclass = {53D12},
	mrnumber = {4683312},
	mrreviewer = {Andrea\ D'Agnolo},
	pages = {1333--1340},
	shorthand = {AGHIV},
	title = {The {$\gamma$}-support as a micro-support},
	url = {https://doi.org/10.5802/crmath.499},
	volume = {361},
	year = {2023},
	bdsk-url-1 = {https://doi.org/10.5802/crmath.499}}

@misc{Guillermou-Vichery,
	author = {Guillermou, St{\'e}phane and Vichery, Nicolas},
	date-added = {2022-10-08 17:54:01 +0200},
	date-modified = {2022-10-08 17:54:01 +0200},
	eprint = {2203.13700},
	eprinttype = {arXiv},
	primaryclass = {math.SG},
	shorthand = {GVic22},
	title = {Viterbo's spectral bound conjecture for homogeneous spaces},
	year = {2022}}

@misc{Guillermou-Viterbo,
	arxiveprefix = {arXiv},
	author = {Guillermou, St{\'e}phane and Viterbo, Claude},
	date-added = {2022-10-08 17:54:01 +0200},
	date-modified = {2022-10-08 17:54:01 +0200},
	eprint = {2203.12977},
	eprinttype = {arXiv},
	primaryclass = {math.SG},
	shorthand = {GVit22a},
	title = {The support of sheaves is $\gamma$-coisotropic},
	year = {2022}}

@article{Guillermou-Asterisque,
	author = {Guillermou, St\'ephane},
	doi = {10.24033/ast.1199},
	fjournal = {Ast\'erisque},
	isbn = {978-2-85629-972-2},
	issn = {0303-1179,2492-5926},
	journal = {Ast\'erisque},
	mrclass = {53D12 (18F20 35A27)},
	mrnumber = 4612528,
	mrreviewer = {Andrea\ D'Agnolo},
	number = 440,
	pages = {x+274},
	title = {Sheaves and symplectic geometry of cotangent bundles},
	url = {https://doi.org/10.24033/ast.1199},
	year = 2023,
	bdsk-url-1 = {https://doi.org/10.24033/ast.1199}}

@misc{Viterbo-inverse-reduction,
	arxiveprefix = {arXiv},
	author = {Viterbo, Claude},
	date-added = {2022-07-18 15:57:04 +0200},
	date-modified = {2022-07-18 15:57:04 +0200},
	eprint = {2203.13172},
	eprinttype = {arXiv},
	primaryclass = {math.SG},
	title = {Inverse Reduction inequalities for spectral numbers and Applications},
	year = {2022}}

@article{Lalonde-Sikorav,
	author = {Lalonde, Fran{\c c}ois and Sikorav, Jean-Claude},
	date-added = {2022-07-09 17:31:07 +0200},
	date-modified = {2022-11-19 23:52:27 +0100},
	doi = {10.1007/BF02566634},
	journal = {Commentarii Mathematici Helvetici},
	pages = {18-33},
	title = {Sous-vari{\'e}t{\'e}s lagrangiennes et lagrangiennes exactes des fibr{\'e}s cotangents},
	url = {https://www.e-periodica.ch/digbib/view?pid=com-001%3A1991%3A66#63},
	volume = {66},
	year = {1991},
	bdsk-url-1 = {https://www.e-periodica.ch/digbib/view?pid=com-001%3A1991%3A66#63},
	bdsk-url-2 = {https://doi.org/10.1007/BF02566634}}

@article{M-V-Z,
	author = {Monzner, Alexandra and Vichery, Nicolas and Zapolsky, Frol},
	date-added = {2022-06-06 20:49:56 +0200},
	date-modified = {2022-06-06 20:50:43 +0200},
	doi = {10.3934/jmd.2012.6.205},
	journal = {Journal of Modern Dynamics},
	pages = {205-249},
	title = {Partial quasi-morphisms and quasi-states on cotangent bundles, and symplectic homogenization.},
	volume = {6},
	year = {2012},
	bdsk-url-1 = {https://doi.org/10.3934/jmd.2012.6.205}}

@article{LeCalvez-Birkhoff1,
	author = {Le~Calvez, Patrice},
	date-added = {2022-04-13 13:04:14 +0200},
	date-modified = {2022-04-13 13:04:14 +0200},
	journal = {Commun. Math. Phys.},
	number = {3},
	pages = {383-394},
	shorthand = {LeCal86},
	title = {Existence d'orbites quasi-periodiques dans les attracteurs de Birkhoff},
	volume = {106},
	year = {1986},
	bdsk-url-1 = {https://doi.org/cmp/1104115780}}

@article{Charpentier-2,
	author = {Charpentier, Marie},
	date-added = {2022-04-03 00:22:41 +0200},
	date-modified = {2022-04-03 00:22:41 +0200},
	journal = {J. Math. Pures Appl.},
	pages = {1-48},
	title = {Sur les courbes ferm{\'e}es analogues aux courbes de M. Birkhoff},
	url = {https://gallica.bnf.fr/ark:/12148/bpt6k6458302w},
	volume = {14},
	year = {1935},
	bdsk-url-1 = {https://gallica.bnf.fr/ark:/12148/bpt6k6458302w}}

@article{LeCalvez-Birkhoff2,
	author = {Le~Calvez, Patrice},
	date-added = {2022-04-02 18:38:22 +0200},
	date-modified = {2024-02-23 10:45:54 +0100},
	doi = {10.1017/S0143385700004442},
	journal = {Ergodic Theory Dynam. Systems},
	pages = {241-310},
	shorthand = {LeCal87},
	title = {Propri{\'e}t{\'e}s des attracteurs de Birkhoff},
	volume = {8},
	year = {1987},
	bdsk-url-1 = {https://doi.org/10.1017/S0143385700004442}}

@article{Arnaud-Fejoz,
	author = {Arnaud, Marie-Claude and Fejoz, Jacques},
	date-added = {2022-01-09 09:00:44 +0100},
	date-modified = {2022-02-09 14:51:51 +0100},
	doi = {10.5802/jep.252},
	journal = {Journal de l'\'ecole polytechnique},
	pages = {159-185},
	title = {Invariant submanifolds of conformally symplectic dynamics},
	volume = {11},
	year = {2024},
	bdsk-url-1 = {https://jep.centre-mersenne.org/articles/10.5802/jep.252/}}

@article{Charpentier-1,
	author = {Charpentier, Marie},
	date-added = {2022-04-01 20:27:29 +0200},
	date-modified = {2022-04-01 20:27:29 +0200},
	doi = {10.24033/bsmf.1221},
	journal = {Bulletin de la S. M. F.},
	pages = {193-224},
	title = {Sur quelques propri{\'e}t{\'e}s des courbes de M. Birkhoff},
	volume = {62},
	year = {1934},
	bdsk-url-1 = {https://doi.org/10.24033/bsmf.1221}}

@article{BHS-C0,
	author = {Buhovsky, Lev and Humili{\`e}re, Vincent and Seyfaddini, Sobhan},
	date-added = {2022-03-31 13:11:02 +0200},
	date-modified = {2022-03-31 13:11:02 +0200},
	doi = {10.1007/s00208-021-02183-w},
	journal = {Mathematische Annalen},
	number = {1-2},
	pages = {293-316},
	title = {The action spectrum and $C^0$ symplectic topology},
	volume = {380},
	year = {2021},
	bdsk-url-1 = {https://arxiv.org/abs/1808.09790v2}}

@phdthesis{Vichery-these,
	author = {Vichery, Nicolas},
	date-added = {2022-02-02 21:21:04 +0100},
	date-modified = {2022-02-02 21:21:04 +0100},
	school = {{\'E}cole polytechnique},
	title = {Homog{\'e}n{\'e}isation symplectique et Applications de la th{\'e}orie des faisceaux {\`a} la topologie symplectique},
	url = {https://tel.archives-ouvertes.fr/pastel-00780016/},
	year = {2012},
	bdsk-url-1 = {https://tel.archives-ouvertes.fr/pastel-00780016/}}

@misc{Viterbo-gammas,
	arxiveprefix = {arXiv},
	author = {Viterbo, Claude},
	date-added = {2022-02-02 20:50:46 +0100},
	date-modified = {2022-04-11 23:34:55 +0200},
	eprint = {2204.04133},
	eprinttype = {To appear in Compositio Math. arXiv},
	%volume = {To appear in Compositio Math.}
	primaryclass = {math.SG},
	title = {On the support of elements in the Humili{\`e}re completion of smooth Lagrangians and the inverse reduction inequality},
	year = {2022}}

@article{Usher-rigidity,
 author = {Usher, Michael},
 title = {Local rigidity, symplectic homeomorphisms, and coisotropic submanifolds},
 fjournal = {Bulletin of the London Mathematical Society},
 journal = {Bull. Lond. Math. Soc.},
 volume = {54},
 number = {1},
 pages = {45--53},
 year = {2022},
 doi = {10.1112/blms.12555}
}

@article{Seidel-graded,
	author = {Seidel, Paul},
	date-added = {2021-12-23 11:40:02 +0100},
	date-modified = {2021-12-23 11:41:45 +0100},
	journal = {Bulletin de la S. M. F.},
	number = {1},
	pages = {103-149},
	title = {Graded Lagrangian submanifolds},
	url = {http://www.numdam.org/item?id=BSMF_2000__128_1_103_0},
	volume = {128},
	year = {2000},
	bdsk-url-1 = {http://www.numdam.org/item?id=BSMF_2000__128_1_103_0}}

@article{Hum-Lec-Sey3,
	author = {Humili{\`e}re, Vincent and Leclercq, R{\'e}mi and Seyfaddini, Sobhan},
	date-added = {2021-10-23 23:06:52 +0200},
	date-modified = {2022-04-08 12:21:29 +0200},
	doi = {10.24033/asens.2292},
	journal = {Annales scientifiques de l'{\'E}cole Normale Sup\'erieure, 4{\`e}me s{\'e}rie,},
	pages = {633-668},
	title = {Reduction of Symplectic Homeomorphisms},
	volume = {49},
	year = {2016},
	bdsk-url-1 = {https://doi.org/10.24033/asens.2292}}

@unpublished{Viterbo-Ottolenghi,
	author = {Ottolenghi, Alberto and Viterbo, Claude},
	date-added = {2021-07-04 14:16:58 +0200},
	date-modified = {2021-07-04 16:13:41 +0200},
	title = {Solutions g{\'e}n{\'e}ralis{\'e}es pour l'{\'e}quation d'Ha\-mil\-ton-Jacobi dans le cas d'{\'e}volution.},
	url = {http://www.math.ens.fr/~viterbo/Ottolenghi-Viterbo.pdf},
	year = {1994},
	bdsk-url-1 = {http://www.math.ens.fr/~viterbo/Ottolenghi-Viterbo.pdf}}

@book{Barles,
	address = {Berlin},
	author = {Barles, Guy},
	date-added = {2021-05-20 21:32:19 +0200},
	date-modified = {2021-05-20 21:32:19 +0200},
	publisher = {Springer-Verlag, Heidelberg},
	title = {Solutions de viscosit{\'e} des {\'e}quations de Hamilton-Jacobi},
	year = {1994}}

@article{Cardin-Viterbo,
	author = {Cardin, Franco and Viterbo, Claude},
	date-added = {2021-05-20 21:32:19 +0200},
	date-modified = {2024-02-22 17:36:30 +0100},
	doi = {10.1215/00127094-2008-036},
	journal = {Duke Math Journal,},
	pages = {235-284},
	title = {Commuting Hamiltonians and multi-time Hamilton-Jacobi equations.},
	url = {https://projecteuclid.org/euclid.dmj/1218716299},
	volume = {144},
	year = {2008},
	bdsk-url-1 = {https://projecteuclid.org/euclid.dmj/1218716299}}

@article{Hum-Lec-Sey2,
	author = {Humili{\`e}re, Vincent and Leclercq, R{\'e}mi and Seyfaddini, Sobhan},
	date-added = {2021-05-20 21:32:19 +0200},
	date-modified = {2022-04-08 12:21:21 +0200},
	doi = {10.1215/00127094-2881701},
	journal = {Duke Math Journal,},
	pages = {767-799},
	title = {Coisotropic rigidity and $C^{0}$ -symplectic geometry},
	url = {https://projecteuclid.org/euclid.dmj/1426512107},
	volume = {164},
	year = {2015},
	bdsk-url-1 = {https://projecteuclid.org/euclid.dmj/1426512107},
	bdsk-url-2 = {https://doi.org/10.1215/00127094-2881701}}

@article{Laudenbach-Sikorav,
	author = {Laudenbach, Fran{\c{c}}ois and Sikorav, Jean-Claude},
	date-added = {2021-05-20 21:32:19 +0200},
	date-modified = {2021-05-20 21:32:19 +0200},
	doi = {10.1007/bf01388807},
	journal = {Inventiones Math.},
	pages = {349-357},
	title = {Persistance d'intersection avec la section nulle au cours d'une isotopie hamiltonienne dans un fibr\'e cotangent},
	volume = {82},
	year = {1985},
	bdsk-url-1 = {https://doi.org/10.1007/bf01388807}}

@article{Schwarz,
	author = {Schwarz, Matthias},
	date-added = {2021-05-20 21:32:19 +0200},
	date-modified = {2021-05-20 21:32:19 +0200},
	doi = {10.2140/pjm.2000.193.419},
	journal = {Pacific J. Math},
	pages = {419-461},
	title = {On the action spectrum for closed symplectically aspherical manifolds},
	volume = {193},
	year = {2000},
	bdsk-url-1 = {https://doi.org/10.2140/pjm.2000.193.419}}

@unpublished{Sik,
	author = {Sikorav, J. C.},
	date-added = {2021-05-20 21:32:19 +0200},
	date-modified = {2021-05-20 21:32:19 +0200},
	title = {Private communication},
	year = {1989}}

@article{Viterbo-STAGGF,
	author = {Viterbo, Claude},
	date-added = {2021-05-20 21:32:19 +0200},
	date-modified = {2021-05-20 21:32:19 +0200},
	doi = {10.1007/bf01444643},
	journal = {Mathematische Annalen},
	pages = {685-710},
	title = {Symplectic topology as the geometry of generating functions},
	volume = {292},
	year = {1992},
	bdsk-url-1 = {https://doi.org/10.1007/bf01444643}}

@unpublished{Viterbo-FCFH2,
	addendum = {(revised in 2003)},
	author = {Viterbo, Claude},
	date-added = {2021-05-20 21:32:19 +0200},
	date-modified = {2021-05-20 21:32:19 +0200},
	eprint = {1805.01316},
	eprinttype = {arxiv},
	primaryclass = {math.SG},
	title = {Functors and Computations in Floer cohomology, II},
	url = {https://arxiv.org/abs/1805.01316},
	year = {1995},
	bdsk-url-1 = {https://arxiv.org/abs/1805.01316}}

@incollection{Viterbo-Ott,
	author = {Viterbo, Claude},
	booktitle = {S{\'e}minaire \'Equations aux d{\'e}riv{\'e}es partielles (Polytechnique)-(1995-1996)},
	date-added = {2021-05-20 21:32:19 +0200},
	date-modified = {2022-02-07 18:54:10 +0100},
	publisher = {Ecole Polytechnique, Centre de Math{\'e}matiques},
	title = {Solutions d'{\'e}quations d'Hamilton-Jacobi et g{\'e}om{\'e}trie symplectique},
	url = {http://www.numdam.org/item/SEDP_1995-1996____A22_0},
	year = {1996},
	bdsk-url-1 = {http://www.numdam.org/item/SEDP_1995-1996____A22_0}}

@misc{Viterbo-Sheaves,
	author = {Viterbo, Claude},
	date-added = {2021-05-20 21:32:19 +0200},
	date-modified = {2021-12-28 03:54:27 +0100},
	eprint = {1901.09440},
	eprinttype = {arXiv},
	primaryclass = {math.SG},
	title = {Sheaf Quantization of Lagrangians and Floer cohomology},
	year = {2019}}

@article{Fukaya-Seidel-Smith,
	author = {Fukaya, K. and Seidel, P. and Smith, I.},
	date-added = {2021-05-20 21:32:08 +0200},
	date-modified = {2021-10-24 00:36:37 +0200},
	doi = {10.1007/s00222-007-0092-8},
	journal = {Inventiones Math.},
	pages = {1-27},
	title = {Exact Lagrangian submanifolds in simply-connected cotangent bundles},
	volume = {172},
	year = {2008},
	bdsk-url-1 = {https://doi.org/10.1007/s00222-007-0092-8}}

@article{Kragh2,
	author = {Kragh, T.},
	date-added = {2021-05-20 21:32:08 +0200},
	date-modified = {2021-05-20 21:32:09 +0200},
	doi = {10.1007/s00208-016-1447-5},
	journal = {Mathematische Annalen},
	pages = {945-970},
	title = {Homotopy equivalence of nearby Lagrangians and the Serre spectral sequence},
	volume = {368},
	year = {2017},
	bdsk-url-1 = {https://doi.org/10.1007/s00208-016-1447-5}}

@article{Leclercq,
	author = {Leclercq, R{\'e}mi},
	date-added = {2021-05-20 21:32:08 +0200},
	date-modified = {2021-05-20 21:32:09 +0200},
	doi = {10.3934/jmd.2008.2.249},
	journal = {Journal of Modern Dynamics},
	number = {2},
	pages = {249-286},
	title = {Spectral invariants in Lagrangian Floer theory},
	url = {https://www.aimsciences.org/article/doi/10.3934/jmd.2008.2.249},
	volume = {2},
	year = {2008},
	bdsk-url-1 = {https://www.aimsciences.org/article/doi/10.3934/jmd.2008.2.249},
	bdsk-url-2 = {https://doi.org/10.3934/jmd.2008.2.249}}

@article{Lusternik-Schnirelman,
	author = {Lusternik, Lazar and Schnirelman, Lev},
	date-added = {2021-05-20 21:32:08 +0200},
	date-modified = {2022-06-03 19:46:36 +0200},
	journal = {Comptes Rend. Acad. Sci. Paris},
	pages = {295-297},
	title = {Sur un principe topologique en analyse},
	url = {https://gallica.bnf.fr/ark:/12148/bpt6k31417/f295},
	volume = {188},
	year = {1929},
	bdsk-url-1 = {https://gallica.bnf.fr/ark:/12148/bpt6k31417/f295}}

@book{Seidel,
	author = {Seidel, P.},
	date-added = {2021-05-20 21:32:08 +0200},
	date-modified = {2021-05-20 21:32:09 +0200},
	doi = {10.4171/063},
	publisher = {European Math. Soc},
	title = {Fukaya category and Picard-Lefschetz theory},
	year = {2008},
	bdsk-url-1 = {https://doi.org/10.4171/063}}

@article{Shelukhin-Zoll,
	author = {Shelukhin, Egor},
	doi = {10.1007/s00222-022-01124-x},
	fjournal = {Inventiones Mathematicae},
	issn = {0020-9910,1432-1297},
	journal = {Invent. Math.},
	mrclass = {53D40 (57R17)},
	mrnumber = {4480149},
	mrreviewer = {Olga\ Bernardi},
	number = {1},
	pages = {321--373},
	title = {Viterbo conjecture for {Z}oll symmetric spaces},
	url = {https://doi.org/10.1007/s00222-022-01124-x},
	volume = {230},
	year = {2022},
	bdsk-url-1 = {https://doi.org/10.1007/s00222-022-01124-x}}

@article{shelukhin-sc-viterbo,
	author = {Shelukhin, Egor},
	doi = {10.1007/s00039-022-00619-2},
	fjournal = {Geometric and Functional Analysis},
	issn = {1016-443X,1420-8970},
	journal = {Geom. Funct. Anal.},
	mrclass = {53D12 (37J06 53D40)},
	mrnumber = {4536469},
	mrreviewer = {Sheila\ Sandon},
	number = {6},
	pages = {1514--1543},
	title = {Symplectic cohomology and a conjecture of {V}iterbo},
	url = {https://doi.org/10.1007/s00039-022-00619-2},
	volume = {32},
	year = {2022},
	bdsk-url-1 = {https://doi.org/10.1007/s00039-022-00619-2}}

@inproceedings{Viterbo-Montreal,
	author = {Viterbo, C.},
	booktitle = {Morse Theoretic Methods in Nonlinear Analysis and in Symplectic Topology. NATO Science Series II: Mathematics, Physics and Chemistry book series (NAII, volume 217},
	date-added = {2021-05-20 21:32:08 +0200},
	date-modified = {2021-05-20 21:32:09 +0200},
	editor = {Paul Biran, Octav Cornea and Lalonde, Fran{\c c}ois},
	pages = {439-459},
	publisher = {Springer-Verlag, Netherlands},
	title = {Symplectic topology and Hamilton-Jacobi equations},
	year = {2006}}

@article{Humiliere-completion,
	author = {Humili{\`e}re, Vincent},
	date-added = {2022-01-09 14:19:49 +0100},
	date-modified = {2022-01-09 14:20:10 +0100},
	doi = {10.24033/bsmf.2560},
	journal = {Bulletin de la Soc. Math. de France},
	number = {3},
	pages = {373-404},
	title = {On some completions of the space of Hamiltonian maps.},
	url = {http://www.numdam.org/item/BSMF_2008__136_3_373_0/},
	volume = {136},
	year = {2008},
	bdsk-url-1 = {http://www.numdam.org/item/BSMF_2008__136_3_373_0/},
	bdsk-url-2 = {https://doi.org/10.24033/bsmf.2560}}

@article{Kragh-Abouzaid,
	author = {Kragh, T.},
	date-added = {2021-11-16 15:10:10 +0100},
	date-modified = {2021-11-16 15:10:10 +0100},
	doi = {10.2140/gt.2013.17.639},
	journal = {Geometry \& Topology},
	pages = {639-731},
	title = {Parametrized ring-spectra and the nearby Lagrangian conjecture. Appendix by M. Abouzaid},
	volume = {17},
	year = {2013},
	bdsk-url-1 = {https://doi.org/10.2140/gt.2013.17.639}}

@article{Chaperon-HJ,
	author = {Chaperon, Marc},
	date-added = {2021-05-20 21:32:19 +0200},
	date-modified = {2022-02-22 15:13:31 +0100},
	journal = {C. R. Acad. Sci.Paris S{\'e}r. I Math},
	number = {4},
	pages = {345-348},
	title = {Lois de conservation et g{\'e}om{\'e}trie symplectique.},
	volume = {312},
	year = {1991}}

@book{Lions1982,
	author = {Lions, Pierre-Louis},
	date-added = {2022-04-22 10:55:45 +0200},
	date-modified = {2022-04-22 10:57:22 +0200},
	isbn = {978-0273085560},
	publisher = {Pitman},
	series = {Research Notes in Mathematics Series},
	title = {Generalized Solutions of Hamilton-Jacobi Equations},
	volume = {69},
	year = {1982}}

@book{Fathi,
	author = {Fathi, Albert},
	date-added = {2022-04-22 10:37:57 +0200},
	date-modified = {2022-04-22 10:41:22 +0200},
	publisher = {Unpublished},
	title = {Weak KAM theorems in Lagrangian dynamics},
	url = {https://www.math.u-bordeaux.fr/~pthieull/Recherche/KamFaible/Publications/Fathi2008_01.pdf},
	bdsk-url-1 = {https://www.math.u-bordeaux.fr/~pthieull/Recherche/KamFaible/Publications/Fathi2008_01.pdf}}

\end{document}